\newtheorem{theorem}{Theorem}[section]
\newtheorem{lemma}[theorem]{Lemma}
\newtheorem{proposition}[theorem]{Proposition}
\newtheorem{corollary}[theorem]{Corollary}
\newtheorem{definition}[theorem]{Definition}
\theoremstyle{remark}
\newtheorem{remark}[theorem]{Remark}
\newcommand{\PPP}{\mathbf{P}}
\newcommand{\EEE}{\mathbf{E}}
\newcommand{\PP}{\mathbb{P}}
\newcommand{\EE}{\mathbb{E}}
\newcommand{\id}{\mathbbm 1}
\newcommand{\invsigma}{\sigma^{-1}}
\newcommand{\NN}{\mathbb{N}}
\newcommand{\ZZ}{\mathbb{Z}}
\newcommand{\RR}{\mathbb{R}}
\newcommand{\cA}{\mathcal{A}}
\newcommand{\cE}{\mathcal{E}}
\newcommand{\cF}{\mathcal{F}}
\newcommand{\cH}{\mathcal{H}}
\newcommand{\cI}{\mathcal{I}}
\newcommand{\cN}{\mathcal{N}}
\newcommand{\cP}{\mathcal{P}}
\newcommand{\cR}{\mathcal{R}}
\newcommand{\cS}{\mathcal{S}}
\newcommand{\cZ}{\mathcal{Z}}
\newcommand{\scrM}{\mathscr{M}}
\newcommand{\ee}{\textnormal{e}}
\newcommand{\dd}{\textnormal{d}}
\newcommand{\cc}{\textnormal{c}}
\newcommand{\essinf}{\textnormal{essinf }}
\newcommand{\ssst}{\scriptscriptstyle}
\newcommand{\Set}{\mathrm{Set}}
\DeclareMathOperator*{\argmax}{arg\,max}
\DeclarePairedDelimiter\floor{\lfloor}{\rfloor}
\begin{document}

\title[The Bouchaud-Anderson model with double-exponential potential]{The Bouchaud-Anderson model  \\ with double-exponential potential}
\author{S.~Muirhead$^1$}
\address{$^1$Department of Mathematics, King's College London}
\email{stephen.muirhead@kcl.ac.uk}
\author{ R.~Pymar$^2$}
\address{$^2$Department of Economics, Mathematics and Statistics, Birkbeck, University of London}
\email{r.pymar@bbk.ac.uk}
\author{R.S.~dos~Santos$^3$}
\address{$^3$Weierstrass Institute, Berlin}
\email{soares@wias-berlin.de}
\subjclass[2010]{60H25 (Primary) 82C44 (Secondary)}
\keywords{Parabolic Anderson model, Bouchaud trap model, intermittency, localisation}
\thanks{{\bf Acknowledgements.} The first author was supported by the Engineering \& Physical Sciences Research Council (EPSRC) Fellowship
EP/M002896/1 held by Dmitry Belyaev. The second author was partially supported by the EPSRC Grant EP/M027694/1 held by Codina Cotar. The third author was supported by the German DFG project KO 2205/13 and by the DFG Research Unit FOR2402. 
The third author thanks UCL and Birkbeck for their hospitality during two research visits.}

\begin{abstract}
The Bouchaud-Anderson model (BAM) is a generalisation of the parabolic Anderson model (PAM) in which the driving simple random walk is replaced by a random walk in an inhomogeneous trapping landscape; the BAM reduces to the PAM in the case of constant traps. In this paper we study the BAM with double-exponential potential. We prove the complete localisation of the model whenever the distribution of the traps is unbounded. This may be contrasted with the case of constant traps (i.e.\ the PAM), for which it is known that complete localisation fails. 
This shows that the presence of an inhomogeneous trapping landscape may cause a system of branching particles to exhibit qualitatively distinct concentration behaviour.
\end{abstract}
\maketitle

\section{Introduction}
\label{sec:intro}

The Bouchaud-Anderson model (BAM) is the Cauchy problem
\begin{align} 
\label{e:BAM}
\frac{\partial u(t, z)}{\partial t} &= (\Delta \sigma^{-1} + \xi) u(t, z) , & (t, z) \in (0, \infty)  \times \ZZ^d, \\
\nonumber u(0, z) &= \id_{\{0\}}(z) , & z \in \ZZ^d,
\end{align}
where $\xi = (\xi(z))_{z \in \ZZ^d}$ is a collection of random variables known as the \textit{potential field}, $\sigma = (\sigma(z))_{z \in \ZZ^d}$ is a collection of strictly-positive random variables known as the \textit{trapping landscape}, $\Delta$ is the \textit{discrete Laplacian} defined by $(\Delta f)(z) = \sum_{|y - z| = 1} (2d)^{-1} (f(y) - f(z)) $ (where $|\cdot |$ denotes the $\ell^1$-norm), 
and $\Delta \sigma^{-1} f = \Delta(\sigma^{-1}f)$.
The potential field and trapping landscape are taken i.i.d.\ in space and independent of each other; 
we denote by $\PPP$ their joint law and by $\EEE$ the corresponding expectation.
Here and in the following we use the notation $\sigma^{u}$ for the field $x \mapsto \sigma(x)^u$,
where $u \in \RR$.

The BAM was introduced in \cite{MP} as a combination of two 
well-known models: the parabolic Anderson model (PAM), to which it reduces when $\sigma \equiv 1$,
and the Bouchaud trap model (BTM), which is the continuous-time Markov chain on $\ZZ^d$ with transition rates
\begin{align}
\label{e:BTMjumps}
w_{z \to y} := \begin{cases}
(2d  \sigma(z))^{-1},   & \text{if } |y-z|=1  , \\
0, &  \text{otherwise},
\end{cases}
\end{align}
i.e., with generator $(\Delta \sigma^{-1})^T$.
Indeed, under mild conditions on $\xi$, $\sigma$, the Cauchy problem \eqref{e:BAM} admits a unique non-negative solution
given by the Feynman-Kac representation
\begin{align}
\label{e:fk}
u(t, z) = \EE_{0} \left[ \exp \left\{ \int_0^t \xi(X_s) \dd s)  \right\} \id {\{X_t = z\}} \right] ,
\end{align}
where $(X_s)_{s \ge 0}$ is the BTM and $\EE_z$ denotes its law started at $z \in \ZZ^d$.
Another interpretation for $u(t,z)$ is as the expected number of particles at the space-time point $(t,z)$ 
in a system of branching particles starting from a single particle at the origin and evolving through:

\begin{itemize}
\item \textit{Branching:} A particle at site $z$ branches at rate $\xi(z)^+$ or is removed at rate $\xi(z)^-$;
\item \textit{Trapping:} Each particle moves as an independent BTM.
\end{itemize}
This system has connections to applications such as population dynamics and chemical kinetics.
For more information, we refer the reader to \cite{GM90, Konig16} (PAM)
and \cite{BenArous06, Bouchaud92} (BTM).

Like the PAM, the BAM exhibits complex \textit{intermittency} phenomena,
meaning that the model may develop pronounced spatial inhomogeneities over time.
The strength of this effect depends naturally on the tails of $\xi(0)$ and $\sigma(0)$.
In the most extreme cases, intermittency manifests as \textit{complete localisation}, 
in which there exists a $\ZZ^d$-valued process $Z_t$ such that
\[ \lim_{t \to \infty} \frac{u(t, Z_t)}{U(t)} = 1 \quad \text{in probability,} \]
where $U(t) := \sum_{z \in \ZZ^d} u(t, z)$ denotes the total mass of the solution. 
In less extreme cases, a larger number of sites may be needed; 
see Section~\ref{s:loc} for further discussion.

In \cite{MP}, the BAM was studied in the case of \textit{Weibull} random environments, i.e.\ when
\begin{align}
\label{e:wei}
\PPP(\xi(0) > x) \approx \ee^{-x^\gamma} \quad \text{and} \quad \PPP(\sigma(0) > x) \approx \ee^{-x^\mu}, \qquad \gamma, \mu \in (0, \infty).  
\end{align}
One of the main results of \cite{MP} is that the BAM completely localises throughout this regime.
This is not very surprising, as complete localisation of the corresponding PAM was already known,
and it is natural to expect that the presence of traps strengthens concentration.

In the present paper, we examine the BAM with \textit{double-exponential} potential, i.e.\ when
\[  \PPP( \xi(0) > x ) \approx \exp \{ - \ee^{x /\varrho} \}, \qquad \varrho \in (0,\infty) .\]
Our interest in this case comes from the fact that complete localisation \textit{fails} in the corresponding PAM \cite{BKS16, GKM07}.
By contrast, here we show that, as soon as $\sigma(0)$ has infinite essential supremum and positive essential infimum,
the BAM completely localises,
i.e., the presence of the trapping landscape qualitatively affects the intermittent behaviour of the solution.
While seemingly surprising, this can be seen as a manifestation of the criticality of double-exponential tails
for intermittency in the PAM, see Section~\ref{s:loc} below.

As in \cite{MP}, we additionally provide information about the structure
of $\xi$ and $\sigma$ around the localisation site $Z_t$.
To motivate these results, consider the following interpretation
of the branching system described above:
each particle is an individual, branching is seen as reproduction,
removal as death, and movement as mutation within a space of phenotypes.
In this context, $\xi(z)$ is interpreted as \emph{fitness} and $\sigma(z)$ as \emph{stability}
of the phenotype $z$.
Similar models were introduced (and analysed, mostly through simulations) in \cite{Brotto16}, 
with the conclusion that, under general conditions, the population should concentrate 
on phenotypes that are both atypically fit and atypically stable.
This prediction, which we call the ``fit and stable hypothesis'',
was first considered rigorously in \cite{MP} for the BAM,
where it is shown to hold in many cases, but not always.
Here we confirm the prediction under our assumptions, 
showing in particular that both $\xi(Z_t)$ and $\sigma(Z_t)$ tend to infinity (in probability) as $t \to \infty$.
More detailed information is available for particular choices of trap distribution.

\subsection{Localisation in the PAM and BAM}
\label{s:loc}

Intermittent phenomena in the PAM have been the object of extensive study for many years.
Earlier approaches \cite{GM90} characterized intermittency in terms of moments of the total mass $U(t)$,
and much effort was devoted to the asymptotic analysis of its moments as well as its almost sure behaviour
\cite{Biskup01, Gartner98, HKM, HMS}. 
In this literature, a heuristic geometric description of intermittency emerged according to which 
the solution should concentrate in a relatively small number of ``islands'' of slowly growing radius
in which the potential field approaches a certain optimal shape.
In particular, the double-exponential family was identified as critical:
for heavier tails, the islands consist of single points, whereas for lighter tails their radius grows to infinity.

The geometric description of intermittency was first rigorously established for the case of double-exponential tails in \cite{GKM07}, 
where an explicit family of islands was provided whose number grows slower than any power of $t$.
Since then, the geometric approach was very successful for heavier-tailed potentials,
e.g.\ Pareto \cite{KLMS, Moerters11}, Weibull \cite{FM, LM12, ST}, stretched-double-exponential \cite{MTh},
which were all shown to completely localise.
More recently, it has been shown in \cite{BKS16} that,
in the double-exponential case, even though there is no complete localisation,
most of the solution is supported in a \emph{single} island of bounded radius.
Corresponding mass concentration results for lighter tails are expected to be harder, and are still open.

Even within the complete localisation universality class of the PAM, different shades of localisation can be distinguished. 
Most emphatically this relates to how neighbouring values of the potential interact in determining the position of the localisation site $Z_t$. In the case of potentials with sub-Gaussian tail decay, one can determine $Z_t$ by maximising a time-dependent functional $\Psi_t(z), z \in \ZZ^d$, that depends on the potential field only through its value \textit{at} the site $z$. In other words there is no interaction between neighbouring values of the potential. As a consequence, the sites neighbouring $Z_t$ all have typical potential values. The situation is very different in the case of potentials with super-Gaussian tail decay,
\[   \PPP( \xi(0) > x ) = \ee^{-x^\gamma}  ,  \quad \gamma \ge 3,\]
where the localisation site $Z_t$ must be identified via a functional that depends on the values of the potential inside balls of radius
\[   \rho_\xi := \floor{ (\gamma - 1)/2 } \vee 0 \]
around each site; the value $\rho_\xi$ is known as the \textit{radius of influence} of the model. In contrast to the previous case, potential values within this distance of $Z_t$ are \textit{atypical}, and in particular are much larger than their typical values.

The study of localisation in the BAM was initiated in \cite{MP}, which analysed the Weibull case in which \eqref{e:wei} holds. 
As mentioned above, the main result of \cite{MP} was that complete localisation occurs throughout this regime. 
Further, it is shown in \cite{MP} that the BAM exhibits subtly distinct complete localisation behaviour depending on the choice of Weibull parameters $(\gamma, \mu)$, as characterized by the radii of influence.
More specifically, in order to identify the localisation site, the potential field and trapping landscape must interact in balls of radius
\[   \rho_\xi := \left\lfloor  \frac{\gamma - 1}{2} \frac{\mu}{\mu+1} \right\rfloor \vee 0 \quad \text{and} \quad  \rho_\sigma := \left\lfloor  \frac{\gamma - 1}{2} \frac{\mu}{\mu+1} + \frac{1}{2} \right\rfloor  \vee 0     \]
respectively. 
As a result, \cite{MP} proved two interesting and unexpected addendums to the ``fit and stable hypothesis'' in the Weibull case. First, in the case $\gamma < 1$, the strict version of the hypothesis actually \textit{fails} (at least asymptotically); instead the trap value at the localisation site converges in law to its typical distribution. This can be understood as meaning that, if $\gamma < 1$, the benefits of a high branching rate outweighs any additional benefit gained from a deep trap. Second, the ``fit and stable'' profile of the localisation site may extend, in certain circumstances, to sites neighbouring the localisation site, but with an interesting twist. Specifically, if $\rho_\xi \ge 1$ then sites neighbouring the localisation site will also be atypically fit, but if $\rho_\sigma \ge 1$ the sites neighbouring the localisation site will also be atypically \textit{unstable} (so as to more quickly jump back to the localisation site).

To complete this section, we mention recent work \cite{OR, OR2, OR3} that establishes 
the complete localisation of the branching system itself
(as described above) in the case of Pareto potentials and $\sigma \equiv 1$ (i.e., corresponding to the PAM). 
Although the branching system lacks many of the appealing (and simplifying) features of the BAM, it might be hoped that the techniques developed in these papers could apply also in the case of inhomogeneous trapping landscapes.

\subsection{Our results}
\label{s:results}

We now present formally our main results, which consist of four theorems.
The first two apply to a general class of potential
and trap distributions (as described next),
and establish complete localisation of the model, 
a weak limit for the localisation site $Z_t$
as well as some qualitative properties of $\xi$ and $\sigma$ around $Z_t$.
The last two concern a specific class of traps
that leads to finite radii of influence, namely: log-Weibull.
In this case, we obtain the detailed profile
of $\xi$ and $\sigma$ around $Z_t$
and prove a form of optimality of the radii of influence.

\subsubsection{Assumptions}

We make the following general assumptions throughout the paper:

\begin{enumerate}[label={(A.\arabic*}),ref={(A.\arabic*)}]
\item\label{a:1} There exists a $\varrho > 0$ such that the function 
\begin{align*}
  F(r) := \ln ( - \ln \PPP( \xi(0) > r ) ) , \quad r > \rm{ess inf} \, \xi(0) 
  \end{align*}
is eventually differentiable as $r \to \infty$, and moreover satisfies
\[   \lim_{r \to \infty} F'(r) =  \frac{1}{\varrho} .\]
\item \label{a:2} The trap distribution $\sigma(0)$ is unbounded, i.e.,
\[ \rm{ess sup} \, \sigma(0) = \infty.\]
\end{enumerate}

Condition \ref{a:1} is the same as \cite[Assumption 2.1]{BKS16}, and slightly stronger than \cite[Condition (F)]{Gartner98}. It ensures that the tail of $\xi(0)$ lies in the vicinity of a double-exponential distribution, and guarantees a certain amount of regularity in addition to bounds on the tail decay. This condition holds, for instance, in the case of \textit{exact} double-exponential tail decay
\[  \PPP( \xi(0) > x ) = \exp \{ - \ee^{x /\varrho} \} , \quad x \in \RR .\]

To avoid certain technicalities, we also make the following ellipticity assumption:
\begin{enumerate}[label={(E}),ref={(E)}]
\item    The trap distribution $\sigma(0)$ is bounded away from zero, i.e.\ 
\label{a:3} \[ \delta_\sigma  := \rm{ess inf} \, \sigma(0) > 0 . \]
\end{enumerate}

Assumption \ref{a:3} prevents pathological behaviour caused by excessively ``quick sites'', and is used extensively throughout the proof. This condition can likely be weakened, although we lack a firm understanding of the extent to which this would be possible. 

Finally, we only consider $d \ge 2$. 
This avoids possible screening effects of deep traps and very negative potential values (cf.\ \cite{Biskup01}), 
and is used only to bound from below the total mass of the solution. 
Note that we do not, as has been standard in the analysis of localisation in the PAM, 
impose any lower tail restriction on $\xi(0)$, relying instead on percolation arguments.
To our knowledge, our approach is novel and could be used to remove lower-tail assumptions that have appeared in previous work (e.g.\ \cite[Assumption 2.2]{BKS16}).
Although we do not show it, in the case $d = 1$ our arguments go through as long as $\sigma(0)$ and $\xi(0)$ decay sufficiently quickly at infinity and negative infinity respectively (along with some regularity in this decay);
see e.g.\ conditions~(c) and (d) in \cite[Assumption 1.6]{MP}, and \cite[Assumption 2.2]{BKS16}.

\subsubsection{General results}

Our first result establishes the complete localisation of the model. As in \cite{BKS16, MP}, the localisation site may be defined as the maximiser of a certain time-dependent functional $\Psi_t$ (described in detail in Section~\ref{s:outline}), allowing in particular the identification of its limiting distribution. Here and in the sequel we make use of the abbreviations $\ln_2 t := \ln \ln (t \vee \ee)$ and $\ln_3 t := \ln \ln \ln (t \vee \ee^\ee)$.

\begin{theorem}[Complete localisation]
\label{t:gen1}
There exists a $\ZZ^d$-valued process $Z_t$ such that
\begin{align}
\label{e:main1}
\lim_{t \to \infty} \frac{u(t, Z_t)}{\sum_{z \in \ZZ^d} u(t, z)}  =  1  \quad \text{in probability.}   
\end{align}
Moreover, as $t \to \infty$,
\begin{align}
\label{e:main2}
\frac{d}{\varrho} \frac{(\ln t) \ln_3 t }{t} \, Z_t  \Rightarrow   Z_\infty \quad \text{in law,}
\end{align}
where $Z_\infty$ is a random vector in $\RR^d$ with i.i.d.\ Cartesian coordinates, 
each Laplace-distributed with location $0$ and scale $1$ 
(i.e., with density $x \mapsto \tfrac12 \mathrm{e}^{-|x|}$ with respect to Lebesgue measure).
\end{theorem}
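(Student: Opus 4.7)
The plan is to follow the ``single-island'' variational strategy developed for the PAM in \cite{BKS16, GKM07} and for the BAM with Weibull tails in \cite{MP}, adapting it to the interplay of double-exponential $\xi$ and unbounded $\sigma$. First I would fix a macrobox $V_t \subset \ZZ^d$ of side length slightly larger than the expected scale $t/(\ln t\,\ln_3 t)$ of $Z_t$, large enough to contain $Z_t$ with probability $1-o(1)$ and small enough that $\max_{V_t}\xi$ is of order $\varrho\,\ln_2 t$ by \ref{a:1}. On $V_t$ I would introduce a time-dependent functional $\Psi_t(z)$ whose leading behaviour is of the form
\[
\Psi_t(z) \;=\; \xi(z) \;+\; \tfrac{1}{t}\ln\sigma(z) \;-\; \tfrac{|z|}{t}\,\chi_t,
\]
where $\chi_t \sim \ln_2 t$ is the LDP cost per unit $\ell^1$-displacement for the BTM to reach $z$ by time $t$, the $\ln\sigma(z)/t$ term captures the additional mass contributed by a deep trap, and $\xi(z)$ is the exponential growth rate while sitting at $z$. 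The localisation site is then defined as $Z_t \in \argmax_{V_t}\Psi_t$ with any deterministic tie-breaker.

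The core technical step is a \emph{gap property}: with high probability $Z_t$ is unique and $\Psi_t(Z_t) - \max_{z \ne Z_t}\Psi_t(z) \ge g_t$ for some $g_t\to\infty$. This property \emph{fails} in the PAM case, because the double-exponential tails of $\xi$ place many neighbouring sites on essentially the same scale --- precisely the obstruction to complete localisation identified in \cite{BKS16, GKM07}. The decisive new input is \ref{a:2}: since $\sigma(0)$ is unbounded, the extra randomness contributed by $\tfrac{1}{t}\ln\sigma(z)$ breaks these degeneracies. Making this rigorous requires an extreme-value analysis of $\Psi_t$ at the level of small balls around candidate sites (to handle the non-local interaction of nearby $\xi$-values via the shape-theorem description of \cite{GKM07}), and then a cancellation argument showing that the joint maximum over such a ball is effectively attained at the central site. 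As a by-product one obtains $\xi(Z_t),\sigma(Z_t)\to\infty$ in probability, confirming the ``fit and stable'' prediction.

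Given a divergent gap, \eqref{e:main1} follows by matching upper and lower bounds. For the lower bound $u(t,Z_t)\ge \exp\{t\,\Psi_t(Z_t)(1+o(1))\}$ I would feed \eqref{e:fk} with the path that travels from $0$ to $Z_t$ along a near-geodesic in time $o(t)$ --- using \ref{a:3} to bound transition rates away from zero --- and then sits at $Z_t$ for the remaining time. For the upper bound on $\sum_{z \ne Z_t}u(t,z)$ I would use the standard $\ell^\infty$-semigroup/spectral bound for $\Delta\sigma^{-1}+\xi$ restricted to $V_t\setminus\{Z_t\}$: its principal Dirichlet eigenvalue is controlled by the restricted maximum of $\Psi_t$, hence by $\Psi_t(Z_t)-g_t$, so the off-$Z_t$ contribution is negligible against the lower bound. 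Because no lower-tail condition on $\xi$ is imposed, the usual screening bound must be replaced by a percolation argument (valid for $d\ge 2$) showing that the bulk of $V_t$ remains connected through sites with moderate $\xi$-values.

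The weak limit \eqref{e:main2} should follow from a point-process convergence argument. After centring, \ref{a:1} implies that the top values of $\xi$ over $V_t$ converge to a Poisson point process on $\RR$ with Gumbel intensity at scale $\varrho$, while the cost term $\chi_t|z|/t$ provides a deterministic $\ell^1$-penalty. The argmax of the resulting space-mark PPP on $\RR^d$, after rescaling by $d\,(\ln t)(\ln_3 t)/(\varrho t)$, has independent Laplace$(0,1)$ coordinates, yielding \eqref{e:main2}. The principal obstacle throughout is the gap step: unlike in the sub-Gaussian PAM or the Weibull BAM of \cite{MP}, the double-exponential $\xi$-tail is heavy enough to render near-optimal $\xi$-configurations intrinsically non-local, and one must show that the additional single-site randomness contributed by $\sigma$ is nonetheless strong enough to isolate a single winner.
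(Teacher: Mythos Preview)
Your proposal misidentifies both the functional and the mechanism. The paper's functional is $\Psi_t(z)=\lambda_{R_{L_t}}(z)-\tfrac{\ln_3 t}{t}|z|$, where $\lambda_{R_{L_t}}(z)$ is the principal Dirichlet eigenvalue of $\Delta\sigma^{-1}+\xi$ in a mesoscopic ball; the trap enters via $\lambda_r(z)\ge\xi(z)-\sigma(z)^{-1}$, not through a vanishing $\tfrac{1}{t}\ln\sigma(z)$ term. The cost per step is $\ln_3 t$, not $\ln_2 t$ (each factor in the path expansion is of order $(\varrho\ln_2 t)^{-1}$, contributing $\ln_3 t$ to the logarithm), and this is exactly what produces the scale $r_t\sim\varrho t/(d\ln t\,\ln_3 t)$ in \eqref{e:main2}. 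Most importantly, the gap $\Psi_t^{(1)}-\Psi_t^{(2)}$ is of order $d_t=\varrho/(d\ln t)\to 0$, not divergent; complete localisation does \emph{not} follow from a large gap in $\Psi_t$, and your upper-bound step (remove $Z_t$ and invoke a spectral bound on $V_t\setminus\{Z_t\}$) cannot work since deleting one site does not lower the principal eigenvalue by the amount you need.

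The actual mechanism is as follows. Because $\sigma(0)$ is unbounded, $\max_z\{\xi(z)-\sigma(z)^{-1}\}=a_t+o(1)$, so the relevant eigenvalue---and hence $\xi(Z_t)$---lies within $o(1)$ of the global maximum $a_t$. Separation of high exceedances of a double-exponential field then forces the potential around $Z_t$ to be sharply peaked (a gap of order $1$ to every site in a growing ball), which makes the principal Dirichlet eigenfunction $\phi_{D_t}$ concentrate on the single site $Z_t$: in fact $\sigma(Z_t)\sum_{y\ne Z_t}\phi_{D_t}(y)\to 0$ in probability. Localisation of $u(t,\cdot)$ is then inherited from $\phi_{D_t}$ via a solution-to-eigenfunction comparison lemma, after first restricting the Feynman--Kac formula (via a path-expansion bound) to paths that hit $Z_t$ and stay in a tight ball. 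In the PAM this chain breaks because the optimal eigenvalue sits a \emph{constant} below $a_t$, so the relevant configuration is an island rather than a spike: unbounded traps shift \emph{which region of the lattice dominates}, they do not merely break ties among comparable candidates of a fixed functional.
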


Our second result describes the local profile of $\xi$ and $\sigma$ near $Z_t$, 
in particular showing that the localisation site has an atypically large potential value and trap depth; 
this confirms the ``fit and stable hypothesis'' in our setting. 
Recall \ref{a:1} and define, for $t > 0$, 
\begin{equation}
\label{e:a}  
a_t := \inf\{ u > 1 \colon\, \ee^{F(u)} \ge d \ln t \} \wedge 1,
\end{equation}
where by convention $\inf \varnothing = \infty$.
 Using assumption~\ref{a:1}, it is straightforward to show that
\[ \lim_{t \to \infty} a_t  /  \ln_2 t = \varrho . \]

In the sequel, we say that a sequence of random variables $(X_t)_{t \ge 0}$ ``asymptotically stochastically dominates'' another random variable $Y$ if, for each $x \in \mathbb{R}$,
\[ \liminf_{t \to \infty}    \mathbb{P}( X_t > x ) \geq  \mathbb{P}(Y > x)  ,  \]
and say that $(X_t)_{t \ge 0}$ ``is asymptotically stochastically dominated'' by $Y$ if, for each $x \in \mathbb{R}$,
\[ \limsup_{t \to \infty}    \mathbb{P}( X_t > x ) \leq  \mathbb{P}(Y > x)  . \]

\begin{theorem}[Local profile of the random environments]
\label{t:gen2}
There exists a process $Z_t$ satisfying~\eqref{e:main1}--\eqref{e:main2} such that, as $t \to \infty$, the following hold:
\begin{enumerate}
\item (Potential at localisation site)
\[ |\xi(Z_t) -  a_t| \to 0 \quad \text{in probability;} \]
\item (Potential at neighbouring sites)
For each $y \in \ZZ^d\setminus\{0\}$, the sequence $\left(\xi(Z_t + y)\right)_{t \ge 0}$ asymptotically stochastically dominates $\xi(0)$, and
\[  \xi(Z_t) -  \xi(Z_t + y) \to \infty \quad \text{in probability;} \]
\item  (Trap at localisation site)
\[  \sigma(Z_t) \to \infty  \quad \text{in probability;}   \]
\item (Traps at neighbouring sites)
For each $y \in \ZZ^d \setminus \{0\} $, the sequence $\left(\sigma(Z_t + y)\right)_{t \ge 0}$ is asymptotically stochastically dominated by $\sigma(0)$. 
\end{enumerate}
\end{theorem}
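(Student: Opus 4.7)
The approach is to exploit the description of $Z_t$ from Theorem~\ref{t:gen1} as the maximiser of a local functional $\Psi_t$ built from the random environment, which can be thought of as closely related to the principal eigenvalue of $\Delta\sigma^{-1}+\xi$ restricted to a small neighbourhood of each site; this functional rewards large $\xi(z)$, rewards large $\sigma(z)$, rewards large $\xi$ at neighbours of $z$, and penalises large $\sigma$ at neighbours of $z$ as well as large $|z|$. All four statements will follow by comparing $\Psi_t(Z_t)$ with $\Psi_t$ at carefully chosen competitors, combined with the extreme-value asymptotics of $\xi$ on the relevant balls.

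For Part~(1), the defining relation $\mathrm{e}^{F(a_t)}=d\ln t$ in~\eqref{e:a} together with Assumption~\ref{a:1} implies that $a_t$ is precisely the scale of the maximum of $\xi$ over the natural competition region for $Z_t$. Since the $\sigma$- and $|z|$-dependent corrections in $\Psi_t$ are of smaller order than the typical spacing between consecutive top $\xi$-values, maximality of $Z_t$ pins $\xi(Z_t)$ to $a_t+o(1)$ in probability. Part~(3) then follows by a contradiction argument: Assumption~\ref{a:2} produces sites in the competition region with arbitrarily large $\sigma$-values, which combined (independently) with the abundance of near-optimal $\xi$-sites identified in Part~(1) yields candidates $z$ with both $\xi(z)\approx a_t$ and $\sigma(z)\to\infty$, so unless $\sigma(Z_t)\to\infty$ the $\Psi_t$-value of such a candidate would strictly exceed $\Psi_t(Z_t)$.

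For Parts~(2) and~(4), the key step is to isolate the marginal environment at $Z_t+y$ in the inequality $\Psi_t(Z_t)\ge\Psi_t(Z_t+y)$, which holds by the maximality of $Z_t$. For Part~(4) this produces an upper bound on $\sigma(Z_t+y)$ in terms of quantities that do not involve $\sigma(Z_t+y)$ itself; resampling just this marginal, using the i.i.d.\ structure, yields asymptotic stochastic domination by $\sigma(0)$. For Part~(2) the analogous resampling shows that $\xi(Z_t+y)$ asymptotically stochastically dominates $\xi(0)$, while the divergence $\xi(Z_t)-\xi(Z_t+y)\to\infty$ is extracted from quantitative ``spectral gap'' estimates comparing $\Psi_t(Z_t)$ with its second-best competitor.

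The main obstacle I anticipate is precisely this quantitative gap: one needs the separation between the top value of $\Psi_t$ and its competitors in the competition region to be large enough to absorb the fluctuations introduced by resampling a single coordinate of $\xi$ or $\sigma$, uniformly in the choice of neighbour $y$. This is the same mechanism that drives the complete localisation in Theorem~\ref{t:gen1}, and controlling it while accounting for how the local structure of $\Psi_t$ couples $\xi$ and $\sigma$ across neighbouring sites is where the bulk of the technical work lies.
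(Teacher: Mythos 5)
There are genuine gaps, concentrated in your justifications of (1) and of the divergence in (2), and in the mechanics of the resampling step for (2) and (4).

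For (1), your stated mechanism is backwards: the ``$\sigma$-dependent correction'' in $\Psi_t$, i.e.\ the gap between $\lambda_{R_{L_t}}(z)$ and $\xi(z)$, can be as large as a constant of order $\delta_\sigma^{-1}$, whereas the spacing between consecutive top order statistics of $\xi$ is $d_t \asymp 1/\ln t \to 0$; the correction is \emph{much larger} than the spacing, not smaller. Maximality of $Z_t$ alone therefore does not pin $\xi(Z_t)$ to $a_t+o(1)$ --- indeed in the PAM ($\sigma\equiv 1$) the maximiser of the same kind of functional has $\xi(Z_t)$ at a \emph{constant} gap below $a_t$, so any argument not using unboundedness of $\sigma$ proves too much. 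What makes (1) true is precisely the competitor construction you sketch under (3): sites with $\xi$ within $\varepsilon$ of $a_t$ \emph{and} $\sigma$ arbitrarily large force $\max_z\Psi_t(z)\ge a_t-o(1)$ (taking competitors at distance $O(r_t)$ so the penalty $\tfrac{\ln_3 t}{t}|z|$ is $o(1)$; over the whole macrobox the penalty can reach $\ln_2 t\,\ln_3 t\gg a_t$). Then (1) and (3) follow \emph{jointly} from $\Psi_t(Z_t)\le\lambda_{R_{L_t}}(Z_t)\le\xi(Z_t)\le\max_{B_{L_t}}\xi=a_t+o(1)$ and from the bound $\lambda_{R_{L_t}}(Z_t)\le\xi(Z_t)-\tfrac12\sigma(Z_t)^{-1}$ as in \eqref{e:eigbound}, which itself needs the separation of high exceedances (Proposition~\ref{p:seppot}, Corollary~\ref{c:seppot}) fed into the path expansion \eqref{e:pathexpeig}. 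As written, (1) is unjustified and (3) partly leans on (1). Similarly, the divergence $\xi(Z_t)-\xi(Z_t+y)\to\infty$ cannot come from the gap $\Psi^{\ssst(1)}_t-\Psi^{\ssst(2)}_t$: a neighbour with $\xi(Z_t+y)<a_{L_t}-\delta$ is not even a competitor ($\Psi_t=-\infty$ there), so no comparison of $\Psi_t$-values constrains it below level $a_{L_t}-\delta$, and the gap is only $O(d_t)$ in any case. The paper gets the divergence from the purely environmental Borel--Cantelli estimate of Proposition~\ref{p:seppot} (separation at an \emph{arbitrary} level $c$) combined with $\xi(Z_t)\ge a_t-o(1)$.

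For the stochastic-domination claims in (2) and (4), the comparison you propose, $\Psi_t(Z_t)\ge\Psi_t(Z_t+y)$, is typically vacuous for the same reason ($Z_t+y\notin\Pi_{L_t,\delta}$), and the real obstacle is not absorbing resampling fluctuations by a quantitative gap. With the actual single field, the event $\{z=Z_t\}$ is \emph{not} monotone in the coordinate $\xi(z+y)$: increasing it also increases $\lambda_{R_{L_t}}(x)$ for every competitor $x$ with $z+y\in B_{R_{L_t}}(x)$ and can even promote $z+y$ itself into $\Pi_{L_t,\delta}$ as a new competitor, so a single-coordinate resampling bound does not follow from maximality. The paper circumvents this by coupling the field around each high exceedance to independent copies $(\xi^z,\sigma^z)$ (Lemma~\ref{l:coupling}, justified by the separation property), after which the relevant eigenvalue and hence the argmax event are genuinely monotone in the single coordinate --- non-decreasing in $\xi^z(z+y)$, non-increasing in $\sigma^z(z+y)$, the latter using the truncation to guarantee the eigenvalue exceeds the potential there --- and then concludes by positive correlation of monotone functions of one real variable (Harris/FKG); no gap estimate enters. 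Your plan needs either this coupling-plus-monotonicity device or some substitute that addresses the non-monotonicity; as proposed, the resampling step would fail, and the gap $O(d_t)$ is in any case too small to absorb the $O(1/\ln_2 t)$ changes that resampling a neighbouring coordinate can induce in competitors' eigenvalues.
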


We note that the asymptotic stochastic domination in items $(2)$ and $(4)$ is not necessarily strict; indeed, the discrepancy between $\xi(Z_t + y)$ and $\xi(0)$ (respectively $\sigma(Z_t + y)$ and $\sigma(0)$), will vanish in the limit if $y$ lies outside the radius of influence of the potential field (respectively trapping landscape), see Theorem \ref{t:spec2} below. 
We also mention that, similarly as in \cite{BKS16, MP}, 
ageing results for both the solution of \eqref{e:BAM} and the localisation site could be obtained;
in the interest of brevity, we do not pursue this here.
We also believe that a \emph{two-cities theorem}, i.e.,
almost-sure localisation in two sites as obtained in \cite{KLMS} for the PAM with Pareto potential,
would hold in our setting as well, but a proof of this would likely require much more work.

In light of Theorem \ref{t:gen1},
one may ask whether a similar phenomenon arises for potentials with lighter tails. 
For such potentials, the PAM has even weaker concentration, 
so we do not expect all unbounded trap distributions to induce complete localisation. 
Instead, we suspect that this happens when the trap distribution has sufficiently heavy tails,
at least as long as the potential is unbounded;  
see Section~\ref{ss:heuristics} for heuristic justification of this.

\subsubsection{Refined results for special cases of trap distribution}
\label{sss:refined}

Theorems \ref{t:gen1} and \ref{t:gen2} are the limit of our results in full generality, 
but we can get more refined results by specifying exactly the trap distribution. 
We restrict our attention to the case in which the trapping landscape has a \textit{log-Weibull} upper-tail decay:
\begin{enumerate}[label={(LW)},ref={(LW)}]
\item \label{lw} There exists a $\mu > 1$ such that
\begin{align*}
- \ln \PPP ( \sigma(0) > x ) &=  (\ln x)^\mu, \quad x \ge 1 . 
\end{align*}
\end{enumerate}
As we will see, this assumption is natural and indeed 
one of the most interesting for the trap distribution, 
as it leads to finite radii of influence. 
The equality above is not crucial and could be relaxed to an approximate one, 
but the precise choice makes certain computations easier. 
For the same reason, we will also replace assumption \ref{a:1} 
with the condition that the potential distribution is \textit{exactly} double-exponential:
\begin{enumerate}[label={(DE)},ref={(DE)}]
\item 
\label{de} There exists a $\varrho>0$ such that
\begin{align*}  \PPP( \xi(0) > x ) = \exp \{- \ee^{x /\varrho} \} , \quad x \in \RR . \end{align*}
\end{enumerate}

\noindent 
Under assumptions \ref{lw} and \ref{de} we give exact values for the radii of influence of the model 
as well as a more detailed description of the local profile of the random environments. 

We first make the concept of radii of influence more precise. 
For $r \ge 0$ and $z \in \ZZ^d$, let  
\[ B_r(z) := \{x \in \ZZ^d : \, | x-z | \le  r \}  \]
denote the $\ell^1$-ball with radius $r$ around $z$. In the sequel we abbreviate $B_r := B_r(0)$. 

\begin{definition}\label{d:r1r2local}
Fix $k_1, k_2 \in \NN$ and let $\psi_t : \ZZ^d \to \RR$ be a family of random functionals
indexed by $t > 0$ such that, for each $z \in \ZZ^d$ and $t>0$, $\psi_t(z)$ is measurable with respect to~$\xi, \sigma$.
We say that $\psi_t$ is \emph{$(k_1, k_2)$-local} if, for any $t > 0$ and any $z \in \ZZ^d$,
the random variable $\psi_t(z)$ depends on $\xi$ only through its values in $B_{k_1}(z)$,
and on $\sigma$ only through its values in $B_{k_2}(z)$.
\end{definition}

For $\psi : \ZZ^d \to \RR$, we write
\begin{equation}\label{e:defargmax}
\argmax \psi := \left\{z \in \ZZ^d \colon\, \psi(z) = \sup_{x \in \ZZ^d} \psi(x) \right\}.
\end{equation}

Define the \emph{radii of influence}
\begin{equation}\label{e:defradiiinf}
\rho_\xi  = \rho_\xi(\mu) := \lfloor(\mu-1)/2 \rfloor  \quad \text{and} \quad \rho_\sigma = \rho_\sigma(\mu) :=  \lfloor \mu/2 \rfloor.
\end{equation}

\begin{theorem}[Radii of influence]
\label{t:spec1}
Assume that \ref{lw} and \ref{de} hold, and let $Z_t$ be the process from Theorems~\ref{t:gen1} and~\ref{t:gen2}. 
There exists a $(\rho_\xi, \rho_\sigma)$-local functional $\psi_t$ such that
\begin{align}
\label{e:spec1}
\lim_{t \to \infty} \PPP \left(  \argmax \psi_t  = \{ Z_t \}  \right) = 1.
 \end{align}
On the other hand, if $k_1 < \rho_\xi$ or $k_2 < \rho_\sigma$ then no $(k_1, k_2)$-local functional satisfies \eqref{e:spec1}.
\end{theorem}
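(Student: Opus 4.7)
My plan is to prove the theorem in two parts: first the existence of a suitable $(\rho_\xi,\rho_\sigma)$-local functional $\psi_t$, then the impossibility of reducing either radius. Both parts hinge on the fact that $Z_t$ is the argmax of the global functional $\Psi_t$ from Section~\ref{s:outline}, which is essentially the principal eigenvalue of $\Delta\sigma^{-1}+\xi$ in a fixed (but large) neighbourhood of each candidate site.

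For the existence part, I would fix $R$ much larger than $\max(\rho_\xi,\rho_\sigma)$ and consider the principal Dirichlet eigenvalue $\lambda_R(z)$ of $\Delta\sigma^{-1}+\xi$ on $B_R(z)$. A Rayleigh--Schr\"odinger-type expansion around the zero-order value $\xi(z)$ gives
\[
\lambda_R(z) = \xi(z) + \sum_{k\ge 1} C_k(z),
\]
where each $C_k(z)$ is a sum over closed walks of length $2k$ from $z$ and hence has $\xi$- and $\sigma$-dependence confined to a ball of radius $k$ (with an extra shell for $\sigma$ in certain orders, reflecting that $\Delta\sigma^{-1}$ places $\sigma^{-1}$-factors on jumped-to sites). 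I would then \emph{define} $\psi_t(z)$ as the truncation of this series retaining precisely those terms whose $\xi$-dependence lies in $B_{\rho_\xi}(z)$ and whose $\sigma$-dependence lies in $B_{\rho_\sigma}(z)$; by construction this makes $\psi_t$ a $(\rho_\xi,\rho_\sigma)$-local functional. The match $\argmax \psi_t = \{Z_t\}$ then reduces to two quantitative inputs: (i) a lower bound $\Psi_t(Z_t) - \max_{z\neq Z_t}\Psi_t(z) \ge \eta_t$ with probability $\to 1$, for some explicit gap scale $\eta_t$; and (ii) an upper bound $|\psi_t(z) - \Psi_t(z)| = o(\eta_t)$ uniformly over the polylogarithmically many candidate sites. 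The definitions $\rho_\xi=\lfloor(\mu-1)/2\rfloor$ and $\rho_\sigma=\lfloor\mu/2\rfloor$ are calibrated precisely so that, under \ref{lw}--\ref{de}, the discarded $C_k$-terms fall below $\eta_t$.

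For the impossibility part, let $\psi_t$ be a hypothetical $(k_1,k_2)$-local functional satisfying~\eqref{e:spec1}, with $k_1<\rho_\xi$ (the case $k_2<\rho_\sigma$ is analogous, swapping the roles of the $\xi$- and $\sigma$-shells). The plan is a resampling argument. On an event of asymptotic probability bounded away from zero, I would exhibit a pair of sites $(z_1,z_2)$ with the following properties: (a) after a suitable translation the restrictions of $(\xi,\sigma)$ to $B_{k_1}(z_i)\cup B_{k_2}(z_i)$ agree, which forces $\psi_t(z_1)=\psi_t(z_2)$; yet (b) the $\xi$-values on the $\rho_\xi$-shell around $z_1$ and $z_2$ differ in a way that moves $\Psi_t$ by more than $\eta_t$ so that $Z_t$ is one of $z_1,z_2$ but not both. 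Since $\psi_t$ cannot distinguish these sites, $\argmax \psi_t$ cannot coincide with $\{Z_t\}$ with positive probability, contradicting~\eqref{e:spec1}.

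The main obstacle in both halves is the tight control of $\eta_t$ relative to the truncation errors at each order $k$: the positive direction requires $\eta_t$ to dominate the error at level $(\rho_\xi,\rho_\sigma)$, while the negative direction requires the \emph{reverse} at any strictly smaller radii. Both inequalities should come from a point-process description of the top of $\Psi_t$ implicit in the proofs of Theorems~\ref{t:gen1}--\ref{t:gen2}, combined with explicit size estimates for $C_k(z)$ at typical candidate sites under \ref{lw} and \ref{de}. Establishing the sharp crossover---so that $C_k$ passes through the gap scale exactly as $k$ reaches $\rho_\xi$ (for $\xi$) and $\rho_\sigma$ (for $\sigma$)---is the delicate quantitative heart of the proof, and is precisely where the arithmetic of the floor functions in \eqref{e:defradiiinf} enters.
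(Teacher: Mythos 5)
Your positive direction has the right flavour (truncate the eigenvalue to a local quantity and compare against the spectral gap of $\Psi_t$), but the quantitative scheme you propose does not close. The gap at the top of $\Psi_t$ is of order $d_t\asymp 1/\ln t$ (Proposition~\ref{p:orderstat}, Corollary~\ref{c:e}), while at a \emph{typical} candidate site $z\in\Pi_{L_t,\delta}$ the trap $\sigma(z)$ is of order one and the terms you discard (closed walks leaving $B_{\rho_\xi}(z)$, resp.\ $B_{\rho_\sigma}(z)$) are of size roughly $\sigma(z)^{-1}(\ln_2 t)^{-(2\rho_\xi+1)}$, i.e.\ only polylogarithmically small and hence far larger than $1/\ln t$; so your step (ii), uniform closeness $|\psi_t(z)-\Psi_t(z)|=o(\eta_t)$ over candidate sites, is false, and no finite radius is ``calibrated'' by such a site-by-site comparison (also, $|\Pi_{L_t,\delta}|$ is polynomial in $t$, not polylogarithmic). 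What makes the floors in \eqref{e:defradiiinf} come out is the atypical profile \emph{at $Z_t$ itself}: $\sigma(Z_t)\sim q_{\sigma,t}\asymp \ln t/(\ln_2 t)^{\mu-1}$, traps near $\delta_\sigma$ and moderate potential on the intervening shells (Theorem~\ref{t:spec2}, Lemma~\ref{l:profilearoundZt}), which multiplies the truncation error by $\sigma(Z_t)^{-1}$ and pushes it below $d_{r_t}$ exactly when the radii reach $(\rho_\xi,\rho_\sigma)$. The workable argument is therefore one-sided: take $\psi_t$ to be the principal eigenvalue of $\Delta\sigma^{-1}+\xi\id_{B_{\rho_\xi}(z)}$ in $B_{\rho_\sigma}(z)$ (penalised and restricted to $\Pi_{L_t,\delta}$), use monotonicity to get $\psi_t\le\Psi_t$ everywhere, and prove closeness \emph{only at $Z_t$} using the local profile — this is Proposition~\ref{p:zhatz}. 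Note also that a term-by-term truncation of \eqref{e:pathexpeig} is not obviously a well-defined local functional, since $\lambda$ appears implicitly on the right-hand side; the truncated operator avoids this.

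The negative direction has a more basic gap. Your step (a) cannot be realised: $\xi(0)$ and $\sigma(0)$ have densities, so exact agreement of the environments on $B_{k_1}(z_1)\cup B_{k_2}(z_1)$ and around $z_2$ is a probability-zero event, and you assume no regularity of $\psi_t$ (it is merely measurable) that would allow approximate matching; moreover Definition~\ref{d:r1r2local} permits $\psi_t(z)$ to depend explicitly on the site $z$ (the paper's own functional does, through the penalisation $-|z|\ln_3 t/t$), so even identical local environments would not force $\psi_t(z_1)=\psi_t(z_2)$. The paper's route avoids both problems: via the i.i.d.\ coupling of Lemma~\ref{l:coupling}, for \emph{any} $(k_1,k_2)$-local functional the event $\{z=Z^\psi_t\}$ is independent of $(\xi^z(z+y_1),\sigma^z(z+y_2))$ when $|y_1|>k_1$, $|y_2|>k_2$, so $\xi(Z^\psi_t+y)$ is asymptotically distributed as $\xi(0)$ (Proposition~\ref{p:decorrelation}); if \eqref{e:spec1} held with $k_1<\rho_\xi$, then $Z^\psi_t=Z_t$ with high probability, and choosing $|y|=\rho_\xi\ge1$ this typicality contradicts Theorem~\ref{t:spec2}, which makes $\xi(Z_t+y)$ either diverge or converge to $\nu^y_\xi\neq$ the law of $\xi(0)$. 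You would need to replace your resampling construction by a decorrelation argument of this kind.
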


In order to describe the local profile of $\xi$ and $\sigma$, we introduce the following sets of `interface sites',
depending on the parameter $\mu$:
\begin{align*}
\mathcal{F}_\xi :=\begin{cases}
\{y\in\mathbb{Z}^d:\,|y|=\rho_\xi\} &\mbox{if $\mu\in 2 \NN + 1$},\\
\varnothing &\mbox{otherwise,} \end{cases} 
\quad \text{and} \quad 
\mathcal{F}_\sigma :=\begin{cases}
\{ y\in\mathbb{Z}^d:\,|y|=\rho_\sigma\} &\mbox{if  $\mu\in 2\NN$ },\\
\varnothing &\mbox{otherwise}.
\end{cases}
\end{align*}
Note that $\mathcal{F}_\xi$, $\mathcal{F}_\sigma$ are empty unless $\mu \in 2 \NN+1$ or $\mu \in 2 \NN$,
which we call `interface cases'. They correspond to the discontinuity points of the
radii of influence $\rho_\xi$ and $\rho_\sigma$, respectively.

In the interface cases, certain weak limits arise which we describe next.
For $y \in \ZZ^d$, denote by $n(y)$ the number of shortest nearest-neighbour paths in $\ZZ^d$ from the origin to $y$,
and set
\begin{equation}\label{e:defbarc}
\bar{c}(y) := \frac{\mu n(y)^2}{(2d \varrho \delta_\sigma)^{2|y|-1}}.
\end{equation}
Let $\nu^y_\xi$ and $\nu^y_\sigma$ denote probability laws on $\RR$
with densities proportional to $\ee^{\bar{c}(y) x /\varrho} f_\xi(x)$ 
and $\ee^{\bar{c}(y) \delta_\sigma/x} f_\sigma(x)$ respectively,
where $f_\xi$, $f_\sigma$ are the densities of $\xi(0)$ and $\sigma(0)$.

\begin{theorem}[Local profile of the random environments]
\label{t:spec2}
Assume \ref{lw} and \ref{de}.
Then there exists a process $Z_t$ satisfying~\eqref{e:main1}--\eqref{e:main2} and such that, as $t \to \infty$, the following hold:
\begin{enumerate}
\item (Local profile of the potential field)
\begin{enumerate}
\item (Potential within radius of influence) For each $y \in B_{\rho_\xi} \setminus ( \{0\} \cup \mathcal{F}_\xi )$,
\[  \xi(Z_t + y) -  \varrho (\mu - 1 - 2|y| ) \ln_3 t \to \varrho \ln \bar{c}(y) \quad \text{in probability,}  \]
 whereas for each $y \in \mathcal{F}_\xi$, $\xi(Z_t + y) \Rightarrow \nu^y_\xi$ in law;

\item (Potential outside radius of influence) For each $y \in \ZZ^d \setminus B_{\rho_\xi}$,
\[ \xi(Z_t + y) \Rightarrow \xi(0) \quad \text{in law} ;\]
\end{enumerate}
\item  (Local profile of the trapping landscape)
\begin{enumerate}
\item (Trap at localisation site)
\[  \frac{\mu \varrho (\ln_2 t)^{\mu -1} }{d\ln t} \, \sigma(Z_t)  \to 1 \quad \text{in probability;}  \]
\item (Traps within radius of influence) For each $y \in B_{\rho_\sigma} \setminus (\{0\} \cup \mathcal{F}_\sigma)$,
\[  \sigma(Z_t + y) \to  \essinf  \sigma(0)  \quad \text{in probability,}  \] 
whereas for each $y \in \mathcal{F}_\sigma$, $\sigma(Z_t + y) \Rightarrow \nu^y_\sigma$ in law; 

\item (Traps outside radius of influence) For each $y \in \ZZ^d \setminus B_{\rho_\sigma}$,
\[ \sigma(Z_t + y) \Rightarrow \sigma(0) \quad \text{in law} .\]
\end{enumerate}
\end{enumerate}
\end{theorem}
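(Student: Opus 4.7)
The plan is to exploit the $(\rho_\xi,\rho_\sigma)$-local functional $\psi_t$ provided by Theorem~\ref{t:spec1}, by first deriving an explicit asymptotic formula for $\psi_t$ and then reading off the local profile of $(\xi,\sigma)$ around $Z_t$ from a size-biasing analysis of its argmax.

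To make $\psi_t$ explicit, I would recall that $Z_t$ coincides, with high probability, with the maximiser of the top eigenvalue of $\Delta\sigma^{-1}+\xi$ restricted to a finite ball. Since at $Z_t$ the diagonal entry $\xi(z)\sim a_t\to\infty$ dominates the off-diagonal entries $(2d\sigma)^{-1}$, a Rayleigh-Schr\"odinger expansion of this eigenvalue in powers of the off-diagonal gives a formula schematically of the form
\[
\psi_t(z) = \xi(z) - \frac{1}{\sigma(z)} - \sum_{y\neq 0} \frac{n(y)^2\,\Pi_\sigma(z,y)}{\xi(z)-\xi(z+y)} + \cdots,
\]
where $\Pi_\sigma(z,y)$ collects products of inverse trap values along the $n(y)$ shortest nearest-neighbour paths between $z$ and $z+y$, and the ellipsis denotes higher-order contributions. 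Under~\ref{lw} and~\ref{de}, the balance between the $y$-th term and the large-deviation cost of deviating $\xi(z+y)$ or $\sigma(z+y)$ from typical values produces exactly the thresholds $|y|=\rho_\xi$, $|y|=\rho_\sigma$ of~\eqref{e:defradiiinf}.

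Conditioning on $\{Z_t=z\}$ then amounts to tilting the a priori law of $\xi$ on $B_{\rho_\xi}(z)$ and of $\sigma$ on $B_{\rho_\sigma}(z)$ by an exponential factor proportional to the $y$-contributions above, a variant of the Palm-type arguments used in~\cite{BKS16,MP}. Evaluating this tilt under \ref{lw}--\ref{de}: for $|y|<\rho_\xi$ it forces $\xi(Z_t+y)=\varrho(\mu-1-2|y|)\ln_3 t + \varrho\ln\bar{c}(y) + o(1)$ in probability; for $|y|<\rho_\sigma$ it forces $\sigma(Z_t+y)\to\delta_\sigma$, using~\ref{a:3}; for $|y|$ on the respective interface the tilt has order~$1$ and yields the weak limits $\nu^y_\xi$, $\nu^y_\sigma$ with the stated densities; and for $|y|$ beyond the respective interface the tilt is asymptotically trivial, so the unconditional laws persist. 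The diagonal term $-1/\sigma(z)$ in $\psi_t$, combined with the log-Weibull tail of $\sigma(0)$, produces by a saddle-point calculation the quantitative asymptotics $\sigma(Z_t)\sim d\ln t/(\mu\varrho(\ln_2 t)^{\mu-1})$.

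The main obstacles I foresee are: (i) justifying the Rayleigh-Schr\"odinger expansion to the precise order needed, uniformly over the relevant region of candidate maximisers, so that truncation errors do not swamp the tilts at outer sites; (ii) handling the interface cases $\mu\in 2\NN$ and $\mu\in 2\NN+1$, where the tilt has order~$1$ and joint control over the interface sites is required to deduce the weak limits $\nu^y_\xi$, $\nu^y_\sigma$; and (iii) ensuring that the $Z_t$ produced by Theorems~\ref{t:gen1}--\ref{t:gen2} can be aligned, with probability tending to~$1$, with the argmax of $\psi_t$, possibly after modifying it on an event of vanishing probability.
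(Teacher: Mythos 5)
Your plan follows essentially the same route as the paper: the Rayleigh--Schr\"odinger heuristic is made rigorous there by the exact path expansion \eqref{e:pathexpeig} of the principal eigenvalue, the ``tilt'' is computed by iterated applications of Laplace's method conditionally on the (truncated) eigenvalue lying in the top order statistics (Proposition~\ref{p:spec}), and the transfer from a fixed site to the random site $Z_t$ is precisely your Palm-type step, carried out via the point-process bound of Corollary~\ref{c:bddexpecPP} and the i.i.d.\ coupling of Lemma~\ref{l:coupling} (Lemma~\ref{l:profilearoundZt} and the proof of Theorem~\ref{t:spec2}). Two cautions: the second-order correction enters with a \emph{positive} sign (your displayed minus sign would reverse all the tilts, although the conclusions you then state correspond to the correct sign), and you should work directly with the penalisation functional \eqref{e:defPsit} rather than invoking Theorem~\ref{t:spec1}, since in the paper that theorem is deduced from Theorem~\ref{t:spec2} and Proposition~\ref{p:zhatz} (whose proof itself uses the local-profile analysis), so taking it as a starting point would be circular.
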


Observe both that $\nu^y_\xi$ stochastically dominates $\xi(0)$ and that $\nu^y_\sigma$ is stochastically dominated by~$\sigma(0)$, and so Theorem \ref{t:spec2} is consistent with our general result in Theorem \ref{t:gen2}.

Although we do not prove it, similar results hold for other special cases of trap distribution. For example, in the \textit{Pareto case} in which there exists $\mu > d$ such that,
\[  \PPP ( \sigma(0) > x ) =  x^{-\mu}, \quad x > 1,\]
the radii of influence are $\rho_\xi = \rho_\sigma = 0$, and the trap at the localisation site satisfies 
\[ \frac{ \mu \varrho } {d \ln t } \sigma(Z_t)  \Rightarrow \bar{\nu}_\sigma \quad \text{in law,}  \]
where $\bar{\nu}_\sigma$ is a random variable on $\mathbb{R}$ with density proportional to $\ee^{-\mu / x } f_\sigma(x)$. On the other hand, in the \textit{Weibull case} in which there exists $\mu > 0$ such that
\[  - \ln \PPP ( \sigma(0) > x ) =  (x-1)^\mu, \quad x > 1 ,\]
the radii of influence grow with $t$ and satisfy
\[  \frac{\rho_\xi }{ \ln_2 t / \ln_3 t} \to  \frac{\mu}{2(\mu+1)}  \quad \text{and} \quad   \frac{\rho_\sigma }{ \ln_2 t / \ln_3 t} \to  \frac{\mu}{2(\mu+1)}, \]
and the trap at the localisation site satisfies \[  \left(\frac{\mu \varrho  } { d \ln t } \right)^{1/(\mu + 1) }   \sigma(Z_t)  \to 1  \quad \text{in probability.}  \]

\subsection{Overview of the remainder of the paper}

The rest of the paper is organised as follows.
After setting up some notation, we describe next in Section~\ref{ss:heuristics} some heuristic ideas
motivating our results.
Technical statements start in Section~\ref{s:outline},
where we provide a comprehensive overview of the proofs of 
Theorems~\ref{t:gen1},~\ref{t:gen2} and ~\ref{t:spec1} based on intermediate propositions
that are proved in subsequent sections.
The main mathematical tools of the paper are developed in Section~\ref{s:prelim}
(properties of the random environments and spectral theory for the BAM), 
Section~\ref{s:os} (a point process approach) and Section~\ref{s:pathexp} (a path expansion technique).
These tools are then applied in Section~\ref{s:neg} to obtain the bulk of the proofs related to our main results.
Finally, Section~\ref{s:spec} treats the special case of log-Weibull tails,
finishing the proof of Theorem~\ref{t:spec2};
the proof of a technical result used therein is given in Appendix~\ref{s:laplace}.

\subsection{Notation and terminology}

The set of positive integers is denoted by $\NN$, and $\NN_0 := \NN \cup \{0\}$.
We denote by $|B|$ the cardinality of a set $B$.
For a real-valued function $f$ and a positive function $g$, we write $f = O(g)$ to denote $\limsup_{t \to \infty} |f(t)|/g(t)$ $\le C$ for some constant $C\in (0,\infty)$, and we write $f(t) \sim a g(t)$ to mean that $\lim_{t \to \infty} f(t)/g(t) = a$, $a \in \RR\setminus\{0\}$.
When the latter limit holds with $a=0$, we write $f = o(g)$ or alternatively $|f| \ll g$ or $g \gg |f|$.
By $O(\cdot)$ and $o(\cdot)$ we will always mean \emph{deterministic} bounds, in the sense that, if we write
for example $Y = o(g)$ for a $t$-dependent random variable $Y$, we mean that $|Y|\le |f|$ where $f$ is a deterministic function
and $f = o(g)$ (and analogously for $O(\cdot)$).
We will call a \emph{scale} any positive function $t \mapsto s_t$ indexed by either $t \in (0,\infty)$
or $t \in \NN$.

We say that a $t$-dependent event occurs ``with high probability''
if its probability tends to $1$ as $t \to \infty$, and we say that it occurs ``eventually almost surely'' if there exists a (random) $t_0 \in [0,\infty]$ that is a.s.\ finite and such that the event occurs for all $t \ge t_0$.

\subsection{Heuristics}
\label{ss:heuristics}
\hspace{-7pt}
The reason why the BAM with double-exponential potential exhibits complete localisation for any unbounded trap distribution, 
whereas complete localisation fails for the equivalent PAM (i.e.\ the case of constant traps),
is that the models put mass in very different regions of the lattice:
while both concentrate near peaks of the potential,
the peaks are much sharper in the BAM, 
leading also to sharper shapes of the solution itself.
We give next some heuristic arguments in support of this statement.

Analogously to in the PAM (cf.\ \cite{BKS16, Gartner98, GKM07}), 
we expect the total mass $U(t)$ to be asymptotic to~$\ee^{t \lambda_{D_t}}$,
where $\lambda_{D_t}$ is the principal Dirichlet eigenvalue 
of the Bouchaud-Anderson operator
in a box $D_t$ of radius roughly $t$.
Furthermore, we expect the solution $u(t,x)$ to concentrate inside a ``microbox'' $B_{R_t}(z) \subset D_t$ 
of relatively small radius $R_t$ (but possibly still growing with $t$),
whose principal Dirichlet eigenvalue is almost that of $D_t$;
this is of course difficult to prove, but may be used as an \emph{ansatz} to obtain a lower bound for $U(t)$
(see Lemma~\ref{l:lbtotmass}).

Now, using \cite[Lemma~3.2]{MP} and some basic extreme value analysis, we obtain
\begin{equation}\label{e:heurist1}
\lambda_{D_t} \ge \max_{z \in D_t} \{ \xi(z) - \sigma(z)^{-1} \} = a_t + o(1)
\end{equation}
for large $t$, where $a_t$ is the leading order of $\xi$ inside $D_t$ as given by \eqref{e:a}, 
and the error is $o(1)$ because $\sigma(0)$ is unbounded. 
Assuming that $\lambda_{B_{R_t}(z)} = \lambda_{D_t} + o(1)$,
Lemma~\ref{l:boundsEV} below gives
\begin{equation}\label{e:heurist2}
\max_{x \in B_{R_t}(z)} \xi(x) \ge \lambda_{B_{R_t}(z)} = a_t + o(1).
\end{equation}
Since the high peaks of $\xi$ in $D_t$ (i.e., those within $o(1)$ of $a_t$)
are separated by a distance more than $R_t$ from each other, 
and indeed even from any $x$ where $\xi(x) > a_t - c$ for some $c>0$ (see Proposition~\ref{p:seppot}), 
we see that the potential in $B_{R_t}(z)$ is sharply peaked.
Using \cite[Proposition~3.3]{MP},
we deduce that the principal eigenfunction in $B_{R_t}(z)$ is also sharply peaked (cf.\ Proposition~\ref{p:eigloc} below),
at which point we may apply spectral methods (cf.\ \cite[Proposition~3.14]{MP}) to show that $u(t,x)$ completely localises.

In the case of the PAM, the analogous inequality to \eqref{e:heurist1} is
\[
\lambda_{D_t} \ge \max_{z \in D_t} \xi(z) - 1,
\]
i.e., a constant below the leading order $a_t$; in fact,
\[
\lambda_{D_t} = a_t - \chi + o(1) \qquad
\]
for a strictly positive constant $\chi$ (cf.\ \cite[Theorem~2.16]{Gartner98}).
This indicates that potential values inside the optimising microbox $B_{R_t}(z)$ are at a constant gap below $a_t$, 
and indeed it can be shown that the shifted potential $\xi(x) - a_t$ approaches in $B_{R_t}(z)$ 
the minimisers of a certain deterministic variational problem (cf.\ \cite{GdH99, GKM07}).
In particular, the principal eigenfunction is \emph{not} sharply peaked, 
and the solution does \emph{not} completely localise.

Note that the value of $\sigma$ on the maximiser of \eqref{e:heurist1} above goes to infinity with $t$,
motivating our result in part (3) of Theorem~\ref{t:gen2}.
Also note that, when the tail of $\xi(0)$ is lighter than double-exponential,
\eqref{e:heurist2} is not enough to conclude separation of high peaks;
indeed, some explicit decay in $o(1)$ would be required, 
translating (via \eqref{e:heurist1}) into a condition on the tail of~$\sigma(0)$.
This suggests that complete localisation might still hold
for lighter tails of $\xi(0)$, as long as the tails of $\sigma(0)$ are taken heavy enough.

To conclude, let us stress that the key factor differentiating the BAM from the PAM
is the asymptotics of $\lambda_{D_t}$.
Indeed, principal eigenfunctions in slowly-growing boxes around high peaks of the potential
are \emph{also} sharply peaked in the PAM, however, these regions do \emph{not}
contribute much to the solution as their principal eigenvalues are not large enough.
Thus the PAM and the BAM concentrate in completely different portions of the lattice,
where the potential assumes very different shapes: 
in the PAM, the relevant region consists of several points where the potential is at a constant gap below $a_t$,
whereas in the BAM, the region reduces to a single point where the potential is within $o(1)$ of $a_t$ and the trap is also large.


\smallskip

\section{Overview of the proof}
\label{s:outline}

In this section we provide a thorough overview of the proof of our results, showing how they follow from key intermediate statements. More precisely, we give here the proof of Theorems~\ref{t:gen1}, \ref{t:gen2} and \ref{t:spec1} conditionally on several propositions that are stated below and proved in the remainder of the paper. The proof of Theorem~\ref{t:spec1} will additionally depend on Theorem~\ref{t:spec2}, whose proof is also deferred to later sections. Our strategy closely follows that implemented in \cite{MP}, with additional input from the techniques developed in \cite{BKS16}.

\subsection{The localisation site}

We begin by defining the localisation site~$Z_t$. 
For this we need a collection of relevant scales. For each $L > 0$ recall the scale $a_L$ in \eqref{e:a} and observe that
\begin{equation}
\label{e:defat}
\PPP \left( \xi(0) > a_L \right) = L^{-d}
\end{equation}
when $L$ is large, as $\xi(0)$ has an eventually continuous tail by assumption. 
By \cite[Corollary~2.7]{Gartner98}, 
\begin{align}
\label{e:max}
\lim_{L \to \infty} \max_{z \in B_L} \xi(z) - a_L = 0 \quad \text{ almost surely.}
 \end{align}
 It is straightforward to show using \ref{a:1} that, if $\ln k_L \sim \ln L$, then $a_{k_L} = a_L + o(1)$ as $L \to \infty$.
 
Define for $\eta > 0$ and $L > 0$ the set $\Pi_{L, \eta}$ of high exceedances of the potential field
\begin{equation}
\label{e:defPiLdelta}
\Pi_{L,\eta}:= \left\{ x \in B_L \colon\, \xi(x) > a_L - \eta \right\}.
\end{equation}
By \eqref{e:max}, $\Pi_{L, \eta} \neq \varnothing$ for large $L$.
Moreover, this set has useful separation properties, summarised in the following result 
(whose proof is deferred to Section \ref{s:prelim} below).

\begin{proposition}[Separation properties of the high exceedances]
\label{p:seppot}
For each $c > 0$, there exists an $\eta > 0$ such that, 
for any sequence $m_L \to \infty$ satisfying $\ln m_L \ll \ln L$, eventually almost surely as $L \to \infty$,
\begin{equation}\label{e:seppot}
z \in \Pi_{L, \eta} \;\Rightarrow\; \xi(x) < a_L -  c \; \quad \forall\, x \in B_{m_L}(z) \setminus \{z\}.
\end{equation}
\end{proposition}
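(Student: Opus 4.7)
My plan is to apply a first-moment / Borel--Cantelli argument along a geometric subsequence. The key tail input, coming from assumption \ref{a:1}, is the following: since $F$ is eventually differentiable with $F'(r) \to 1/\varrho$, the mean value theorem gives $F(a_L - s) - F(a_L) \to -s/\varrho$ as $L \to \infty$ for each $s > 0$; combined with $\ee^{F(a_L)} \sim d \ln L$ (from \eqref{e:defat}), this yields
\[
\PPP(\xi(0) > a_L - s) = \exp\bigl\{- d \ln L \cdot (\ee^{-s/\varrho} + o(1))\bigr\} \quad \text{as } L \to \infty.
\]
Given $c>0$, I would then choose $\eta \in (0, c]$ small enough that
$\delta := \ee^{-\eta/\varrho} + \ee^{-c/\varrho} - 1 > 0$,
which is possible since this expression tends to $\ee^{-c/\varrho} > 0$ as $\eta \downarrow 0$. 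The tail estimate above then gives, for all $L$ large,
\[
\PPP(\xi(0) > a_L - \eta) \, \PPP(\xi(0) > a_L - c) \le L^{-d(1 + \delta/2)}.
\]

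Next, I would pass to the dyadic subsequence $L_n := 2^n$. Without loss of generality I may assume $m_L$ is nondecreasing (replacing it by $L \mapsto \max_{L' \le L} m_{L'}$, which still satisfies $\ln m_L \ll \ln L$ by a standard envelope argument), and set $M_n := m_{L_{n+1}}$, noting $\ln M_n = o(n)$. Since $L \mapsto a_L$ is nondecreasing, for any $L \in [L_n, L_{n+1}]$ the failure event of \eqref{e:seppot} is contained in
\[
\widetilde A_n := \bigl\{\, \exists\, z \in B_{L_{n+1}},\, x \in B_{M_n}(z) \setminus \{z\} \,\colon\, \xi(z) > a_{L_n} - \eta,\; \xi(x) > a_{L_n} - c \,\bigr\}.
\]
A union bound over the $\lesssim L_n^d M_n^d$ admissible pairs $(z, x)$, together with independence of $\xi$ across distinct sites, yields
\[
\PPP(\widetilde A_n) \lesssim L_n^d M_n^d \cdot L_n^{-d(1 + \delta/2)} = L_n^{-d\delta/2} M_n^d.
\]
Since $M_n$ grows sub-polynomially in $L_n = 2^n$, we have $\sum_n \PPP(\widetilde A_n) < \infty$, and the first Borel--Cantelli lemma ensures that $\widetilde A_n$ (and hence the failure event of \eqref{e:seppot}) occurs for only finitely many $L$ almost surely.

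The main delicate point is the order of quantifiers: $\eta$ must be chosen \emph{after} $c$, and is forced to be of order at most $\ee^{-c/\varrho}$ when $c$ is large. The use of a geometric (rather than integer) subsequence is essential here, since direct Borel--Cantelli on all $L \in \NN$ would require $\delta > 1/d$, which cannot be guaranteed uniformly in $c$; on the dyadic scale, however, the polynomial volume factor $L^d$ is precisely compensated by the small excess $L^{-d\delta}$ in the tail estimate.
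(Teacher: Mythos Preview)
Your proof is correct and follows essentially the same approach as the paper's: a first-moment bound on the ``bad pair'' event via the tail estimate coming from \ref{a:1}, then Borel--Cantelli along the dyadic subsequence $L=2^n$, followed by a sandwiching argument using the monotonicity of $a_L$ to cover all $L$. The only cosmetic difference is that the paper works with an auxiliary $\tilde\varrho<\varrho$ to absorb the $o(1)$ in the tail exponent, whereas you handle it directly; both routes yield a polynomial decay $L^{-c_0}$ sufficient for summability.
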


We shall need a macroscopic scale
\begin{equation}\label{e:defLt}
L_t := t \ln_2 t,
\end{equation}
as well as a mesoscopic scale $R_L \to \infty$, that we take to be non-decreasing and satisfying 
\begin{equation}
\label{e:assumpRL}
(\ln L)^\beta \ll R_L \ll (\ln L)^{\alpha} \;\; \text{ as } \; L \to \infty \; \text{ for some (henceforth fixed) } \; 0 < \beta < \alpha < 1/d.
\end{equation}
The macroscopic scale $L_t$ is used to define an \textit{a priori} macrobox $B_{L_t}$ in which the solution $u(t, z)$ is contained with minimal loss of mass; see Lemma~\ref{l:macro} below. The mesoscopic scale $R_{L_t}$ gives an upper bound on the scale within which the random environments interact to determine the localisation site; in other words, it is an upper bound on the \textit{radii of influence}. 

Observe that the scale in which the separation properties hold in Proposition~\ref{p:seppot} is much larger than the mesoscopic scale $R_L$. Recalling the parameter~$\delta_\sigma$ from \ref{a:3}, we therefore draw the following important corollary.

\begin{corollary}
\label{c:seppot}
There exists $\delta \in (0, \delta_\sigma^{-1})$ such that, eventually almost surely as $L \to \infty$,
\begin{equation}\label{e:corseppot}
z \in \Pi_{L, \delta} \;\Rightarrow\; \xi(x) < a_L -   4 \delta_\sigma^{-1}    \; \quad \forall\, x \in B_{2 R_L}(z) \setminus \{z\},
\end{equation}
and hence also
\[  B_{R_L}(y) \cap B_{R_L}(z) = \varnothing \quad \text{for all distinct } y,z \in \Pi_{L,\delta}. \]
\end{corollary}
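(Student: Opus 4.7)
The plan is to extract both statements directly from Proposition~\ref{p:seppot} by specialising the free parameter $c$ and the free sequence $m_L$; the corollary is essentially a bookkeeping exercise. First I would apply Proposition~\ref{p:seppot} with the choice $c := 4 \delta_\sigma^{-1}$, obtaining a corresponding constant $\eta > 0$. I would then set $\delta := \min\{\eta, \delta_\sigma^{-1}/2\}$, which automatically places $\delta$ inside $(0, \delta_\sigma^{-1})$ and yields the inclusion $\Pi_{L,\delta} \subseteq \Pi_{L,\eta}$ for every $L$.

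Next I would verify that $m_L := 2 R_L$ satisfies the two hypotheses of Proposition~\ref{p:seppot}. The lower bound $R_L \gg (\ln L)^\beta$ in \eqref{e:assumpRL} gives $m_L \to \infty$, while the upper bound $R_L \ll (\ln L)^\alpha$ gives $\ln m_L = \ln R_L + O(1) \le \alpha \ln_2 L + O(1) = o(\ln L)$. Combining these with the inclusion $\Pi_{L,\delta} \subseteq \Pi_{L,\eta}$, Proposition~\ref{p:seppot} delivers \eqref{e:corseppot} eventually almost surely.

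For the disjointness statement, I would argue by contradiction. Suppose $y, z \in \Pi_{L,\delta}$ are distinct but $B_{R_L}(y) \cap B_{R_L}(z) \neq \varnothing$; then the triangle inequality for the $\ell^1$ norm forces $|y - z| \le 2 R_L$, so $y \in B_{2 R_L}(z) \setminus \{z\}$. Applying \eqref{e:corseppot} at $z$ yields $\xi(y) < a_L - 4 \delta_\sigma^{-1}$. On the other hand, $y \in \Pi_{L,\delta}$ gives $\xi(y) > a_L - \delta > a_L - \delta_\sigma^{-1}$, contradicting the previous bound since $4 \delta_\sigma^{-1} > \delta_\sigma^{-1}$.

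There is no genuine obstacle here once Proposition~\ref{p:seppot} is granted; the only minor subtleties are choosing $\delta$ simultaneously below both $\eta$ and $\delta_\sigma^{-1}$, and checking that the mesoscopic scale $R_L$ from \eqref{e:assumpRL} satisfies the sub-polynomial growth condition $\ln m_L \ll \ln L$. The factor of $4$ in $4 \delta_\sigma^{-1}$ is purely a cushion: any constant strictly larger than $\delta_\sigma^{-1}$ would suffice for the contradiction step, and the larger choice will be convenient later when combined with trap bounds of order $\delta_\sigma^{-1}$.
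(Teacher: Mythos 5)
Your proof is correct and follows exactly the route the paper intends: apply Proposition~\ref{p:seppot} with $c = 4\delta_\sigma^{-1}$ and $m_L = 2R_L$ (admissible since $\ln R_L \ll \ln L$ by \eqref{e:assumpRL}), choose $\delta$ below both $\eta$ and $\delta_\sigma^{-1}$, and deduce the ball-disjointness by the triangle-inequality contradiction. Nothing is missing.
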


We fix henceforth $\delta$ as in Corollary~\ref{c:seppot}. The localisation site~$Z_t$ is defined as follows. For $z \in \ZZ^d$ and $r > 0$, let $\lambda_r(z)$ be the principal eigenvalue of the Bouchaud-Anderson operator 
\[   \Delta \invsigma + \xi  \]
in $B_r(z)$ with zero Dirichlet boundary conditions. Define the \emph{penalisation functional}
\begin{equation}\label{e:defPsit}
\Psi_t(z) := 
\left\{\begin{array}{ll}
\lambda_{R_{L_t}}(z) - \frac{\ln_3 t}{t} |z| & \text{ if } z \in \Pi_{L_t,\delta},\\
-\infty & \text{ if } z \in \ZZ^d \setminus \Pi_{L_t,\delta}.
\end{array}\right.
\end{equation}
Note that $\argmax \Psi_t \subset \Pi_{L_t, \delta}$. Setting
\begin{equation}\label{e:defPsi1}
\Psi^{\ssst (1)}_t := \max_{z \in \Pi_{L_t, \delta}} \Psi_t(z),
\end{equation}
we define the localisation site $Z_t$ uniquely by requiring
\begin{equation}\label{e:defZt}
Z_t \in \argmax{\Psi_t}, \qquad Z_t \succeq z \;\;\forall\, z \in \argmax{\Psi_t},
\end{equation}
where $\succeq$ denotes the usual lexicographical order of $\ZZ^d$. We point out that the specific choice of $\delta$ in the definition is of minor relevance and does not affect the asymptotic properties of~$Z_t$, and so we suppress the dependence on $\delta$ in the notation.

\subsection{Properties of the penalisation functional and the localising site}

We next characterise the top order statistics of the penalisation functional~$\Psi_t$,
in particular ensuring that a sufficient gap exists between $\Psi^{\ssst (1)}_t$ and the second largest value $\Psi^{\ssst (2)}_t$, defined as
\[ \Psi^{\ssst (2)}_t := \max_{z \in \Pi_{L_t, \delta} \setminus \{Z_t\} } \Psi_t(z). \]
We also define the corresponding second maximiser $Z^{\ssst (2)}_t$ by requiring
\[    Z^{\ssst (2)}_{t} \in  {\argmax}^{\ssst(2)}{\Psi_t} := \left\{z \in \Pi_{L_t, \delta}  \setminus \{Z_t\} \colon\, \Psi_t(z) = \Psi^{\ssst (2)}_t \right\} , \;\;  Z^{\ssst (2)}_{t} \succeq z \;\forall\, z \in {\argmax}^{\ssst (2)}{\Psi_t}. \]

Introduce the scales
\begin{equation}\label{e:defdr}
d_t := \varrho/(d \ln t) \quad \text{and} \quad  r_t := t d_t / \ln_3 t ,
\end{equation}
which denote, respectively, the scale of the gaps in top order statistics of $\Psi_t$, and the scale of the distance from the origin to the localisation site.
Our description of the top order statistics is contained in the following proposition, whose proof is undertaken in Section \ref{s:os}.

\begin{proposition}\label{p:orderstat}
There exists a scale $A_t > 0$ satisfying $\lim_{t \to \infty} |A_t - a_t| = 0$ such that the random vector
\begin{equation}\label{e:ordstatoverview1}
\left(  \frac{ Z_t }{r_t}, \frac{Z^{\ssst (2)}_{t}}{r_t} , \frac{\Psi^{\ssst (1)}_{t} - A_{r_t}}{d_{r_t}}, \frac{\Psi^{\ssst (2)}_{t} - A_{r_t}}{d_{r_t}}  \right)
\end{equation}
converges in distribution as $t \to \infty$ to a random vector in $(\RR^d)^2 \times \RR^2$ with distribution
\begin{equation}\label{e:ordstatoverview2}
\id_{\{\psi_1 > \psi_2 \}} \ee^{-\left( |z_1| +  |z_2| + \psi_1 + \psi_2 + 2^d \ee^{-\psi_2 }\right)} \dd z_1 \otimes \dd z_2 \otimes \dd  \psi_1 \otimes \dd \psi_2. 
\end{equation}
\end{proposition}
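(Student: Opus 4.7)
My plan is to prove Proposition~\ref{p:orderstat} via a point-process argument in the style of classical extreme-value theory. The key observation is that the density~\eqref{e:ordstatoverview2} is precisely the joint density of the top two atoms, ordered by second coordinate, of a Poisson point process (PPP) on $\RR^d \times \RR$ with intensity $\ee^{-|z|-\psi}\,\dd z\,\dd\psi$: the mass of $\RR^d \times (\psi_2, \infty)$ under this intensity equals $\int_{\RR^d}\ee^{-|z|}\,\dd z\cdot \ee^{-\psi_2} = 2^d \ee^{-\psi_2}$, which produces the final exponential factor. Thus it suffices to show that the rescaled empirical measure
\[
\Xi_t := \sum_{z\in\Pi_{L_t,\delta}}\delta_{(z/r_t,\,(\Psi_t(z)-A_{r_t})/d_{r_t})}
\]
converges vaguely in distribution to such a PPP, for some scale $A_t$ with $|A_t-a_t| \to 0$.

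The first step is a spectral reduction. Using the spectral theory of Section~\ref{s:prelim} together with the path expansion of Section~\ref{s:pathexp}, and exploiting that each $z\in\Pi_{L_t,\delta}$ is an isolated peak of $\xi$ within $B_{2R_{L_t}}(z)$ (Corollary~\ref{c:seppot}), I would write
\[
\lambda_{R_{L_t}}(z) = \xi(z) + K(z) + o(d_{r_t}),
\]
with $K(z)$ measurable w.r.t.\ $\xi, \sigma|_{B_{R_{L_t}}(z)}$ and leading contribution $-1/\sigma(z)$ (consistent with the heuristic~\eqref{e:heurist1}). By the disjointness in Corollary~\ref{c:seppot} and the i.i.d.\ nature of the environment, the marks $\{(\xi(z), K(z)) : z\in\Pi_{L_t,\delta}\}$ are conditionally i.i.d.\ across $z$.

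The second step is a marginal tail computation. From~\ref{a:1} and~\eqref{e:defat} one obtains $\PPP(\xi(0) > a_L + x d_L) \sim L^{-d}\ee^{-x}$, and integrating out the local correction $K(0)$ against the law of the environment yields
\[
\PPP\bigl(\lambda_{R_{L_t}}(0) > A_{r_t} + \psi d_{r_t},\; 0 \in \Pi_{L_t,\delta}\bigr) \;\sim\; r_t^{-d}\,\ee^{-\psi},
\]
where $A_{r_t}$ differs from $a_{r_t}$ by an $o(1)$ shift absorbing a Laplace-type expectation against the $\sigma$-distribution (schematically, one shifts to compensate $\EEE[\ee^{-(d\ln r_t)/(\varrho\sigma(0))}]$, the factor produced by the $-1/\sigma(z)$ contribution to $K$). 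The condition $|A_{r_t}-a_{r_t}| \to 0$ holds because $\sigma(0)$ has infinite essential supremum by~\ref{a:2}, which prevents this Laplace transform from decaying polynomially in $r_t$. Meanwhile, the penalty $-(\ln_3 t/t)|z|$, rescaled as $|z| = r_t |z'|$ and divided by $d_{r_t}$, becomes $-(d_t/d_{r_t})|z'| \to -|z'|$ (using $\ln r_t \sim \ln t$), producing the spatial factor $\ee^{-|z'|}$ in the limiting intensity.

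A standard Poisson convergence criterion — convergence of the expected number of atoms on bounded sets (from the tail computation) plus asymptotic independence of well-separated atoms (from the conditional i.i.d.\ structure of marks and Corollary~\ref{c:seppot}) — then yields $\Xi_t \Rightarrow \Xi_\infty$, after which the joint density of the top two atoms is a direct calculation. I expect the main obstacle to be the spectral reduction: the required error $o(d_{r_t})$ lives on the fine scale $1/\ln t$, much smaller than the $O(1)$ magnitude of $K(z)$. This is resolved because the typical (deterministic) size of $K(z)$ is absorbed into $A_{r_t}$ via the Laplace-type expectation, while the residual fluctuations — tracked through the path expansion and controlled using the separation from Corollary~\ref{c:seppot} — are of size $o(d_{r_t})$ uniformly over $z \in \Pi_{L_t,\delta}$.
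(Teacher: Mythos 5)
Your outline follows essentially the same route as the paper: reduce to the upper tail of a single local eigenvalue via the separation property of Corollary~\ref{c:seppot}, centre at a scale $A_t$ that absorbs a Laplace-type expectation of $\ee^{-c\ln t/\sigma(0)}$ (so that Assumption~\ref{a:2} forces $|A_t-a_t|\to 0$, exactly the mechanism in \eqref{e:prMOTEIG4}), prove Poisson point process convergence, and read off the top-two density (the paper's Appendix~\ref{a:density}). The identification of \eqref{e:ordstatoverview2} as the law of the top two atoms of a PPP with intensity $\ee^{-|z|-\psi}\,\dd z\,\dd\psi$, and the bookkeeping $d_t/d_{r_t}\to1$ for the penalty, are correct.

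Three steps are glossed in a way that would not survive as written. First, the claim that the marks $(\xi(z),K(z))_{z\in\Pi_{L_t,\delta}}$ are ``conditionally i.i.d.'' is not literally true: the index set $\Pi_{L_t,\delta}$ is itself a functional of $\xi$, and conditioning on membership and on the separation event distorts the local law; the paper resolves this with a genuine coupling to i.i.d.\ truncated fields (Lemma~\ref{l:coupling}, using the truncation \eqref{e:defhatxiz}), and some substitute for that construction is needed. Second, vague convergence of $\Xi_t$ on $\RR^d\times\RR$ plus ``expected counts on bounded sets'' is not enough to conclude convergence of the top two atoms, because the argmax functional is not vaguely continuous: one must rule out atoms with high $\psi$-value escaping to spatial infinity. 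This is why the paper works in the compactified space $\mathfrak{E}$ with the sets $\cH^\theta_\eta$ and proves the uniform-in-$s$ upper tail bound \eqref{e:maxorduppbd} and the expectation bound of Corollary~\ref{c:bddexpecPP}; your uniform tail computation can supply this, but it must be stated and used as a separate tightness input. Third, your closing claim that the ``residual fluctuations'' of $K(z)$ are $o(d_{r_t})$ is false in general (e.g.\ under \ref{lw} with $\mu>2$ the multi-path corrections $\sigma^{-1}(z)Q$ and the fluctuations of $\sigma^{-1}(z)$ itself exceed $d_t$); fortunately it is also unnecessary, since $\lambda_{R_{L_t}}(z)=\xi(z)+K(z)$ exactly and the correct procedure is the one in your second step — integrate $K$ out against the environment law, as the paper does in Proposition~\ref{prop:maxordertrunceig} via the implicit characterisation of $\{\hat\lambda^*_t>A\}$, the restriction to $\sigma(0)\ge\ell_t$, and the mean-value estimate exploiting \ref{a:1} — rather than to argue that $K$ is deterministic up to negligible error.
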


The proof of Proposition~\ref{p:orderstat} uses point process machinery, similarly as the corresponding results in \cite{BKS16, MP}. A crucial observation is that, by Corollary~\ref{c:seppot}, the random variables $(\lambda_{R_{L}}(z))_{z \in \Pi_{L}}$ are essentially i.i.d., allowing us to couple the points $(z, \Psi_t(z))_{z \in \Pi_{L_t}}$ to a linear transformation of i.i.d.\ random variables (cf.\ Lemma~\ref{l:coupling}). As a result, we may understand the top order statistics of $\Psi_t$ by analysing the tail of the single random variable $\lambda_{R_{L_t}}(0)$.

From Proposition \ref{p:orderstat} and $d_{r_t} \sim d_t$, we draw the following immediate corollary.

\begin{corollary}
\label{c:e}
For any $f_t \to 0$ and $g_t \to \infty$, the event
\[   \left\{  r_t  f_t  <   |Z_t| < r_t g_t  , \, \Psi^{\ssst (1)}_{t}  -  \Psi^{\ssst (2)}_{t} > d_t f_t , \, \Psi^{\ssst (2)}_{t} > A_{r_t} - d_t g_t  \right\}  \]
holds with high probability as $t \to \infty$.
\end{corollary}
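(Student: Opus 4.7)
The plan is to derive Corollary \ref{c:e} directly from Proposition \ref{p:orderstat} via a standard continuity and tightness argument applied to the limiting distribution \eqref{e:ordstatoverview2}. Write
\[
V_t := \left( \frac{Z_t}{r_t}, \frac{Z^{\ssst (2)}_t}{r_t}, \frac{\Psi^{\ssst (1)}_t - A_{r_t}}{d_{r_t}}, \frac{\Psi^{\ssst (2)}_t - A_{r_t}}{d_{r_t}} \right) \;\Rightarrow\; V_\infty = (\zeta_1, \zeta_2, \psi_1, \psi_2),
\]
with $V_\infty$ distributed according to \eqref{e:ordstatoverview2}.

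First I would extract from \eqref{e:ordstatoverview2} the following three properties of the marginal laws of $V_\infty$: (a) the density of $|\zeta_1|$ is continuous on $[0,\infty)$ with no atom at $0$ and is integrable, so $\PP(|\zeta_1|\le \epsilon) \to 0$ as $\epsilon \downarrow 0$ and $\PP(|\zeta_1|\ge M) \to 0$ as $M \to \infty$; (b) the difference $\psi_1 - \psi_2$ is almost surely strictly positive and has a continuous marginal (obtained by integrating out $\zeta_1,\zeta_2$ and changing variables), so $\PP(\psi_1 - \psi_2 \le \epsilon) \to 0$ as $\epsilon \downarrow 0$; (c) the marginal of $\psi_2$ has a density proportional to $\ee^{-(2\psi_2 + 2^d \ee^{-\psi_2})}$, which decays super-exponentially as $\psi_2 \to -\infty$ (this last is the crucial point, ensuring no mass escapes to $-\infty$), so $\PP(\psi_2 \le -M) \to 0$ as $M \to \infty$.

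Next, I would handle the discrepancy between $d_{r_t}$ and $d_t$ by noting that $\ln r_t = \ln t + \ln(\varrho/d) - \ln_2 t - \ln_3 t \sim \ln t$, hence $d_{r_t}/d_t \to 1$. Thus the three events in the statement of Corollary~\ref{c:e} can be rewritten, up to factors tending to $1$, as $\{|Z_t|/r_t < f_t\}$, $\{|Z_t|/r_t > g_t\}$, $\{(\Psi^{\ssst (1)}_t - \Psi^{\ssst (2)}_t)/d_{r_t} \le f_t'\}$, $\{(\Psi^{\ssst (2)}_t - A_{r_t})/d_{r_t} \le -g_t'\}$, where $f_t' \to 0$ and $g_t' \to \infty$. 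For each fixed $\epsilon > 0$ and $M > 0$, the Portmanteau theorem applied to the appropriate open/closed sets yields
\[
\limsup_{t \to \infty} \PP(|Z_t|/r_t \le \epsilon) \le \PP(|\zeta_1|\le \epsilon),
\]
and analogously for the other three marginals. Given $\delta > 0$, I fix $\epsilon,\epsilon'$ small and $M,M'$ large so that each right-hand side is below $\delta/4$, and then use $f_t, f_t' \to 0$ and $g_t, g_t' \to \infty$ to ensure $f_t < \epsilon$, $f_t' < \epsilon'$, $g_t > M$, $g_t' > M'$ for all large $t$. A union bound over the four conditions then gives probability at least $1 - \delta$ for all sufficiently large $t$; since $\delta$ was arbitrary, the event in Corollary~\ref{c:e} has probability tending to $1$.

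There is no substantive obstacle here once Proposition~\ref{p:orderstat} is in hand; the only point that requires care is verifying property (c) above, namely that the marginal of $\psi_2$ has a light enough left tail, since that is what rules out the lower bound $\Psi^{\ssst (2)}_t > A_{r_t} - d_t g_t$ being violated, and it relies on the specific form of the $\ee^{-2^d \ee^{-\psi_2}}$ factor arising in \eqref{e:ordstatoverview2}.
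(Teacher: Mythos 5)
Your argument is correct and is exactly the standard weak-convergence/Portmanteau argument that the paper has in mind when it calls the corollary ``immediate'' from Proposition~\ref{p:orderstat} together with $d_{r_t} \sim d_t$; the only cosmetic remarks are that your expansion of $\ln r_t$ should have $\ln_4 t$ rather than $\ln_3 t$ in the last term (irrelevant, since only $\ln r_t \sim \ln t$ is used), and that your point (c) is automatic because the limit in \eqref{e:ordstatoverview2} is already a proper probability law, so no separate tail estimate is needed.
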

\noindent
Note that, since $d_t = o(1)$, the error $d_t g_t$ in the event above can be chosen to be $o(1)$.

To draw a further important consequence of Proposition \ref{p:orderstat}, 
we emphasise the crucial fact that $A_t = a_t + o(1)$, which will allow us to deduce that the trap at $Z_t$ must be large.
To see why, recall from \cite[Proposition~3.7]{MP}) that $\lambda_{r}(z)$ has the path expansion, for each $y \in B_{r}(z)$, 
\begin{align}
\label{e:pathexpeig}
\lambda_r(z) = \xi(y) - \invsigma(y) + \invsigma(y) \sum_{k \geq 2} \ \sum_{\substack{ p \in \Gamma_{k}(y, y) \\ p_i \neq y , \, 0 < i < k \\ \Set(p) \subseteq B_r(z)  } } \ \prod_{0 < i < k} \frac{1}{2d} \frac{1}{1+ \sigma(p_i)(\lambda_r(z) - \xi(p_i))}  ,
\end{align}
where $\Gamma_k(x, y) := \{ p = (p_0, \ldots, p_k) \colon\, p_0 = x, p_k = y, |p_{i} - p_{i-1}|= 1 \,\forall\, 1 \le i \le k  \}$ denotes the set of nearest-neighbour paths in $\ZZ^d$ of length $k$ running from $x$ to $y$,
and $\Set(p) :=\{p_0,\ldots,p_k\}$. 
The expansion \eqref{e:pathexpeig} can be seen as a consequence of the Feynman-Kac representation
for the principal Dirichlet eigenfunction of $\Delta \sigma^{-1} + \xi$ inside $B_r(z)$ (see \eqref{e:fkeig2} and \eqref{e:patheval} below).
Recall also the following \emph{a priori} bounds on $\lambda_r(z)$ (see Lemma~\ref{l:boundsEV} below):
\begin{equation}
\label{e:boundsEV}
\max_{y \in B_r(z)}  \xi(y) - \delta_\sigma^{-1} \le   \max_{y \in B_r(z)}  \xi(y) - \invsigma(y)  \le \lambda_r(z) \le  \max_{y \in B_r(z)} \xi(y),
\end{equation}
where the first inequality comes from Assumption~\ref{a:3}.
Combining the lower bound with Corollary~\ref{c:seppot}, we see that, almost surely as $L \to \infty$,
each $z \in \Pi_{L,\delta}$ satisfies
\begin{equation}
\label{e:bounds}
\lambda_{R_{L}}(z) - \max_{y \in B_{R_L}(z) \setminus \{z\} } \xi(y) > ( a_L  - 2 \delta_\sigma^{-1} ) - (a_L - 4 \delta_\sigma^{-1}  ) = 2 \delta_\sigma^{-1} > 0  ,
\end{equation}
where we used $\delta < \delta_\sigma^{-1}$.
Now apply \eqref{e:bounds} to the path expansion \eqref{e:pathexpeig} (with $y=z$), 
use Assumption~\ref{a:3} and note that $|\Gamma_k(y,y)| \le (2d)^{k-1}$ to obtain, eventually almost surely,
\begin{equation}
\label{e:eigbound}
z \in \Pi_{L, \delta} \;\Rightarrow\;  \lambda_{R_L}(z) \le \xi(z) - \frac{1}{2} \invsigma(z)  .  
\end{equation}
Combining \eqref{e:eigbound} with, successively, the definition of $\Psi_t$, Corollary \ref{c:e}, the bound on the maximum potential in \eqref{e:max} and the fact that $A_{r_t} = a_{L_t} + o(1)$, we deduce that
\[  \frac{1}{2} \invsigma(Z_t) < \xi(Z_t) - \lambda_{R_{L_t}}(Z_t)   \le  \xi(Z_t) - \Psi_t^{\ssst (1)} < \max_{z \in B_{L_t}} \xi(z) - A_{r_t } + o(1) = o(1) \]
with high probability, and so indeed $\sigma(Z_t) \to \infty$. Along with Proposition~\ref{p:seppot}, this guarantees that the site $Z_t$ has the local profile specified in Theorem~\ref{t:gen2}, as follows.

\begin{corollary}
\label{c:locpro}
For any $\varepsilon>0$ and sequence $m_t \to \infty$ satisfying $\ln m_t \ll \ln L_t$, the following hold with high probability as $t\to \infty$:
\[  |\xi(Z_t) -  a_t| < \varepsilon , \quad \sigma(Z_t) > \varepsilon^{-1} \quad \text{and} \quad \min_{y \in B_{m_t}(Z_t) \setminus \{Z_t\}} |\xi(Z_t) - \xi(y)| > \varepsilon^{-1}. \]
\end{corollary}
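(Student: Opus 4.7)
The plan is to assemble the corollary from the ingredients already developed in the overview, combining Proposition~\ref{p:orderstat} (top order statistics of $\Psi_t$), Proposition~\ref{p:seppot} (separation of high exceedances), and the eigenvalue sandwich \eqref{e:eigbound}--\eqref{e:boundsEV}. No new spectral or probabilistic input is needed; the argument is a careful bookkeeping of error terms.

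First I would establish the two-sided estimate $\xi(Z_t) = a_t + o(1)$ with high probability. For the lower bound, Corollary~\ref{c:e} applied to any $g_t \to \infty$ with $d_t g_t \to 0$ yields $\Psi^{\ssst(1)}_t \ge \Psi^{\ssst(2)}_t \ge A_{r_t} - o(1)$, and since $\ln r_t \sim \ln t$ and $A_t = a_t + o(1)$ (from Proposition~\ref{p:orderstat}), we get $\Psi^{\ssst(1)}_t \ge a_t - o(1)$. Combining with $\xi(Z_t) \ge \lambda_{R_{L_t}}(Z_t) \ge \Psi^{\ssst(1)}_t$ (the first inequality from \eqref{e:eigbound}, valid since $Z_t \in \Pi_{L_t,\delta}$; the second because the penalty in \eqref{e:defPsit} is non-positive) gives $\xi(Z_t) \ge a_t - o(1)$. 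For the upper bound, use $Z_t \in B_{L_t}$ together with \eqref{e:max} to obtain $\xi(Z_t) \le \max_{z \in B_{L_t}} \xi(z) = a_{L_t} + o(1) = a_t + o(1)$, where the last equality uses that $\ln L_t \sim \ln t$ together with the regularity of $a_\cdot$ noted after \eqref{e:max}.

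Next I would deduce $\sigma(Z_t) \to \infty$ exactly as sketched in the paragraph preceding the corollary: by \eqref{e:eigbound},
\[ \tfrac{1}{2} \invsigma(Z_t) \le \xi(Z_t) - \lambda_{R_{L_t}}(Z_t) \le \xi(Z_t) - \Psi^{\ssst(1)}_t \le \bigl(a_t + o(1)\bigr) - \bigl(a_t - o(1)\bigr) = o(1), \]
so $\sigma(Z_t) > \varepsilon^{-1}$ with high probability.

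Finally, for the neighbour separation, I would apply Proposition~\ref{p:seppot} with the constant $c := \varepsilon^{-1}+\varepsilon$ (say), obtaining an $\eta > 0$ for which \eqref{e:seppot} holds eventually almost surely. Shrinking if necessary, assume $\eta \le \delta$. Since $\xi(Z_t) \ge a_{L_t} - o(1)$ from the lower bound above, we have $Z_t \in \Pi_{L_t, \eta}$ with high probability, and \eqref{e:seppot} then gives $\xi(y) < a_{L_t} - c$ for every $y \in B_{m_t}(Z_t)\setminus\{Z_t\}$ (the hypothesis $\ln m_t \ll \ln L_t$ is exactly that required by Proposition~\ref{p:seppot}). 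Hence
\[ \xi(Z_t) - \xi(y) \ge \bigl(a_{L_t} - o(1)\bigr) - \bigl(a_{L_t} - c\bigr) = c - o(1) > \varepsilon^{-1}, \]
completing the proof.

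The argument is essentially a matter of assembly, with no genuine obstacle; the only delicate point is ensuring that all four error terms ($d_t g_t$ from Corollary~\ref{c:e}, $\max_{B_{L_t}}\xi - a_{L_t}$ from \eqref{e:max}, $A_{r_t} - a_t$ from Proposition~\ref{p:orderstat}, and $a_{L_t} - a_t$ from the regularity of $a_\cdot$) are all $o(1)$ simultaneously, and that $\eta$ can be taken at most $\delta$ so that $\Pi_{L_t,\eta}\subseteq \Pi_{L_t,\delta}$ (where $Z_t$ is known to live by construction).
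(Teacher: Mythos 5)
Your proof is correct and follows essentially the same route as the paper: the paper's proof of Corollary~\ref{c:locpro} is a brief remark invoking the preceding discussion (the chain $\tfrac12\invsigma(Z_t)<\xi(Z_t)-\lambda_{R_{L_t}}(Z_t)\le\xi(Z_t)-\Psi_t^{\ssst(1)}<\max_{B_{L_t}}\xi-A_{r_t}+o(1)=o(1)$ built from \eqref{e:eigbound}, Corollary~\ref{c:e}, \eqref{e:max} and $A_{r_t}=a_{L_t}+o(1)$) together with Proposition~\ref{p:seppot} and the lower bound $\xi(Z_t)>a_{L_t}+o(1)$, which is exactly the bookkeeping you have carried out explicitly.
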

\begin{proof}
This follows from the previous discussion together with Proposition~\ref{p:seppot} and the fact that, by \eqref{e:eigbound} and Corollary~\ref{c:e}, $\xi(Z_t) > a_{L_t} + o(1)$ with high probability.
\end{proof}
Corollary~\ref{c:locpro} already indicates an important difference between the PAM and the BAM with unbounded traps,
namely, how close $\xi(Z_t)$ is to $a_t$: in the BAM, their difference is $o(1)$,
while in the PAM it remains strictly positive (cf.\ e.g.\ \cite[Theorem~2.9]{BKS16}).

The remaining statements needed for the proof of Theorem~\ref{t:gen2}
are gathered in the following.

\begin{proposition}
\label{p:sd}
For all $y \in \mathbb{Z}^d \setminus \{0\}$, $\xi(Z_t + y)$  asymptotically stochastically dominates $\xi(0)$
and $\sigma(Z_t + y)$ is asymptotically stochastically dominated by $\sigma(0)$.
\end{proposition}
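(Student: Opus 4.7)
The plan is to prove both stochastic domination statements by a single conditioning argument that exploits pointwise monotonicity of $\Psi_t$, after restricting to the high-probability separation event supplied by Corollary~\ref{c:seppot}. Fix $y\in\ZZ^d\setminus\{0\}$ and $c\in\RR$, and take $t$ large enough that $|y|\le R_{L_t}$ and $M_t:=a_{L_t}-4\delta_\sigma^{-1}>c$. Let $H_t$ denote the event in Corollary~\ref{c:seppot} with $L=L_t$, so that $\PPP(H_t)\to 1$; on $H_t$ we have $Z_t\in\Pi_{L_t,\delta}$ and, since $|y|\le 2R_{L_t}$, the deterministic upper bound $\xi(Z_t+y)<M_t$.

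For each $x\in\ZZ^d$ let $\cA_x$ be the $\sigma$-algebra generated by the full environment except the single variable $\xi(x+y)$, and introduce the $\cA_x$-measurable event $H^*_{t,x}$ defined like $H_t$ but with the site $x+y$ removed from $\Pi_{L_t,\delta}$ and from the separation condition~\eqref{e:corseppot}; one checks directly that $H_t\subseteq H^*_{t,x}$ for every $x$. The crux of the argument is that on $H^*_{t,x}\cap\{v<M_t\}$ with $v:=\xi(x+y)$, strict separation of $\Pi_{L_t,\delta}$ combined with $|y|\le R_{L_t}$ forces $x+y\notin B_{R_{L_t}}(z)$ for every $z\in\Pi_{L_t,\delta}\setminus\{x\}$, while $x+y\in B_{R_{L_t}}(x)$; so as $v$ varies only $\lambda_{R_{L_t}}(x)$ is affected, and does so monotonically increasingly (by standard Perron--Frobenius monotonicity after adding a multiple of the identity, or directly from~\eqref{e:pathexpeig}). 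It follows that $\{Z_t=x\}\cap H_t$ coincides on $H^*_{t,x}$ with $\{\tau_x<v<M_t\}$ for some $\cA_x$-measurable threshold $\tau_x$.

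Since $v$ is independent of $\cA_x$ with law $\xi(0)$, a short case analysis on $\tau_x\gtrless c$ yields the uniform bound
\[
\PPP\bigl(v>c\bigm|Z_t=x,\,H_t,\,H^*_{t,x}\bigr)\;\ge\;\PPP(\xi(0)>c)-\PPP(\xi(0)>M_t).
\]
Multiplying by $\PPP(Z_t=x,H_t,H^*_{t,x})=\PPP(Z_t=x,H_t)$, summing over the disjoint events $\{Z_t=x\}$, and letting $t\to\infty$ so that $M_t\to\infty$ and $\PPP(H_t)\to 1$, gives the first half of the proposition. The trap case follows the same template with two simplifications: first, since $H_t$ involves only $\xi$, it is already measurable with respect to the $\sigma$-algebra $\cA^\sigma_x$ generated by all variables except $\sigma(x+y)$; second, combining~\eqref{e:boundsEV} with the gap in~\eqref{e:corseppot} gives $\lambda_{R_{L_t}}(z)-\xi(p_i)>2\delta_\sigma^{-1}>0$ on $H_t$ for every $p_i\in B_{R_{L_t}}(z)\setminus\{z\}$ with $z\in\Pi_{L_t,\delta}$, making each factor in~\eqref{e:pathexpeig} strictly decreasing in $\sigma(p_i)$ and hence $\lambda_{R_{L_t}}(x)$ strictly decreasing in $\sigma(x+y)$. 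No upper truncation of $\sigma(x+y)$ is needed, and the analogous computation delivers $\PPP(\sigma(x+y)>c\mid Z_t=x,H_t)\le \PPP(\sigma(0)>c)$.

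The main obstacle is the circularity that $H_t$ depends on the very variable being decoupled, which blocks a direct Harris inequality on a single random variable. I deal with it by passing to the weaker $\cA_x$-measurable proxy $H^*_{t,x}$ and absorbing the residual upper truncation $\{v<M_t\}$ into the vanishing correction $\PPP(\xi(0)>M_t)\to 0$; this step requires the separation gap $4\delta_\sigma^{-1}$ in Corollary~\ref{c:seppot} to strictly exceed the threshold $\delta$ in the definition of $\Pi_{L_t,\delta}$, which is guaranteed by the choice $\delta<\delta_\sigma^{-1}$.
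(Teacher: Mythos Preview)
Your argument is correct and achieves the same conclusion as the paper, but by a genuinely different route. The paper invokes the coupling of Lemma~\ref{l:coupling} to replace $(\xi,\sigma)$ by i.i.d.\ copies $(\xi^z,\sigma^z)_{z\in\ZZ^d}$ and the functional $\Psi_t$ by its truncated analogue $\widehat\Psi_t$; independence of the copies then makes the event $\{z=\widehat Z_t\}$ manifestly monotone in $\hat\xi^z_{L_t}(z+y)$ alone, so the positive-correlation inequality for monotone functions of a single real variable applies in one line. You instead stay with the original environment and use Corollary~\ref{c:seppot} directly to decouple: the proxy event $H^*_{t,x}$ absorbs the dependence of the separation event on $\xi(x+y)$, and the identity $H_t\cap\{v<M_t\}=H^*_{t,x}\cap\{v<M_t\}$ then reduces $\{Z_t=x\}\cap H_t$ to an interval in the single variable $v$, after which the conditional bound follows by elementary case analysis.

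What each approach buys: the paper's coupling makes the monotonicity and independence transparent and avoids any measurability bookkeeping, at the cost of importing the machinery of Section~\ref{ss:coupling}. Your argument is self-contained and more elementary, relying only on the separation corollary and Perron--Frobenius monotonicity, but it requires the careful passage from $H_t$ to $H^*_{t,x}$ and the observation that, on $H^*_{t,x}\cap\{x\in\Pi_{L_t,\delta}\}\cap\{v<M_t\}$, the site $x+y$ lies outside $B_{R_{L_t}}(z)$ for every $z\in\Pi_{L_t,\delta}\setminus\{x\}$ (which you correctly deduce from $|z-x|>2R_{L_t}$ and $|y|\le R_{L_t}$). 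One small point worth making explicit: the intersection with $\{x\in\Pi_{L_t,\delta}\}$, which is $\cA_x$-measurable since $y\neq 0$, is needed for that separation step; on its complement $\{Z_t=x\}$ is empty and contributes nothing.
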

Proposition~\ref{p:sd} will be proved in Section~\ref{ss:conseqPP}.
The crucial observation is that,
by \eqref{e:pathexpeig}, $\lambda_r(z)$ is increasing in each~$\xi(x)$,
and is (locally) decreasing in $\sigma(x)$ whenever $\lambda_r(z) > \xi(x)$.

\subsection{Path decompositions and eliminating negligible paths}

The next step is to decompose the Feynman-Kac representation of the solution \eqref{e:fk} and show that only a small portion of the path-space of $X$ makes a non-negligible contribution. To be more precise, we show that the dominant portion of the solution comes from paths which, by time $t$, (i) hit the site $Z_t$, and (ii) do not exit a certain ball $D_t$ that tightly contains $Z_t$. 

To define the ball $D_t$, we need to introduce an auxiliary scale $h_t \to 0$ satisfying 
\begin{align}
\label{e:scalesh}
h_t \gg  \max \left\{    1/\ln a_t,   \, \bar F_\sigma(  \exp \{ h_t^2 \ln a_t \} )  ,  \, F_\xi( - a_t h_t^2 )  \right\} 
\end{align}
where $\bar F_\sigma(x) := \PPP(\sigma(0) \ge x)$ and $F_\xi(x) := \PPP(\xi(0) \le x)$. The existence of such a scale is guaranteed since the right-hand side of \eqref{e:scalesh} is increasing in $h_t$ and tends to zero when $h_t \equiv 1$. 
The origin of~\eqref{e:scalesh} will become apparent later; 
for now we note that whereas the first condition is common in the analysis of the PAM (see e.g.\ \cite{ST}), 
the latter two conditions arise out of the percolation arguments we use to eliminate screening effects due to heavy traps and large negative potentials respectively (see the proof of Proposition \ref{p:lb}). 
  
We now define the random ball
\[  D_t := B_{|Z_t| (1 + h_t) } , \]
and observe that $D_t \subseteq B_{L_t}$ with high probability by Corollary \ref{c:e}. 
We further define, for $\Lambda \subseteq \ZZ^d$, the hitting time
\[   \tau_{\Lambda} := \inf \{ s > 0 : X_s \in \Lambda\} . \]
If $\Lambda = \{z\}$, we write $\tau_z := \tau_{\{z\}}$.
The main result in this step is the following.

\begin{lemma}
\label{l:mainneg}
As $t \to \infty$,
\[    \frac{1}{U(t)}  \EE_0 \left[  \exp \left\{   \int_0^t \xi(X_s) \, \dd s  \right\}  \id \{  \tau_{Z_t}  \le  t < \tau_{D_t^\cc}    \} \right]   \to 1 \quad \text{  in probability.  }  \]
\end{lemma}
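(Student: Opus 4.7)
The plan is to establish the lemma by matching a lower bound $U(t) \ge \exp\{t\Psi^{\ssst (1)}_t + o(1)\}$ with upper bounds on the three complementary ``bad'' Feynman--Kac contributions: paths exiting the macrobox $B_{L_t}$, paths exiting $D_t$ but remaining in $B_{L_t}$, and paths staying in $D_t$ yet failing to hit $Z_t$. Since each of these will be shown to be exponentially smaller than $\exp\{t\Psi^{\ssst (1)}_t\}$, their combined contribution will be negligible relative to $U(t)$.

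For the lower bound I would restrict the Feynman--Kac integral \eqref{e:fk} to trajectories that follow a geodesic from $0$ to $Z_t$ in an initial window $[0,\varepsilon_t]$ with $\varepsilon_t := |Z_t|$, and subsequently stay inside the microbox $B_{R_{L_t}}(Z_t)$. Using ellipticity \ref{a:3}, the probability of the BTM tracing such a geodesic is at least $\exp\{-C|Z_t|\}$ for some constant $C>0$, contributing a rate $O(|Z_t|/t) = O(d_t/\ln_3 t) = o(d_t)$; the usual principal-eigenfunction estimate in the microbox then yields the factor $\exp\{(t-\varepsilon_t)\lambda_{R_{L_t}}(Z_t)\}$. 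Since the travel cost is of lower order than the penalisation $\tfrac{\ln_3 t}{t}|Z_t| \sim d_t$ built into $\Psi_t$, this gives $U(t) \ge \exp\{t\Psi^{\ssst (1)}_t + o(1)\}$, with Corollary~\ref{c:locpro} supplying the remaining control on $\xi(Z_t)$.

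Among the upper bounds, paths exiting $B_{L_t}$ are routine: since $L_t = t\ln_2 t \gg t$, the Poisson large-deviation bound on the number of BTM jumps (whose rate is bounded above by $\delta_\sigma^{-1}$ by \ref{a:3}) yields a super-exponential decay that comfortably dwarfs the trivial estimate $\exp\{t \max_{B_{L_t}} \xi\}$. For paths staying in $D_t$ but missing $Z_t$, I would apply a spectral bound together with a cluster cover of $D_t \setminus \{Z_t\}$ by the microboxes $B_{R_{L_t}}(z)$ centred at $z \in \Pi_{L_t,\delta} \cap D_t \setminus \{Z_t\}$ and a complementary low-potential region where $\xi < a_{L_t} - \delta$ (on which eigenvalues are bounded by $a_{L_t} - \delta + \delta_\sigma^{-1}$ via \eqref{e:boundsEV}). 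Combined with a path expansion that absorbs the walker's starting position into the penalisation $\tfrac{\ln_3 t}{t}|z|$, this bounds the contribution by $|D_t| \exp\{t\Psi^{\ssst (2)}_t\}$, which is dominated by $\exp\{t\Psi^{\ssst (1)}_t\}$ thanks to the spectral gap $\Psi^{\ssst (1)}_t - \Psi^{\ssst (2)}_t \ge d_t f_t$ (Corollary~\ref{c:e}) for any $f_t \to 0$ slowly enough that $t d_t f_t \gg \ln|D_t|$.

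The main obstacle is the third class, paths that exit $D_t$ while remaining in $B_{L_t}$, since leaving $D_t$ is not an intrinsically rare event for the BTM. The same path-expansion argument reduces this contribution to $\sum_{z \in \Pi_{L_t,\delta} \setminus D_t} \exp\{t\Psi_t(z)\}$ plus a low-potential remainder; for such $z$ the penalty $\tfrac{\ln_3 t}{t}|z|$ exceeds $\tfrac{\ln_3 t}{t}|Z_t|$ by at least $h_t|Z_t|\ln_3 t/t \sim h_t d_t$, which (multiplied by $t$) yields an exponential gap of order $\exp\{t h_t d_t\} \to \infty$. Making this rigorous requires ruling out screening effects from anomalously deep traps or atypically negative potential values that could isolate the origin from $Z_t$ inside $D_t$ and prevent the path expansion from closing; this is precisely the role of the scale conditions \eqref{e:scalesh} on $h_t$, which, via percolation arguments, guarantee that $D_t$ contains enough sites with moderate trap depth and not-too-negative potential to carry out the expansion.
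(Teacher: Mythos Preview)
Your decomposition into three classes of bad paths and the matching against a lower bound on $U(t)$ is exactly the architecture the paper uses (Lemma~\ref{l:macro}, Propositions~\ref{p:lb} and~\ref{p:ub}). The upper-bound sketches are broadly in the right direction. However, your lower bound contains a genuine error that breaks the whole argument.

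With $\varepsilon_t = |Z_t|$ you obtain from the microbox the factor $\exp\{(t-\varepsilon_t)\lambda_{R_{L_t}}(Z_t)\}$, and you correctly bound the \emph{path-probability} cost by $\ee^{-C|Z_t|}$. But you have overlooked the \emph{time} cost $\varepsilon_t \lambda_{R_{L_t}}(Z_t) \sim |Z_t|\,\varrho \ln_2 t$, which is the dominant term. Writing the bound against $t\Psi^{\ssst (1)}_t = t\lambda_{R_{L_t}}(Z_t) - |Z_t|\ln_3 t$ gives
\[
(t-\varepsilon_t)\lambda_{R_{L_t}}(Z_t) - C|Z_t| - t\Psi^{\ssst (1)}_t = -|Z_t|\bigl(\lambda_{R_{L_t}}(Z_t) + C - \ln_3 t\bigr) \sim -|Z_t|\,\varrho \ln_2 t,
\]
which tends to $-\infty$, not $o(1)$. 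The correct balance requires the travel time to be of order $|Z_t|/\lambda_{R_{L_t}}(Z_t) \sim |Z_t|/\ln_2 t$; at that scale the walker must make $|Z_t|$ jumps in time $|Z_t|/\ln_2 t$, whose large-deviation cost is $\sim |Z_t|\ln_3 t$ and thus \emph{matches} the penalisation in $\Psi_t$. Even then, the comparison with the third class of bad paths demands precision $o(t d_t h_t)$ in the lower bound, not merely $o(t d_t)$, and achieving this forces one to control both $\xi$ and $\sigma$ along the traversal.

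This is also where the percolation argument actually belongs: it is needed for the \emph{lower} bound, not the upper. With no lower-tail assumption on $\xi$ and with $\sigma$ unbounded, a fixed geodesic from $0$ to $Z_t$ may pass through sites with arbitrarily negative potential or arbitrarily deep traps; the first makes $\int_0^{\varepsilon_t}\xi(X_s)\,\dd s$ uncontrolled, the second blows up the time needed to traverse. The conditions \eqref{e:scalesh} on $h_t$ are calibrated precisely so that a percolation estimate (Proposition~\ref{p:goodpaths}) guarantees a path of length $\le |Z_t|(1+o(h_t))$ avoiding both pathologies, which then feeds into the sharp lower bound of Proposition~\ref{p:lb}. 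The path expansion for the upper bounds (Proposition~\ref{prop:masspathset}) does not require any such control; it only needs separation of the high peaks in $\Pi_{L_t,\delta}$.
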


To elucidate how Lemma~\ref{l:mainneg} is obtained, we will give here its proof conditionally on two intermediate propositions. These will be proved in Section~\ref{s:neg} using spectral bounds from Section~\ref{ss:bam}, the analysis of the top order statistics of (slight generalisations of) $\Psi_t$ (cf.\ Proposition~\ref{p:orderstat2}), and the path expansion analysis of Section~\ref{s:pathexp}.

First we note that, as a consequence of \ref{a:3}, we may readily restrict to paths staying within the macrobox $B_{L_t}$, as is shown by the following lemma.

\begin{lemma}
\label{l:macro}
As $t \to \infty$,
\[   \EE_0 \left[  \exp  \left\{   \int_0^t \xi(X_s) \, \dd s   \right\} \id \{  \tau_{B_{L_t}^\cc}  \le  t \}   \right]   \to  0 \quad \text{ almost surely.}\]
\end{lemma}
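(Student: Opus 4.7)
The plan is to exploit Assumption \ref{a:3} to bound the speed of the BTM, combining this with the a.s.\ growth $\max_{z \in B_L}\xi(z) \sim a_L \sim \varrho \ln_2 L$ (cf.\ \eqref{e:max}). The total jump rate of $X$ at any site $z$ equals $1/\sigma(z) \le 1/\delta_\sigma$, so one may couple $X$ with a Poisson process $N_t$ of rate $1/\delta_\sigma$, independent of the environment, such that the maximum displacement $M_t := \max_{s \le t}|X_s|$ satisfies $M_t \le N_t$ almost surely. Reaching distance $L_t = t\ln_2 t$ by time $t$ is then a substantial large deviation for $N_t$, whose mean is only $t/\delta_\sigma$. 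Slicing the expectation by $M_t$ and using $\int_0^t \xi(X_s)\,\dd s \le t \max_{z \in B_k}\xi(z)$ on $\{M_t=k\}$ gives
\[
\EE_0\!\left[\exp\!\left(\int_0^t \xi(X_s)\,\dd s\right)\id\{\tau_{B_{L_t}^\cc}\le t\}\right] \;\le\; \sum_{k \ge L_t}\exp\!\left(t\max_{z\in B_k}\xi(z)\right)\PP(N_t \ge k).
\]

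By \eqref{e:max} there is an a.s.\ finite $k_0(\xi)$ with $\max_{z\in B_k}\xi(z) \le a_k + 1$ for every $k \ge k_0$, and $a_k = O(\ln_2 k)$ by Assumption \ref{a:1}; since $L_t \to \infty$, this bound applies uniformly to every $k \ge L_t$ once $t$ is large. Inserting the elementary Poisson tail $\PP(N_t \ge k) \le (\ee t/(\delta_\sigma k))^k$ (valid for $k > t/\delta_\sigma$) yields, for large $t$,
\[
\sum_{k \ge L_t}\exp\!\Big\{t(\varrho \ln_2 k + 2) - k\ln(\delta_\sigma k/(\ee t))\Big\}.
\]
For $k \ge L_t$ one has $\ln(k/t) \ge \ln_3 t - O(1)$, so the Poisson exit cost satisfies $k\ln(\delta_\sigma k/(\ee t)) \ge \tfrac12 k \ln_3 t$, which at $k = L_t$ is already $\tfrac12 t \ln_2 t \ln_3 t$; this dwarfs the potential gain $t\varrho \ln_2 k \lesssim t\ln_3 t$ near $k = L_t$, and the dominance $k\ln(k/t) \gg t\ln_2 k$ persists for all larger $k$. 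The sum is therefore bounded by $\exp(-c\,t\ln_2 t\ln_3 t)$ for some $c > 0$, vanishing super-polynomially as $t \to \infty$ for a.e.\ $\xi$.

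The only real difficulty is uniformity in $k$: one needs the a.s.\ bound $\max_{B_k}\xi \le a_k + 1$ to hold simultaneously for every $k \ge L_t$ (immediate from \eqref{e:max}) and the Poisson tail to dominate even at astronomically large $k$ where $a_k$ may itself be large---both routine, since $k\ln(k/t)$ vastly outgrows any $O(\ln_2 k)$ contribution. Beyond these points the argument uses only Assumption \ref{a:3}, \eqref{e:max}, and standard Poisson large deviations; no spectral input is required at this stage.
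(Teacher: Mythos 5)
Your argument is correct and essentially the same as the paper's, which proves this lemma by invoking Assumption \ref{a:3} and the argument of \cite[Proposition~4.7]{BKS16}: bound the jump rates by $\delta_\sigma^{-1}$, compare the number of jumps needed to reach distance $L_t$ with a rate-$\delta_\sigma^{-1}$ Poisson variable, and use the almost-sure bound $\max_{B_k}\xi \le a_k + o(1)$ from \eqref{e:max}. One minor slip in your write-up: near $k=L_t$ the potential gain is $t\varrho\ln_2 L_t \approx t\varrho\ln_2 t$ (not $\lesssim t\ln_3 t$), but since this is still dwarfed by the Poisson exit cost of order $t\,(\ln_2 t)(\ln_3 t)$, the conclusion is unaffected.
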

\begin{proof}
Using Assumption~\ref{a:3}, the proof follows as in \cite[Proposition~4.7]{BKS16}.
\end{proof}

To prove negligibility of paths, we need to establish a good lower bound on the total mass; 
this relies on percolation properties, and is the only place in the proof that uses $d \ge 2$.

\begin{proposition}
\label{p:lb}
With high probability as $t \to \infty$,
\[  \ln U(t)  >  t  \Psi^{\ssst (1)}_t  + o(t d_t h_t).   \]
\end{proposition}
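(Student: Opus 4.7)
The plan is to lower-bound $U(t) \ge u(t, Z_t)$ and then estimate $u(t,Z_t)$ via a spectral representation on the random box $D_t$. Let $v$ denote the positive Dirichlet principal eigenfunction of $\Delta \sigma^{-1} + \xi$ on $D_t$, with eigenvalue $\lambda_{D_t}$. Standard Feynman--Kac/spectral arguments (based on the tools of Section~\ref{ss:bam}) yield
\[
\ln u(t, Z_t) \;\ge\; t \lambda_{D_t} + \ln v(0) + \ln v(Z_t) + o(t d_t h_t),
\]
while domain monotonicity together with $D_t \supseteq B_{R_{L_t}}(Z_t)$ gives $\lambda_{D_t} \ge \lambda_{R_{L_t}}(Z_t) = \Psi^{\ssst (1)}_t + (\ln_3 t) |Z_t|/t$. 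Localisation of $v$ near $Z_t$ (via a Proposition~\ref{p:eigloc}-style bound) implies $|\ln v(Z_t)| = o(t d_t h_t)$. Hence the task reduces to proving the exponential lower bound
\[
\ln v(0) \;\ge\; -|Z_t| \ln_3 t + o(t d_t h_t).
\]

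To obtain this, first I would appeal to supercritical site percolation in $\ZZ^d$, $d \ge 2$, to construct a ``good'' path from $Z_t$ to $0$ inside $D_t$. Call $x \in D_t$ \emph{bad} if $\sigma(x) > \exp\{h_t^2 \ln a_t\}$ or $\xi(x) < -a_t h_t^2$, and \emph{good} otherwise. The second and third conditions in~\eqref{e:scalesh} guarantee that the probability of being bad is $o(h_t)$, which for $t$ large lies strictly below $1 - p_c^{\mathrm{site}}(\ZZ^d)$. Standard quantitative chemical-distance estimates for supercritical site percolation then produce, with high probability, a nearest-neighbour good path $\pi$ inside $D_t$ joining $Z_t$ to $0$ of length at most $(1+h_t)|Z_t|$. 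This is the only step of the entire proof where the assumption $d \ge 2$ is invoked.

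Next, I would iterate the eigenfunction equation along~$\pi$. Setting $w := v/\sigma$ and $W(x) := \sigma(x)(\lambda_{D_t} - \xi(x))$ reduces the equation to $w(x) = [2d(1+W(x))]^{-1} \sum_{y \sim x} w(y)$; keeping just the contribution of the predecessor in $\pi$ and undoing the change of variables gives
\[
v(\pi_i) \;\ge\; \frac{\sigma(\pi_i)}{\sigma(\pi_{i-1})} \cdot \frac{v(\pi_{i-1})}{2d(1+W(\pi_i))}.
\]
Iterating, the $\sigma$-ratios telescope to $\sigma(0)/\sigma(Z_t)$, contributing $-\ln \sigma(Z_t) = o(td_t h_t)$. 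For good sites $W(\pi_i) \le a_t^{1+h_t^2}(1+o(1))$, and since $\ln a_t \sim \ln_3 t$ each factor obeys $\ln(2d(1+W(\pi_i))) \le (1+h_t^2)\ln_3 t + O(1)$. Summing over the $\le (1+h_t)|Z_t|$ steps of $\pi$ and using the consequence $h_t \ln_3 t \to \infty$ of \eqref{e:scalesh} to absorb the $O(|Z_t|) = O(td_t/\ln_3 t)$ remainder into $o(td_t h_t)$, the total log-cost is precisely $|Z_t|\ln_3 t + O(td_t h_t)$, as needed.

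The main obstacle is the percolation step: one needs not merely existence but a \emph{quantitatively short} good path of length at most $(1+h_t)|Z_t|$ with $h_t \to 0$, which demands sharp chemical-distance estimates for high-density supercritical site percolation on $\ZZ^d$. A secondary technical point is justifying the principal-mode lower bound $u(t,Z_t) \gtrsim v(0)\,v(Z_t)\,e^{t\lambda_{D_t}}$: the non-principal eigenmodes carry signs and must be controlled, either by a spectral-gap estimate or by passing to a slightly enlarged box around $D_t$.
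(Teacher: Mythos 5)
Your scheme shares the paper's percolation input (a good path on which $\sigma < s^\sigma_t$ and $\xi > -s^\xi_t$, costing roughly $\ln_3 t$ per step), but the route through the principal eigenpair of $D_t$ has a genuine gap at its very first step: the off-diagonal bound $\ln u(t,Z_t) \ge t\lambda_{D_t} + \ln v(0) + \ln v(Z_t) + o(t d_t h_t)$ is not a ``standard'' spectral fact, and the point you defer as secondary is the heart of the matter. In the (weighted) eigenfunction expansion the non-principal modes contribute an error of size $\ee^{t\lambda_2(D_t)}$, and this is \emph{not} negligible: on the event $|Z^{\ssst (2)}_t| < |Z_t|$, which has asymptotically positive probability, the min--max principle with disjointly supported test functions gives $\lambda_2(D_t) \ge \lambda_{R_{L_t}}(Z^{\ssst (2)}_t)$, hence $t\lambda_2(D_t) - t\Psi^{\ssst (1)}_t \ge |Z^{\ssst (2)}_t|\ln_3 t - t\big(\Psi^{\ssst (1)}_t - \Psi^{\ssst (2)}_t\big)$. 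By Proposition~\ref{p:orderstat} both terms on the right are of order $t d_t$ with non-degenerate random prefactors, so with probability bounded away from zero $\ee^{t\lambda_2(D_t)}$ dominates $\ee^{t\lambda_{D_t}}v(0)v(Z_t) \approx \ee^{t\Psi^{\ssst (1)}_t}$, because $-\ln v(0) \asymp |Z_t|\ln_3 t \asymp t d_t$ is of exactly the same order as $t$ times the spectral gap. Neither of your proposed fixes addresses this: a spectral-gap estimate is precisely what fails, and enlarging the box only adds competing peaks. The standard rigorous repair is probabilistic -- force the walk to hit $Z_t$ within a short time along the good path and then use a \emph{diagonal} bound at $Z_t$ -- which is exactly the paper's proof (Lemma~\ref{l:lbpath} together with Corollary~\ref{cor:aprioriboundstotalmass}, after \eqref{e:zin}); once you do that, neither $\lambda_{D_t}$, nor $v(0)$, nor your iteration of the eigenvalue equation is needed.

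Two further points. First, a quantitative slip: with a good path of length $(1+h_t)|Z_t|$ and cost $\ln_3 t$ per step, the excess is $h_t|Z_t|\ln_3 t \asymp t d_t h_t$, i.e.\ your final error is $O(t d_t h_t)$, not the $o(t d_t h_t)$ demanded by the proposition (and by your own intermediate claims). This is why the paper introduces the auxiliary scale $h^\star_t \ll h_t$ in \eqref{e:assumphstar} and works with paths of length at most $(1+h^\star_t)|Z_t|$; your percolation margin ($\PPP(\text{bad}) \ll h^\star_t$) permits this, so the fix is easy but must be made. Second, your telescoped $\sigma$-ratio requires $\ln \sigma(Z_t) = o(t d_t h_t)$ with high probability; under the standing assumptions only unboundedness of $\sigma(0)$ is available and no upper bound on $\sigma(Z_t)$ has been established at this stage, so for suitably heavy or flat trap tails this needs a separate argument. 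The paper's route is arranged so that no upper bound on $\sigma(Z_t)$ is ever required: the factor $\sigma(z)^{-1}$ in Lemma~\ref{l:lbtotmass} is cancelled against the weighted norm using Lemma~\ref{l:bdlocEFs}.
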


Next we obtain an upper bound on the contribution to the solution from certain sets of paths,
to be compared with the lower bound above.
This step is rather involved, and draws heavily on the path expansion techniques developed in Section~\ref{s:pathexp}. 

\begin{proposition}
\label{p:ub}
For any scale $g_t \to \infty$, with high probability as $t \to \infty$,
\[  \ln \EE_0 \left[  \exp \left\{   \int_0^t \xi(X_s) \, \dd s  \right\}  \id \{   \tau_{Z_t} \wedge \tau_{D_t^\cc} > t  \} \right]   <  t  \Psi^{\ssst (2)}_{t} + o(r_t g_t) \]
and
\[  \ln \EE_0 \left[  \exp \left\{   \int_0^t \xi(X_s) \, \dd s  \right\}  \id \{  \tau_{D_t^\cc} \le t <  \tau_{B_{L_t}^\cc}    \} \right]   <  \max\{ t \Psi^{\ssst (2)}_{t}  , t  \Psi^{\ssst (1)}_{t}  - h_t |Z_t| \ln_3 t \} + o(r_t g_t). \]
\end{proposition}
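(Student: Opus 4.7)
The plan is to combine the spectral bounds of Section~\ref{ss:bam} with a path decomposition based on the last exceedance microbox visited before time $t$, controlled via the path expansion of Section~\ref{s:pathexp}. First, cover $B_{L_t}$ by the (disjoint, by Corollary~\ref{c:seppot}) microboxes $B_{R_{L_t}}(z)$, $z \in \Pi_{L_t,\delta}$, together with the complementary gap region $G_t := B_{L_t} \setminus \bigcup_{z \in \Pi_{L_t,\delta}} B_{R_{L_t}}(z)$, on which $\xi < a_{L_t} - \delta$ pointwise. Then split the Feynman--Kac expectation over each relevant path set according to the last exceedance microbox visited before time $t$ (or the walk never entering such a microbox at all).

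For the first inequality, paths confined to $D_t \setminus \{Z_t\}$ up to time $t$ either stay in $G_t$ throughout, or have their last pre-$t$ exceedance visit centred at some $z \in \Pi_{L_t,\delta} \cap D_t \setminus \{Z_t\}$. In the latter case, conditioning on the hitting time $\tau_z$ and applying the microbox spectral bound yields a contribution of at most $\int_0^t p_s(0,z)\, \ee^{(t-s)\lambda_{R_{L_t}}(z)} \dd s$, where $p_s(0,z)$ collects the Feynman--Kac weight of walks $0 \to z$ of duration $s$ staying out of $B_{R_{L_t}}(Z_t)$. The path expansion of Section~\ref{s:pathexp} combined with BTM hitting-time estimates (made available by~\ref{a:3}) gives $\ln p_s(0,z) \le -|z|\ln_3 t + o(r_t g_t)$ uniformly in $s \le t$ and $z \in B_{L_t}$, so each contribution is at most $\ee^{t\Psi_t(z) + o(r_t g_t)} \le \ee^{t\Psi^{\ssst (2)}_t + o(r_t g_t)}$. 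Paths staying in $G_t$ contribute at most $\ee^{t \lambda_{G_t}}$ with $\lambda_{G_t} \le a_{L_t} - \delta/2$ by~\eqref{e:boundsEV}, a value well below $\Psi^{\ssst (2)}_t$ by Corollary~\ref{c:e}; the polynomial factor $|B_{L_t}|$ from summation over $z$ is absorbed into the $o(r_t g_t)$ error.

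The second inequality follows the same scheme, with the last-visited exceedance microbox now centred at some $z \in \Pi_{L_t,\delta} \cap B_{L_t} \setminus D_t$, forcing $|z| > |Z_t|(1+h_t)$. Again one obtains that each contribution is at most $\ee^{t\Psi_t(z) + o(r_t g_t)}$. Splitting on whether $\Psi_t(z) = \Psi^{\ssst (1)}_t$: in the non-leading case one obtains $\le \ee^{t\Psi^{\ssst (2)}_t + o(r_t g_t)}$ directly; in the leading case, substituting $\Psi_t(z) = \lambda_{R_{L_t}}(z) - (\ln_3 t / t)|z|$ together with $|z| > |Z_t|(1+h_t)$ gives $t\Psi_t(z) \le t\Psi^{\ssst (1)}_t - h_t |Z_t|\ln_3 t$, as required.

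The principal technical obstacle is establishing the uniform hitting-cost estimate $\ln p_s(0,z) \le -|z|\ln_3 t + o(r_t g_t)$: the walk may enter and leave the gap region $G_t$ many times en route to $z$, and a naive bound would over-count such excursions. The Dyson-type path expansion from Section~\ref{s:pathexp} is essential here, as it allows each excursion into $G_t$ to be peeled off and controlled using the strict separation of $\lambda_{G_t}$ from the relevant microbox eigenvalues. A secondary subtlety is maintaining the $o(r_t g_t)$ error uniformly for arbitrarily slow $g_t$, which follows from comparing $|z| \le L_t$ against the relevant scales in~\eqref{e:defdr} and using the fact that the worst case is $|z| \sim |Z_t| \sim r_t$ via Corollary~\ref{c:e}.
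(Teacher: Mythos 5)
Your overall scheme---decompose over visits to the exceedance microboxes, pay the local eigenvalue for the time spent there and $\ln_3 t$ per unit of distance travelled---is the right spirit, and in fact close to what Proposition~\ref{prop:masspathset} formalises, but two steps as written fail. The central claim $\ln p_s(0,z)\le -|z|\ln_3 t + o(r_t g_t)$ uniformly in $s\le t$ is false: $p_s(0,z)$ is a Feynman--Kac weight, and before its last visit to $B_{R_{L_t}}(z)$ the walk may sit for a positive fraction of $s$ at some other exceedance site $z'\in\Pi_{L_t,\delta}\setminus\{Z_t\}$ where $\xi(z')\approx a_{L_t}\sim\varrho\ln_2 t$, so $p_s(0,z)$ can grow like $\ee^{c\, s\ln_2 t}$. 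Structurally, indexing by the \emph{last} visited microbox decouples where the weight is accumulated (the eigenvalue of an earlier-visited, possibly nearer peak) from where you assess the penalty $|z|\ln_3 t$ (the last, possibly farther peak), and the resulting bound $\ee^{t\Psi_t(z)+o(r_tg_t)}$ for that class simply does not follow. The paper's path expansion fixes exactly this alignment: in Proposition~\ref{prop:masspathset}\,(i) the rate $\gamma_p$ must dominate $\Lambda_L(p)$, the \emph{maximum} eigenvalue over all exceedance sites visited by the path, and $z_p$ is chosen to be the site attaining that maximum, so $t\gamma_p-(\ln_3 L_t-c)|z_p|$ is comparable to $t\Psi_{t,c}(z_p)\le t\Psi^{\ssst (2)}_{t,c}$; one then still needs Corollaries~\ref{c:e} and~\ref{c:ec} to trade the $c$-shifted functional $\Psi_{t,c}$ for $\Psi_t$ within $o(r_t g_t)$, a step your proposal never addresses. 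Two further unaddressed points: the spectral factor $\ee^{(t-s)\lambda_{R_{L_t}}(z)}$ after $\tau_z$ presupposes confinement to $B_{R_{L_t}}(z)$, so excursions after $\tau_z$ require the cluster expansion of Lemma~\ref{l:massupexit}; and in the first display the walk may enter $B_{R_{L_t}}(Z_t)\setminus\{Z_t\}$, a case your covering omits.

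The case analysis for the second display is also incorrect: the event $\tau_{D_t^\cc}\le t$ forces the walk to \emph{exit} $D_t$, not its last visited exceedance microbox to lie outside $D_t$. The dominant paths here exit $D_t$ and then return to spend most of their time at $Z_t$ itself, and it is precisely these that generate the term $t\Psi^{\ssst (1)}_t - h_t|Z_t|\ln_3 t$. Your derivation of that term from a site $z\notin D_t$ with $\Psi_t(z)=\Psi^{\ssst (1)}_t$ is vacuous, since with high probability the maximum of $\Psi_t$ is attained only at $Z_t\in D_t$, so any admissible $z\neq Z_t$ has $\Psi_t(z)\le\Psi^{\ssst (2)}_t$ and the second term never appears in your bound. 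The paper obtains it by modifying the choice of reference point: keep $z_p$ equal to the eigenvalue-maximiser among visited exceedance sites, except when that maximiser is $Z_t$, in which case take $z_p$ to be any point of the path with $|z_p|>|Z_t|(1+h_t)$ (such a point exists because the path exits $D_t$), which converts the travel cost of leaving $D_t$ into the penalty $h_t|Z_t|\ln_3 t$ against $t\Psi^{\ssst (1)}_t$.
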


Propositions~\ref{p:lb} and \ref{p:ub} will be proved respectively in Sections~\ref{ss:lb} and \ref{ss:negpaths}.
Together with Lemma~\ref{l:macro} and Corollary~\ref{c:e}, they allow us to eliminate the negligible paths as follows.

\begin{proof}[Proof of Lemma~\ref{l:mainneg}]
Combining Lemma~\ref{l:macro} and Propositions \ref{p:lb}--\ref{p:ub} yields that there exists an $f_t \to 0$ such that, for any $g_t \to \infty$,
with high probability,
\begin{align}
\nonumber &  \ln \frac{1}{U(t)}  \EE_0 \left[  \exp \left\{   \int_0^t \xi(X_s) \, \dd s  \right\}  \id \{  \tau_{Z_t}  >  t \text{ or } \tau_{D_t^\cc}   \le t\} \right]  \\
\label{e:1} & \phantom{aaaaaaaaaaa} <  \max\{  t(\Psi^{\ssst (2)}_t - \Psi^{\ssst (1)}_{t}) ,  - h_t |Z_t| \ln_3 t,  - t \Psi^{\ssst (1)}_t  \} + o(t d_t h_t f_t) + o(r_t g_t) 
\end{align}
where we also used the fact that $D_t \subseteq B_{L_t}$.
By Corollary \ref{c:e} and since $A_{r_t} \sim \varrho \ln_2 t$,
for any $\bar f_t \to 0$, with high probability \eqref{e:1} is at most
\[ \max\{- t d_t f_t  ,  - t d_t h_t \bar f_t ,  - \tfrac12 \varrho t \ln_2 t  \} + o(t d_t h_t f_t) + o(r_t g_t).  \]
Choosing $\bar f_t$ and $g_t$ to satisfy
$ \bar f_t  \gg f_t$ and $\bar f_t h_t \gg g_t / \ln_3 t$,
the result follows; such a choice is possible by the first condition on $h_t$ in \eqref{e:scalesh}.
\end{proof}

\subsection{Localisation of the non-negligible part}

We have now reduced the problem to the study of the Feynman-Kac representation of the solution \eqref{e:fk} restricted to paths which hit $Z_t$ and stay within the ball $D_t$. The final step is to prove the following.
\begin{lemma}
\label{l:mainloc}
As $t \to \infty$, 
\[  \frac{1}{U(t)} \sum_{z \in D_t \setminus \{Z_t\} }  \EE_0 \left[  \exp \left\{   \int_0^t \xi(X_s) \, \dd s  \right\}  \id \{  \tau_{Z_t}  \le  t < \tau_{D_t^\cc}  \} \id \{ X_t = z \}  \right]    \to 0 \;\; \text{ in probability.}   \]
\end{lemma}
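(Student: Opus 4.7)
The plan is to reduce matters via the strong Markov property at $\tau_{Z_t}$ to a spectral comparison on $D_t$, and then prove sharp concentration of the principal Dirichlet eigenfunction of the BAM operator at $Z_t$ using the path expansion \eqref{e:pathexpeig}. By the strong Markov property applied at $\tau_{Z_t}$, each summand in Lemma~\ref{l:mainloc} equals
$$\EE_0\!\left[\,e^{\int_0^{\tau_{Z_t}}\xi(X_s)\,ds}\id\{\tau_{Z_t}\le t\wedge\tau_{D_t^{\mathrm c}}\}\,v(t-\tau_{Z_t},z)\right],$$
where $v(s,z):=\EE_{Z_t}[e^{\int_0^s\xi(X_u)\,du}\id\{X_s=z,\,s<\tau_{D_t^{\mathrm c}}\}]$. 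Since the BTM is reversible with respect to $\sigma$, the Dirichlet operator $\Delta\sigma^{-1}+\xi$ on $D_t$ is the transpose of $\sigma^{-1}\Delta+\xi$ (self-adjoint in $\ell^2(\sigma)$), and admits the spectral representation
$$v(s,z)=\sigma(Z_t)^{-1}\sum_{k\ge 1} e^{s\lambda_k^{(t)}}\phi_k^{(t)}(Z_t)\,\phi_k^{(t)}(z),$$
where $\lambda_1^{(t)}>\lambda_2^{(t)}\ge\cdots$ are the eigenvalues and $(\phi_k^{(t)})$ the corresponding eigenfunctions of $\Delta\sigma^{-1}+\xi$ on $D_t$. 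Combining this factorisation with Lemma~\ref{l:mainneg}, the claim reduces to showing that, with high probability,
$$\frac{\sum_{z\in D_t\setminus\{Z_t\}}v(s,z)}{v(s,Z_t)}\to 0$$
uniformly in $s\in[ct,t]$ for some $c\in(0,1)$, the small-$s$ contribution being handled via the crude bound $v(s,z)\le e^{s\max_{D_t}\xi}$ compared against the lower bound $U(t)\ge e^{t(\Psi^{\ssst(1)}_t+o(d_t h_t))}$ of Proposition~\ref{p:lb}.

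To dispose of the non-principal modes, we first identify $\lambda_k^{(t)}=\Psi^{\ssst(k)}_t+o(d_t)$ for $k=1,2$ (following \cite[Proposition~3.14]{MP}), by applying \eqref{e:pathexpeig} inside each microbox $B_{R_{L_t}}(z)$ around a high exceedance $z\in\Pi_{L_t,\delta}$ and invoking Corollary~\ref{c:seppot} to decouple distinct microboxes. Corollary~\ref{c:e} then supplies $t(\lambda_1^{(t)}-\lambda_2^{(t)})\to\infty$ in probability, so modes $k\ge 2$ contribute negligibly in the above ratio for $s\ge ct$, reducing it further to
$$\frac{\sum_{z\in D_t\setminus\{Z_t\}}|\phi_1^{(t)}(z)|}{\phi_1^{(t)}(Z_t)}.$$

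For this final quantity we adapt \eqref{e:pathexpeig} to the principal eigenfunction (analogous to \cite[Proposition~3.3]{MP}), obtaining
$$\frac{\phi_1^{(t)}(y)}{\phi_1^{(t)}(Z_t)}=\sum_{k\ge |y-Z_t|}\ \sum_{\substack{p\in\Gamma_k(Z_t,y)\\ p_i\ne Z_t,\,0<i<k\\ \Set(p)\subseteq D_t}}\prod_{0<i<k}\frac{1}{2d\,(1+\sigma(p_i)(\lambda_1^{(t)}-\xi(p_i)))}.$$
By Corollary~\ref{c:seppot} together with~\ref{a:3}, every interior $p_i$ lying outside the (disjoint) microboxes around the competing high exceedances in $\Pi_{L_t,\delta}\setminus\{Z_t\}$ satisfies $\sigma(p_i)(\lambda_1^{(t)}-\xi(p_i))\ge 2$, so each such factor is at most $(6d)^{-1}$; the rare exceptional sites inside competing microboxes are absorbed using the strict gap $\Psi^{\ssst(1)}_t-\Psi^{\ssst(2)}_t\gg 0$ from Corollary~\ref{c:e}. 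This geometric decay beats the $(2d)^k$ growth in $|\Gamma_k(Z_t,y)|$, yielding $\sum_{y\ne Z_t}|\phi_1^{(t)}(y)|/\phi_1^{(t)}(Z_t)\to 0$ in probability. \textbf{The main obstacle} is carrying out this path-expansion argument uniformly on the \emph{macroscopic} ball $D_t$ of diameter $\sim t$, in particular controlling paths that wind through the potentially many competing high-exceedance microboxes; this is where the sharp order-statistics gap of Proposition~\ref{p:orderstat} and the uniform separation of Corollary~\ref{c:seppot} become indispensable.
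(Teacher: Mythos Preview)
Your approach via the full spectral decomposition of $v(s,\cdot)$ is more elaborate than the paper's and contains a genuine gap. The paper bypasses the spectral decomposition entirely: it applies the comparison inequality of Lemma~\ref{l:link} (which \emph{is} \cite[Proposition~3.14]{MP}) to bound each summand directly by $\frac{\sigma(Z_t)\|\sigma^{-1/2}\phi_{D_t}\|_{\ell^2}^2}{\phi_{D_t}(Z_t)^3}\,\phi_{D_t}(z)$, and then invokes Proposition~\ref{p:eigloc}, namely $\sigma(Z_t)\sum_{z\neq Z_t}\phi_{D_t}(z)\to 0$. No information about higher eigenvalues or about the spectral gap $\lambda_1^{(t)}-\lambda_2^{(t)}$ in the macroscopic box $D_t$ is ever needed.

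The gap in your proposal is precisely this spectral gap step. You assert ``$\lambda_k^{(t)}=\Psi_t^{\ssst(k)}+o(d_t)$ for $k=1,2$'', citing \cite[Proposition~3.14]{MP}, but that proposition is the comparison lemma (your Lemma~\ref{l:link}), not an eigenvalue identification; it says nothing about $\lambda_2^{(t)}$. Identifying the second Dirichlet eigenvalue in a box of radius $\sim t$ with the second-best \emph{local} eigenvalue up to $o(d_t)$ is a substantial result in its own right (comparable to the analysis in \cite[Section~5]{BKS16}), and nothing in the tools you invoke---Corollary~\ref{c:seppot}, Corollary~\ref{c:e}, the path expansion \eqref{e:pathexpeig}---gives it for free. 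In particular, the second eigenvalue in $D_t$ could a priori arise from the same microbox as the first, and ruling this out requires exactly the kind of global argument you have not supplied. Without this, you cannot discard the modes $k\ge2$ and the reduction to $\sum_{z\neq Z_t}\phi_1^{(t)}(z)/\phi_1^{(t)}(Z_t)$ is unjustified.

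Two smaller remarks. First, your path expansion for $\phi_1^{(t)}(y)/\phi_1^{(t)}(Z_t)$ is missing the prefactor $\sigma(y)/\sigma(Z_t)$ coming from \eqref{e:fkeig2}, and the product should start at $i=0$; this matters when summing over $y$, and is also why the paper's Proposition~\ref{p:eigloc} carries the extra factor $\sigma(Z_t)$. Second, the ``main obstacle'' you flag---controlling paths through competing microboxes on the macroscopic scale---is exactly what Proposition~\ref{prop:masspathset}(ii) is built for, and the paper applies it to the Feynman--Kac representation \eqref{e:fkeig} of $\phi_{D_t}$ in the proof of Proposition~\ref{p:eigloc}; your sketch (``absorbed using the strict gap'') would need to be replaced by that machinery.
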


Our strategy to prove the above is to compare the solution with the principal Dirichlet eigenfunction of the Bouchaud-Anderson operator in $D_t$; let $\phi_{D_t}$ denote this eigenfunction, which we take non-negative and normalised in $\ell^2$. We have the following comparison lemma.

\begin{lemma}
\label{l:link}
For each $z \in D_t$,
\begin{align*}
&   \frac{1}{U(t)}  \EE_0 \left[  \exp \left\{   \int_0^t \xi(X_s) \, \dd s  \right\}  \id \{  \tau_{Z_t}  \le  t < \tau_{D_t^\cc}   \} \id \{ X_t = z \}  \right] \le \frac{\sigma(Z_t) \|\sigma^{-\frac1{2}} \phi_{D_t}\|_{\ell^2}^2} { ( \phi_{D_t}(Z_t))^3 } \, \phi_{D_t}(z) .  
  \end{align*}
\end{lemma}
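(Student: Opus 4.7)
The plan is to apply the strong Markov property at $\tau := \tau_{Z_t}$, reducing the lemma to a pointwise comparison of the Dirichlet Feynman--Kac kernel
\[
u^{D_t}(s, z, y) := \EE_y\!\left[\exp\!\left\{\int_0^s \xi(X_r)\dd r\right\}\id\{X_s = z, \tau_{D_t^c} > s\}\right].
\]
Strong Markov at $\tau$ rewrites the numerator in the lemma as
\[
B := \EE_0\!\left[\exp\!\left\{\int_0^{\tau}\xi(X_s)\dd s\right\}\id\{\tau \le t, \tau < \tau_{D_t^c}\}\, u^{D_t}(t-\tau, z, Z_t)\right],
\]
and the same decomposition applied to $u^{D_t}(t, Z_t, 0)$ replaces $u^{D_t}(t-\tau, z, Z_t)$ by $u^{D_t}(t-\tau, Z_t, Z_t)$. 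Since $U(t) \ge u(t, Z_t) \ge u^{D_t}(t, Z_t, 0)$, the lemma will follow from the uniform-in-$s$ pointwise bound
\[
u^{D_t}(s, z, Z_t) \le \frac{\sigma(Z_t)\, \|\sigma^{-1/2}\phi_{D_t}\|_{\ell^2}^2}{\phi_{D_t}(Z_t)^3}\, \phi_{D_t}(z) \cdot u^{D_t}(s, Z_t, Z_t), \qquad s \in [0, t],
\]
as substitution then cancels the common integrand in the ratio $B/U(t)$.

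For the off-diagonal upper bound, I would exploit the eigenfunction identity. Since $\phi_{D_t}$ is a non-negative eigenfunction of $\Delta\sigma^{-1} + \xi$ on $D_t$ with eigenvalue $\lambda_{D_t}$, the semigroup satisfies
\[
\sum_{y \in D_t} u^{D_t}(s, z, y)\, \phi_{D_t}(y) = e^{s\lambda_{D_t}}\, \phi_{D_t}(z), \qquad z \in D_t.
\]
Using non-negativity of the Feynman--Kac kernel together with strict positivity of $\phi_{D_t}$ on $D_t$ (by Perron--Frobenius), discarding every summand except $y = Z_t$ yields $u^{D_t}(s, z, Z_t) \le e^{s\lambda_{D_t}}\phi_{D_t}(z)/\phi_{D_t}(Z_t)$.

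For the matching diagonal lower bound, I would appeal to the spectral expansion. Since the BTM is reversible with respect to~$\sigma$, the operator $\Delta\sigma^{-1}+\xi$ on $D_t$ factors as $\sigma(\sigma^{-1}\Delta + \xi)\sigma^{-1}$, with $\sigma^{-1}\Delta + \xi$ self-adjoint on $L^2(D_t,\sigma)$; equivalently, $\Delta\sigma^{-1}+\xi$ is self-adjoint on $L^2(D_t, \sigma^{-1})$, producing a real eigenbasis $\{\phi^{(k)}\}$ (with $\phi^{(0)} = \phi_{D_t}$) and the representation
\[
u^{D_t}(s, Z_t, Z_t) = \frac{1}{\sigma(Z_t)} \sum_k \frac{e^{s\lambda^{(k)}}\, \phi^{(k)}(Z_t)^2}{\|\sigma^{-1/2}\phi^{(k)}\|_{\ell^2}^2}.
\]
Each summand is manifestly non-negative, so retaining only $k = 0$ gives $u^{D_t}(s, Z_t, Z_t) \ge e^{s\lambda_{D_t}}\phi_{D_t}(Z_t)^2/[\sigma(Z_t)\, \|\sigma^{-1/2}\phi_{D_t}\|_{\ell^2}^2]$. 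Taking the ratio of the two bounds produces exactly the constant claimed in the lemma, which combined with the Markov decomposition finishes the proof. The only real subtlety is pinning down the correct Hilbert setting, $L^2(D_t, \sigma^{-1})$ rather than plain $\ell^2$, in which $\Delta\sigma^{-1}+\xi$ is self-adjoint; once that is in place, both the upper and lower bound reduce to discarding manifestly non-negative contributions from an eigenfunction or spectral expansion.
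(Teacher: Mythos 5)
Your argument is correct, and every step checks out: the strong Markov decomposition at $\tau_{Z_t}$ is applied consistently to both the numerator and to $u^{D_t}(t,Z_t,0)\le u(t,Z_t)\le U(t)$; the off-diagonal bound follows from the identity $\sum_{y\in D_t}u^{D_t}(s,z,y)\phi_{D_t}(y)=\ee^{s\lambda_{D_t}}\phi_{D_t}(z)$ (valid because the sum is over the \emph{starting} point and $\phi_{D_t}$ is an eigenfunction of $\Delta\sigma^{-1}+\xi$, which governs the kernel in its endpoint variable); and the diagonal lower bound is the $k=0$ term of the spectral expansion in the correct Hilbert space $\ell^2(D_t,\sigma^{-1})$, in which $\Delta\sigma^{-1}+\xi$ is indeed self-adjoint. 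The ratio of the two bounds reproduces exactly the constant in the statement. The difference from the paper is one of packaging rather than substance: the paper proves Lemma~\ref{l:link} in one line by invoking the quoted comparison result Lemma~\ref{l:3.14} (i.e.\ \cite[Proposition~3.14]{MP}, with the denominator there bounded above by $U(t)$), whereas you re-derive that comparison from scratch; your diagonal lower bound is precisely the mechanism behind Lemma~\ref{l:lbtotmass}, and your hitting-time/eigenfunction argument is essentially how the cited proposition is proved in \cite{MP} (and in \cite[Theorem~4.1]{GKM07} for the PAM). What your route buys is a self-contained proof that makes the role of the $\sigma^{-1}$-weighted inner product explicit; what the paper's route buys is brevity by outsourcing exactly this computation to the general theory of Section~\ref{ss:bam}.
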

\begin{proof}
This is a direct application of Lemma~\ref{l:3.14} below; it is similar in spirit to the equivalent comparison lemma in the PAM stated, for instance, in \cite[Theorem 4.1]{GKM07}.
\end{proof}

We next exploit the Feynman-Kac representation for $\phi_{D_t}$ (see \cite[Proposition 3.3]{MP}):
 \begin{align}
 \label{e:fkeig}
 \phi_{D_t}(z) =  \phi_{D_t} (Z_t)   \frac{\sigma(z)}{\sigma(Z_t)} \EE_{z} \left[  \exp \left\{   \int_0^{\tau_{Z_t}} (  \xi(X_s)  - \lambda_{D_t} ) \, \dd s  \right\}  \id \{  \tau_{Z_t}  < \tau_{D_t^\cc}  \}  \right] ,
 \end{align}    
where $\lambda_{D_t}$ denotes the principal Dirichlet eigenvalue of the Bouchaud-Anderson operator in $D_t$, corresponding to $\phi_{D_t}$. 
This representation is amenable to the path expansion analysis in Section~\ref{s:pathexp}. 
Here the restriction to paths in $D_t$ is crucial, 
since it ensures that $\lambda_{R_{L_t}}(z)$ is maximised at $z = Z_t$ with high probability;
this is necessary for the path expansion to be applicable.
This analysis yields the following result, 
which is at the heart of our argument.
\begin{proposition}
\label{p:eigloc}
 As $t \to \infty$,
\[  \sigma(Z_t)  \sum_{z \in D_t \setminus \{Z_t\} } \phi_{D_t}(z)  \to 0 \quad \text{ in probability.}   \]
\end{proposition}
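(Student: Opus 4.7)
The plan is to start from the Feynman--Kac representation \eqref{e:fkeig}, which after rearrangement gives
\[
\sigma(Z_t) \sum_{z \in D_t \setminus \{Z_t\}} \phi_{D_t}(z) \;=\; \phi_{D_t}(Z_t) \cdot S_t,
\]
where $S_t := \sum_{z \in D_t \setminus \{Z_t\}} \sigma(z) \, \EE_z\!\left[\exp\left\{\int_0^{\tau_{Z_t}}(\xi(X_s) - \lambda_{D_t})\, \dd s\right\} \id\{\tau_{Z_t} < \tau_{D_t^\cc}\}\right]$. Since $\phi_{D_t}(Z_t) \le \|\phi_{D_t}\|_{\ell^2} = 1$, it suffices to show that $S_t \to 0$ in probability. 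I would then apply the path-expansion techniques of Section~\ref{s:pathexp} to each Feynman--Kac expectation above, and reverse paths to rewrite
\[
S_t \;=\; \sum_{k \ge 1} \sum_{q} \sigma(q_k) \prod_{j=1}^{k} v_t(q_j), \qquad v_t(y) := \frac{1}{2d\bigl(1 + \sigma(y)(\lambda_{D_t} - \xi(y))\bigr)},
\]
where the inner sum runs over length-$k$ nearest-neighbour walks $q = (q_0, \ldots, q_k) \subset D_t$ with $q_0 = Z_t$ and $q_i \neq Z_t$ for $i \ge 1$.

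Next I would record some a priori bounds on the factors $v_t$. From \eqref{e:boundsEV} and \eqref{e:max} applied within $D_t \subseteq B_{L_t}$ together with Corollary~\ref{c:locpro}, one obtains $\lambda_{D_t} = \xi(Z_t) + o(1) = a_{L_t} + o(1)$. Consequently, Corollary~\ref{c:locpro} delivers $\lambda_{D_t} - \xi(y) \to \infty$ uniformly for $y \in B_{m_t}(Z_t) \setminus \{Z_t\}$ for a suitably slowly growing $m_t \to \infty$, while Corollary~\ref{c:seppot} gives $\lambda_{D_t} - \xi(y) \ge 3\delta_\sigma^{-1}$ for $y \in B_{2R_{L_t}}(Z_t) \setminus \{Z_t\}$, so that $v_t(y) \le 1/(8d)$ throughout that larger ball. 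Moreover, for any $y \in D_t \setminus \Pi_{L_t,\delta}$, the defining condition $\xi(y) \le a_{L_t} - \delta$ yields the uniform bound $v_t(y) \le 1/(2d(1+\delta_\sigma \delta))$.

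The contribution to $S_t$ from walks remaining within $B_{2R_{L_t}}(Z_t)$ is $o(1)$: the first step satisfies $v_t(q_1) = o(1/(2d))$, subsequent steps contribute factors at most $1/(8d)$, and the endpoint factor obeys $\sigma(q_k) v_t(q_k) \le \delta_\sigma/(8d)$; summing over at most $(2d)^k$ walks of length $k$ yields a geometric series with ratio $1/4$. Walks that exit $B_{2R_{L_t}}(Z_t)$ while avoiding every other exceedance in $\Pi_{L_t,\delta}$ have length at least $2R_{L_t}$ and individual factors $v_t \le 1/(2d(1+\delta_\sigma \delta))$, so summation produces a total of order $(1+\delta_\sigma\delta)^{-2R_{L_t}} = o(1)$ by \eqref{e:assumpRL}.

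The main obstacle is controlling walks that visit additional exceedances $y \in \Pi_{L_t,\delta} \cap D_t \setminus \{Z_t\}$, for which $v_t(y)$ need not be pointwise small. I would address this by factorising such walks according to the sequence of exceedances they visit, noting via Corollary~\ref{c:seppot} that distinct exceedances are separated by at least $2R_{L_t}$ and via $|D_t| \le L_t^d$ that the number of exceedances is at most polynomial in $t$. Inside a microbox $B_{R_{L_t}}(y)$ around each visited $y$, the local path expansion \eqref{e:pathexpeig} together with the spectral gap $\lambda_{D_t} - \lambda_{R_{L_t}}(y) \ge \Psi^{\ssst (1)}_t - \Psi^{\ssst (2)}_t \ge d_t f_t$ from Corollary~\ref{c:e} bounds the contribution of excursions lingering near $y$ by a factor that grows at most polynomially in $\ln t$, while each traversal between separated exceedances costs $O((1+\delta_\sigma\delta)^{-2R_{L_t}})$, which by \eqref{e:assumpRL} decays faster than any power of $\ln t$ and thus dominates the polynomial price per excursion. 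Summing over all such walks then yields the desired $o(1)$ bound, completing the proof.
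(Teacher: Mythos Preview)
Your approach is correct and follows essentially the same strategy as the paper, though packaged differently. Both proofs start from the Feynman--Kac representation \eqref{e:fkeig} and rely on the same three ingredients: (i) the crucial fact that $\lambda_{D_t} - \xi(y) \to \infty$ for $y$ near $Z_t$ (Corollary~\ref{c:locpro}), which produces the $o(1)$ factor driving localisation; (ii) the uniform bound $v_t(y) \le q_\delta/(2d)$ outside $\Pi_{L_t,\delta}$; and (iii) the cluster expansion (Lemma~\ref{l:massupexit}) combined with the spectral gap of Lemma~\ref{l:valid} to control excursions near other exceedances.

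The paper differs in organisation: rather than reversing paths and working inline, it splits each starting point $y$ according to whether the path exits $B_{\ln L_t}(y)$, handles short paths via the eigenfunction comparison of Lemma~\ref{l:bdlocEFs}, and applies the packaged Proposition~\ref{prop:masspathset}(ii) to long paths. That proposition is proved via Lemma~\ref{l:massoneclass}, which is precisely the path--excursion decomposition you sketch in your final paragraph. The paper's machinery additionally tracks the count $M^{L,\varepsilon}_p$ of ``moderately low'' potential values, yielding a sharper bound with $\ln_3 L_t$ in the exponent; your simpler bound (using only $q_\delta$) suffices here since only $S_t = o(1)$ is needed.

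A few details in your sketch would need care: the traversal between an exit point of one microbox and the next exceedance has length at least $R_{L_t}$ rather than $2R_{L_t}$ (still enough); the remark about $|\Pi_{L_t,\delta}|$ being polynomial in $t$ is a red herring, since summing over \emph{paths} (not over sequences of exceedances) already absorbs that combinatorics; and the endpoint factor $\sigma(q_k) v_t(q_k)$ requires attention when $q_k$ is itself an exceedance, though this is exactly where the factor $1/(\sigma(p_0)\vee 1)$ in Lemma~\ref{l:massoneclass}(ii) comes from. None of these is a genuine obstruction.
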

Proposition~\ref{p:eigloc} will be proved in Section~\ref{ss:loc},
and is the key for complete localisation of the BAM.
We note that the same result is \emph{not} true for the PAM,
as the corresponding localisation process is \emph{not} eventually
in $\Pi_{L_t, \eta}$ for all $\eta>0$ (i.e., Corollary~\ref{c:locpro} is not valid).

We may now complete the proof of Lemma~\ref{l:mainloc}.
 
\begin{proof}[Proof of Lemma~\ref{l:mainloc}]
Fix $\varepsilon > 0$ and choose $\bar \varepsilon > 0$ such that
$ \bar{\varepsilon} \delta_\sigma^{-1}/(1 - \bar{\varepsilon} \delta_\sigma^{-1})^3  < \varepsilon$.
Using Proposition \ref{p:eigloc}, Assumption~\ref{a:3} and $\| \phi_{D_t} \|_{\ell^1} \ge \|\phi_{D_t}\|_{\ell^2}^2 = 1$,
we obtain
\begin{align}
\label{e:2}
\sigma(Z_t)  \sum_{z \in D_t \setminus \{Z_t\} } \phi_{D_t}(z)  < \bar \varepsilon  \qquad \text{and} \qquad  \phi_{D_t}(Z_t)   > 1 - \bar{\varepsilon} \delta_\sigma^{-1}
\end{align}
with high probability.
By Lemma~\ref{l:link} and since
$ \|\sigma^{-\frac1{2}} \phi_{D_t}\|_{\ell^2}^2 \le \delta_\sigma^{-1}$ by \ref{a:3} again,
we get
\begin{align*}
\frac{1}{U(t)} \sum_{z \in D_t \setminus \{Z_t\}}  \EE_0 \left[  \exp \left\{   \int_0^t \xi(X_s) \, \dd s  \right\}  \id \{  \tau_{Z_t}  \le  t \le \tau_{D_t^\cc}   \} \id \{ X_t = z \}  \right]  < \frac{  \bar{\varepsilon} \delta_\sigma^{-1} } { (1 -\bar{\varepsilon} \delta_\sigma^{-1})^3 }   < \varepsilon 
\end{align*}
with high probability, as required.
 \end{proof}
 
To finish this section, we complete the proof of Theorems \ref{t:gen1} and \ref{t:gen2} subject to the auxiliary results that remain to be proved, namely Propositions~\ref{p:seppot}, \ref{p:orderstat}, \ref{p:sd}, \ref{p:lb}, \ref{p:ub} and \ref{p:eigloc}.

\begin{proof}[Proof of Theorem \ref{t:gen1}]
Follows from \eqref{e:fk} together with Lemmas~\ref{l:mainneg} and \ref{l:mainloc}.
\end{proof}

\begin{proof}[Proof of Theorem \ref{t:gen2}]
Apply Theorem~\ref{t:gen1}, Proposition~\ref{p:sd} and Corollary~\ref{c:locpro}.
\end{proof}

\subsection{The log-Weibull case}\label{ss:logweib}

We now give a brief overview of the proof of our refined results in the log-Weibull case. 
We begin with a decorrelation result for local functionals.

\begin{proposition}\label{p:decorrelation}
Fix $k_1, k_2 \in \NN$ and let $\psi_t: \ZZ^d \to \RR$ be a $(k_1, k_2)$-local
functional as in Definition~\ref{d:r1r2local}.
Define a point $Z^\psi_t \in \Pi_{L_t, \delta}$ uniquely by requiring
\begin{equation}\label{e:defZpsi}
\psi_t(Z^\psi_t) = \max_{z \in \Pi_{L_t, \delta}} \psi_t(z) \quad \text{ and } \quad Z^\psi_t \succeq z \;\,\forall\, z \in \Pi_{L_t, \delta} \text{ such that } \psi_t(z) = \psi_t(Z^\psi_t).
\end{equation}
Then, for any $y_1, y_2 \in \ZZ^d$ with $|y_1|> k_1$, $|y_2|>k_2$,
the pair $(\xi(Z^\psi_t+y_1), \sigma(Z^\psi_t+y_2))$ 
converges in distribution to $(\xi(0),\sigma(0))$ as $t \to \infty$.
\end{proposition}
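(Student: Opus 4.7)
The strategy rests on two facts: the $(k_1, k_2)$-locality of $\psi_t$, and the separation property of $\Pi_{L_t,\delta}$ from Corollary~\ref{c:seppot}. Since $R_{L_t} \to \infty$ while $k_1, k_2, |y_1|, |y_2|$ are fixed, the first step is to observe that, for $t$ large and on the event of Corollary~\ref{c:seppot}, the following hold for every $z \in \Pi_{L_t,\delta}$: neither $z + y_1$ nor $z + y_2$ belongs to $\Pi_{L_t,\delta}$ (a distinct point of $\Pi_{L_t,\delta}$ would need to lie at distance at least $2 R_{L_t}$ from $z$, which fails since $|y_i|$ is bounded; coincidence with $z$ is ruled out by $|y_i| > k_i \ge 0$), and moreover $\xi(z + y_1)$ (resp.\ $\sigma(z + y_2)$) does not appear in the definition of $\psi_t(z')$ for any $z' \in \Pi_{L_t,\delta}$: taking $z' = z$ would force $|y_i| \le k_i$, contrary to hypothesis; taking $z' \ne z$ would force $|z - z'| \le k_i + |y_i|$, again violating separation.

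The next step is to condition on the pair $(S, \mathcal{X})$, where $S := \Pi_{L_t,\delta}$ and $\mathcal{X}$ is the restriction of $(\xi, \sigma)$ to the ``information region'' $\mathcal{N}(S) := \bigcup_{z' \in S} (B_{k_1}(z') \cup B_{k_2}(z'))$. By locality, $(\psi_t(z'))_{z' \in S}$ and hence $Z^\psi_t$ are measurable functions of $(S, \mathcal{X})$. Moreover, given $S$, the coordinates of $\xi$ are conditionally independent, with law $\xi(0) \mid \xi(0) > a_{L_t} - \delta$ on $S$, $\xi(0) \mid \xi(0) \le a_{L_t} - \delta$ on $B_{L_t} \setminus S$, and unconditional $\xi(0)$ outside $B_{L_t}$; the field $\sigma$ is independent of $S$ with unconditional marginals. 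Combining with the previous paragraph, for $t$ large and on the good event, $\xi(Z^\psi_t + y_1)$ and $\sigma(Z^\psi_t + y_2)$ are conditionally independent of $(S, \mathcal{X})$ (and in particular of the event $\{Z^\psi_t = z\}$), with respective laws $\xi(0) \mid \xi(0) \le a_{L_t} - \delta$ (or plain $\xi(0)$ in the degenerate case $Z^\psi_t + y_1 \notin B_{L_t}$) and $\sigma(0)$.

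Finally, since $a_{L_t} \to \infty$ by Assumption~\ref{a:1}, the truncation $\{\xi(0) \le a_{L_t} - \delta\}$ has probability tending to $1$, so both candidate conditional laws of $\xi(Z^\psi_t + y_1)$ converge weakly to $\xi(0)$; averaging over the conditioning then yields the claimed joint weak convergence to $\xi(0) \otimes \sigma(0)$. The main point to pin down rigorously is the conditional independence claim of the previous paragraph, which should reduce to a short measurability computation once the geometric picture (separation plus locality) is in place. A minor caveat is to work on the high-probability event that $\Pi_{L_t,\delta}$ is non-empty so that $Z^\psi_t$ is well defined, which holds eventually almost surely by~\eqref{e:max}.
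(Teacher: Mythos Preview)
Your approach is correct and takes a different route from the paper. The paper invokes the coupling of Lemma~\ref{l:coupling}, replacing $(\xi,\sigma)$ near each $z\in\Pi_{L_t,\delta}$ by an independent copy $(\xi^z,\sigma^z)$; since the event $\{\widehat{Z}_t=z\}$ then depends on $(\xi^z,\sigma^z)$ only through $B_{k_1}(z)\times B_{k_2}(z)$, its independence from $(\xi^z(z+y_1),\sigma^z(z+y_2))$ is immediate and one simply sums over $z$. Your argument instead conditions directly on $S=\Pi_{L_t,\delta}$ together with the local fields, which is more self-contained (it avoids building the coupling) but requires handling the truncated conditional law of $\xi$ off $S$. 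Both methods work with comparable effort; the coupling cleanly decouples everything, while your approach is elementary.

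One point to tighten: for your conditional-independence step to go through verbatim, the ``good event'' you restrict to should be $(S,\mathcal{X})$-measurable, otherwise conditioning on it can perturb the conditional law of $\xi(Z^\psi_t+y_1)$. The full event of Corollary~\ref{c:seppot} constrains $\xi$-values in mesoscopic balls around $S$ and is \emph{not} $(S,\mathcal{X})$-measurable. However, the only consequence you actually use --- that distinct points of $S$ lie at mutual distance exceeding $2R_{L_t}$ --- depends on $S$ alone and hence is measurable in your conditioning. Replacing the former by the latter in your write-up removes the ambiguity; both events have probability tending to one.
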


Proposition~\ref{p:decorrelation} will be proved in Section~\ref{ss:conseqPP} by means of a coupling argument,
using the separation properties of Corollary~\ref{c:seppot}.

Recall now the \textit{radii of influence} $\rho_\xi, \rho_\sigma$ from \eqref{e:defradiiinf}.
We define next a $(\rho_\xi, \rho_\sigma)$-local functional satisfying \eqref{e:spec1}.
For $z \in \ZZ^d$ and $r_2 \ge r_1 > 0$, let $\lambda_{r_1, r_2}(z)$ be the principal eigenvalue of
\[    \Delta \invsigma + \xi  \id_{B_{r_1}(z)}  \]
in $B_{r_2}(z)$ with zero Dirichlet boundary conditions.
Define the $(\rho_\xi, \rho_\sigma)$-local functional
\[  
{\Psi}^{\rho_\xi,\rho_\sigma}_t(z) := 
\left\{\begin{array}{ll}
\lambda_{\rho_\xi, \rho_\sigma}(z) - \frac{ \ln_3 t}{t} |z| & \text{ if } z \in \Pi_{L_t,\delta}, \\  
-\infty & \text{ if } z \in \ZZ^d \setminus \Pi_{L_t, \delta}.
\end{array}\right.
\]
Our next result shows that its $\argmax$ equals the singleton $\{Z_t\}$ with high probability,
where $Z_t$ is the process from Theorems~\ref{t:gen1}--\ref{t:gen2}.
\begin{proposition}
\label{p:zhatz}
As $t \to \infty$, $\PPP( \argmax {\Psi}^{\rho_\xi,\rho_\sigma}_t = \{Z_t\} ) \to 1$.
\end{proposition}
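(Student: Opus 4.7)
The plan is to compare $\Psi^{\rho_\xi,\rho_\sigma}_t$ with $\Psi_t$ on $\Pi_{L_t,\delta}$ and exploit the spectral gap of $\Psi_t$ at $Z_t$ furnished by Corollary~\ref{c:e}. Since both functionals share the penalisation term $-\frac{\ln_3 t}{t}|z|$, the task reduces to controlling $|\lambda_{R_{L_t}}(z) - \lambda_{\rho_\xi,\rho_\sigma}(z)|$. My target is to prove that, with high probability, the following hold simultaneously: (a) $\lambda_{\rho_\xi,\rho_\sigma}(Z_t) \ge \lambda_{R_{L_t}}(Z_t) - o(d_t)$, and (b) $\lambda_{\rho_\xi,\rho_\sigma}(z) \le \lambda_{R_{L_t}}(z) + o(d_t)$ uniformly in $z \in \Pi_{L_t,\delta}$. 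Once (a)--(b) are established, Corollary~\ref{c:e} supplies $f_t \to 0$ with $\Psi^{\ssst(1)}_t - \Psi^{\ssst(2)}_t > d_t f_t$, which, taking $f_t$ larger than the error scales in (a)--(b), gives $\Psi^{\rho_\xi,\rho_\sigma}_t(Z_t) > \Psi^{\rho_\xi,\rho_\sigma}_t(z)$ for every $z \in \Pi_{L_t,\delta}\setminus\{Z_t\}$, and hence $\argmax \Psi^{\rho_\xi,\rho_\sigma}_t = \{Z_t\}$ with high probability.

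To prove (a), I would apply the path expansion~\eqref{e:pathexpeig} at $y = z = Z_t$ for both $\lambda_{R_{L_t}}(Z_t)$ and $\lambda_{\rho_\xi,\rho_\sigma}(Z_t)$ and subtract. The common contribution comes from loops $p \in \Gamma_k(Z_t, Z_t)$ with $\Set(p) \subseteq B_{\rho_\sigma}(Z_t)$ and all intermediate sites in $B_{\rho_\xi}(Z_t)$; the remaining terms involve either a path exiting $B_{\rho_\sigma}(Z_t)$ or a visit to a site outside $B_{\rho_\xi}(Z_t)$. Theorem~\ref{t:spec2} describes $\xi$ and $\sigma$ precisely at such sites — the relevant values are typical (i.e.\ $\xi(Z_t+y) \Rightarrow \xi(0)$ for $|y|>\rho_\xi$, and $\sigma(Z_t+y) \Rightarrow \sigma(0)$ for $|y|>\rho_\sigma$), so each additional factor in such a path weight is of order $(\varrho\ln_2 t)^{-1}$. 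A combinatorial sum, keyed to the specific values $\rho_\xi = \lfloor (\mu-1)/2 \rfloor$ and $\rho_\sigma = \lfloor \mu/2 \rfloor$, then yields a total error of order $o((\ln t)^{-1}) = o(d_t)$, matching exactly the scale at which truncation becomes irrelevant.

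For (b), the difficulty is that the precise profile of Theorem~\ref{t:spec2} is available only at $Z_t$. My approach is to use the path expansion \eqref{e:pathexpeig} together with the \emph{a priori} bounds \eqref{e:boundsEV} and the separation estimate of Corollary~\ref{c:seppot}, which guarantee, for each $z \in \Pi_{L_t,\delta}$, that $\lambda_{R_{L_t}}(z) - \xi(p_i) \ge 2\delta_\sigma^{-1}$ for all $p_i \ne z$ in $B_{2R_{L_t}}(z)$. This makes each factor in the expansion bounded by $(1 + 2\delta_\sigma/\delta_\sigma)^{-1} < 1$, so the tail of the expansion decays geometrically in path length, and contributions from paths of length $> \rho_\sigma$ or from sites outside $B_{\rho_\xi}(z)$ are dominated by an expression that, in moments, is at most a power of $\ln t$ times a typical path weight. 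Since $|\Pi_{L_t,\delta}|$ is polylogarithmic in $t$ (being the number of exceedances of $a_{L_t} - \delta$ in $B_{L_t}$, comparable to $L_t^d \PPP(\xi(0) > a_{L_t}-\delta) \asymp (\ln t)^{O(1)}$), a Markov inequality and union bound over $\Pi_{L_t,\delta}$ upgrade this to the uniform conclusion $|\lambda_{\rho_\xi,\rho_\sigma}(z) - \lambda_{R_{L_t}}(z)| = o(d_t)$.

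The main obstacle is this last uniform estimate in (b): whereas at $Z_t$ we have the explicit local profile of Theorem~\ref{t:spec2}, at other exceedance points we must control the path expansion using only the a priori environment bounds. The key technical input will be the path-expansion machinery of Section~\ref{s:pathexp}, used quantitatively rather than for asymptotics, together with the observation that the radii $\rho_\xi$ and $\rho_\sigma$ are chosen precisely so that paths beyond them carry weight $o(d_t)$ even at a \emph{typical} exceedance site, not only at the optimised site $Z_t$.
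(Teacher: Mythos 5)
Your overall skeleton (compare the two functionals, then absorb eigenvalue errors into the gap $\Psi^{\ssst (1)}_t - \Psi^{\ssst (2)}_t > d_t f_t$ from Corollary~\ref{c:e}) agrees with the paper, and your step (a) is essentially the paper's key claim \eqref{eq:eigclaim0}. The genuine gap is step (b). The uniform estimate $\lambda_{\rho_\xi,\rho_\sigma}(z) \le \lambda_{R_{L_t}}(z) + o(d_t)$ over $z \in \Pi_{L_t,\delta}$ is not just hard to prove with the a priori bounds you cite — it fails at a typical exceedance site. In the path expansion \eqref{e:pathexpeig} at such a $z$ the prefactor is $\sigma(z)^{-1} = O(1)$, not $\asymp (\ln_2 t)^{\mu-1}/\ln t$ as at $Z_t$; truncating the potential outside $B_{\rho_\xi}(z)$ (or the domain to $B_{\rho_\sigma}(z)$) then shifts the eigenvalue by a fixed power of $(\ln_2 t)^{-1}$ — e.g.\ replacing a typical (possibly negative) value $\xi(x)$ at distance $\rho_\xi+1$ by $0$ changes the corresponding loop contribution by order $\sigma(z)^{-1}(\ln_2 t)^{-2\rho_\xi-2}$, which dwarfs $d_t \asymp 1/\ln t$. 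The radii \eqref{e:defradiiinf} make truncation invisible only in combination with the deep trap $\sigma(Z_t) \sim d\ln t/(\mu\varrho(\ln_2 t)^{\mu-1})$; your closing assertion that paths beyond the radii carry weight $o(d_t)$ ``even at a typical exceedance site'' is precisely the misconception. The paper needs no such uniform estimate: it replaces (b) by the one-sided, error-free inequality $\lambda_{\rho_\xi,\rho_\sigma}(z) \le \lambda_{R_{L_t}}(z)$, hence $\Psi^{\rho_\xi,\rho_\sigma}_t(z) \le \Psi_t(z)$, obtained from monotonicity of the principal eigenvalue in the domain and the potential (\cite[Lemma~3.1]{MP}), so that the only probabilistic input is the comparison at the single site $Z_t$, i.e.\ your (a).

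Two further points. First, your union bound rests on $|\Pi_{L_t,\delta}|$ being polylogarithmic in $t$; in fact $\EEE |\Pi_{L_t,\delta}| \asymp L_t^d\, \PPP(\xi(0) > a_{L_t}-\delta) \asymp L_t^{d(1-\ee^{-\delta/\varrho})}$, a positive power of $t$, so a Markov bound losing only powers of $\ln t$ cannot be summed over $\Pi_{L_t,\delta}$. Second, even for (a), the fixed-$y$ weak limits of Theorem~\ref{t:spec2} are not quite the right input: to control long loops that stay in $B_{R_{L_t}}(Z_t)$ but exit $B_{\rho_\sigma}(Z_t)$ (the paper's term $T_3$) you need uniform control of $\xi$ on the whole mesoscopic ball — supplied in the paper by the coupling with truncated fields (Lemma~\ref{l:coupling}) together with Corollary~\ref{c:seppot} — and the quantitative local profile of Lemma~\ref{l:profilearoundZt}, in particular $\sigma(Z_t) \ge \tfrac12 q_{\sigma,t}$, which is the source of the decisive $(\ln_2 t)^{\mu-1}/\ln t$ factor. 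With those inputs, your computation in (a) reproduces the paper's $T_1+T_2+T_3$ estimate; without repairing (b) along the monotonicity route, however, the argument does not close.
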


The proof of Proposition \ref{p:zhatz} will be given in Section~\ref{s:spec} together with the proof of Theorem~\ref{t:spec2}. 
They are obtained by refining the method of Section~\ref{s:os} below, in particular reducing the problem to an analysis of the upper tail of the random variable $ \lambda_{\rho_\xi, \rho_\sigma}(0)$ and determining the shape of the local profile of $\xi$ and $\sigma$ that dominates if $ \lambda_{\rho_\xi, \rho_\sigma}(0)$ is conditioned to be large. This analysis is fairly technical, and we defer the details to Section~\ref{s:spec}.

It remains to complete the proof of Theorem~\ref{t:spec1} subject to Theorem~\ref{t:spec2} and to Propositions~\ref{p:seppot}, \ref{p:decorrelation} and~\ref{p:zhatz}.

\begin{proof}[Proof of Theorem \ref{t:spec1}]
One direction is immediate from Theorem \ref{t:gen1} and Proposition \ref{p:zhatz}. 
For the converse, suppose that $k_1 < \rho_\xi$ and that there exists a $(k_1, k_2)$-local functional $\psi_t$ satisfying \eqref{e:spec1} (the case $k_2 < \rho_\sigma$ is similar).
Take $Z^\psi_t$ as in Proposition~\ref{p:decorrelation}.
Since, by Corollary~\ref{c:locpro}, $Z_t \in \Pi_{L_t, \delta}$ with high probability,
\[
\lim_{t \to \infty} \PPP \left(Z^\psi_t = Z_t \right) =1.
\]
Fixing now $y \in \ZZ^d$ with $|y| = \rho_{\xi} \ge 1$,
we obtain a contradiction between Proposition~\ref{p:decorrelation} and Theorem~\ref{t:spec2},
as the first implies that $\xi(Z_t + y)$ converges in law to $\xi(0)$, 
while the second implies convergence either to $\infty$ or to a random variable different from $\xi(0)$.
\end{proof}


\smallskip

\section{Preliminary results}
\label{s:prelim}

In this section we state preliminary results, establishing the separation properties of the potential in Proposition~\ref{p:seppot}, recalling elements of the general theory of Bouchaud-Anderson operators and then applying this general theory to our setting. 

\subsection{Proof of Proposition~\ref{p:seppot}}
\label{ss:prpseppot}
Fix $\tilde{\varrho} \in (0,\varrho)$ and $\tilde{c} > c$, and let $\eta> 0$ be such that 
\[ \eta < \tilde{\eta} := \tilde{\varrho} \exp \left\{ - \tilde{c}/\tilde{\varrho} \right\}. \]
Since $\ee^{-x} > 1 - x$ for $x > 0$, we have
\begin{equation}
\label{e:4}
1 - \ee^{-\tilde{\eta}/ \tilde{\varrho}} - \ee^{-\tilde{c}/ \tilde{\varrho}} < 0 .
\end{equation} 
Define the event
\[  \mathcal{A}_L := \left\{ \exists\, x \in B_L, y \in B_{m_L}(x) \setminus \{x\} \colon\, \xi(x) > a_L -  \tilde{\eta}, \xi(y) > a_L - \tilde{c} \right\}. \]
Using \ref{a:1}, it is straightforward to show that, for any $u>0$ and all large enough $L$,
\begin{equation*}\label{e:tailxi}
\PPP \left( \xi(0) > a_L - u \right) < L^{-d \ee^{-u/\tilde{\varrho}}},
\end{equation*}
so that, using \eqref{e:4}, $\ln m_L \ll  \ln L$ and a union bound, we obtain
$ \PPP \left(\mathcal{A}_L \right) <  L^{ -c_0}    $
for some $c_0 > 0$. Hence, by the Borel-Cantelli lemma,
\begin{equation}\label{e:prseppot3}
\PPP \left(\mathcal{A}_{2^n} \text{ occurs for infinitely many } n \right) = 0.
\end{equation}
Now, for $L>1$, let $n \in \NN$ be such that $2^{n-1} < L \le 2^n$. Since $\lim_{L \to \infty}|a_{2L} - a_L|=0$, when $L$ is large enough we have $a_L - \eta > a_{2^n} -  \tilde{\eta}$ and $a_L - c > a_{2^n} -  \tilde{c}$. Since $B_{L} \subseteq B_{2^n}$ and $B_{m_L}(x) \subseteq B_{m_{2^n}}(x)$, 
\eqref{e:seppot} follows from \eqref{e:prseppot3}.
\hfill $\qed$

\subsection{General properties of Bouchaud-Anderson operators}
\label{ss:bam}

Here we recall elements of the general theory of Bouchaud-Anderson operators that hold for arbitrary deterministic potential fields $\xi$ and trapping landscapes $\sigma$; this theory was developed in \cite{MP}. We have already introduced some of these elements in Lemma~\ref{l:link} and in~\eqref{e:pathexpeig}, \eqref{e:boundsEV} and \eqref{e:fkeig}.

We first introduce some path notation. Recall the definition, for sites $y, z \in \ZZ^d$ and an integer $k$, 
of the set $\Gamma_k(y, z) = \{p=(p_0, \ldots, p_k) \colon\, p_0 = y, p_k = z, |p_i - p_{i-1}|=1 \,\forall\, 1 \le i \le k\}$
of nearest-neighbour paths starting at $y$ and ending at $z$ in $k$ steps.
Similarly, denote
\[ \Gamma_k(y) := \bigcup_{z \in \ZZ^d} \Gamma_k (y, z) \ , \quad \Gamma(y, z) := \bigcup_{k \in \NN_0} \Gamma_k(y, z) \]
\[ \Gamma(y) := \bigcup_{k \in \NN_0} \Gamma_k (y) \ ,  \quad  \Gamma_k := \bigcup_{y \in \ZZ^d} \Gamma_k(y) \ , \quad \Gamma := \bigcup_{y \in \mathbb{Z}^d} \Gamma(y) . \]
For a path $p \in \Gamma_k(y, z)$, denote $|p| := k$. For a nearest neighbour continuous-time random walk $X$, let $p(X_t) \in \Gamma(X_0)$ denote the geometric path associated with the trajectory of $\{X_s\}_{s \le t}$ and let $p_k(X) \in \Gamma_k(X_0)$ denote the geometric path associated with the random walk $\{X_s\}_{s \ge 0}$ up to and including its $k^{\rm{th}}$ jump. Let $T_k$ denote the $k^{\rm{th}}$ jump time of $X$.

Our first lemma gives a path-wise evaluation of the Feynman-Kac formula \eqref{e:fk}.
 
\begin{lemma}[Path-wise evaluation; see {\cite[Lemma~3.4]{MP}}]
\label{l:patheval}
For any $k \in \NN_0$, $p \in \Gamma_k$ and $\gamma > \max_{0 \le i \le k-1} \xi(p_i)$,
\begin{equation}\label{e:patheval}
\EE_{p_0} \left[ \exp \left\{ \int_0^{T_k} (\xi(X_s) - \gamma) \, \dd s \right\} \id \{p_k(X) = p \} \right] = \prod_{i=0}^{k-1} \frac{1}{2d} \frac{1}{1+ \sigma(p_i)(\gamma - \xi(p_i))}.
\end{equation}
\end{lemma}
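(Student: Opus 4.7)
The plan is to evaluate the expectation directly by exploiting the explicit construction of the BTM. Recall that from site $z$, the walker waits an exponential time with rate $\sigma(z)^{-1}$, independently of its past, and then jumps to a uniformly chosen nearest neighbour with probability $1/(2d)$. Writing $\tau_i := T_{i+1} - T_i$ for the $i$-th sojourn time (with $T_0 := 0$), I would first observe that on the event $\{p_k(X) = p\}$ the variables $\tau_0, \ldots, \tau_{k-1}$ are mutually independent with $\tau_i$ having density $s \mapsto \sigma(p_i)^{-1} \ee^{-s/\sigma(p_i)}$ on $(0,\infty)$, and that each of the $k$ successive jumps selects the prescribed neighbour with probability $(2d)^{-1}$, independently of the sojourn times.

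Second, since $X_s = p_i$ for $s \in [T_i, T_{i+1})$, on this event the integrated potential simplifies to
\[
\int_0^{T_k} (\xi(X_s) - \gamma) \, \dd s = \sum_{i=0}^{k-1} \tau_i \, (\xi(p_i) - \gamma).
\]
Combining these two observations, the expectation on the left-hand side of \eqref{e:patheval} factors as a product of one-dimensional integrals, one per site of $p$:
\[
\prod_{i=0}^{k-1} \frac{1}{2d} \int_0^\infty \sigma(p_i)^{-1} \exp\bigl\{-\tau \bigl(\sigma(p_i)^{-1} + \gamma - \xi(p_i)\bigr) \bigr\} \, \dd \tau.
\]
The hypothesis $\gamma > \xi(p_i)$ ensures that the coefficient of $\tau$ in the exponent is strictly positive, so each integral converges absolutely; computing each as a Laplace transform and cancelling the $\sigma(p_i)^{-1}$ prefactor yields $\{1 + \sigma(p_i)(\gamma - \xi(p_i))\}^{-1}$, exactly matching the claimed product.

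There is no substantive obstacle here: the lemma is essentially a Laplace-transform computation, and the only point requiring care is ensuring integrability of each exponential on $[0,\infty)$, which is precisely what the assumption $\gamma > \max_{0 \le i \le k-1} \xi(p_i)$ provides. The argument could alternatively be organised by induction on $k$, conditioning on the first jump and invoking the strong Markov property of $X$ at time $T_1$, but the direct factorisation via independence of sojourn times is cleaner and more transparent.
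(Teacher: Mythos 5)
Your proof is correct, and it is essentially the standard argument: the present paper simply cites \cite[Lemma~3.4]{MP} for this statement, and the direct computation you give — independence of the i.i.d.\ exponential sojourn times (rate $\sigma(p_i)^{-1}$, since the total jump rate out of $z$ is $2d\cdot(2d\sigma(z))^{-1}=\sigma(z)^{-1}$) from the discrete-time jump chain, factorisation of the integrated potential along the path, and evaluation of each one-dimensional Laplace transform using $\gamma>\xi(p_i)$ — is precisely how such a path-wise evaluation is established. No gaps.
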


We next give an upper bound on the contribution to the Feynman-Kac formula \eqref{e:fk} from the portion of a path starting from a site $z$ up until its exit from the ball $B_{r}(z)$. Recall that $\lambda_r(z)$ is the principal Dirichlet eigenvalue of $\Delta \sigma^{-1}+\xi$ in $B_r(z)$.

\begin{lemma}[Cluster expansion; see {\cite[Lemma~3.13]{MP}}]
\label{l:massupexit}
For each $z \in \ZZ^d$, $r > 0$ and $\gamma > \lambda_{r}(z)$,
\begin{equation}\label{e:massupexit}
\EE_z \left[\exp  \left\{ \int_0^{\tau_{B^\cc_{r}(z)}} \left( \xi(X_s) - \gamma \right) \, \dd s \right\} \right] \le 1 + \frac{\max_{y \in B_{r}(y) }\{ \invsigma(y) \} \, |B_{r}|}{\gamma - \lambda_{r}(z)}.
\end{equation}
\end{lemma}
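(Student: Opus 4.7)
My plan is to identify the left-hand side with a Green's-function expression for the Dirichlet resolvent of the operator $\sigma^{-1}\Delta + \xi$ on $B_r(z)$ --- the transpose of the BAM operator $\Delta\invsigma + \xi$, which is the generator of the Bouchaud trap model and hence has the same Dirichlet spectrum --- evaluated at the spectral parameter $\gamma$, and then to exploit the spectral gap $\gamma - \lambda_r(z) > 0$ together with the \emph{a priori} bound~\eqref{e:boundsEV} to derive the explicit estimate.

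I will first set $F(y) := \EE_y[\exp\{\int_0^{\tau_{B^\cc_r(z)}}(\xi-\gamma)\,\dd s\}]$ for $y \in \ZZ^d$, so $F \equiv 1$ on $B^\cc_r(z)$ and the left-hand side of~\eqref{e:massupexit} is $F(z)$; Dynkin's formula applied to the BTM will show that $F$ satisfies $(\sigma^{-1}\Delta + \xi - \gamma)F = 0$ on $B_r(z)$ with boundary value $1$. Using $\sigma^{-1}\Delta(1) = 0$, the function $H := F - 1$ then solves the inhomogeneous Dirichlet problem $\mathcal{L}_\gamma H = \gamma - \xi$ on $B_r(z)$ with $H \equiv 0$ outside, where $\mathcal{L}_\gamma := \sigma^{-1}\Delta + \xi - \gamma$ is self-adjoint in the weighted space $\ell^2(B_r(z),\sigma)$ and has principal eigenvalue $\lambda_r(z) - \gamma < 0$ (equal to that of $\Delta\invsigma + \xi - \gamma$ by transposition). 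Hence $-\mathcal{L}_\gamma^{-1}$ is a non-negative operator by Perron-Frobenius, and $H = -\mathcal{L}_\gamma^{-1}(\xi - \gamma)$. Invoking~\eqref{e:boundsEV} to obtain $\xi(y) - \lambda_r(z) \le \invsigma(y) \le M$ for all $y \in B_r(z)$, so that $(\xi - \gamma)_+ \le M$ uniformly, non-negativity of $-\mathcal{L}_\gamma^{-1}$ gives $H(z) \le M \cdot (-\mathcal{L}_\gamma^{-1})\id_{B_r(z)}(z)$. The final step is to bound the Green's-function mass $(-\mathcal{L}_\gamma^{-1})\id_{B_r(z)}(z)$ by $|B_r|/(\gamma - \lambda_r(z))$, which I would obtain by expanding $-\mathcal{L}_\gamma^{-1}$ in an $\ell^2(\sigma)$-orthonormal eigenbasis of $\mathcal{L}_\gamma$ and applying Cauchy-Schwarz, with the volume factor $|B_r|$ emerging from the $\ell^2$-to-$\ell^\infty$ conversion at the test vector $\id_{B_r(z)}$.

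The principal hurdle will be this final Green's-function estimate in a form clean of the weights $\sigma(y)$ (which are not assumed bounded above): a naive Cauchy-Schwarz in $\ell^2(\sigma)$ produces the uncontrolled factor $\sqrt{\sum_{y\in B_r(z)}\sigma(y)}$, so one needs to work with appropriately conjugated eigenfunctions or to fall back on a direct cluster-expansion argument (which would justify the lemma's name). In the latter approach, Lemma~\ref{l:patheval} expresses the expectation as a sum over nearest-neighbour paths starting at $z$ and exiting $B_r(z)$ at their last step; grouping these paths by their $\nu \ge 1$ visits to $z$ into $\nu-1$ loops from $z$ back to $z$ (avoiding $z$ in between) followed by one final exit excursion factorises the sum as $\alpha S /(1-\alpha\beta)$, where $\alpha := (2d(1+\sigma(z)(\gamma-\xi(z))))^{-1}$, $\beta$ denotes the loop sum and $S$ the exit-excursion sum. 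The path-expansion identity~\eqref{e:pathexpeig} applied at $y=z$ pins down $\alpha(\lambda_r(z))\beta(\lambda_r(z)) = 1/(2d)$, and strict monotonicity in $\gamma$ (itself a consequence of~\eqref{e:boundsEV}, which forces each factor in $\beta$ to be positive and decreasing in $\gamma$) then gives $\alpha\beta < 1/(2d)$ for $\gamma > \lambda_r(z)$; the factor $1/(\gamma-\lambda_r(z))$ necessarily present in $F(z)$ as $\gamma \downarrow \lambda_r(z)$ must then be located in $S$, and extracting it together with the multiplicative constant $M|B_r|$ via crude bounds on the summands of $S$ completes the proof.
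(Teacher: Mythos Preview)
The paper does not prove this lemma; it is quoted verbatim from \cite[Lemma~3.13]{MP}. So there is no in-paper argument to compare against, and your task reduces to producing a self-contained proof.

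Your resolvent approach is correct and can be completed; the ``hurdle'' you identified dissolves once you use the sharp pointwise bound rather than the uniform one. Concretely: from~\eqref{e:boundsEV} you have $\xi(y)-\gamma < \xi(y)-\lambda_r(z) \le \invsigma(y)$ for every $y \in B_r(z)$, not merely $\le M$. By positivity of $(-\mathcal{L}_\gamma)^{-1}$ this gives
\[
H(z) \le \bigl((-\mathcal{L}_\gamma)^{-1}\invsigma\bigr)(z)
= \sigma(z)^{-1/2}\sum_{x \in B_r(z)} G^{\mathrm{sym}}(z,x)\,\sigma(x)^{1/2}\invsigma(x)
= \sigma(z)^{-1/2}\sum_{x} G^{\mathrm{sym}}(z,x)\,\sigma(x)^{-1/2},
\]
where $G^{\mathrm{sym}}(y,x)=\sum_k \psi_k(y)\psi_k(x)/(\gamma-\lambda_k)$ is the Green kernel of the symmetrised operator $\sigma^{-1/2}\Delta\sigma^{-1/2}+\xi-\gamma$ in its $\ell^2$-orthonormal eigenbasis $(\psi_k)$. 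The $\sigma$-weights have cancelled down to $\sigma^{-1/2}$ factors, each bounded by $M^{1/2}$. For the kernel itself, $G^{\mathrm{sym}}\ge 0$ (since the unsymmetrised Green kernel $\sigma(z)^{-1/2}G^{\mathrm{sym}}(z,x)\sigma(x)^{1/2}$ has a non-negative Feynman--Kac representation), and the diagonal bound
\[
G^{\mathrm{sym}}(y,y)=\sum_k \frac{\psi_k(y)^2}{\gamma-\lambda_k}\le \frac{1}{\gamma-\lambda_r(z)}\sum_k\psi_k(y)^2=\frac{1}{\gamma-\lambda_r(z)}
\]
combined with Cauchy--Schwarz in $k$ gives $0\le G^{\mathrm{sym}}(z,x)\le (\gamma-\lambda_r(z))^{-1}$ for all $x$. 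Hence $H(z)\le M^{1/2}\cdot|B_r|\cdot(\gamma-\lambda_r(z))^{-1}\cdot M^{1/2}=M|B_r|/(\gamma-\lambda_r(z))$, which is exactly~\eqref{e:massupexit}.

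Your fallback cluster-expansion route is less clean: the factorisation $F(z)=E/(1-L)$ and the identity $L(\lambda_r(z))=1/(2d)$ are correct, but since $1-L\ge 1-1/(2d)$ stays bounded away from zero the near-singular behaviour is not isolated there, and ``extracting $(\gamma-\lambda_r(z))^{-1}$ from $S$ via crude bounds'' is not a proof. Stick with the resolvent argument.
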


To state the next collection of results, denote by $\phi_{z,r}$ the $\ell^2$-normalised principal Dirichlet eigenfunction of $\Delta \sigma^{-1}+\xi$ in $B_r(z)$, corresponding to the eigenvalue $\lambda_r(z)$.

\begin{lemma}[Lower bound on the solution; see {\cite[Corollary~3.11]{MP}}]
\label{l:lbtotmass}
For each $z \in \ZZ^d$ and $r, t > 0$,
\begin{equation}
 \label{e:lbtotmass} 
  \EE_z \left[ \left\{\int_0^t \left(\xi(X_s) - \lambda_r(z) \right) \, \dd s \right\} \id \{\tau_{B^\cc_r(z)} > t,  X_t = z \} \right]   \ge   \frac{\sigma(z)^{-1}\phi_{z, r}(z)^2}{ \| \sigma^{-1/2} \phi_{z, r} \|_{\ell^2}^2 } .
  \end{equation}
\end{lemma}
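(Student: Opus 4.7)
My plan is to derive the bound from a spectral decomposition of the Feynman-Kac semigroup on $B_r(z)$ with zero Dirichlet boundary conditions, retaining only the principal eigenmode. The key observation is that although $\mathcal{L}:=\Delta\sigma^{-1}+\xi$ is not self-adjoint in the standard $\ell^2$ inner product, a short calculation on its off-diagonal part shows it \emph{is} self-adjoint with respect to the weighted inner product $\langle f,g\rangle_\star:=\sum_x \sigma(x)^{-1}f(x)g(x)$. Consequently $\mathcal{L}$, restricted to $B_r(z)$ with zero boundary conditions, admits a real eigenbasis $\{\hat\phi_n\}$ orthonormal in $\langle\cdot,\cdot\rangle_\star$, with simple top pair $(\lambda_r(z),\hat\phi_1)$ where $\hat\phi_1=\phi_{z,r}/\|\sigma^{-1/2}\phi_{z,r}\|_{\ell^2}$ (the rescaling being needed because the paper's $\phi_{z,r}$ is $\ell^2$-normalized rather than $\star$-normalized).

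Next, I would recall that the BTM generator on $B_r(z)$ (with killing on exit) is $\sigma^{-1}\Delta=\mathcal{L}^*$, the $\ell^2$-adjoint of $\mathcal{L}$. A direct check yields $\mathcal{L}^*(\sigma^{-1}\hat\phi_n)=\lambda_n\sigma^{-1}\hat\phi_n$, so the families $\{\hat\phi_n\}$ and $\{\sigma^{-1}\hat\phi_n\}$ form biorthonormal left/right eigenvectors of $\mathcal{L}^*+\xi$ in $\ell^2$. Expanding the Feynman-Kac semigroup $(T_t f)(\cdot):=\EE_\cdot[\exp\{\int_0^t\xi(X_s)\,\dd s\}f(X_t)\id\{\tau_{B^\cc_r(z)}>t\}]$ applied to $f=\id_{\{z\}}$ in this basis gives, for any $y\in B_r(z)$,
\[
\EE_y\left[\exp\left\{\int_0^t \xi(X_s)\,\dd s\right\}\id\{X_t=z,\,\tau_{B^\cc_r(z)}>t\}\right] = \sum_n \ee^{t\lambda_n}\,\sigma(y)^{-1}\hat\phi_n(y)\,\hat\phi_n(z).
\]

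Finally, I would set $y=z$ and multiply through by $\ee^{-t\lambda_r(z)}$. Every summand of the resulting series is then non-negative, being the product of a positive exponential, the positive prefactor $\sigma(z)^{-1}$, and the square $\hat\phi_n(z)^2\ge 0$. Keeping only the $n=1$ term yields the lower bound $\sigma(z)^{-1}\hat\phi_1(z)^2=\sigma(z)^{-1}\phi_{z,r}(z)^2/\|\sigma^{-1/2}\phi_{z,r}\|_{\ell^2}^2$, as required. The only delicate step is setting up the spectral representation of the non-self-adjoint operator $\mathcal{L}^*+\xi$ via the weighted inner product and tracking the two normalization conventions carefully; once this is in place, the truncation to the principal mode is immediate by positivity. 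A clumsier alternative would iterate the Feynman-Kac identity \eqref{e:fkeig} over successive returns to $z$ and sum the resulting renewal series, but the spectral route is cleaner.
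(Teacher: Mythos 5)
Your proof is correct: the operator $\Delta\sigma^{-1}+\xi$ is indeed self-adjoint for the $\sigma^{-1}$-weighted inner product, the Feynman--Kac semigroup of the BTM with potential $\xi$ is generated by its $\ell^2$-adjoint $\sigma^{-1}\Delta+\xi$, and expanding the kernel at $y=z$ and discarding all non-principal (non-negative) terms gives exactly $\sigma(z)^{-1}\phi_{z,r}(z)^2/\|\sigma^{-1/2}\phi_{z,r}\|_{\ell^2}^2$. This is essentially the same route as the paper, which simply cites \cite[Corollary~3.11]{MP}, where the bound is obtained from precisely this weighted-space eigenfunction expansion of the solution restricted to $B_r(z)$.
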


\begin{lemma}[Upper bound on total mass of the solution; see {\cite[Lemma~3.12]{MP}}]
\label{l:totmass}
For each $z \in \ZZ^d$ and $r,t > 0$,
\begin{equation}
 \label{e:ubtotmass} 
  \EE_z \left[ \exp \left\{ \int_0^t \left(\xi(X_s) - \lambda_r(z) \right) \, \dd s \right\} \id \{\tau_{B^\cc_r(z)} > t\} \right]   \le \frac{ \| \phi_{z, r} \|_{\ell^1}}{\phi_{z, r}(z)} .
  \end{equation}
\end{lemma}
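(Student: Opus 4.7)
The plan is to apply the Feynman-Kac representation for $e^{t\mathcal{H}}$ with the principal eigenfunction $\phi_{z,r}$ itself as the initial datum, and then exploit its non-negativity to extract a one-sided bound. Recall that $\phi_{z,r}$ is the Dirichlet principal eigenfunction of the Bouchaud-Anderson operator $\mathcal{H} := \Delta\sigma^{-1} + \xi$ on $B_r(z)$, so by Perron-Frobenius it is strictly positive on $B_r(z)$, and satisfies $\mathcal{H}\phi_{z,r} = \lambda_r(z)\phi_{z,r}$ with $\phi_{z,r} \equiv 0$ on $B_r^{\cc}(z)$. Consequently the solution of $\partial_t w = \mathcal{H} w$ in $B_r(z)$ with zero Dirichlet boundary conditions and initial datum $\phi_{z,r}$ is simply $w(t,\cdot) = \ee^{t\lambda_r(z)}\phi_{z,r}$.

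On the other hand, the same Feynman-Kac identity underlying \eqref{e:fk} (now applied with the BTM killed upon exit from $B_r(z)$, and with general initial datum via linearity) gives
\[
w(t,y) \;=\; \sum_{x \in B_r(z)} \phi_{z,r}(x)\, \EE_x\!\left[ \exp\!\left\{\int_0^t \xi(X_s)\,\dd s\right\} \id\{X_t = y,\, \tau_{B_r^{\cc}(z)} > t\}\right].
\]
Equating the two expressions for $w(t,y)$, dividing by $\ee^{t\lambda_r(z)}$, summing over $y \in \ZZ^d$ and swapping sums (using $\sum_y \id\{X_t = y\} \equiv 1$ on the survival event $\{\tau_{B_r^{\cc}(z)} > t\}$), I get
\[
\|\phi_{z,r}\|_{\ell^1} \;=\; \sum_{x \in B_r(z)} \phi_{z,r}(x)\, \EE_x\!\left[ \exp\!\left\{\int_0^t \left(\xi(X_s) - \lambda_r(z)\right) \dd s\right\} \id\{\tau_{B_r^{\cc}(z)} > t\}\right].
\]

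Since $\phi_{z,r} \ge 0$ and every summand on the right-hand side is non-negative, retaining only the term at $x = z$ and dividing through by the strictly positive quantity $\phi_{z,r}(z)$ gives \eqref{e:ubtotmass}. The only point requiring care is the direction of Feynman-Kac: because $\mathcal{H}$ is not self-adjoint (its adjoint is $\sigma^{-1}\Delta + \xi$, which is the BTM generator plus multiplication by $\xi$), one must use the \emph{forward} representation of the semigroup $\ee^{t\mathcal{H}}$ under which the BTM still drives the evolution via $q_t(x,y) = \EE_x[\ee^{\int_0^t \xi(X_s)\,\dd s}\id\{X_t = y\}]$ — this is exactly the representation already used to derive \eqref{e:fk} and requires no new work. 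I do not expect any substantive obstacle: once the semigroup identity is written down, the proof is a single line plus non-negativity.
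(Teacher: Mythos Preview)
Your proof is correct. The paper itself does not give a proof of this lemma, citing instead \cite[Lemma~3.12]{MP}; your argument is precisely the standard one used there: write $\ee^{t\lambda_r(z)}\phi_{z,r}$ via the Feynman-Kac representation of the Dirichlet semigroup acting on the eigenfunction, sum over the terminal point, and drop all but the $x=z$ term by non-negativity. Your care with the direction of the representation (using that the BTM generator is $\sigma^{-1}\Delta = (\Delta\sigma^{-1})^*$, so that $(\ee^{t\mathcal{H}}f)(y) = \sum_x f(x)\,\EE_x[\ee^{\int_0^t\xi}\id\{X_t=y\}]$) is exactly the point that distinguishes the BAM from the self-adjoint PAM case, and you have it right.
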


\begin{lemma}[Solution-to-eigenfunction comparison lemma; see {\cite[Proposition 3.14]{MP}}]
\label{l:3.14}
For each $z \in \mathbb{Z}^d$, $r,t > 0$, and $x,y \in B_r(z)$,
\begin{align*}
&   \frac{  \EE_z \left[  \exp \left\{   \int_0^t \xi(X_s) \, \dd s  \right\}  \id \{  \tau_{x}  \le  t < \tau_{B^\cc_r(z)}  \} \id \{ X_t = y \}  \right]  }  {  \EE_z \left[  \exp \left\{   \int_0^t \xi(X_s) \, \dd s  \right\}  \id \{   \tau_{B^\cc_r(z)} > t  \}   \right]    }\le \frac{\sigma(x) \|\sigma^{-\frac1{2}} \phi_{z,r} \|_{\ell^2}^2 } { ( \phi_{z,r}(x))^3 } \, \phi_{z,r}(y).  
  \end{align*}
\end{lemma}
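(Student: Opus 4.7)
The plan is to bound the numerator $N$ from above and the denominator $D$ from below in such a way that both expressions acquire a common hitting-time factor that cancels in the ratio. Throughout write $\phi := \phi_{z,r}$, $\lambda := \lambda_r(z)$, and $U^r_s(a,b) := \EE_a[\exp\{\int_0^s \xi(X_u)\,\dd u\}\id\{X_s = b,\, \tau_{B^\cc_r(z)} > s\}]$ for the Feynman--Kac kernel killed outside $B_r(z)$. Applying the strong Markov property at $\tau_x$ gives
\[
N = \EE_z\!\left[\exp\!\Big\{\int_0^{\tau_x}\!\!\xi(X_u)\,\dd u\Big\}\,\id\{\tau_x \le t \wedge \tau_{B^\cc_r(z)}\}\, U^r_{t-\tau_x}(x,y)\right],
\]
while restricting $D$ to paths that additionally hit $x$ and end at $x$, then applying strong Markov at $\tau_x$ a second time, yields
\[
D \ge \EE_z\!\left[\exp\!\Big\{\int_0^{\tau_x}\!\!\xi(X_u)\,\dd u\Big\}\,\id\{\tau_x \le t \wedge \tau_{B^\cc_r(z)}\}\, U^r_{t-\tau_x}(x,x)\right].
\]
These two expressions differ only through the replacement of $U^r_{t-\tau_x}(x,y)$ by $U^r_{t-\tau_x}(x,x)$.

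The proof then reduces to two pointwise heat-kernel estimates, each valid for all $s \ge 0$. The lower bound $U^r_s(x,x) \ge e^{\lambda s}\phi(x)^2/(\sigma(x)\|\sigma^{-\frac12}\phi\|_{\ell^2}^2)$ follows by retaining only the principal term in the non-negative spectral expansion $\sigma(x)U^r_s(x,x) = \sum_k e^{\lambda_k s}\phi_k(x)^2/\|\sigma^{-\frac12}\phi_k\|_{\ell^2}^2$, and is essentially the content of Lemma~\ref{l:lbtotmass}. The more delicate upper bound is
\[
U^r_s(x,y) \le e^{\lambda s}\,\phi(y)/\phi(x),
\]
which I expect to be the main technical step. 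I would prove it by first invoking the detailed-balance identity $\sigma(x)U^r_s(x,y) = \sigma(y)U^r_s(y,x)$ (visible from the symmetry of the spectral expansion, or equivalently from the reversibility of the BTM with respect to $\sigma$ together with the fact that the Feynman--Kac weight depends only on the occupation measure of the trajectory), reducing the task to bounding $U^r_s(y,x)$. A strong-Markov decomposition at $\tau_x$ gives
\[
U^r_s(y,x) = \EE_y\!\left[\exp\!\Big\{\int_0^{\tau_x}\!\!\xi(X_u)\,\dd u\Big\}\,\id\{\tau_x \le s,\, \tau_x < \tau_{B^\cc_r(z)}\}\, U^r_{s-\tau_x}(x,x)\right],
\]
into which I would insert the crude spectral upper bound $U^r_{s'}(x,x) \le e^{\lambda s'}$ (coming from the completeness identity $\sum_k \phi_k(x)^2/\|\sigma^{-\frac12}\phi_k\|_{\ell^2}^2 = \sigma(x)$) and the Feynman--Kac representation of $\phi$ (cf.~\eqref{e:fkeig} and \cite[Proposition~3.3]{MP}) with base point $x$, which rearranges to $\EE_y[e^{\int_0^{\tau_x}(\xi-\lambda)\dd u}\id\{\tau_x < \tau_{B^\cc_r(z)}\}] = \phi(y)\sigma(x)/(\phi(x)\sigma(y))$. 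Combining these ingredients and undoing the detailed-balance swap delivers the claim.

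With both estimates in hand, pulling an $e^{\lambda(t-\tau_x)}$ factor out of $U^r_{t-\tau_x}$ inside each expectation reveals that $N$ and the lower bound on $D$ are equal to the common expectation $E := \EE_z[e^{\int_0^{\tau_x}(\xi-\lambda)\dd u}\id\{\tau_x \le t \wedge \tau_{B^\cc_r(z)}\}]$ multiplied by $e^{\lambda t}$ and, respectively, by $\phi(y)/\phi(x)$ and by $\phi(x)^2/(\sigma(x)\|\sigma^{-\frac12}\phi\|_{\ell^2}^2)$; dividing produces exactly the bound $\sigma(x)\|\sigma^{-\frac12}\phi\|_{\ell^2}^2\phi(y)/\phi(x)^3$. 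The main obstacle, as indicated, is the upper bound on $U^r_s(x,y)$: a direct application of the eigenfunction identity $\sum_{y'}U^r_s(x,y')\sigma(y')^{-1}\phi(y') = e^{\lambda s}\sigma(x)^{-1}\phi(x)$ only yields $U^r_s(x,y) \le e^{\lambda s}\sigma(y)\phi(x)/(\sigma(x)\phi(y))$, which places $\phi(y)$ in the denominator rather than the numerator; the time-reversal detour via detailed balance is what moves $\phi(y)$ to the correct side.
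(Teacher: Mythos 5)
Your proof is correct, and it is worth noting that the paper does not actually prove this lemma itself: the proof given here is a one-line citation of \cite[Proposition~3.14]{MP} (together with the remark that the $\sigma(y)$ appearing there should be $\sigma(x)$). Your argument is therefore a self-contained substitute for the imported result, built from exactly the ingredients the paper otherwise deploys: the strong Markov decomposition at $\tau_x$, the spectral lower bound on the on-diagonal kernel $U^r_s(x,x)$ (this is the content of Lemma~\ref{l:lbtotmass}, which you rederive from the expansion), and the off-diagonal bound $U^r_s(x,y)\le \ee^{\lambda s}\phi(y)/\phi(x)$ via detailed balance and the Feynman--Kac representation of the eigenfunction. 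All the identities you invoke check out: the walk is reversible with respect to $\sigma$, and writing the kernel in terms of the (complete, since the ball is finite and $\Delta\sigma^{-1}+\xi$ is conjugate to a symmetric operator) Dirichlet eigenfunctions $\phi_k$ gives $\sigma(a)U^r_s(a,b)=\sum_k \ee^{\lambda_k s}\phi_k(a)\phi_k(b)\,\|\sigma^{-1/2}\phi_k\|_{\ell^2}^{-2}$, whose symmetry yields detailed balance, whose $s=0$ evaluation yields your completeness identity and hence $U^r_s(x,x)\le \ee^{\lambda s}$, and whose principal term yields the lower bound. Two small points. First, the Feynman--Kac formula for $\phi_{z,r}$ with base point $x$ rather than the centre requires the expectation to be finite, i.e.\ the principal eigenvalue of the punctured ball $B_r(z)\setminus\{x\}$ to lie strictly below $\lambda_r(z)$; this holds because the ball is connected, so $\phi_{z,r}>0$ by Perron--Frobenius, and it is indeed what \cite[Proposition~3.3]{MP} supplies (the paper itself uses the non-central base point $Z_t$ in \eqref{e:fkeig}), so your citation covers it, though it deserves a sentence. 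Second, in the final assembly ``equal to'' should read ``bounded above (respectively below) by'', since you have inserted the two kernel estimates; the inequalities point in the right directions, so the ratio bound $\sigma(x)\,\|\sigma^{-1/2}\phi_{z,r}\|_{\ell^2}^2\,\phi_{z,r}(y)/\phi_{z,r}(x)^3$ follows exactly as you state.
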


\begin{proof}
This is a special case of \cite[Proposition 3.14]{MP}. Note that the statement therein contains a typo: $\sigma(y)$ in the numerator of the right-hand side should be replaced by $\sigma(x)$.
\end{proof}

We close this section by giving a priori bounds on the principal eigenvalue.

\begin{lemma}[A priori bounds on the principal eigenvalue; see {\cite[Lemma 3.2]{MP}}]
\label{l:boundsEV}
For each $z \in \ZZ^d$ and $r > 0$,
\[ \max_{y \in B_r(z)} \left\{ \xi(y) - \sigma(y)^{-1} \right\} \le \lambda_r(z) \le \max_{y \in B_r(z)} \xi(y). \]
\end{lemma}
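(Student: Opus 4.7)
The plan is to prove the two bounds separately, relying only on elementary properties of the Bouchaud--Anderson operator $H := \Delta \invsigma + \xi$ on $B_r(z)$ with zero Dirichlet boundary conditions. Throughout, let $\phi \ge 0$ denote a principal Dirichlet eigenfunction of $H$; its existence and strict positivity on $B_r(z)$ follow from the Perron--Frobenius theorem applied to the irreducible non-negative shifted matrix $H + cI$ on the connected set $B_r(z)$, for any sufficiently large constant $c$.

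For the upper bound, the key observation is that the substitution $\psi := \invsigma \phi$ -- still non-negative on $B_r(z)$ and vanishing outside -- transforms the eigenvalue equation $H\phi = \lambda_r(z) \phi$ into
\[
\Delta \psi(y) = \bigl(\lambda_r(z) - \xi(y)\bigr)\, \sigma(y)\, \psi(y), \qquad y \in B_r(z).
\]
At any maximiser $y^\star$ of $\psi$ -- which must lie in $B_r(z)$ since $\psi > 0$ there and $\psi = 0$ outside -- the discrete Laplacian satisfies $\Delta \psi(y^\star) \le 0$ by the discrete maximum principle. Since $\sigma(y^\star) \psi(y^\star) > 0$, the display above forces $\lambda_r(z) \le \xi(y^\star) \le \max_{y \in B_r(z)} \xi(y)$.

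For the lower bound, I would exploit that $H$, although not symmetric, is similar via the diagonal conjugation $\widetilde H := \sigma^{-1/2} H \, \sigma^{1/2}$ to the symmetric operator with entries
\[
\widetilde H(x,y) = \bigl(\xi(x) - \invsigma(x)\bigr) \id_{\{x = y\}} + \frac{1}{2d \sqrt{\sigma(x) \sigma(y)}}\, \id_{\{|x - y| = 1\}},
\]
and hence shares its spectrum. Letting $y^\star$ now be any maximiser of $\xi - \invsigma$ on $B_r(z)$ and taking the test function $f = \id_{\{y^\star\}}$ in the Rayleigh variational principle for $\widetilde H$ gives
\[
\lambda_r(z) \ge \frac{\langle \widetilde H f, f\rangle}{\langle f, f\rangle} = \widetilde H(y^\star, y^\star) = \max_{y \in B_r(z)}\bigl\{\xi(y) - \invsigma(y)\bigr\}.
\]
A probabilistic alternative is available via Feynman--Kac: the BTM $X$ started at $y^\star$ remains at $y^\star$ throughout $[0,t]$ with probability $\ee^{-\invsigma(y^\star) t}$, so the restricted Feynman--Kac representation yields $u_r(t, y^\star, y^\star) \ge \ee^{(\xi(y^\star) - \invsigma(y^\star))t}$, and standard spectral asymptotics as $t \to \infty$ give the same lower bound.

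No serious obstacle is anticipated. The only mildly delicate input is the Perron--Frobenius statement used in the upper bound to guarantee a strictly positive principal eigenfunction, which is entirely standard for irreducible non-negative matrices on connected graphs.
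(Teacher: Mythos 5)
Your proposal is correct, but it proves both bounds by a different route than the paper. For the upper bound, the paper argues in one line via sub-additivity of principal eigenvalues: $\lambda_r(z) - \max_{y \in B_r(z)}\xi(y)$ is dominated by the principal Dirichlet eigenvalue of the pure Bouchaud operator $\Delta\invsigma$, which is at most zero. You instead pass to $\psi = \invsigma\phi$ and apply a discrete maximum principle at the maximiser of $\psi$; this is a valid and self-contained alternative (the key points — strict positivity of $\phi$ on the connected ball via Perron--Frobenius, the identity $\Delta\psi = (\lambda_r(z)-\xi)\sigma\psi$ on $B_r(z)$ with $\psi$ extended by zero, and nonpositivity of $\Delta\psi$ at an interior global maximiser — all check out). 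For the lower bound the paper simply cites \cite[Lemma 3.2]{MP}, whereas you give a complete argument: symmetrising $H$ by the conjugation $\sigma^{-1/2}H\sigma^{1/2}$ (your matrix entries are correct, and similarity plus symmetry identifies $\lambda_r(z)$ with the top of the Rayleigh quotient) and testing with $\id_{\{y^\star\}}$ at a maximiser of $\xi-\invsigma$; your Feynman--Kac alternative, using that the BTM stays put for time $t$ with probability $\ee^{-t/\sigma(y^\star)}$, is also sound. What your approach buys is full self-containment with only elementary linear algebra; what the paper's buys is brevity, by leaning on the general eigenvalue toolbox (sub-additivity, monotonicity) already developed in \cite{MP}.
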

\begin{proof}
The lower bound is in \cite[Lemma 3.2]{MP}. The upper bound is a slight improvement on its equivalent in \cite[Lemma 3.2]{MP} and is easy to prove: by the sub-additivity of principal eigenvalues, the difference $\lambda_r(z) - \max_{y \in B_r(z)}  \xi(y)$ is bounded above by the principal Dirichlet eigenvalue of the Bouchaud operator $\Delta \invsigma$, which is zero.
\end{proof}
As mentioned in Section~\ref{ss:heuristics},
the lower bound above indicates an important difference between BAM and PAM,
namely that the gap between the principal eigenvalue and the maximum of the potential
becomes small in the presence of large traps.

\subsection{Applications of the general theory to our setting}

We now apply the general theory developed in the previous section to our setting, in particular assuming \ref{a:1} and~\ref{a:3}.  
We begin by using the path-wise evaluation in Lemma~\ref{l:patheval} 
to state a bound on the Feynman-Kac formula in terms of the number of visits to sites of `moderate' potential. 

For $p \in \Gamma$, $L \in \NN$ and $\varepsilon >0$, let
\begin{equation}
\label{e:defMp}
M^{L,\varepsilon}_p := \left|\{x \in \{p_0, \ldots, p_{|p|-1} \} \colon\, \xi(x) \le (1-\varepsilon) a_L\}\right|
\end{equation}
denote the number of moderately low points of $\Set(p)$ (excluding possibly the last point), with the interpretation that $M^{L,\varepsilon}_p = 0$ if $|p|=0$.

\begin{lemma}
\label{l:massexcursions}
For each $\delta, \varepsilon >0$, there exists $c>1$ such that, 
eventually as $L \to \infty$, all $\gamma > a_L - \delta/2$, $k \in \NN_0$, 
and all $p \in \Gamma_k$ with $p_i \notin \Pi_{L,\delta}$ for $0 \le i \le k-1$ satisfy
\begin{equation*}
\label{e:massexcursions}
\left( \sigma(p_0) \vee 1 \right)  \EEE_{p_0} \left[ \exp \left\{ \int_0^{T_k} (\xi(X_s) - \gamma) \, \dd s \right\} \mathbbm{1} \{p_k(X) = p\} \right] < c  \left(\frac{ q_\delta }{2d}\right)^{k} \, \ee^{(c - \ln_3 L) M^{L,\varepsilon}_p},
\end{equation*}
 where $q_\delta := (1+\delta \delta_\sigma/2)^{-1}$ (with $\delta_\sigma$ as in~\ref{a:3}).
\end{lemma}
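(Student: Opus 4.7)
The plan is to apply the path-wise evaluation formula in Lemma~\ref{l:patheval} to reduce the expectation to an explicit product of terms indexed by $i = 0, \ldots, k-1$, and then to bound each such term according to whether the potential $\xi(p_i)$ is ``moderately high'' or ``low''. Invoking Lemma~\ref{l:patheval} requires $\gamma > \max_{0 \le i \le k-1} \xi(p_i)$; this holds because the assumption $p_i \notin \Pi_{L, \delta}$ (together with the implicit restriction that $p_i$ lies in a region where $\xi$ is controlled by $a_L + o(1)$, as guaranteed by \eqref{e:max}) forces $\xi(p_i) \le a_L - \delta < a_L - \delta/2 < \gamma$. The expectation inside the statement therefore equals $\prod_{i=0}^{k-1} (2d)^{-1} [1 + \sigma(p_i)(\gamma - \xi(p_i))]^{-1}$.

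Partition $\{0, \ldots, k-1\}$ into a set $H$ of ``high'' indices with $\xi(p_i) > (1-\varepsilon) a_L$ and a complementary set $L'$ of ``low'' indices with $\xi(p_i) \le (1-\varepsilon) a_L$, so that $|L'| = M^{L, \varepsilon}_p$. For $i \in H$, the inequalities $\gamma - \xi(p_i) > \delta/2$ and $\sigma(p_i) \ge \delta_\sigma$ (the latter from \ref{a:3}) give $[1 + \sigma(p_i)(\gamma - \xi(p_i))]^{-1} \le (1 + \delta_\sigma \delta/2)^{-1} = q_\delta$. For $i \in L'$ the bound $\gamma - \xi(p_i) \ge \varepsilon a_L - \delta/2$, together with the asymptotic $a_L \sim \varrho \ln_2 L$ coming from \ref{a:1}, yields, for all $L$ large enough, $[1 + \sigma(p_i)(\gamma - \xi(p_i))]^{-1} \le 2/(\delta_\sigma \varepsilon \varrho \ln_2 L)$. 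Recognising $\ee^{-\ln_3 L} = (\ln_2 L)^{-1}$, this is at most $q_\delta \ee^{c - \ln_3 L}$ provided $c$ is chosen large enough that $q_\delta \delta_\sigma \varepsilon \varrho \ee^c \ge 2$.

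The prefactor $\sigma(p_0) \vee 1$ is absorbed into the $i = 0$ term by separating on the magnitude of $\sigma(p_0)$. When $\sigma(p_0) \ge 1$, the elementary identity $\sigma \cdot [1 + \sigma(\gamma - \xi)]^{-1} \le (\gamma - \xi)^{-1}$ gives
\begin{equation*}
(\sigma(p_0) \vee 1) \cdot \frac{1}{1 + \sigma(p_0)(\gamma - \xi(p_0))} \le \frac{1}{\gamma - \xi(p_0)};
\end{equation*}
when $\sigma(p_0) < 1$, the prefactor is unity and the denominator is still bounded below by $1 + \delta_\sigma (\gamma - \xi(p_0))$ using \ref{a:3}. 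Combining these two cases, if $p_0 \in H$ the $i=0$ factor is at most a constant (depending only on $\delta$ and $\delta_\sigma$) times $q_\delta/(2d)$, while if $p_0 \in L'$ it is at most a constant times $q_\delta \ee^{c - \ln_3 L}/(2d)$. Multiplying the resulting per-index bounds over all $i$ and absorbing the $i = 0$ constant into the final $c$ produces the claimed inequality.

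The only non-routine step is the treatment of the prefactor $\sigma(p_0) \vee 1$, which cannot simply be pulled outside the product when $\sigma(p_0)$ is large; once this is handled by the elementary manipulation above, the estimate reduces to straightforward bookkeeping. The resulting constant $c$ depends only on $\delta, \varepsilon, \delta_\sigma$ and $\varrho$, and is in particular uniform in $L, \gamma, k$ and~$p$.
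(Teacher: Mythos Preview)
Your proof is correct and follows essentially the same approach as the paper's reference to \cite[Lemma~6.4]{BKS16}: apply the path-wise evaluation of Lemma~\ref{l:patheval} to obtain the product, split indices according to whether $\xi(p_i)$ exceeds $(1-\varepsilon)a_L$, and bound each factor separately using $\gamma-\xi(p_i)>\delta/2$ on the high set and $\gamma-\xi(p_i)\gtrsim \varepsilon a_L$ on the low set. Your handling of the $\sigma(p_0)\vee 1$ prefactor via $\sigma/(1+\sigma x)\le 1/x$ is the natural BAM-specific adaptation and is carried out correctly.
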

\begin{proof}
This follows from Lemma~\ref{l:patheval} as in the proof of \cite[Lemma~6.4]{BKS16}.
\end{proof}

We next use the path-wise evaluation in Lemma~\ref{l:patheval} to prove that, for sufficiently small $\eta > 0$, 
the principal Dirichlet eigenvectors in balls of small radius around $z \in \Pi_{L, \eta}$ are highly localised, 
even when weighted by the trap $\sigma(z)$.
\begin{lemma}
\label{l:bdlocEFs}
For each $\varepsilon > 0$, there exists $\eta > 0$ such that, 
for any sequence $m_L \in \NN$ satisfying $\ln m_L \ll \ln L$,
eventually almost surely as $L \to \infty$,
\[ 
z \in \Pi_{L, \eta} \;\Rightarrow\;   \sigma(z) \frac{ \sum_{y \in B_{m_L}(z) \setminus \{z\} } \phi_{z, m_L} (y) }{\phi_{z, m_L}(z)} <  \varepsilon.
\]
\end{lemma}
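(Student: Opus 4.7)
The plan is to leverage the Feynman--Kac representation of the eigenfunction $\phi_{z, m_L}$ (as in~\eqref{e:fkeig}, with $z$ playing the role of the reference point), together with the separation property of Proposition~\ref{p:seppot}, to reduce the ratio to a convergent geometric series over nearest-neighbour paths.

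Given $\varepsilon > 0$, I first fix a constant $c > 0$ (large, to be chosen later) and invoke Proposition~\ref{p:seppot} to produce a matching $\eta \in (0, 1)$ such that, eventually almost surely, every $z \in \Pi_{L, \eta}$ satisfies $\xi(x) < a_L - c$ for all $x \in B_{m_L}(z) \setminus \{z\}$. Combined with the a priori bound $\lambda_{m_L}(z) \ge \xi(z) - \delta_\sigma^{-1} > a_L - \eta - \delta_\sigma^{-1}$ from Lemma~\ref{l:boundsEV}, this yields the uniform spectral gap
\[
\lambda_{m_L}(z) - \xi(x) \ge \Delta := c - \eta - \delta_\sigma^{-1} \quad \text{for all } x \in B_{m_L}(z) \setminus \{z\},
\]
and $\Delta$ can be made arbitrarily large by taking $c$ large.

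Now, by the Feynman--Kac representation of the principal Dirichlet eigenfunction applied with reference point $z$ and box $B_{m_L}(z)$ (as in~\eqref{e:fkeig}), for each $y \in B_{m_L}(z) \setminus \{z\}$ one has
\[
\sigma(z)\,\frac{\phi_{z, m_L}(y)}{\phi_{z, m_L}(z)} = \sigma(y)\,\EE_y \left[ \exp\left\{ \int_0^{\tau_z} (\xi(X_s) - \lambda_{m_L}(z))\,\dd s \right\} \id \{ \tau_z < \tau_{B^\cc_{m_L}(z)} \} \right].
\]
Summing over $y$ and expanding the expectation as a sum over geometric paths via the path-wise evaluation in Lemma~\ref{l:patheval}, I obtain
\[
\sigma(z)\,\frac{\sum_{y \ne z} \phi_{z, m_L}(y)}{\phi_{z, m_L}(z)} = \sum_{k \ge 1} \sum_{p} \sigma(p_0) \prod_{i=0}^{k-1} \frac{1}{2d \bigl(1 + \sigma(p_i)(\lambda_{m_L}(z) - \xi(p_i))\bigr)},
\]
where the inner sum runs over nearest-neighbour paths $p = (p_0, \ldots, p_k)$ with $p_k = z$, $p_i \ne z$ for $0 \le i < k$, and $\Set(p) \subseteq B_{m_L}(z)$.

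Finally, the $i = 0$ factor is bounded by $\sigma(p_0)/(1 + \sigma(p_0)\Delta) \le 1/\Delta$, while for $i \ge 1$ the ellipticity $\sigma(p_i) \ge \delta_\sigma$ from assumption~\ref{a:3} gives $1/(1 + \sigma(p_i)(\lambda_{m_L}(z) - \xi(p_i))) \le 1/(1 + \delta_\sigma \Delta)$. Since there are at most $(2d)^k$ length-$k$ nearest-neighbour walks ending at $z$, the $k$-th term above is at most $1/(\Delta (1 + \delta_\sigma \Delta)^{k-1})$, and summing the geometric series in $k$ yields the upper bound $(1 + \delta_\sigma \Delta)/(\delta_\sigma \Delta^2)$, which is smaller than $\varepsilon$ provided $c$ was chosen large enough. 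The main (and essentially only) obstacle is securing a uniformly large spectral gap $\Delta$; this crucially relies on assumption~\ref{a:3}, since otherwise the $\sigma(p_i)$-factors in the denominators of the path expansion could be too small to drive the geometric decay.
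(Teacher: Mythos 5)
Your proposal is correct and follows essentially the same route as the paper's proof: Proposition~\ref{p:seppot} together with the lower bound from Lemma~\ref{l:boundsEV} to secure a large spectral gap, the Feynman--Kac representation of the eigenfunction, the path-wise evaluation of Lemma~\ref{l:patheval}, and a geometric-series bound driven by the ellipticity assumption~\ref{a:3}. The only cosmetic difference is that you keep the gap $\Delta = c - \eta - \delta_\sigma^{-1}$ explicit rather than absorbing $\eta < \delta_\sigma^{-1}$ into the constants, which if anything is a bit cleaner; the rest matches the paper's argument step for step.
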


\begin{proof}
By \cite[Proposition 3.3]{MP}, the principal eigenvector has the Feynman-Kac formula
\begin{align}
 \label{e:fkeig2}
 \phi_{z, r}(y) =  \phi_{z, r}(z)   \frac{\sigma(y)}{\sigma(z)} \EE_y \left[  \exp \left\{   \int_0^{\tau_{z}} (  \xi(X_s)  - \lambda_r(z) ) \, \dd s  \right\}  \id \{  \tau_{z}  \le \tau_{B^\cc_r(z)}  \}  \right] .
 \end{align}  
By Proposition~\ref{p:seppot} and Lemma~\ref{l:boundsEV}, for each $c > 0$ there exists an $\eta > 0$ small enough such that, eventually almost surely as $L \to \infty$, $z \in \Pi_{L,\eta}$ implies
\[   \lambda_{m_L}(z)  - \max_{y \in B_{m_L}(z) \setminus \{z\} } \xi(y) > ( a_L - 2\delta_\sigma^{-1} ) -  (a_L - c) = c - 2 \delta_\sigma^{-1}.   \]
Applying the path-wise evaluation in Lemma~\ref{l:patheval}, we obtain, for any $y \neq z$, 
\[
\sigma(z) \frac{ \phi_{z, m_L} (y) }{\phi_{z, m_L}(z)} <  \sum_{ k \geq 1 } \left(\frac{1}{2d}\right)^{k} \sum_{\substack{ p \in \Gamma_{k}(y, z) \\ 
p_i \neq z , \, 0 < i < k \\ \Set(p) \subseteq B_{m_L}(z)  } } \frac{1}{c - \delta_\sigma^{-1}} \left( \frac{1}{ 1 + \delta_\sigma (c - 2 \delta_\sigma^{-1} )} \right)^{k-1},  
\]
where the first factor inside the second sum corresponds to $i=0$ (note $p_0=y$).
Note that the path sets $\Gamma_k(y,z)$ are disjoint for distinct $y$;
moreover, their union over $y\in \ZZ^d$ has cardinality $(2d)^k$. 
Thus summing the above over $y \in B_{R_L}(z) \setminus \{z\}$ we get
\[
\sigma(z) \sum_{y \in B_{R_L}(z) \setminus \{z\}} \frac{ \phi_{z, m_L} (y) }{\phi_{z, m_L}(z)} 
<\frac{1}{c - \delta_\sigma^{-1}}   \sum_{ k \geq 1 } 
\left( \frac{1}{ 1 + \delta_\sigma (c - 2 \delta_\sigma^{-1} )} \right)^{k-1}
= \frac{1}{c - 2\delta_\sigma^{-1}}.
\]
Taking now $c > 0$ large enough yields the result.
\end{proof}
Lemma~\ref{l:bdlocEFs} will be an important ingredient in the proof of Proposition~\ref{p:eigloc}
which, as already mentioned, is the key step to prove complete localisation of the BAM.
We note that Lemma~\ref{l:bdlocEFs} is also true for the PAM; indeed,
only Assumption~\ref{a:3} was used in its proof, and not the
unboundedness of the traps.
However, it would not be possible to use it
to conclude complete localization in the PAM
because, as mentioned after Corollary~\ref{c:locpro}, 
the corresponding localization process does \emph{not} eventually belong to $\Pi_{L_t, \eta}$ for every $\eta>0$.

Combining Lemma~\ref{l:bdlocEFs}  with Lemmas~\ref{l:lbtotmass} and \ref{l:totmass} respectively,
we may use Assumption~\ref{a:3} and the fact that $\|\phi_{z,r}\|_{\ell^{\infty}} \le 1$
to obtain the following consequences.
\begin{corollary}
\label{cor:aprioriboundstotalmass}
For each $\varepsilon > 0$ there exists an $\eta > 0$ such that, eventually almost surely as $L \to \infty$,
if $z \in \Pi_{L, \eta}$ then
\begin{equation*}
\begin{aligned}
1- \varepsilon & < \inf_{t \ge 0} \, \EE_z \left[ \exp \left\{ \int_0^t \left(\xi(X_s) - \lambda_{R_L}(z) \right)  \, \dd s \right\} \mathbbm{1} \{ X_t  =z \}\mathbbm{1} {\{\tau_{B^\cc_{R_L}(z)} > t\}}\right] \\
& \le \sup_{t \ge 0} \, \EE_z \left[ \exp \left\{ \int_0^t \left(\xi(X_s) - \lambda_{R_L}(z) \right) \, \dd s \right\} \mathbbm{1} {\{\tau_{B^\cc_{R_L}(z)} > t\}}\right] < 1 + \varepsilon.
\end{aligned}
\end{equation*}
\end{corollary}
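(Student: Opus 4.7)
The strategy is to combine the sharp localisation of the principal eigenfunction given by Lemma~\ref{l:bdlocEFs} with the two-sided spectral bounds on the Feynman-Kac functional provided by Lemmas~\ref{l:lbtotmass} and~\ref{l:totmass}. First I would check that Lemma~\ref{l:bdlocEFs} is applicable with $m_L = R_L$: this only requires $\ln R_L \ll \ln L$, which is immediate from~\eqref{e:assumpRL}. Given $\varepsilon > 0$, I pick an auxiliary $\varepsilon' > 0$ (to be fixed later, in terms of $\varepsilon$ and $\delta_\sigma$) and let $\eta>0$ be the value furnished by Lemma~\ref{l:bdlocEFs} for that choice. Throughout, abbreviate $\phi := \phi_{z,R_L}$ and note that $\phi(y) \le \|\phi\|_{\ell^\infty} \le 1$ for every $y$, since $\phi$ is sub-probabilistic and $\ell^2$-normalised.

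For the upper bound I would apply Lemma~\ref{l:totmass}, which bounds the expectation by $\|\phi\|_{\ell^1}/\phi(z) = 1 + \sum_{y \ne z} \phi(y)/\phi(z)$. Using Lemma~\ref{l:bdlocEFs} and the ellipticity bound $\sigma(z) \ge \delta_\sigma$ from Assumption~\ref{a:3}, the sum is at most $\varepsilon'/\sigma(z) \le \varepsilon'/\delta_\sigma$. Taking $\varepsilon'$ small enough that $\varepsilon'/\delta_\sigma < \varepsilon$ yields the desired $1+\varepsilon$ upper bound, uniformly in $t \ge 0$, eventually almost surely as $L\to\infty$.

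For the lower bound I would apply Lemma~\ref{l:lbtotmass}, whose right-hand side equals $\sigma(z)^{-1}\phi(z)^2/\|\sigma^{-1/2}\phi\|_{\ell^2}^2$. Splitting the denominator as $\sigma(z)^{-1}\phi(z)^2 + \sum_{y\ne z}\sigma(y)^{-1}\phi(y)^2$, the ratio becomes $(1+Q)^{-1}$ where
\[
Q \;=\; \sigma(z) \sum_{y\ne z}\sigma(y)^{-1}\Bigl(\frac{\phi(y)}{\phi(z)}\Bigr)^{2}.
\]
Using $\|\phi\|_{\ell^\infty}\le 1$ to replace $\phi(y)^2$ by $\phi(y)$, then $\sigma(y)^{-1}\le \delta_\sigma^{-1}$ by Assumption~\ref{a:3}, and finally Lemma~\ref{l:bdlocEFs} to bound $\sigma(z)\sum_{y\ne z}\phi(y)/\phi(z) < \varepsilon'$, gives $Q \le \delta_\sigma^{-1}\varepsilon'/\phi(z)^{2}$. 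A short argument from the $\ell^2$-normalisation shows $\phi(z)^2 \ge 1 - \varepsilon'/\delta_\sigma$: indeed, from the bound $\phi(y) \le \varepsilon'\phi(z)/\sigma(z)$ valid for each $y\ne z$, one has $\sum_{y\ne z}\phi(y)^2 \le \sum_{y\ne z}\phi(y) \le \varepsilon'\phi(z)/\delta_\sigma \le \varepsilon'/\delta_\sigma$, hence $\phi(z)^2 = 1 - \sum_{y\ne z}\phi(y)^2 \ge 1 - \varepsilon'/\delta_\sigma$. Choosing $\varepsilon'$ small enough so that the resulting $(1+Q)^{-1}$ exceeds $1-\varepsilon$ completes the proof.

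There is no real obstacle: the substantial work is already done in Lemma~\ref{l:bdlocEFs}, and the remainder is routine algebra together with careful bookkeeping of the dependence of $\eta$ on $\varepsilon$ (via $\varepsilon'$). The only mild subtlety is ensuring that the upper bound on the expectation in Lemma~\ref{l:totmass} and the lower bound in Lemma~\ref{l:lbtotmass} are both independent of $t$, which they are, so that the estimates transfer directly to $\sup_{t\ge 0}$ and $\inf_{t\ge 0}$ respectively.
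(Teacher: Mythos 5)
Your proposal is correct and follows the paper's own route exactly: the paper obtains the corollary by combining Lemma~\ref{l:bdlocEFs} (with $m_L=R_L$) with Lemma~\ref{l:totmass} for the upper bound and Lemma~\ref{l:lbtotmass} for the lower bound, using Assumption~\ref{a:3} and $\|\phi_{z,r}\|_{\ell^\infty}\le 1$, which is precisely what you do. The algebraic bookkeeping you supply (the bound on $Q$ and the lower bound $\phi(z)^2\ge 1-\varepsilon'/\delta_\sigma$) is the routine part the paper leaves implicit, and it checks out.
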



\smallskip

\section{Properties of the top order statistics of the penalisation functional}
\label{s:os}

In this section we study the top order statistics of the penalisation functional. 
Our main result, Proposition~\ref{p:orderstat2} below, is a generalisation of Proposition \ref{p:orderstat}
and is proved in Sections~\ref{ss:PPapproach}--\ref{ss:coupling}.
Further applications of the tools developed therein are given in Section~\ref{ss:quickpaths},
where we show the existence of certain ``good paths'' from the origin to the localisation site~$Z_t$,
and in Section~\ref{ss:conseqPP}, where we give the proofs of Propositions~\ref{p:sd} and~\ref{p:decorrelation}.

We introduce generalisations of the penalisation functional by defining, for each $c \in \RR$,
\begin{equation}\label{e:defPsitc}
\Psi_{t,c}(z) := \lambda_{R_{L_t}}(z) - \left( \ln_3 t - c \right)^+ \frac{|z|}{t}, \qquad z \in \Pi_{L_t, \delta}.
\end{equation}
Our reason to introduce the constant $c$ in the above is to be able to more conveniently compare 
differing upper and lower bounds for the total mass coming from different methods;
the results of this section will show in particular that these bounds are close enough.

Analogously to \eqref{e:defZt}, recursively set, for $k \ge 1$,
\begin{equation}\label{e:defPsik}
\begin{aligned}
\Psi^{\ssst (k)}_{t,c} & := \max \left\{ \Psi_{t,c}(z) \colon\, z \in \Pi_{L_t, \delta} \setminus \{Z^{\ssst (1)}_{t,c}, \ldots, Z^{\ssst (k-1)}_{t,c}\} \right\}, \\
{\argmax}^{\ssst (k)} \Psi_{t,c} & := \left\{ z \in \Pi_{L_t, \delta}\setminus \{Z^{\ssst (1)}_{t,c}, \ldots, Z^{\ssst (k-1)}_{t,c}\} \colon\, \Psi_t(z) = \Psi^{\ssst (k)}_{t,c} \right\},
\end{aligned}
\end{equation}
and define $Z^{\ssst (k)}_{t,c}$ by requiring
\begin{equation}\label{e:defZk}
Z^{\ssst (k)}_{t,c} \in {\argmax}^{\ssst (k)} \Psi_t, \quad Z^{\ssst (k)}_{t,c} \succeq z \;\,\forall\, z \in {\argmax}^{\ssst (k)} \Psi_{t,c}.
\end{equation}
Recall the definitions of $a_t$, $d_t$ and $r_t$ in \eqref{e:defat} and \eqref{e:defdr}.  The following is the main result of the section, and contains Proposition \ref{p:orderstat} as a special case.

\begin{proposition}
\label{p:orderstat2}
There exists a scale $A_t > 0$ satisfying $\lim_{t \to \infty} |A_t - a_t| = 0$ such that, for each $c \in \RR$ and $k \in \NN$, the random vector
\begin{equation*}\label{e:ordstat1}
\left( \frac{Z^{\ssst (1)}_{t,c}}{r_t}, \ldots, \frac{Z^{\ssst (k)}_{t,c}}{r_t} , \frac{\Psi^{\ssst (1)}_{t,c} - A_{r_t}}{d_{r_t}}, \ldots, \frac{\Psi^{\ssst (k)}_{t,c} - A_{r_t}}{d_{r_t}} \right)
\end{equation*}
converges in distribution as $t \to \infty$ to a random vector in $(\RR^d)^k \times \RR^k $ with distribution
\begin{equation}\label{e:ordstat2}
\id_{\{\psi_1 > \cdots > \psi_k \}} \ee^{-\left( |z_1| + \cdots + |z_k| + \psi_1 + \cdots \psi_k + 2^d \ee^{-\psi_k}\right)} \prod_{i=1}^k \dd z_i \otimes \dd \psi_i .
\end{equation}
\end{proposition}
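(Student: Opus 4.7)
The plan is a point-process approach in the spirit of \cite{BKS16, MP}. First I calibrate a scale $A_t$ so that the rescaled eigenvalues $(\lambda_{R_{L_t}}(z)-A_{r_t})/d_{r_t}$ have a clean limiting tail; next I use Corollary~\ref{c:seppot} to realise $(z,\lambda_{R_{L_t}}(z))_{z\in\Pi_{L_t,\delta}}$ as an essentially i.i.d.\ family; then I prove convergence of the corresponding point process to a Poisson point process (PPP); and finally I transport the intensity by the penalisation $|z|\ln_3 t/t$ to recover the joint density~\eqref{e:ordstat2}.

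\textbf{Calibration of $A_t$.} Combining \eqref{e:eigbound} with Lemma~\ref{l:boundsEV} gives the sandwich $\xi(z)-\sigma(z)^{-1}\le \lambda_{R_L}(z)\le \xi(z)-\tfrac12\sigma(z)^{-1}$ eventually a.s.\ for $z\in\Pi_{L,\delta}$. Using Assumption~\ref{a:1} and integrating over the independent trap $\sigma(0)$ gives, for $x$ in compact sets,
\[
\PPP\bigl(\lambda_{R_L}(0)>a_L+d_L x\bigr)\;=\;L^{-d}\,\ee^{-x}\,\varepsilon_L\,(1+o(1)),
\]
for some $c\in[\tfrac12,1]$, with $\varepsilon_L:=\EEE[\ee^{-c\,\sigma(0)^{-1}d\ln L/\varrho}]$. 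Setting $A_L:=a_L+d_L\ln\varepsilon_L$ makes the right-hand side equal to $L^{-d}\ee^{-x}(1+o(1))$, and Assumption~\ref{a:2} ensures $A_t-a_t\to 0$: indeed, for every $\delta>0$, choosing $K_\delta>c d/(\delta\varrho)$ gives $\varepsilon_L\ge \PPP(\sigma(0)>K_\delta)\,L^{-\delta}$, which is $\ge L^{-2\delta}$ for large $L$ by unboundedness of $\sigma(0)$, so $\ln\varepsilon_L=o(\ln L)$ and hence $d_t\ln\varepsilon_t=o(1)$.

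\textbf{Coupling and Poisson convergence.} By Corollary~\ref{c:seppot}, on an event of probability $\to 1$ the balls $(B_{R_{L_t}}(z))_{z\in\Pi_{L_t,\delta}}$ are pairwise disjoint, so that the $\lambda_{R_{L_t}}(z)$ are genuinely i.i.d.\ there. To avoid the conditioning, I introduce for each $z\in B_{L_t}$ an independent copy $(\tilde\xi_z,\tilde\sigma_z)$ of $(\xi,\sigma)$ on $B_{R_{L_t}}(z)$ and set $\tilde\lambda_z$ to be the principal Dirichlet eigenvalue of $\Delta\tilde\sigma_z^{-1}+\tilde\xi_z$ on $B_{R_{L_t}}(z)$. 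The resulting auxiliary process
\[
\mathcal{Q}_t:=\sum_{z\in B_{L_t}}\delta_{\bigl(z/r_t,\,(\tilde\lambda_z-A_{r_t})/d_{r_t}\bigr)}
\]
is, on the separation event, identical above any fixed threshold in the second coordinate to the rescaled process driven by $(z,\lambda_{R_{L_t}}(z))_{z\in\Pi_{L_t,\delta}}$, because the tail asymptotic from Step~1 forces all such atoms to lie in $\Pi_{L_t,\delta}$ for large~$t$. Poisson convergence of $\mathcal{Q}_t$ is then a standard null-array computation: for bounded Borel $A\subset\RR^d$ and $x\in\RR$,
\[
\EEE\bigl[\mathcal{Q}_t(A\times(x,\infty))\bigr]=\bigl|\{z\in B_{L_t}: z/r_t\in A\}\bigr|\cdot\PPP(\tilde\lambda_0>A_{r_t}+d_{r_t}x)\longrightarrow \mathrm{vol}(A)\,\ee^{-x},
\]
using $|\{z\in B_{L_t}: z/r_t\in A\}|\sim r_t^d\,\mathrm{vol}(A)$ (since $r_t\ll L_t$) and the calibration of $A_{r_t}$. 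The limit is a PPP on $\RR^d\times\RR$ of intensity $\dd z\otimes\ee^{-\psi}\dd\psi$.

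\textbf{Penalisation and transport.} Uniformly on $\{|z|/r_t\le M\}$ one has
\[
\frac{\Psi_{t,c}(z)-A_{r_t}}{d_{r_t}}=\frac{\lambda_{R_{L_t}}(z)-A_{r_t}}{d_{r_t}}-\frac{|z|}{r_t}\,(1+o(1)),
\]
with the $(1+o(1))$ absorbing the constant $c$ via the identity $(\ln_3 t)/(t\, d_{r_t})=r_t^{-1}(1+o(1))$. Hence the top $k$ order statistics of $\Psi_{t,c}$ (rescaled) correspond to the top $k$ atoms of $\mathcal{Q}_\infty$ under the deterministic map $(z,\psi)\mapsto(z,\psi-|z|)$. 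A change of variables transports the intensity to $\ee^{-|z|}\dd z\otimes\ee^{-\psi}\dd\psi$, whose first $k$ atoms ordered by $\psi$ have joint density exactly \eqref{e:ordstat2}, the factor $\ee^{-2^d\ee^{-\psi_k}}$ coming from the void probability $\int_{\RR^d}\ee^{-|z|}\dd z\cdot\int_{\psi_k}^\infty\ee^{-\psi}\dd\psi=2^d\ee^{-\psi_k}$. Tightness needed to lift vague to weak convergence is automatic: atoms with $|z|/r_t\to\infty$ would need their $\psi$-coordinate to diverge in order to reach the top, which has vanishing probability. The most delicate point I anticipate is the calibration in Step~1, where the mixing of the double-exponential tail of $\xi(0)$ with the law of $\sigma(0)$ must be controlled uniformly on compacts and shown to contribute only an $o(1)$ additive shift; this is precisely where Assumption~\ref{a:2} plays a direct role.
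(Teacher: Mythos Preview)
Your overall strategy---coupling to i.i.d.\ eigenvalues via the separation property, Poisson convergence of the rescaled point process, then transport by the penalisation map $(z,\psi)\mapsto(z,\psi-|z|)$---is exactly the route taken in the paper. The difficulty is concentrated in your Step~1, and there the argument as written has a genuine gap.

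The sandwich $\xi(z)-\sigma(z)^{-1}\le \lambda_{R_L}(z)\le \xi(z)-\tfrac12\sigma(z)^{-1}$ only yields
\[
L^{-d}\ee^{-x}\,\varepsilon_L^{(1)}(1+o(1))\;\le\;\PPP\bigl(\lambda_{R_L}(0)>a_L+d_L x\bigr)\;\le\;L^{-d}\ee^{-x}\,\varepsilon_L^{(1/2)}(1+o(1)),
\]
with $\varepsilon_L^{(c)}=\EEE[\ee^{-c\sigma(0)^{-1}d\ln L/\varrho}]$. For the calibration to produce a tail of the form $r_t^{-d}\ee^{-x}(1+o(1))$ you need the ratio $\varepsilon_L^{(1)}/\varepsilon_L^{(1/2)}$ to tend to $1$, and it does not in general: for instance if $\PPP(\sigma(0)>x)=x^{-\alpha}$ then $\varepsilon_L^{(c)}\sim \alpha\Gamma(\alpha)(cd\ln L/\varrho)^{-\alpha}$ and the ratio tends to $2^{-\alpha}$. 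Thus no choice of $c\in[\tfrac12,1]$ gives a single asymptotic; you only get the tail up to a bounded multiplicative constant, which translates to an $O(d_t)$ ambiguity in $A_t$ rather than $o(d_t)$, and the limiting intensity is not pinned down.

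The paper fixes this by replacing the two-sided bound with an exact relation. Working with potentials truncated outside~$0$ (so that the separation in Corollary~\ref{c:seppot} holds deterministically), the path expansion~\eqref{e:pathexpeig} gives $\hat\lambda^*_t=\xi(0)-\sigma(0)^{-1}(1-Q_t(\hat\lambda^*_t))$ with $Q_t(A)$ measurable in $(\xi(x),\sigma(x))_{x\neq 0}$ and strictly less than~$1$. Monotonicity then yields the equivalence $\hat\lambda^*_t>A \Leftrightarrow \xi(0)>A+(1-Q_t(A))/\sigma(0)$, and $A_t$ is defined implicitly by $\PPP(\hat\lambda^*_t>A_t)=t^{-d}$; see Proposition~\ref{prop:maxordertrunceig}. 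This also delivers the uniform upper bound $t^d\PPP(\hat\lambda^*_t>A_t+sd_t)\le \ee^{-(1-\varepsilon)s}$ for all $s\ge 0$, which is what makes your tightness claim in Step~3 rigorous (in the paper this is handled by embedding the point process in a compactified space $\mathfrak{E}$ where the sets $\cH^\theta_\eta$ of~\eqref{e:defcH} are relatively compact).
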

We deduce the following corollary, proven in the same way as in \cite[Proposition~5.8]{MP}. 

\begin{corollary}
\label{c:ec}
For each $c > 0$, with high probability as $t \to \infty$,
\[  Z_t = Z_{t, c}^{\ssst (1)} \quad \text{and}  \quad  Z_{t}^{\ssst (2)}  = Z_{t, c}^{\ssst (2)}. \]
\end{corollary}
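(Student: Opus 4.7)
The plan is to treat $\Psi_{t,c}$ as a deterministic perturbation of $\Psi_{t,0} = \Psi_t$ that is uniformly small on the set of sites that can plausibly contribute to the top order statistics, and to invoke Proposition~\ref{p:orderstat2} to control both the size of this set and the gaps between consecutive top values. Since $Z_t = Z_{t,0}^{\ssst (1)}$ and $Z_t^{\ssst (2)} = Z_{t,0}^{\ssst (2)}$ hold by definition, the claim reduces to showing $Z_{t,0}^{\ssst (i)} = Z_{t,c}^{\ssst (i)}$ for $i = 1, 2$ with high probability.

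First I would observe that, for $t$ large enough that $\ln_3 t > c$ and every $z \in \Pi_{L_t, \delta}$,
\[ \Psi_{t,c}(z) - \Psi_{t,0}(z) \;=\; \frac{c\,|z|}{t} , \]
so $\Psi_{t,c}$ is obtained from $\Psi_{t,0}$ by adding a deterministic, nonnegative function of $|z|$. Next I would invoke Proposition~\ref{p:orderstat2} with $k = 3$ for both parameter values $0$ and $c$ and intersect the two resulting high-probability events. For any $\varepsilon > 0$ this yields a constant $M > 0$ such that, with probability at least $1 - \varepsilon$ for all large $t$, both families $\{Z_{t,\ast}^{\ssst (i)} : i = 1, 2, 3, \ \ast \in \{0,c\}\}$ are contained in $\{z : |z| \le M r_t\}$, and all four consecutive gaps $\Psi_{t,\ast}^{\ssst (i)} - \Psi_{t,\ast}^{\ssst (i+1)}$ (for $i \in \{1, 2\}$ and $\ast \in \{0, c\}$) exceed $d_{r_t}/M$. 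This relies only on the limiting density in \eqref{e:ordstat2} being continuous and supported on $\psi_1 > \psi_2 > \psi_3$, and on the spatial densities $\ee^{-|z_i|}$ being integrable.

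On this good event, set $S := \{z \in \Pi_{L_t,\delta} : |z| \le M r_t\}$. The top-three maximisers of both $\Psi_{t,0}$ and $\Psi_{t,c}$ lie in $S$, so the top two of each functional are determined by the restriction to $S$. On $S$ the perturbation satisfies
\[ \left| \Psi_{t,c}(z) - \Psi_{t,0}(z) \right| \;\le\; \frac{cMr_t}{t} \;=\; \frac{cM d_t}{\ln_3 t} \;=\; o(d_{r_t}), \]
which is much smaller than the minimum gap $d_{r_t}/M$ between consecutive top-three values of $\Psi_{t,0}$. Two short two-line comparisons of the form
\[ \Psi_{t,c}(Z_{t,0}^{\ssst (i)}) - \Psi_{t,c}(z) \;=\; [\Psi_{t,0}(Z_{t,0}^{\ssst (i)}) - \Psi_{t,0}(z)] + \frac{c(|Z_{t,0}^{\ssst (i)}| - |z|)}{t} \qquad (i = 1, 2) \]
then show that $Z_{t,0}^{\ssst (1)}$ is the unique $\Psi_{t,c}$-maximiser on $S$ and that $Z_{t,0}^{\ssst (2)}$ maximises $\Psi_{t,c}$ on $S \setminus \{Z_{t,0}^{\ssst (1)}\}$. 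Hence $Z_{t,c}^{\ssst (1)} = Z_{t,0}^{\ssst (1)} = Z_t$ and $Z_{t,c}^{\ssst (2)} = Z_{t,0}^{\ssst (2)} = Z_t^{\ssst (2)}$, and letting $\varepsilon \to 0$ completes the argument.

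The main (mild) technical point is that one must apply Proposition~\ref{p:orderstat2} with $k = 3$ rather than $k = 2$: identifying the \emph{second} maximiser of $\Psi_{t,c}$ requires a positive gap between the \emph{second} and \emph{third} top values of $\Psi_{t,0}$, for otherwise the perturbation could promote some other site above $Z_{t,0}^{\ssst (2)}$. The lexicographic tie-breaker in the definitions plays no role, as all relevant gaps are strictly positive on the good event.
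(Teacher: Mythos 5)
Your proof is correct and follows essentially the same route as the paper, which deduces the corollary directly from the order-statistics convergence of Proposition~\ref{p:orderstat2} (via the argument of \cite[Proposition~5.8]{MP}): tightness of the rescaled top maximisers and strictly positive rescaled gaps, combined with the fact that the penalisation difference $c|z|/t$ is $O(r_t/t)=o(d_{r_t})$ on the relevant spatial scale. Your observation that one needs $k=3$ (a gap between the second and third order statistics) to pin down the second maximiser is the right technical point and is handled correctly.
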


The following three subsections are dedicated to establishing Proposition~\ref{p:orderstat2}.
The crux of the proof is to show that, after proper rescaling and as $t \to \infty$, the point set
\[ (z, \lambda_{R_{L_t}}(z)  )_{z \in \Pi_{L_t, \delta}} \]
converges to (the support of) a Poisson point process,
and moreover this convergence takes place with respect to a topology that is fine enough to conclude, by continuity, 
the convergence of certain relevant functionals of the point set.

\subsection{Point process machinery}
\label{ss:PPapproach}  
We begin by describing the set-up in which the point process convergence takes place. Since the functionals we are ultimately interested in are not continuous with respect to the usual vague topology of point measures in $\RR \times \RR^d$, we embed $\RR \times \RR^d$ in a locally-compact Polish space $\mathfrak{E}$ such that, for any $\theta>0$, $\eta \in \RR$, the set
\begin{equation}\label{e:defcH}
\cH^\theta_\eta := \left\{ (\lambda, z) \in \RR \times \RR^d \colon\, \lambda > \eta + \frac{|z|}{\theta} \right\}
\end{equation}
is relatively compact in $\mathfrak{E}$ and, for any compact $K \subseteq \mathfrak{E}$, there exists $\theta>0$, 
$\eta \in \RR$ such that $K \cap (\RR \times \RR^d) \subset \cH^\theta_\eta$. 
For a suitable choice of $\mathfrak{E}$, we refer the reader to \cite[Appendix~B]{BKS16}.

Note that a Poisson point process in $\RR \times \RR^d$ with intensity measure $\ee^{-\lambda} \dd \lambda \otimes \dd z$ may be extended to $\mathfrak{E}$. Let $\scrM_p = \scrM_p(\mathfrak{E})$ denote the set of point measures (i.e., integer-valued Radon measures) in $\mathfrak{E}$, equipped with the topology of vague convergence.

Define a scale $L^*_t  > 0$ such that, for all large enough $t$, $L^*_{r_t} = L_t$, and abbreviate $R^*_t := R_{L^*_t}$. 
Using \eqref{e:defLt} and \eqref{e:defdr}, we may verify that, as $t \to \infty$,
\begin{equation*}\label{e:asympL*t}
L^*_t \sim \frac{d}{\varrho} t (\ln t) (\ln_2 t) \ln_3 t ,
\end{equation*}
and thus also $a_{L^*_t} = a_t + o(1)$.
For a scale $A_t > 0$, define the point measure
\begin{equation}\label{e:defcPL}
\cP_t := \sum_{z \in \Pi_{L^*_t, \delta}} \delta_{\left(  \frac{z}{t} , Y_t(z) \right)} \quad \text{ where } \quad Y_t(z) := \frac{\lambda_{R^*_t}(z) - A_t}{d_t}.
\end{equation}

The following is the key result of this section.
\begin{lemma}
\label{l:convPPP}
There exists a scale $A_t > 0$ satisfying $\lim_{t \to \infty}|A_t - a_t| =0$ such that the point process $\cP_t$ defined in \eqref{e:defcPL} converges in distribution, as $t \to \infty$, with respect to the vague topology of $\scrM_p$ to a Poisson point process supported in $\RR \times \RR^d$ with intensity $\ee^{-\lambda}  \dd z \otimes \dd \lambda$.
\end{lemma}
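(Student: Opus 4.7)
I would reduce the statement to a standard Poisson convergence for a field of i.i.d.\ random variables, in three steps.

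\emph{Tail asymptotics for $\lambda_{R^*_t}(0)$.} First, I would define $A_t$ by asking (up to asymptotic equivalence) that $|B_{L^*_t}| \cdot \PPP(\lambda_{R^*_t}(0) > A_t) = 1$, so that the mean number of high exceedances is of order one. The bound $|A_t - a_t| \to 0$ would follow from the sandwich $\xi(0) - \sigma(0)^{-1} \le \lambda_{R^*_t}(0) \le \max_{y \in B_{R^*_t}} \xi(y)$ provided by Lemma~\ref{l:boundsEV}, combined with Assumption~\ref{a:2} (unboundedness of $\sigma(0)$), the double-exponential decay from Assumption~\ref{a:1}, and~\eqref{e:defat}. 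The main sharper claim to establish is that for every $x \in \RR$,
\begin{equation}\label{e:plantailasymp}
t^d \, \PPP\bigl(\lambda_{R^*_t}(0) > A_t + x d_t\bigr) \;\longrightarrow\; \ee^{-x}.
\end{equation}
This is a Gumbel-type tail, and follows from Assumption~\ref{a:1}, which yields $\PPP(\xi(0) > a + y)/\PPP(\xi(0) > a) \to \ee^{-y/\varrho}$ as $a \to \infty$ uniformly in bounded $y$, matching the rescaling $d_t = \varrho/(d \ln t)$ of \eqref{e:defdr}.

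\emph{Decoupling via separation.} For $z_1 \neq z_2 \in \Pi_{L^*_t,\delta}$, Corollary~\ref{c:seppot} ensures that $B_{R^*_t}(z_1) \cap B_{R^*_t}(z_2) = \varnothing$ eventually almost surely, so the eigenvalues $\lambda_{R^*_t}(z_i)$ depend on disjoint pieces of the environment. This opens the door to a coupling: introduce an i.i.d.\ family $\{\hat\lambda_{R^*_t}(z)\}_{z \in \ZZ^d}$, each distributed as $\lambda_{R^*_t}(0)$, such that $\hat\lambda_{R^*_t}(z) = \lambda_{R^*_t}(z)$ for all $z \in \Pi_{L^*_t,\delta}$ with high probability. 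This coupling (forthcoming Lemma~\ref{l:coupling}) would be constructed by tiling $B_{L^*_t}$ into disjoint boxes of radius $R^*_t$ and exploiting independence of the environment across them together with the separation property. It then suffices to prove the Poisson limit for the i.i.d.\ point process
\[
\hat \cP_t \;:=\; \sum_{z \in B_{L^*_t}} \delta_{(z/t,\, (\hat\lambda_{R^*_t}(z) - A_t)/d_t)} \, \id\bigl\{\hat\lambda_{R^*_t}(z) > a_{L^*_t} - \delta\bigr\}.
\]

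\emph{Poisson convergence in $\scrM_p(\mathfrak{E})$.} By the structure of the compactification $\mathfrak{E}$, it suffices to verify convergence of avoidance and intensity functionals on the relatively compact sets $\cH^\theta_\eta$ from~\eqref{e:defcH}. Concretely, I would invoke Kallenberg's criterion to reduce matters to
\[
\EE\bigl[\hat \cP_t(\cH^\theta_\eta)\bigr] \longrightarrow \int_{\cH^\theta_\eta} \ee^{-\lambda}\, \dd z\, \dd \lambda \quad \text{and} \quad \PPP\bigl(\hat \cP_t(\cH^\theta_\eta) = 0\bigr) \longrightarrow \exp\Bigl(-\int_{\cH^\theta_\eta} \ee^{-\lambda} \, \dd z\, \dd \lambda \Bigr).
\]
The first follows from~\eqref{e:plantailasymp} via a Riemann-sum argument (on sets of the form $C \times (u_1, u_2]$, there are $\sim t^d|C|$ sites, each contributing probability $\sim t^{-d}\ee^{-\lambda}$ per unit interval of $\lambda$). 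The second then follows by independence of the $\hat\lambda_{R^*_t}(z)$ through the standard $\prod(1-p_i) = \exp(-\sum p_i + O(\sum p_i^2))$ as $\max p_i \to 0$.

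\emph{Main obstacle.} The crux is the tail estimate~\eqref{e:plantailasymp}. The a priori bounds only give $\lambda_{R^*_t}(0) \in [\xi(0) - \sigma(0)^{-1}, \max_{y \in B_{R^*_t}} \xi(y)]$, so to identify the precise leading order one must show that, conditionally on $\lambda_{R^*_t}(0) > A_t + x d_t$, the trap satisfies $\sigma(0)^{-1} = o(d_t)$ and simultaneously the contributions from other sites in $B_{R^*_t}(0)$ are also $o(d_t)$. The latter is where separation (Corollary~\ref{c:seppot}) and the path-expansion bound~\eqref{e:eigbound} enter, effectively reducing the tail of $\lambda_{R^*_t}(0)$ at scale $A_t + x d_t$ to that of $\xi(0)$ at the same scale, which then matches the Gumbel form dictated by Assumption~\ref{a:1}.
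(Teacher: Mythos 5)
Your architecture is the same as the paper's (single-box tail asymptotics defining $A_t$, decoupling via the separation property and a coupling to i.i.d.\ copies, then a standard Poisson convergence criterion), but as written it has two genuine problems. First, the normalisation of $A_t$ is wrong: the spatial rescaling in $\cP_t$ is $z \mapsto z/t$, so the site density per unit volume of the limit is $t^d$, and you need $t^d\,\PPP(\lambda_{R^*_t}(0) > A_t) \to 1$; with your choice $|B_{L^*_t}|\,\PPP(\lambda_{R^*_t}(0)>A_t)=1$ one gets $t^d\,\PPP(\lambda_{R^*_t}(0)>A_t) = t^d/|B_{L^*_t}| \to 0$, which contradicts your own claimed tail asymptotic at $x=0$ and would make every $\cH^\theta_\eta$ asymptotically empty. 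The paper fixes $A_t$ by $\PPP(\hat{\lambda}^*_t > A_t) = t^{-d}$, see \eqref{e:defAt}.

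The more serious gap is that the crux — the Gumbel tail $t^d\,\PPP(\lambda_{R^*_t}(0) > A_t + x d_t) \to \ee^{-x}$ — is not actually proved by the tools you invoke. Corollary~\ref{c:seppot} and \eqref{e:eigbound} are eventually-almost-sure statements about all sites of $\Pi_{L,\delta}$ simultaneously; they give no quantitative control of the conditional law of $(\sigma(0),\,\xi$ near $0)$ on a single-site event whose probability is itself of order $t^{-d}$, which is exactly what the tail computation needs. The paper's device (Proposition~\ref{prop:maxordertrunceig}) is to work with the deterministically truncated field \eqref{e:defhatxiz}, which forces $\hat{\xi}\le a_L - c_*$ off the centre, so that in the path expansion the correction $Q_t(A)\le 1/2$ holds uniformly over \emph{all} configurations; this yields the exact equivalence $\hat{\lambda}^*_t > A \iff \xi(0) > A + (1-Q_t(A))/\sigma(0)$, after which one integrates over $\sigma(0)$, shows the contribution from $\sigma(0) < \ell_t$ is negligible (this is where Assumption~\ref{a:2} enters), and uses the regularity of $G=\ee^{F}$ from Assumption~\ref{a:1} via the mean value theorem. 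Consistently, the i.i.d.\ family one couples to consists of these \emph{truncated} eigenvalues (Lemma~\ref{l:coupling}), not of copies of the untruncated $\lambda_{R^*_t}(0)$; and the coupling is not obtained by tiling $B_{L^*_t}$ into boxes of radius $R^*_t$ — the balls $B_{R^*_t}(z)$ are centred at the random points of $\Pi_{L,\delta}$ and straddle any fixed tiling — but by gluing independent copies of the environment around each high point. Finally, note that $\cH^\theta_\eta$ is unbounded in $\RR\times\RR^d$, so your Riemann-sum computation of $\EE[\hat{\cP}_t(\cH^\theta_\eta)]$ also requires a tail bound uniform in $s \ge 0$, namely \eqref{e:maxorduppbd}, and not only the pointwise limit on bounded intervals.
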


From Lemma~\ref{l:convPPP}, Proposition~\ref{p:orderstat2} follows using standard arguments, as we show next.
\begin{proof}[Proof of Proposition~\ref{p:orderstat2}]
We will use the setup of \cite[Section~7.2]{BKS16}.
We claim that, in their notation (cf.\ equations~(7.33)--(7.38) therein), we may write, for any $1 \le i \le k$,
\begin{equation}\label{e:prproporderstat1}
\left( \frac{\Psi_{t,c}^{\ssst (i)} - A_{r_t}}{d_{r_t}}, \frac{\lambda_{R^*_t}(Z^{\ssst (i)}_{t,c}) - A_{r_t}}{d_{r_t}}, \frac{Z^{\ssst (i)}_{t,c}}{r_t} \right) =
\Phi^{\ssst (i)}_{\vartheta_t}(\cP_{r_t})(1), \qquad \vartheta_t(z) := z \frac{d_t}{d_{r_t}}\frac{(\ln_3 t -c)^+}{\ln_3 t},
\end{equation}
with high probability as $t \to \infty$,
where $A_t$ is given by Lemma~\ref{l:convPPP}. 
Indeed, this follows from the definition of $L^*_t$ and the fact that, by Lemma~\ref{l:convPPP}, we may assume that $|{\argmax}^{\ssst (i)} \Psi_{t,c}|=1$,  and thus our definition \eqref{e:defPsitc}-\eqref{e:defZk} coincides with the one in \cite{BKS16}.
Now, by \cite[Lemma~7.6]{BKS16} and Lemma~\ref{l:convPPP} above, the vector
$(\Phi^{\ssst (1)}_{\vartheta_t}(\cP_{r_t})(1), \ldots, \Phi^{\ssst (k)}_{\vartheta_t}(\cP_{r_t})(1))$
converges in distribution as $t \to \infty$ to $(\Phi^{\ssst (1)}(\cP_\infty)(1), \ldots, \Phi^{\ssst (k)}(\cP_\infty)(1))$,
where $\cP_\infty$ is a Poisson point process in $\RR \times \RR^d$ with intensity $\ee^{-\lambda} \dd \lambda \otimes \dd z$; this follows from the almost sure continuity of $\Phi^{\ssst (i)}(\cP_\infty)(\theta)$ at $\theta=1$ and e.g.\ the Skorohod representation theorem. 
The expression for the density \eqref{e:ordstat2} follows from Proposition~\ref{p:density} in Appendix~\ref{a:density} (see also \cite[Proposition~3.2]{ST}).
\end{proof}

We turn now to the proof of Lemma~\ref{l:convPPP}, which is achieved by comparing $\cP_t$ to an auxiliary process $\widehat{\cP}_t$ involving `truncated eigenvalues', whose convergence is easier to establish.

To that end, let $(\xi^z, \sigma^z)_{z \in \ZZ^d}$ be an i.i.d.\ collection of random fields and trapping landscapes, with $(\xi^z,\sigma^z)$ distributed as $(\xi, \sigma)$ for each $z \in \ZZ^d$. Fix a truncation level
\begin{equation}\label{e:trunclev}
c_\ast :=  4 \delta^{-1}_\sigma 
\end{equation}
and define, for each $L >0$, a version of $\xi^z$ that is truncated outside $z$ at the level $a_L - c_\ast$:
\begin{equation}\label{e:defhatxiz}
\hat{\xi}_L^z(x) := \left\{ 
\begin{array}{ll}
\xi^z(x) \vee (a_L - c_\ast + \delta^{-1}_\sigma), & \text{ if } x = z, \\
\xi^z(x) \wedge (a_L - c_\ast), & \text{ otherwise.}
\end{array}\right.
\end{equation}
Note that this truncation mirrors the separation properties in Corollary~\ref{c:seppot}. By analogy to $\Pi_{L, \delta}$, define $\widehat{\Pi}_{L,\delta} := \{z \in B_L \colon\, \xi^z(z) > a_L -  \delta \}$. 
Let the truncated eigenvalue $\hat{\lambda}^{\ssst (L)}_r(z)$ denote the principal Dirichlet eigenvalue of the operator $\Delta (\sigma^z)^{-1} + \hat{\xi}^z_L$
in the ball $B_r(z)$, and define, for $A_t > 0$,
 the point measure 
\begin{equation}
\label{e:defhatcP}
\widehat{\cP}_t := \sum_{z \in \ZZ^d} \delta_{\left( \frac{z}{t} , \widehat{Y}_t(z) \right)} \quad \text{where} \quad \widehat{Y}_t(z) := \frac{\hat{\lambda}^{\ssst (L^*_t)}_{R^*_t}(z) - A_t}{d_t}.
\end{equation}

The following two lemmas will be used to deduce the convergence of $\cP_t$ from that of $\widehat{\cP}_t$.

\begin{lemma}[Convergenge of truncated eigenvalues]\label{l:convPPPaux}
The statement of Lemma~\ref{l:convPPP} holds for $\widehat{\cP}_t$ in place of $\cP_t$.
\end{lemma}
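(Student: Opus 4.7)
The plan is to exploit the independence structure of $\widehat{\cP}_t$ and reduce the claim to a single-site tail computation. By construction, $(\hat{\lambda}^{\ssst(L_t^*)}_{R^*_t}(z))_{z \in \ZZ^d}$ is an i.i.d.\ family, since each depends only on its own independent copy $(\xi^z,\sigma^z)$; hence $\widehat{\cP}_t$ is a superposition of independent Dirac masses. Applying a standard Poisson convergence criterion for null arrays to a base of relatively compact rectangles $D \times (y_1,y_2) \subset \RR^d \times \RR$ in $\mathfrak{E}$, it suffices to show
\[
\EE[\widehat{\cP}_t(D \times (y_1,y_2))] \to |D|(\ee^{-y_1}-\ee^{-y_2}) \quad\text{and}\quad \sup_{z \in \ZZ^d} \PP\!\left((z/t,\widehat{Y}_t(z)) \in D \times (y_1,y_2)\right) \to 0;
\]
the second condition will follow from the first since the points are i.i.d.\ with individual probability $\PP(\widehat{Y}_t(0) > y_1)$ of order $t^{-d}$ uniformly in $z$.

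By translation invariance, the first condition reduces to showing
\[
t^d\,\PP\!\left(\hat{\lambda}^{\ssst(L_t^*)}_{R^*_t}(0) > A_t + d_t y\right) \to \ee^{-y} \quad \text{for all } y \in \RR,
\]
for an appropriately chosen $A_t$. To compute this tail I would apply the path expansion~\eqref{e:pathexpeig} to $\hat{\lambda}^{\ssst(L_t^*)}_{R^*_t}(0)$. The truncation~\eqref{e:defhatxiz} enforces $\hat{\xi}^0_{L^*_t}(x) \le a_{L^*_t} - c_\ast$ for all $x \ne 0$ in $B_{R^*_t}(0)$, which, together with~\ref{a:3}, keeps the denominators in the expansion uniformly bounded away from $1$ whenever $\xi^0(0) \ge a_{L^*_t} - c_\ast + \delta_\sigma^{-1}$. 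This yields an identity of the form
\[
\hat{\lambda}^{\ssst(L_t^*)}_{R^*_t}(0) = \xi^0(0) - \frac{1-S_t}{\sigma^0(0)},
\]
where $S_t \in [0,1]$ is a path-sum correction depending only on the environment in $B_{R^*_t}(0) \setminus \{0\}$ and is therefore independent of $(\xi^0(0), \sigma^0(0))$. Combining this with the tail-ratio $\PP(\xi(0) > x+h)/\PP(\xi(0) > x) \to \ee^{-h/\varrho}$ coming from~\ref{a:1}, together with the scaling identity $\PP(\xi(0) > a_t + d_t y) = t^{-d}\ee^{-y}(1+o(1))$ implied by the choice of $d_t$, a conditioning on the pair $(S_t,\sigma^0(0))$ produces an asymptotic of the form $t^d\,\PP(\hat{\lambda}^{\ssst(L_t^*)}_{R^*_t}(0) > a_t + d_t y) \to C\,\ee^{-y}$, after which one simply sets $A_t := a_t + \varrho \ln C$ to absorb the multiplicative prefactor.

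The main obstacle is then to show that $C \to 1$, so that $|A_t - a_t| \to 0$; this is where Assumption~\ref{a:2} (unboundedness of $\sigma$) becomes essential. My approach would be to split the tail computation according to whether $\sigma^0(0)$ lies below or above a cut-off $M_t \to \infty$ chosen with $M_t \gg 1/d_t = d\ln t/\varrho$. On the large-trap regime $\{\sigma^0(0) > M_t\}$ the correction $(1-S_t)/\sigma^0(0)$ is negligible relative to the tail scale $d_t$, so the tail of $\hat{\lambda}^{\ssst(L_t^*)}_{R^*_t}(0)$ matches that of $\xi(0)$ up to $o(1)$. The delicate step is to show, via dominated convergence against the joint law of $(S_t,\sigma^0(0))$, that the complementary small-trap regime contributes only a vanishing fraction of the tail mass once properly renormalised. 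Establishing this tail asymptotic is the analytic heart of the lemma; once it is in place, the Poisson convergence follows by the standard machinery outlined in the first paragraph.
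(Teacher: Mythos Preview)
Your reduction to a single-site tail computation via the i.i.d.\ structure matches the paper's strategy (the paper packages the Poisson-limit step as a citation to \cite[Lemma~7.4]{BKS16}, with its hypotheses supplied by Proposition~\ref{prop:maxordertrunceig}). However, two steps in your tail analysis contain genuine errors.

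First, the correction $S_t$ is \emph{not} independent of $(\xi^0(0),\sigma^0(0))$: every denominator in the path expansion~\eqref{e:pathexpeig} contains $\hat{\lambda}^*_t$ itself, which depends on both. The paper fixes this via monotonicity of $\hat{\lambda}^*_t$ in $\xi^0(0)$, rewriting $\{\hat{\lambda}^*_t > A\}$ as $\{\xi^0(0) > A + (1-Q_t(A))/\sigma^0(0)\}$ with $Q_t$ evaluated at the \emph{deterministic} level $A$; this $Q_t(A)$ is indeed independent of $(\xi^0(0),\sigma^0(0))$.

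Second, and more fundamentally, your plan to show $C \to 1$ cannot succeed, because in fact $t^d \PPP(\hat{\lambda}^*_t > a_t) \to 0$, not $1$. For each fixed $u$, the integrand $t^d\PPP(\xi(0) > a_t + (1-Q_t(a_t))/u)$ in~\eqref{e:prMOTEIG3} tends to $0$ (since $(1-Q_t)/u \ge 1/(2u)$ is of constant order, far exceeding the scale $d_t$), and dominated convergence applies as the integrand is bounded by $1$. Your large-trap regime $\{\sigma^0(0) > M_t\}$ with $M_t \gg 1/d_t$ also has vanishing probability, so neither side of the split dominates. Concretely, in the log-Weibull case $(a_t - A_t)/d_t \to \infty$ (cf.~\eqref{e:compAtat}), so no shift of $a_t$ by a bounded multiple of $d_t$ or of $\varrho$ can serve as $A_t$. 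The paper instead \emph{defines} $A_t$ implicitly by $\PPP(\hat{\lambda}^*_t > A_t) = t^{-d}$ and proves $a_t - A_t \to 0$ by a squeeze at \emph{fixed} $\varepsilon > 0$: on one side $\PPP(\hat{\lambda}^*_t > a_t) \le t^{-d}$ trivially, while on the other $\PPP(\hat{\lambda}^*_t > a_t - \varepsilon) \ge \PPP(\sigma(0) > 4/\varepsilon)\,\PPP(\xi(0) > a_t - \varepsilon/2) \gg t^{-d}$, using~\ref{a:2} for the first factor and the double-exponential tail (which makes $\PPP(\xi(0) > a_t - \varepsilon/2)$ a genuinely larger power of $t$) for the second. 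The asymptotic $t^d\PPP(\hat{\lambda}^*_t > A_t + sd_t) \to \ee^{-s}$ is then proved relative to this implicitly-defined $A_t$, together with a uniform upper bound~\eqref{e:maxorduppbd} needed to control the sets $\cH^\theta_\eta$ that are relatively compact in $\mathfrak{E}$.
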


For the next lemma, we recall the definition of the principal eigenvalue $\lambda_{r_1,r_2}(z)$ 
of the operator $\Delta \sigma^{-1} + \xi \mathbbm{1}_{B_{r_1}(z)}$ 
with zero Dirichlet boundary conditions in $B_{r_2}(z)$, where $r_2 \ge r_1 > 0$ and $z \in \ZZ^d$.
We denote by $\hat{\lambda}^{\ssst (L)}_{r_1, r_2}(z)$ the corresponding truncated eigenvalue, i.e.,
the principal Dirichlet eigenvalue of $\Delta (\sigma^z)^{-1} + \hat{\xi}^z_L \mathbbm{1}_{B_{r_1}(z)}$ in $B_{r_2}(z)$.

\begin{lemma}[Coupling with i.i.d.\ fields] \label{l:coupling}
There exists a coupling $\widetilde{\PPP}_L$ of $(\xi^z, \sigma^z)_{z \in \ZZ^d}$ and $(\xi,\sigma)$ 
such that $\Pi_{L,\delta} = \widehat{\Pi}_{L,\delta}$ $\widetilde{\PPP}_L$-a.s.\ and, 
with $\widetilde{\PPP}_L$-probability tending to one as $L\to\infty$,
\begin{equation}\label{e:coupling1}
(\xi(x), \sigma(x)) = (\xi^z(x), \sigma^z(x)) = (\hat{\xi}^z_L(x), \sigma^z(x)) \quad \forall\, z \in \Pi_{L,\delta},  \; x \in B_{R_L}(z)
\end{equation}
and
\begin{equation}\label{e:coupling2}
\lambda_{r_1, r_2}(z) = \hat{\lambda}^{\ssst (L)}_{r_1, r_2}(z) \quad \forall\, z \in \Pi_{L,\delta}, \;  1 \le r_1 \le r_2 \le R_L. 
\end{equation}
\end{lemma}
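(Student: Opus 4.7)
The plan is to construct the coupling $\widetilde\PPP_L$ by first sampling $(\xi^z,\sigma^z)_{z\in\ZZ^d}$ from the i.i.d.\ product measure and then defining $(\xi,\sigma)$ from these values, exploiting the separation structure of Corollary~\ref{c:seppot}. Because the diagonal values $(\xi^z(z))_{z\in\ZZ^d}$ are themselves i.i.d.\ with the marginal of $\xi(0)$, the set $\widehat\Pi_{L,\delta}$ has the same law as $\Pi_{L,\delta}$, so Corollary~\ref{c:seppot} ensures that, with high probability, the balls $B_{R_L}(z)$ for $z\in\widehat\Pi_{L,\delta}$ are pairwise disjoint. In parallel, for each $x\ne z$ the off-diagonal value $\xi^z(x)$ is independent of every diagonal value, so a union bound combined with the tail estimate for $\PPP(\xi(0)>a_L-c_\ast)$ available via Assumption~\ref{a:1} (for $\delta$ chosen small enough as a function of $c_\ast=4\delta_\sigma^{-1}$) guarantees that $\xi^z(x)<a_L-c_\ast$ at every pair $(z,x)$ with $z\in\widehat\Pi_{L,\delta}$ and $x\in B_{R_L}(z)\setminus\{z\}$. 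Let $\mathcal{E}_L$ denote the intersection of these two events; then $\widetilde\PPP_L(\mathcal{E}_L)\to 1$.

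On $\mathcal{E}_L$, the plan is to set $(\xi(x),\sigma(x)):=(\xi^z(x),\sigma^z(x))$ whenever $x\in B_{R_L}(z)$ for the (necessarily unique) $z\in\widehat\Pi_{L,\delta}$, and to draw $(\xi(x),\sigma(x))$ from an independent source otherwise; off $\mathcal{E}_L$ one resamples everything independently, pairing the indicators $\id_{\{\xi(z)>a_L-\delta\}}$ and $\id_{\{\xi^z(z)>a_L-\delta\}}$ by the standard Bernoulli coupling to force $\Pi_{L,\delta}=\widehat\Pi_{L,\delta}$ almost surely. With this construction, \eqref{e:coupling1} holds on $\mathcal{E}_L$ essentially by definition: at $x=z\in\widehat\Pi_{L,\delta}$ the truncation is inactive because $\xi^z(z)>a_L-\delta>a_L-c_\ast+\delta_\sigma^{-1}$ (using $\delta<\delta_\sigma^{-1}$), whereas at $x\in B_{R_L}(z)\setminus\{z\}$ one has $\xi(x)=\xi^z(x)<a_L-c_\ast$, so $\hat\xi^z_L(x)=\xi^z(x)$. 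Equation~\eqref{e:coupling2} follows at once, since the operators $\Delta\sigma^{-1}+\xi\id_{B_{r_1}(z)}$ and $\Delta(\sigma^z)^{-1}+\hat\xi^z_L\id_{B_{r_1}(z)}$ then coincide on $B_{r_2}(z)\subseteq B_{R_L}(z)$ and therefore share a principal Dirichlet eigenvalue.

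The hardest part will be checking that the resulting marginal law of $(\xi,\sigma)$ is exactly the desired i.i.d.\ product measure. The difficulty is that, on $\mathcal{E}_L$, the value $\xi(x)$ is drawn from visibly different sources depending on the branch: for $x\in\widehat\Pi_{L,\delta}$ it equals $\xi^x(x)$, which marginally sees the conditional law $\xi(0)\mid\xi(0)>a_L-\delta$; for $x$ in some other ball it equals $\xi^z(x)$ (with $z\ne x$), whose distribution, conditional on being in this branch, is essentially that of $\xi(0)$ restricted to values below $a_L-c_\ast$; and for $x$ outside every ball it is an independent resample. My plan is to verify that the probability weights of these branches, together with the explicit independence of each $\xi^z(x)$ from all diagonal values when $x\ne z$ (the only randomness that determines branch membership), recombine precisely to the marginal law of $\xi(0)$, and then to absorb any residual mismatch into the resampling rule on $\mathcal{E}_L^c$ without harming the validity of the construction on $\mathcal{E}_L$. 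This distributional bookkeeping, while technical, is the only step that goes beyond routine application of Corollary~\ref{c:seppot} and the definitions.
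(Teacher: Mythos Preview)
Your overall direction matches the paper: sample $(\xi^z,\sigma^z)_{z}$ first and build $(\xi,\sigma)$ from it. However, the construction you propose has a genuine gap in exactly the place you flag as hardest. With fixed radii $R_L$ and a good-event/bad-event split, the marginal law of $(\xi,\sigma)$ will not be the i.i.d.\ product law, and this cannot be repaired on $\mathcal{E}_L^\cc$. Two concrete failures: (i)~on $\mathcal{E}_L$ the off-centre values $\xi(x)=\xi^z(x)$, $x\in B_{R_L}(z)\setminus\{z\}$, carry the conditioning $\xi^z(x)<a_L-c_\ast$, whereas the true conditional law given $x\notin\Pi_{L,\delta}$ only forces $\xi(x)\le a_L-\delta$; mass on $(a_L-c_\ast,a_L-\delta]$ is missing. (ii)~For $x\in B_L$ outside all balls, an unconditioned fresh sample may land above $a_L-\delta$, creating points in $\Pi_{L,\delta}\setminus\widehat\Pi_{L,\delta}$ already on $\mathcal{E}_L$. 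Since $\PPP(\mathcal{E}_L^\cc)\to0$, no resampling rule on $\mathcal{E}_L^\cc$ can compensate for these distributional distortions; in particular, the signed measure one would have to place on $\mathcal{E}_L^\cc$ need not be non-negative.

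The paper avoids this by using \emph{adaptive} radii: for each $z\in\widehat\Pi_{L,\delta}$ one defines $r_z\le\lceil R_L\rceil$ as the largest radius for which $B_{r_z}(z)\setminus\{z\}$ misses $\widehat\Pi_{L,\delta}$ and $\xi^z(x)\le a_L-\delta$ on it, and uses a field $\xi^{\le}_L$ (i.i.d.\ with law $\xi(0)\mid\xi(0)\le a_L-\delta$) outside the balls. The point is that the conditional law of the constructed $(\xi,\sigma)$ given $\widehat\Pi_{L,\delta}$ and $(r_z)$ does not depend on $(r_z)$, because inside and outside the balls the off-centre values carry the \emph{same} conditional law $\xi^{\le}_L$. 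Hence the marginal is exactly correct for every $L$, without any good-event splitting. One then shows, by the calculation behind Proposition~\ref{p:seppot}, that $r_z=\lceil R_L\rceil$ for all $z\in\Pi_{L,\delta}$ with high probability; only after this does one invoke Corollary~\ref{c:seppot} (now transferable to $\xi^z$ via the coupling) to get the stronger bound $\xi^z(x)<a_L-c_\ast$ needed for $\hat\xi^z_L=\xi^z$ on $B_{R_L}(z)$. Note the two thresholds play different roles: $a_L-\delta$ is built into the construction to make the distribution exact, while $a_L-c_\ast$ enters only \emph{a posteriori} for the truncation.
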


The proofs of Lemmas~\ref{l:convPPPaux} and \ref{l:coupling} will be given respectively in Sections~\ref{ss:maxorder} and \ref{ss:coupling} below. 
For now, we finish the proof of Lemma~\ref{l:convPPP}.

\begin{proof}[Proof of Lemma~\ref{l:convPPP}]
By Lemma~\ref{l:coupling}, $\cP_t$ equals with high probability the point process
\begin{equation*}\label{e:prlconvPPPaux1}
\widetilde{\cP}_t := \sum_{z \in \Pi_{L^*_t, \delta}} \delta_{\left(\frac{z}{t}, \widehat{Y}_t(z) \right)}.
\end{equation*}
On the other hand, note that, by Lemma~\ref{l:boundsEV} and since $A_{r_t} = a_{r_t} + o(1) = a_{L^*_t} + o(1)$ as $t \to \infty$, for any $\theta>0$, $\eta \in \RR$ and all large enough $t$,
\begin{equation*}\label{e:prlconvPPPaux2}
z \notin \Pi_{L^*_t, \delta} \;\; \Rightarrow \;\; \left(\widehat{Y}_t(z), \frac{z}{t} \right) \notin \cH^\theta_\eta,
\end{equation*}
and thus $\widetilde{\cP}_t$ and $\widehat{\cP}_t$ coincide in $\cH^\theta_\eta$. 
Since any compact $K \subset \mathfrak{E}$ has 
$K \cap (\RR \times \RR^d) \subset \cH^\theta_\eta$ for some $\theta>0$, 
$\eta \in \RR$, this is enough to conclude convergence of Laplace functionals.
\end{proof}

\subsection{Convergence of truncated eigenvalues}
\label{ss:maxorder}

Assumption~\ref{a:1} straightforwardly implies 
\begin{equation}\label{e:maxorderxi}
\lim_{t \to \infty} t^d \PPP \left( \xi(0) > a_t + s d_t \right) = \ee^{-s} \quad \forall\, s \in \RR,
\end{equation}
and  for each $\varepsilon >0$, as $t \to \infty$ eventually (cf.\ \cite[Lemma~A.1]{BKS16})
\begin{equation*}\label{e:maxordupdbbxi}
t^d \PPP \left( \xi(0) > a_t + s d_t \right) < \ee^{-(1-\varepsilon) s} \quad \forall\, s \ge 0.
\end{equation*}
Our goal is to obtain similar statements for the truncated eigenvalues. Recall the definition of $L^*_t$, $R^*_t$ in Section~\ref{ss:PPapproach}  as well as the truncated potentials \eqref{e:defhatxiz} and truncated eigenvalues $\hat{\lambda}^{\ssst (L)}_r(z)$, $z \in \ZZ^d$. Abbreviate $\hat{\lambda}^*_t := \hat{\lambda}^{\ssst (L^*_t)}_{R^*_t}(0)$. Our result reads as follows.

\begin{proposition}\label{prop:maxordertrunceig}
There exists a scale $A_t > 0$ satisfying $\lim_{t \to \infty}|A_t - a_t|=0$ such that
\begin{equation}\label{e:maxordertrunceig}
\lim_{t \to \infty} t^d \PPP (\hat{\lambda}^*_t > A_t + s d_t) = \ee^{-s} \quad \forall\, s \in \RR,
\end{equation}
and the convergence is uniform over $s$ in bounded intervals of $\RR$. 
Additionally, for each $\varepsilon>0$, as $t \to \infty$ eventually
\begin{equation}\label{e:maxorduppbd}
t^d \PPP (\hat{\lambda}^*_t > A_t + s d_t) \leq \ee^{-(1-\varepsilon) s} \quad \forall\, s \ge 0.
\end{equation}
\end{proposition}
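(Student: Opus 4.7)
The plan is to exploit the path expansion \eqref{e:pathexpeig} to express $\hat\lambda^*_t$ as an implicit function of $\xi^0(0)$, taking advantage of its independence from $(\sigma^0, \xi^0|_{\ZZ^d \setminus \{0\}})$. Applying \eqref{e:pathexpeig} at $y=0$, on the event $\{\hat\lambda^*_t > a_{L^*_t} - c_\ast + \delta_\sigma^{-1}\}$ the relation reads
\[
\hat\lambda^*_t = \xi^0(0) - \sigma^0(0)^{-1}\bigl(1 - \hat S^*_t(\hat\lambda^*_t)\bigr),
\]
where $\hat S^*_t(\lambda)$ denotes the loop sum in \eqref{e:pathexpeig} over paths based at $0$ in $B_{R^*_t}$, evaluated at eigenvalue $\lambda$. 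The truncation $\hat\xi^0_{L^*_t}(y) \le a_{L^*_t} - c_\ast$ for $y \ne 0$ (with $c_\ast = 4\delta_\sigma^{-1}$) ensures a uniform gap $\lambda - \hat\xi^0_{L^*_t}(p_i) \ge \delta_\sigma^{-1}$, bounding each factor in the expansion by $1/2$ and yielding $\hat S^*_t(\lambda) \in [0,c)$ for some constant $c<1$. Since the right-hand side above is strictly increasing in $\lambda$ and depends only on $(\sigma^0, \hat\xi^0|_{\ZZ^d \setminus \{0\}})$, a conditioning argument gives
\[
\PPP(\hat\lambda^*_t > u) = \EEE\bigl[ \PPP(\xi(0) > u + G_u) \bigr], \qquad G_u := \sigma^0(0)^{-1}\bigl(1 - \hat S^*_t(u)\bigr),
\]
the outer expectation being over $(\sigma^0, \hat\xi^0|_{\ne 0})$ and $\xi(0)$ an independent copy. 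This representation immediately gives the uniform upper bound \eqref{e:maxorduppbd} via $G_u \ge 0$ and the corresponding tail bound on $\xi(0)$ from \ref{a:1}.

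I would then combine this with the uniform asymptotic $\PPP(\xi(0) > a_t + w) = t^{-d}\exp\{-w/d_t\,(1+o(1))\}$ for $w \to 0$, a direct consequence of \ref{a:1}. The contribution from $\{G_u > \varepsilon\}$ is negligible because $\PPP(\xi(0) > a_t + \varepsilon)$ decays super-polynomially in $t$, while the Lipschitz continuity of $\hat S^*_t(\lambda)$ near $\lambda = a_t$ (by the geometric convergence of the expansion) lets one replace $G_u$ by $G_{a_t}$ at cost $1+o(1)$. One arrives at
\[
t^d \PPP(\hat\lambda^*_t > a_t + sd_t) = (1 + o(1))\,e^{-s}\,C_t, \qquad C_t := \EEE\bigl[ e^{-G_{a_t}/d_t}\,\id\{G_{a_t} \le \varepsilon\} \bigr].
\]
Setting $A_t := a_t + d_t \ln C_t$ then yields \eqref{e:maxordertrunceig}, with uniformity over bounded $s$ following from the monotonicity of the tail probabilities in $s$.

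The main obstacle is proving $|A_t - a_t| = d_t\,|\ln C_t| \to 0$, equivalently $C_t \ge t^{-o(1)}$ (as $C_t \le 1$ is immediate). This is where assumption \ref{a:2} enters in a crucial way: since $\sigma(0)$ is unbounded, one may choose $M_t \to \infty$ with $\PPP(\sigma(0) > M_t) \ge 1/\ln t$. On the event $\{\sigma^0(0) > M_t\}$ we have $G_{a_t} \le M_t^{-1}$, so
\[
C_t \ge e^{-1/(M_t d_t)}\,\PPP(\sigma(0) > M_t),
\]
whence
\[
-d_t \ln C_t \le \tfrac{1}{M_t} + d_t\,|\ln \PPP(\sigma(0) > M_t)| \le \tfrac{1}{M_t} + \tfrac{\varrho}{d}\,\tfrac{\ln\ln t}{\ln t} \longrightarrow 0,
\]
completing the proof.
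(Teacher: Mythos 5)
Your overall strategy (condition on $\sigma$ and on $\xi$ off the origin, use monotonicity of $\hat\lambda^*_t$ in $\xi(0)$, and use \ref{a:2} at the end to show the centering shift is $o(1)$) is the same as the paper's, and your final percolation-free step choosing $M_t$ with $\PPP(\sigma(0)>M_t)\ge 1/\ln t$ mirrors the paper's argument. The gap is in the middle step. Your display $t^d\,\PPP(\hat\lambda^*_t>a_t+sd_t)=(1+o(1))\,\ee^{-s}C_t$ with $C_t=\EEE[\ee^{-G_{a_t}/d_t}\id\{G_{a_t}\le\varepsilon\}]$ is not justified by the argument you give, and as an identity it is false in general: the correct weight is $\exp\{-(G(a_t+G_u)-G(a_t))\}$ with $G=\ee^{F}$, not $\ee^{-G_u/d_t}$, and the two differ by a factor $\exp\{-G(a_t)(\ee^{G_u/\varrho}-1-G_u/\varrho)\}\approx\exp\{-G_u^2/(2\varrho d_t)\}$, which is not $1+o(1)$ on the event you retain. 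Your tail expansion $\PPP(\xi(0)>a_t+w)=t^{-d}\exp\{-\tfrac{w}{d_t}(1+o(1))\}$ has the $o(1)$ \emph{inside} an exponent multiplied by $w/d_t$, and on $\{G_{a_t}\le\varepsilon\}$ with $\varepsilon$ fixed $w/d_t$ can be of order $\varepsilon/d_t\to\infty$; likewise the ``replace $G_{a_t+sd_t}$ by $G_{a_t}$'' step costs $\ee^{O(|s|/\sigma(0))}$, which is only $1+O(\varepsilon)$ there, not $1+o(1)$. Under \ref{a:2} alone the dominant trap scale for these integrals (the Laplace optimum of $-1/(ud_t)+\ln\PPP(\sigma(0)\approx u)$) need not be large enough to kill these errors, so the claimed constant $C_t$ is generally off by factors that are only harmless after taking logarithms and multiplying by $d_t$ --- a cancellation your argument never invokes.

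Even granting the display for $s$ in bounded intervals, the conclusion does not follow from setting $A_t:=a_t+d_t\ln C_t$: since $C_t\to 0$ (indeed $\ln C_t\to-\infty$ for every admissible trap law), evaluating $\PPP(\hat\lambda^*_t>A_t+sd_t)$ means evaluating your display at $s'=s+\ln C_t$, which lies outside every bounded interval, precisely where your error control breaks down. The paper avoids both problems by defining $A_t$ directly as the exact $t^{-d}$-quantile of $\hat\lambda^*_t$ (using continuity of $A\mapsto\PPP(\hat\lambda^*_t>A)$), centering the whole analysis at $A_t$, discarding the traps $\sigma(0)<\ell_t$ with $\ell_t^{-1}\gg d_t\vee|a_t-A_t|$ (so that $G_u\le\ell_t^{-1}\to0$ on the retained event), and then comparing the thresholds $x_{t,u,s}$ and $x_{t,u,0}$ \emph{at the same trap value $u$} via the mean value theorem, so the problematic term $(1-Q_t)/u$ essentially cancels and no explicit constant like $C_t$ ever needs to be computed. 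To repair your proof you would need to do something equivalent: either re-centre at a quantile and restrict to $\sigma(0)\ge\ell_t$, or work with the exact weight $\exp\{-(G(a_t+G_u)-G(a_t))\}$ throughout and prove separately that the resulting centering differs from $a_t+d_t\ln C_t$ by $o(d_t)$. The same caveat applies to your one-line claim of \eqref{e:maxorduppbd}, which in the paper requires the monotonicity of $Q_t$ in $A$, the monotonicity of $G$, and the eventual bound $F'\ge(1-\varepsilon)/\varrho$ above $A_t$.
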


\begin{proof}
Abbreviate $\sigma := \sigma^{\ssst 0}$, $\xi := \xi^{\ssst 0}$ and $\hat{\xi}_t:= \hat{\xi}^{\ssst (0)}_{L^*_t}$. 
We first show how to define $A_t$.
Applying a path expansion as in \eqref{e:pathexpeig} (see \cite[Proposition~3.7]{MP}), we may write
\begin{equation}\label{e:expeig}
\hat{\lambda}^*_t = \xi(0) - \invsigma(0) + \invsigma(0) Q_t(\hat{\lambda}^*_t) ,\end{equation}
where
\begin{equation*}
\label{e:prMOTEIG1}
Q_t(A) := \sum_{k \ge 2} \sum_{ \substack{p \in \Gamma_k(0,0) \\ p_i \neq 0 \,\forall\, 0 < i <k \\ \Set(p) \subset B_{R^*_t}}} \prod_{0 < i < k} \frac{1}{2d} \frac{1}{1+\sigma(p_i) (A - \hat{\xi}_t(p_i)) } .
\end{equation*}
Recall the definition of the truncation level $c_\ast$ in \eqref{e:trunclev}. 
As in the proof of \eqref{e:eigbound}, this implies that, for large $t$, $Q_t(A) \le 1/2$ uniformly on $A \ge a_t - \delta$ and the realisation of $\xi, \sigma$.

Since $\hat{\lambda}^*_t$ is a strictly increasing function of $\xi(0)$ over $\xi(0) > a_L - 3 \delta_\sigma^{-1}$, \eqref{e:expeig} implies
\begin{equation*}\label{e:prMOTEIG2}
\hat{\lambda}^*_t > A \quad \text{ if and only if } \quad \xi(0) > A + \frac{1- Q_t(A)}{\sigma(0)} \quad \text{ whenever } \quad A \ge a_t - \delta.
\end{equation*}
Thus we may write, using the independence of $\xi$, $\sigma$, for any measurable $I \subset (0,\infty)$,
\begin{equation}\label{e:prMOTEIG3}
\PPP \left( \hat{\lambda}^*_t > A, \sigma(0) \in I \right) = \int_I \PPP \left(\xi(0) > A + \frac{1- Q_t(A)}{u} \right) \PPP(\sigma(0) \in \dd u).
\end{equation}
Using this together with \eqref{e:maxorderxi} and Assumption~\ref{a:2}, we obtain, for any $\varepsilon \in (0,\delta)$,
\begin{equation}\label{e:prMOTEIG4}
\PPP \left( \hat{\lambda}^*_t > a_t - \varepsilon \right) \ge \PPP(\sigma(0) > 4/\varepsilon) \PPP (\xi(0) > a_t - \varepsilon/2) \gg t^{-d}
\end{equation}
and, since $Q_t(a_t) < 1$, $\PPP(\hat{\lambda}^*_t > a_t) \le t^{-d}$ by \eqref{e:defat}. Now note that, since $\xi(0)$ has an eventually continuous tail and is independent of $Q_t(A)$, \eqref{e:prMOTEIG3} is a continuous function of $A$ and converges to $0$ as $A \to \infty$. Thus we may define $A_t$ to be the smallest positive number satisfying
\begin{equation}\label{e:defAt}
\PPP \left(\hat{\lambda}^*_t > A_t \right) = t^{-d},
\end{equation}
which by the previous discussion necessarily satisfies $A_t \le a_t$, $\lim_{t \to \infty} a_t - A_t = 0$.

We next argue that, for any scale $\ell_t \to \infty$ satisfying  $\ell_t^{-1} \gg d_t \vee |a_t - A_t|$ and any $M>0$,
\begin{equation}\label{e:maxordertraplarge}
\lim_{t \to \infty} t^d \sup_{s \ge -M} \PPP (\hat{\lambda}^*_t > A_t + s d_t, \sigma(0) < \ell_t) =0.
\end{equation}
To see this, note that, since  $Q_t(A_t + sd_t) \le 1/2$ for $t$ large uniformly over $s \ge - M$, 
\begin{align}\label{e:prMOTEIG5}
& \int_{[0,\ell_t)} \PPP \left( \xi(0) > A_t + \frac{1-Q_t(A_t+s d_t)}{u} + s d_t \right) \PPP(\sigma(0) \in \dd u) \nonumber\\
& \le \PPP(\sigma < \ell_t) \PPP \left( \xi(0) > a_t + d_t \{ (4 \ell_t d_t)^{-1} -M\} \right) = o(t^{-d})
\end{align}
by \eqref{e:maxorderxi} and our choice of $\ell_t$. This proves \eqref{e:maxordertraplarge}.

We may now show \eqref{e:maxordertrunceig}. Recall first Assumption~\ref{a:1} and put
$G(r) := \ee^{F(r)}$.
Using \ref{a:1}, it is straightforward to verify that, for any function $\delta_t >0$, $\delta_t \to 0$,
\begin{equation}\label{e:prMOTEIG7}
\lim_{t \to \infty} \sup_{u \in [a_t - 
\delta_t, a_t+\delta_t]} \left|\frac{G(u)}{G(a_t)} -1 \right| = 0.
\end{equation}
Set  $x_{t,u,s} := A_t + s d_t + (1-Q_t(A_t+s d_t))/u$. By the independence properties of $\xi, \sigma$,
\begin{equation*}\label{e:prMOTEIG8}
\frac{\PPP \left(\xi(0) > x_{t,u,s} \,\middle|\, (\xi(x), \sigma(x))_{x \neq 0} \right)}{\PPP \left(\xi(0) > x_{t,u,0}\,\middle|\, (\xi(x), \sigma(x))_{x \neq 0} \right)} = \exp \left\{ - \left[G(x_{t,u,s}) - G(x_{t,u,0}) \right] \right\}.
\end{equation*}
By the mean-value theorem, there exists $\theta_{t,u,s} \in [x_{t,u,0} \wedge x_{t,u,s}, \, x_{t,u,0} \vee x_{t,u,s}]$ such that
\begin{equation}\label{e:prMOTEIG9}
G(x_{t,u,s}) - G(x_{t,u,0}) = \frac{(x_{t,u,s} - x_{t,u,0})}{d_t} \frac{G(\theta_{t,u,s})}{G(a_t)} \varrho F'(\theta_{t,u,s}),
\end{equation}
where we also used that, by \eqref{e:a} and \eqref{e:defdr}, $d_t G(a_t) = \varrho$.
Now note that, when $u \ge \ell_t$ and $s \in [-M, M]$, 
$\theta_{t,u,s} \in [a_t - \delta_t, a_t + \delta_t]$
where $\delta_t := |a_t - A_t| + \ell_t^{-1} + Md_t$, and thus
\[\frac{G(\theta_{t,u,s})}{G(a_t)} \varrho F'(\theta_{t,u,s}) = 1+ o(1)\]
by \eqref{e:prMOTEIG7} and Assumption~\ref{a:1},
with $o(1)$ uniform over $s \in [-M,M]$
and the realisation of $\xi, \sigma$.
Moreover, noting that $\tfrac{\dd}{\dd A}Q_t(A)$
is uniformly bounded over $A > a_t - \delta$, we obtain
\[
\frac{(x_{t,u,s} - x_{t,u,0})}{d_t} = s(1+o(1))
\]
where $o(1)$ is again uniform over $s \in [-M,M]$ and $\xi$, $\sigma$.
Hence
\begin{align}\label{e:prMOTEIG10}
& \int_{[\ell_t, \infty)} \PPP \left( \xi(0) > x_{t,u,s} \right) \PPP( \sigma(0) \in \dd u) \nonumber\\
= \; & \int_{[\ell_t, \infty)} \EEE \left[\PPP \left( \xi(0) > x_{t,u,s} \,\middle|\, (\xi(x), \sigma(x))_{x \neq 0}\right) \right] \PPP( \sigma(0) \in \dd u)
\nonumber\\
= \; & \ee^{-s(1+ o(1))} \int_{\ell_t}^{\infty} \PPP \left( \xi(0) > x_{t,u,0} \right) \PPP( \sigma(0) \in \dd u) 
= \left(\ee^{-s} + o(1) \right) t^{-d}
\end{align}
with $o(1)$ uniform over $s \in [-M,M]$, and the last equality holds by \eqref{e:defAt} and \eqref{e:prMOTEIG5} with $s=0$. Now \eqref{e:maxordertrunceig} follows from \eqref{e:prMOTEIG3}, \eqref{e:maxordertraplarge} and \eqref{e:prMOTEIG10}.

To show \eqref{e:maxorduppbd}, note that, for large $t$ and any $s \ge 0$,  $Q_t(A_t+s d_t) \le Q_t(A_t) < 1$, and thus the numbers $\theta_{t,u,s}$, $x_{t,u,s}$ in \eqref{e:prMOTEIG9} satisfy $\theta_{t,u,s} \ge A_t$ and $x_{t,u,s} \ge x_{t,u,0} + s d_t$ for any $u>0$. 
Since $G$ is non-decreasing,
\begin{equation*}
\label{e:prMOTEIG12}
G(x_{t,u,s}) - G(x_{t,u,0}) \ge s \frac{G(A_t)}{G(a_t)} \varrho \inf_{\theta \ge A_t} F'(\theta) \ge s (1 - \varepsilon)
\end{equation*}
for all $t$ large enough by \ref{a:1} and \eqref{e:prMOTEIG7}. Reasoning as for \eqref{e:prMOTEIG10}, we obtain
\begin{equation*}
\label{e:prMOTEIG13} 
t^d \int \PPP \left( \xi(0) > x_{t,u, s} \right) \PPP( \sigma(0) \in \dd u) \le \ee^{-(1-\varepsilon)s},
\end{equation*}
which together with  \eqref{e:prMOTEIG3} implies \eqref{e:maxorduppbd}.
\end{proof}

Proposition~\ref{prop:maxordertrunceig} has the following useful consequence.
\begin{corollary}\label{c:bddexpecPP}
For any $\theta \in (0,\infty)$ and $\eta \in \RR$,
\begin{equation}\label{e:bddexpecPP}
\limsup_{t \to \infty} \EEE \left[ \widehat{\cP}_t(\cH^\theta_\eta) \right]  < \infty.
\end{equation}
\end{corollary}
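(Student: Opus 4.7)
The plan is to exploit the i.i.d.\ structure built into the definition of $\widehat{\cP}_t$: since $(\hat{\xi}^z_{L^*_t}, \sigma^z)_{z \in \ZZ^d}$ are i.i.d.\ across $z$, the truncated eigenvalues $\hat{\lambda}^{\ssst (L^*_t)}_{R^*_t}(z)$ all share the distribution of $\hat{\lambda}^*_t$. Hence by Fubini, writing $s_z := \eta + |z|/(t\theta)$,
\[
\EEE\!\left[\widehat{\cP}_t(\cH^\theta_\eta)\right]
\;=\; \sum_{z \in \ZZ^d} \PPP\!\left( \widehat{Y}_t(z) > \eta + \tfrac{|z|}{t\theta}\right)
\;=\; \sum_{z \in \ZZ^d} \PPP\!\left(\hat{\lambda}^*_t > A_t + s_z d_t\right).
\]
The strategy is then to bound this sum by Proposition~\ref{prop:maxordertrunceig}, splitting according to the sign of $s_z$.

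For $z$ with $s_z \ge 0$, fix any $\varepsilon \in (0,1)$ and apply \eqref{e:maxorduppbd} to obtain, for all $t$ large enough uniformly in $z$,
\[
\PPP(\hat{\lambda}^*_t > A_t + s_z d_t) \le t^{-d}\,\ee^{-(1-\varepsilon) s_z}.
\]
Summing these terms yields at most
\[
\ee^{-(1-\varepsilon)\eta}\, t^{-d}\sum_{z \in \ZZ^d} \ee^{-(1-\varepsilon)|z|/(t\theta)},
\]
and the right-hand side is a Riemann sum for the finite integral $\int_{\RR^d} \ee^{-(1-\varepsilon)|y|/\theta}\, \dd y$, so it stays bounded as $t \to \infty$. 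For $z$ with $s_z < 0$ (only possible when $\eta<0$, and forcing $|z| \le (-\eta)t\theta$), the values $s_z$ lie in the bounded interval $[\eta,0]$; the uniform convergence in \eqref{e:maxordertrunceig} gives a constant $C_\eta > 0$ with $\PPP(\hat{\lambda}^*_t > A_t + s_z d_t) \le C_\eta t^{-d}$ for all large $t$ and all such $z$, and since there are $O(t^d)$ such lattice points, their contribution is $O(1)$.

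There is no real obstacle here: the corollary is a routine consequence of Proposition~\ref{prop:maxordertrunceig}. The only mildly delicate point is handling the bounded range of negative $s_z$, which is covered by the uniformity statement on bounded intervals in \eqref{e:maxordertrunceig} rather than by the tail bound \eqref{e:maxorduppbd}.
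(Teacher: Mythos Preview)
Your proof is correct and follows essentially the same approach as the paper: both express the expectation as $\sum_{z}\PPP(\hat{\lambda}^*_t > A_t + s_z d_t)$, split the sum into a bounded-$s_z$ regime handled via \eqref{e:maxordertrunceig} and a large-$s_z$ regime handled via the tail bound \eqref{e:maxorduppbd}, and recognise the resulting sum as a convergent Riemann sum. The only difference is the precise split point (you cut at $s_z=0$, the paper at $|z|=2|\eta|\theta t$), which is immaterial.
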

\begin{proof}
Using \eqref{e:maxorduppbd} we may write, for $t$ large enough,
\begin{align*}\label{e:prbddexp1}
\EEE \left[ \widehat{\cP}_t(\cH^\theta_\eta) \right] 
& = \sum_{z \in \ZZ^d} \PPP \left( \hat{\lambda}^*_t > A_t + \left(\frac{|z|}{t \theta} + \eta \right) d_t \right) \nonumber\\
& \le \# \{ z \in \ZZ^d \colon |z| \le 2|\eta| \theta t\} \PPP \left( \hat{\lambda}^*_t > A_t + \eta d_t \right)
+ \sum_{|z| > 2 |\eta| \theta t} t^{-d} \ee^{-\tfrac14 \frac{|z|}{\theta t}},
\end{align*}
which by \eqref{e:maxordertrunceig} converges as $t\to\infty$ to
\begin{equation*}\label{e:prbddexp2}
c_1 \ee^{-\eta} + c_2 \int_{|x| > 2 |\eta|} \ee^{-\frac{|x|}{4}} \dd x  < \infty
\end{equation*}
where $c_1, c_2$ are positive constants depending on $d$ and $\theta$. This finishes the proof.
\end{proof}

We may now complete the proof of Lemma~\ref{l:convPPPaux}.

\begin{proof}[Proof of Lemma~\ref{l:convPPPaux}]
This follows from \cite[Lemma~7.4]{BKS16} with $\widehat{N}_t \equiv 0$. 
Indeed, conditions (7.16)--(7.17) therein may be verified using respectively \eqref{e:maxordertrunceig} and \eqref{e:maxorduppbd} above.
\end{proof}

\subsection{Coupling of truncated potentials}
\label{ss:coupling}

Let $(\xi^z, \sigma^z)_{z \in \ZZ^d}$ be i.i.d.\ with each $(\xi^z, \sigma^z)$ distributed as $(\xi, \sigma)$. For $L >0$, we introduce the following random elements, independent from each other and from $(\xi^z, \sigma^z)_{z \in \ZZ^d}$:
\begin{equation}\label{defREaux}
\begin{array}{cl}
\bullet & \text{A random field $\xi^{\le}_L = (\xi^{\le}_L(x))_{x \in \ZZ^d}$ that is i.i.d.\ in $x$,} \\ 
& \text{with each $\xi^{\le}_L(x)$ distributed as $\xi(0)$ conditioned on $\xi(0) \notin \Pi_{L,\delta}$.} \\
\bullet & \text{Two random fields $(\tilde{\xi}, \tilde{\sigma})$ distributed as $(\xi, \sigma)$.}
\end{array}
\end{equation}

\begin{proof}[Proof of Lemma~\ref{l:coupling}]
Denote by $\widetilde{\PPP}_L$ the joint law of $(\xi^z, \sigma^z)_{z \in \ZZ^d}$ and the random elements listed in \eqref{defREaux}, and set $\widehat{\Pi}_{L,\delta} := \{z \in B_L \colon\, \xi^z(z) > a_L - \delta\}$. For $z \in \widehat{\Pi}_{L,\delta}$, we define
\begin{equation*}\label{e:prlcoup2}
\begin{aligned}
r_z:= \max \Big\{ r \in \{0, \ldots, \lceil R_L \rceil \} \colon\, & (B_r(z)\setminus\{z\}) \cap \widehat{\Pi}_{L,\delta} = \varnothing \\
\text{ and } \; & \xi^z(x) \le a_L -  \delta \, \forall\, x \in  B_L \cap (B_r(z)\setminus\{z\})  \Big\}. 
\end{aligned}
\end{equation*}
Then we set
\begin{equation}\label{e:prlcoup3}
(\xi(x), \sigma(x)) := \left\{ 
\begin{array}{ll}
\left(\xi^z(x), \sigma^z(x) \right) & \text{ if } x \in B_{r_z}(z) \text{ for some } z \in \widehat{\Pi}_{L,\delta},\\
(\xi^{\le}(x), \tilde{\sigma}(x)) & \text{ if } x \in B_L \setminus \left(\bigcup_{z \in \widehat{\Pi}_{L,\delta}} B_{r_z}(z) \right),\\
(\tilde{\xi}(x), \tilde{\sigma}(x)) & \text{ if } x \in B_L^\cc \cap \bigcap_{z \in \widehat{\Pi}_{L,\delta}} B^\cc_{r_z}(z).
\end{array}
\right.
\end{equation}
To check that the pair $(\xi, \sigma)$ has the right distribution, note that $\widehat{\Pi}_{L,\delta}$ is distributed as $\Pi_{L,\delta}$, and the conditional law of \eqref{e:prlcoup3} given $\widehat{\Pi}_{L,\delta}, (r_z)_{z \in \widehat{\Pi}_{L,\delta}}$ does not depend on $(r_z)_{z \in \widehat{\Pi}_{L,\delta}}$ and is equal to the correct conditional law. Hence $\widetilde{\PPP}_L$ is indeed a coupling. It is clear by construction that $\Pi_{L,\delta} = \widehat{\Pi}_{L,\delta}$ almost surely. 
Now note that, also by construction, the first equality in \eqref{e:coupling1} holds as soon as $r_z = \lceil R_L \rceil$ for all $z \in \Pi_{L,\delta}$, which can be shown to hold with high probability by the same calculation as for Proposition~\ref{p:seppot}
(note that the proposition itself does not apply directly). Since by Corollary~\ref{c:seppot} we may assume that $\xi^z(x) = \hat{\xi}^z_L(x)$ for all $z \in \Pi_{L,\delta}$ and $x \in B_{R_L}(z)$, the second inequality in \eqref{e:coupling1} as well as \eqref{e:coupling2} follow. 
\end{proof}


\subsection{Applications of the analysis}

To close the section, we develop some applications of the set-up and results from the previous subsections, and in particular exploit the coupling in Lemma~\ref{l:coupling}.
A first observation is the following.

\begin{lemma}\label{l:EVincrdecr}
For any $L \in \NN$, $r  >0$ and $z,y \in \ZZ^d$, $y \neq z$, 
the eigenvalue $\hat{\lambda}^{\ssst (L)}_r(z)$ is a non-decreasing function of $\xi^z(y)$ and a non-increasing function of $\sigma^z(y)$.
\end{lemma}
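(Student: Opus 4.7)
The plan is to approach monotonicity via the variational characterisation of $\hat{\lambda}^{\ssst (L)}_r(z)$. The operator $\Delta (\sigma^z)^{-1} + \hat{\xi}^z_L$ is not self-adjoint in $\ell^2$, but its transpose $(\sigma^z)^{-1}\Delta + \hat{\xi}^z_L$ (obtained by using that $\Delta$ is a symmetric matrix and $(\sigma^z)^{-1}, \hat{\xi}^z_L$ are diagonal) \emph{is} symmetric with respect to the weighted inner product $\langle f,g\rangle_{\sigma^z} := \sum_x \sigma^z(x) f(x) g(x)$. Since a matrix and its transpose share their spectrum, the Rayleigh--Ritz principle applied to $(\sigma^z)^{-1}\Delta + \hat{\xi}^z_L$ on $B_r(z)$ with Dirichlet boundary conditions yields
\[
\hat{\lambda}^{\ssst (L)}_r(z) \;=\; \sup_{f\not\equiv 0,\; \mathrm{supp}(f)\subseteq B_r(z)}\; \frac{\sum_x \sigma^z(x)\,\hat{\xi}^z_L(x)\, f(x)^2 \;-\; \tfrac{1}{4d}\sum_{x\sim y}(f(x)-f(y))^2}{\sum_x \sigma^z(x)\, f(x)^2}.
\]

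Monotonicity in $\xi^z(y)$ is essentially immediate from this formula. The truncation $\hat{\xi}^z_L(\cdot)$ is a nondecreasing function of each coordinate $\xi^z(y)$ (being a max at $y=z$ and a min at $y\ne z$), and the supremum above is manifestly nondecreasing in each value $\hat{\xi}^z_L(y)$, since $\sigma^z(y) f(y)^2 \ge 0$ appears as a coefficient in the numerator and $\hat{\xi}^z_L$ does not enter the denominator.

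For monotonicity in $\sigma^z(y)$ with $y \ne z$, I would compare two trapping landscapes $\sigma_0, \sigma_1$ that agree except at $y$, where $\sigma_1(y) = \sigma_0(y) + \delta$ for some $\delta > 0$. Writing $R_i(f)$ for the Rayleigh quotient under $\sigma_i$, a short algebraic manipulation gives, for any admissible $f$,
\[
R_0(f) - R_1(f) \;=\; \frac{\delta\, f(y)^2}{\sum_x \sigma_0(x) f(x)^2}\,\bigl(R_1(f) - \hat{\xi}^z_L(y)\bigr).
\]
Evaluating at a maximiser $f_1$ of $R_1$ (so $R_1(f_1)$ equals the eigenvalue $\hat{\lambda}_1$ under $\sigma_1$) and using $\hat{\lambda}_0 \ge R_0(f_1)$ yields $\hat{\lambda}_0 \ge \hat{\lambda}_1$ as soon as $\hat{\lambda}_1 \ge \hat{\xi}^z_L(y)$.

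This last sign condition is the only delicate point, and it is precisely where the truncation does its work. On one hand, for $y \ne z$ the definition of $\hat{\xi}^z_L$ forces $\hat{\xi}^z_L(y) \le a_L - c_\ast$. On the other hand, the lower bound from Lemma~\ref{l:boundsEV} applied at the central site gives $\hat{\lambda}_1 \ge \hat{\xi}^z_L(z) - \sigma_1(z)^{-1} \ge (a_L - c_\ast + \delta_\sigma^{-1}) - \delta_\sigma^{-1} = a_L - c_\ast$, where we used $\sigma_1(z) = \sigma_0(z) \ge \delta_\sigma$ since the modification occurred away from $z$. Combining, $\hat{\lambda}_1 \ge a_L - c_\ast \ge \hat{\xi}^z_L(y)$, as required. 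The case $y \notin B_r(z)$ is trivial since the Dirichlet operator restricted to $B_r(z)$ does not depend on $\sigma^z$ outside $B_r(z)$. The main obstacle is really just identifying the correct variational formulation (since the original operator is not self-adjoint) and matching the truncation level $c_\ast$ to the sign condition above; everything else is bookkeeping.
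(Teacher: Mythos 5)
Your proposal is correct, but it takes a genuinely different route from the paper. The paper deduces both monotonicities directly from the path expansion \eqref{e:pathexpeig} applied with $(\sigma^z,\hat{\xi}^z_L)$ in place of $(\sigma,\xi)$: monotonicity in $\xi^z(y)$ is read off from the expansion, and monotonicity in $\sigma^z(y)$ uses exactly the inequality you isolate, namely $\hat{\lambda}^{\ssst (L)}_r(z)\ge \hat{\xi}^z_L(y)$, obtained as you do from Lemma~\ref{l:boundsEV}, the truncation level $c_\ast$ in \eqref{e:defhatxiz} and Assumption~\ref{a:3}. You instead pass to the transpose $(\sigma^z)^{-1}\Delta+\hat{\xi}^z_L$, which shares its spectrum with the original operator and is symmetric for the $\sigma^z$-weighted inner product, and then use Rayleigh--Ritz: the $\xi$-monotonicity is immediate from the quotient, and the $\sigma$-monotonicity follows from your perturbation identity for the Rayleigh quotient (which checks out: with $N_i,D_i$ the numerator and denominator under $\sigma_i$, one has $N_0D_1-N_1D_0=\delta f(y)^2\bigl(N_1-\hat{\xi}^z_L(y)D_1\bigr)$), evaluated at a maximiser of $R_1$. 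What your approach buys is that the monotonicity does not have to be extracted from the implicit fixed-point structure of the path expansion (where $\hat{\lambda}^{\ssst (L)}_r(z)$ appears on both sides), and the quotient identity makes the mechanism transparent: a deeper trap at $y$ lowers the eigenvalue precisely when the eigenvalue exceeds the (truncated) potential there, which the truncation and ellipticity guarantee for $y\neq z$. The paper's argument is shorter given that the expansion is already set up and reused throughout; your treatment of the trivial case $y\notin B_r(z)$ and of the fact that $\hat{\xi}^z_L(y)$ is non-decreasing in $\xi^z(y)$ covers the remaining bookkeeping, so no gap remains.
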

\begin{proof}
The eigenvalue $\hat{\lambda}^{\ssst (L)}_r(z)$ admits path expansions as in \eqref{e:pathexpeig} with $\sigma$, $\xi$ replaced by $\sigma^z$, $\hat{\xi}^z_L$. From this expression we immediately deduce that $\hat{\lambda}^{\ssst (L)}_r(z)$ is non-decreasing in $\hat{\xi}_L^z(y)$
and hence also in $\xi^z(y)$, as the former is non-decreasing in the latter.
To see that $\hat{\lambda}^{\ssst (L)}_r(z)$ is non-increasing in $\sigma^z(y)$,
note additionally that, by Lemma~\ref{l:boundsEV}, \ref{a:1} and \eqref{e:defhatxiz}, $\hat{\lambda}^{\ssst (L)}_r(z) \ge \hat{\xi}_L^z(y)$.
\end{proof}

\subsubsection{Existence of good paths}
\label{ss:quickpaths}

Here we use percolation estimates from \cite{MP} to prove the existence of \emph{good paths} from the origin to the localisation site $Z_t$. These are nearest-neighbour paths in $\ZZ^d$ with length comparable to $|Z_t|$ 
and along which neither the traps are too large nor the potential too negative.
The existence of such paths will be key in obtaining a lower bound on the total mass of the solution, 
see the proof of Proposition~\ref{p:lb} in Section~\ref{ss:lb} below.

Recall the scale $h_t \to 0$ in \eqref{e:scalesh} and define
\begin{equation}\label{e:defsxissigma}
s^\xi_t := a_t h_t^2, \qquad s^\sigma_t := \exp \{h_t^2 \ln a_t \}.
\end{equation}
Note that $s^\xi_t, s^\sigma_t \to \infty$ as $t \to \infty$.
We also fix an additional scale $h^\star_t >0$ such that 
\begin{equation}\label{e:assumphstar}
h_t \gg h^\star_t \gg \max \left\{\bar{F}_\sigma(s^\sigma_t), F_\xi(-s^\xi_t) \right\}.
\end{equation}
Recall the path notation from Section~\ref{ss:bam}
and set, for $z \in \ZZ^d$,
\begin{equation}\label{e:goodpaths}
\Gamma^\star_t(z) := \left\{ p \in \Gamma(0,z) \colon\, |p| \le |z|(1+h^\star_t), \xi(p_i) > -s^\xi_t, \sigma(p_i) < s^\sigma_t, 0 \le i \le |p|-1\right\}.
\end{equation}
We call $p \in \Gamma^\star_t(z)$  a \emph{good path} from $0$ to $z$.
The main result of the section is the following.

\begin{proposition}\label{p:goodpaths}
$\Gamma^\star_t(Z_t) \neq \varnothing$ with high probability as $t \to \infty$.
\end{proposition}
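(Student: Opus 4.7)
The plan is to establish the existence of good paths via a site percolation argument with vanishing bad-site density. Define the set of bad sites at time $t$ by
\[
\cB_t := \left\{ x \in \ZZ^d \colon \xi(x) \le -s^\xi_t \text{ or } \sigma(x) \ge s^\sigma_t \right\},
\]
so that a good path from $0$ to $z$ is precisely a nearest-neighbour path of length at most $(1+h^\star_t)|z|$ whose vertices $p_0, \ldots, p_{|p|-1}$ all lie outside $\cB_t$. By the independence of $\xi$ and $\sigma$ across sites and by \eqref{e:assumphstar}, each vertex belongs to $\cB_t$ independently with probability at most $\varepsilon_t := F_\xi(-s^\xi_t) + \bar{F}_\sigma(s^\sigma_t) \ll h^\star_t$.

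The core step would be the following deterministic-in-$Z_t$ claim: with high probability under $\PPP$, for \emph{every} $z \in B_{L_t}$ with $|z|$ of order $r_t$, there exists a nearest-neighbour path from $0$ to $z$ of length at most $(1+h^\star_t)|z|$ whose vertices up to (but excluding) $z$ avoid $\cB_t$. This is a chemical-distance estimate for Bernoulli site percolation with vanishing intensity: since $\varepsilon_t \ll h^\star_t$, a block renormalisation --- tile $\ZZ^d$ into boxes of fixed large side-length, declare a block ``good'' if the fraction of bad sites inside is small, and connect $0$ to $z$ through a short chain of good blocks --- delivers paths whose stretch tends to $1$. A union bound over $z \in B_{L_t}$, whose cardinality grows only polylogarithmically in $t$, closes the estimate provided the bad-block probability decays super-polynomially in $L_t$, which follows from standard binomial large deviations given $\varepsilon_t \to 0$. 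Percolation estimates of this flavour already appear in \cite{MP}, and I plan to invoke (a variant of) them here.

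To finish, by Corollary~\ref{c:e} one may fix sequences $f_t \to 0$ and $g_t \to \infty$ such that $r_t f_t \le |Z_t| \le r_t g_t$ with high probability, so in particular $Z_t \in B_{L_t}$. Combined with the uniform percolation claim above, this yields a good path from $0$ to $Z_t$ with high probability. Crucially, the definition of $\Gamma^\star_t(z)$ places no constraint on the endpoint $z$, which is essential since by Corollary~\ref{c:locpro} we have $\sigma(Z_t) \to \infty$ in probability, so $Z_t$ itself typically lies in $\cB_t$.

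I expect the main obstacle to be making the chemical-distance estimate work uniformly in $z$ with a stretch $1 + h^\star_t$ that itself vanishes as $t \to \infty$: the renormalisation block side-length must be tuned against both $h^\star_t$ and the ratio $\varepsilon_t / h^\star_t$, so that ``good'' blocks have probability tending to $1$ fast enough for the union bound, while at the same time concatenations of good blocks produce paths short enough to meet the stretch bound. The separation $\varepsilon_t \ll h^\star_t$ dictated by \eqref{e:assumphstar} is precisely the margin needed to carry this out; without it no such percolation argument could succeed.
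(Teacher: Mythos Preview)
Your plan contains a genuine gap in the ``core step''. First, a factual slip: the set of candidate sites you want to union bound over has cardinality of order $r_t^d$ (or $L_t^d$), which is \emph{polynomial} in $t$, not polylogarithmic. So your union bound requires, for each fixed $z$ with $|z|$ of order $r_t$, that the probability of having no short $\cB_t$-avoiding path from $0$ to $z$ decays faster than any negative power of $t$. But the percolation estimate you cite from \cite{MP} (Lemma~\ref{l:chemdist} in the paper) is purely qualitative: it gives $\sup_z \PP(d_\infty(0,z)>(1+c_q)|z|) \to 0$ as $q \to 0$, with no rate. Your sketch of a block-renormalisation argument does not close this: to achieve stretch $1+h^\star_t$ with $h^\star_t \to 0$ you would need the block side-length to diverge, which degrades the large-deviation rate for bad blocks, and balancing these two effects against a polynomial union bound is a delicate quantitative problem that you have not addressed.

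The paper avoids this difficulty entirely by \emph{not} proving a uniform-in-$z$ statement. Instead, it writes $\PPP(\Gamma^\star_t(Z_t)=\varnothing) \le \varepsilon + \sum_z \PPP(z \in \cZ^{\ssst (\varepsilon)}_t,\ \dot{d}^{\ssst (t)}_\infty(0,z)>|z|(1+h^\star_t))$, where $\cZ^{\ssst (\varepsilon)}_t$ is a set of sites with large truncated local eigenvalue that contains $Z_t$ with probability $>1-\varepsilon$. The key observation is that, conditionally on $(\xi(z),\sigma(z))$, the modified chemical distance $\dot{d}^{\ssst (t)}_\infty(0,z)$ is monotone in $(\xi,\sigma)$ in the \emph{opposite} direction to the truncated eigenvalue $\widetilde{\lambda}^*_t(z)$, so by the FKG inequality the two events in the summand are negatively correlated. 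This lets one pull out $\sup_z \PP(d^{\ssst (t)}_\infty(0,z)>|z|(1+h^\star_t))$ --- which tends to zero by the qualitative Lemma~\ref{l:chemdist} --- and bound the remaining sum $\sum_z \PPP(z \in \cZ^{\ssst (\varepsilon)}_t)$ by the expected mass of the auxiliary point process, which is finite by Corollary~\ref{c:bddexpecPP}. No quantitative percolation rate is needed.
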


In order to prove Proposition~\ref{p:goodpaths}, we first recall the setup of \cite[Section~4.2.2]{MP}.
Fix $d \ge 2$, $q \in (0, 1)$ and consider site percolation in $\ZZ^d$ with parameter $1-q$,
i.e., sites $v \in \ZZ^d$ are declared independently open with probability $1-q$ or closed with probability $q$.
For $u,v \in \ZZ^d$, we denote by $d_\infty(u,v)$ the \emph{chemical distance} between $u$ and $v$, i.e.,
\begin{equation}\label{e:chemdist}
d_\infty(u,v) := \inf \left\{ |p| \colon\, p \in \Gamma(u,v), p_i \text{ is open for all } 0 \le i \le |p|\right\},
\end{equation}
where $\inf \varnothing = \infty$ by convention.
We will use the following result.

\begin{lemma}[cf.\ {\cite[Lemma~4.10]{MP}}]
\label{l:chemdist}
For any $q \mapsto c_q > 0$ such that $ \lim_{q \to 0} c_q/q  = \infty$,
\[  
\lim_{q \to 0} \sup_{ v \in \ZZ^d \setminus \{0\} }  \PP\left( \frac{d_\infty(0,v)}{|v|} >  1 + c_q  \right) = 0. 
\]
\end{lemma}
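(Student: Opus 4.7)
The lemma is a first-passage-percolation-type statement at high density: when only a small fraction $q$ of sites is closed, the chemical distance between $0$ and $v$ should remain within a vanishing multiplicative error of the graph distance $|v|$, uniformly in $v\ne 0$. The hypothesis $c_q/q\to\infty$ provides slack above the natural error rate $q$, and one only needs a surgery on a canonical shortest path whose cost is $O(q)|v|$.

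The plan is to proceed by local surgery on a canonical shortest nearest-neighbour path $p=(p_0,\ldots,p_n)$ from $0$ to $v$, where $n=|v|$. First I would control the number $N$ of closed vertices on $p$ via Markov's inequality: since these are i.i.d.\ Bernoulli$(q)$, $\PP(N>q(n+1)/\varepsilon)\le\varepsilon$ for every $\varepsilon>0$. Setting $\varepsilon_q:=\sqrt{q/c_q}$, which vanishes by hypothesis, yields $N\le\sqrt{qc_q}(n+1)$ outside an event of probability $\le\varepsilon_q$, uniformly in $n$. Crucially, this Markov-based estimate is valid for all $n\ge 1$ without the need for any lower bound on $qn$, which is what will deliver the uniform-in-$v$ statement.

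Next I would construct, for each closed vertex $p_i$ on $p$, a local detour bypassing $p_i$ in a bounded neighbourhood, thereby modifying $p$ into an open path. The condition $d\ge 2$ is essential here: it provides $2(d-1)$ transverse directions $e_j$ at each site, each giving a two-step replacement (e.g.\ $p_{i-1}\to p_{i-1}+e_j\to p_{i+1}+e_j\to p_{i+1}$ in the collinear case, with obvious variants at corners) that visits at most three new sites. Since distinct transverse directions use disjoint site sets, the per-direction success events are mutually independent, each with probability $\ge (1-q)^3$; hence the probability that every simple detour fails at a given closed vertex is at most $(3q)^{2(d-1)}$. At the rare vertices where all simple detours fail I would apply a hierarchical fallback using progressively longer transverse shifts of magnitude $2,3,4,\ldots$: the $\ell$-th shift uses $O(\ell)$ disjoint sites, succeeds with probability $\ge 1-O(q\ell)$, and the independence between distinct magnitudes (their site sets being disjoint) makes the number of required shifts geometrically distributed, so the expected fallback excess per closed vertex is $O(q)$, and the expected total fallback excess is $O(qN) = o(2N)$.

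Combining the above, the constructed open path from $0$ to $v$ has length at most $n+2N+(\text{fallback excess})\le n(1+4\sqrt{qc_q})\le n(1+c_q)=|v|(1+c_q)$ for $q$ small enough (using $q\ll c_q$), with total failure probability $\le\varepsilon_q+o_q(1)\to 0$ uniformly in $v$. The hard part will be ensuring this uniformity: Chernoff concentration for $N$ is ineffective when $q|v|$ is bounded, and the naive union bound over closed vertices for detour failures is insufficient when $n$ is very large. Both difficulties are resolved by the combination of the Markov bound at scale $\varepsilon_q=\sqrt{q/c_q}$ together with the spatial disjointness of the transverse-shift detours (which yields genuine independence and thus controls the aggregate failure probability). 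A secondary technical point, which I expect to be routine, is arranging the canonical path and the shift directions so that detours around distinct closed vertices stay mutually disjoint with probability $1-o_q(1)$, exploiting the abundance of transverse directions available in $d\ge 2$.
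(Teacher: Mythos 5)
Your high-level strategy — a canonical shortest path, a Markov bound on the number $N$ of closed sites on it at scale $\varepsilon_q = \sqrt{q/c_q}$ so that $N \le \sqrt{q c_q}(n+1)$ outside probability $\varepsilon_q$, followed by local surgery — is the right one, and the Markov step is precisely what delivers the claimed uniformity in $v$ (the paper does not spell this out, it merely asserts it ``follows promptly'' from the proof in \cite{MP}). But the surgery analysis, which you flag as ``secondary'' and ``routine,'' is where the real work is, and as written it has concrete problems.

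First, a minor slip: the displayed detour $p_{i-1}\to p_{i-1}+e_j\to p_{i+1}+e_j\to p_{i+1}$ is not a nearest-neighbour path in the collinear case, since $|(p_{i+1}+e_j)-(p_{i-1}+e_j)| = 2$; the genuine minimal bypass is $p_{i-1}\to p_{i-1}+e_j\to p_i+e_j\to p_{i+1}+e_j\to p_{i+1}$, four steps and excess $2$. More seriously, the hierarchical fallback is not well-defined and its claimed independence structure does not hold for the most natural implementation: if the level-$\ell$ shift simply raises the ``across'' segment to height $\ell$ while still departing from $p_{i-1}$ and rejoining at $p_{i+1}$, then the vertical legs $\{p_{i\pm 1}+k e_j\}_{k\le\ell}$ are shared across all levels, so a single closed site at $p_{i-1}+e_j$ blocks \emph{every} level in direction $e_j$ simultaneously, and the number of required shifts is not geometric (its expectation is infinite). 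One can repair this by having the level-$\ell$ detour leave the path at $p_{i-\ell}$ and rejoin at $p_{i+\ell}$, so that the new-site sets for distinct $\ell$ really are disjoint; but then the detour endpoints $p_{i\pm\ell}$ are path sites which may themselves be closed — exactly the case of a \emph{run} of consecutive closed sites — and this forces a joint detour around the whole cluster rather than independent per-vertex surgery. This is the central unaddressed issue: your accounting of the excess is per closed vertex, but the construction has to be per closed \emph{cluster}, and you then need $(a)$ that the expected joint-detour excess attributable to position $i$ is $O(q)$ uniformly in $n$ (not just that the cluster-size tail is light), and $(b)$ an explicit Markov bound on the \emph{total} excess to convert this into a $\sup_v$ statement; the final step ``total failure probability $\le \varepsilon_q + o_q(1)$'' conflates these and does not carry out either. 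The Markov-on-$N$ idea is sound, but without a precise surgery with a verified expectation bound, the proof is incomplete.
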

\noindent
We note that the uniformity over $v$ in the above statement is not claimed in \cite[Lemma~4.10]{MP} 
but follows promptly from its proof. We are now ready to give the proof of Proposition~\ref{p:goodpaths}.

\begin{proof}[Proof of Proposition~\ref{p:goodpaths}]
Fix $\varepsilon>0$.
We first show how to restrict $Z_t$ to a subset of $\ZZ^d$ with useful properties.
Using Corollary~\ref{c:e}, pick a constant $C_\varepsilon \in (1,\infty)$ such that
\[
\PPP \left( |Z_t| > C_\varepsilon r_t, |Z_t| < C_\varepsilon^{-1} r_t \text{ or } \Psi_t(Z_t) < A_{r_t} - C_\varepsilon d_t  \right) < \varepsilon/2 \;\;\; \text{ for all large enough } t.
\]
Next, we introduce truncated local eigenvalues similarly
as in Sections~\ref{ss:PPapproach}--\ref{ss:maxorder}
but without i.i.d.\ copies of the fields.
Recall $c_* = 4 \delta_\sigma^{-1}$ and define, for $z \in \ZZ^d$ and $L \in \NN$,
a truncated version of $\xi$ around $z$ (compare with \eqref{e:defhatxiz}):
\begin{equation}\label{e:deftruncxi}
\widetilde{\xi}^z_L(y) := \left\{ 
\begin{array}{ll}
\xi(z) \vee (a_L - c_\ast +\delta_\sigma^{-1}) & \text{ if } y = z,\\
\xi(y) \wedge (a_L - c_\ast ) & \text{ otherwise.}
\end{array}
\right.
\end{equation}
We denote by $\widetilde{\lambda}^\ast_t(z)$ the
principal Dirichlet eigenvalue of $\Delta \sigma^{-1} + \widetilde{\xi}^z_{L^\ast_t}$ in the box $B_{R^\ast_t}(z)$.
By Corollary~\ref{c:seppot},
$\widetilde{\lambda}^*_t(z) = \lambda_{R^*_t}(z)$ for all $z \in \Pi_{L_t^*}$ with high probability.
In particular, we see that, for large $t$, $Z_t$ belongs with probability larger than $1-\varepsilon$ to the set
\begin{equation}\label{e:defcZepsilon}
\cZ^{\ssst (\varepsilon)}_t := \left\{ z \in \ZZ^d \colon\, 0< |z| \le C_\varepsilon r_t, \, \widetilde{\lambda}^*_{r_t}(z) > A_{r_t} + (|z|/r_t- 2 C_\varepsilon) d_{r_t} \right\},
\end{equation}
where we also used $d_t \sim d_{r_t}$.
Note that, for each fixed $z \in \ZZ^d$, $\widetilde{\lambda}^\ast_t(z)$ has the same distribution as the truncated eigenvalue $\hat{\lambda}^*_t(z) := \hat{\lambda}^{\ssst (L^*_t)}_{R^*_t}(z)$ from Sections~\ref{ss:PPapproach}--\ref{ss:maxorder}.

Consider now site percolation on $\ZZ^d$ where we declare:
\[
v \text{ is open } \quad \text{if and only if } \quad \xi(v) > - s^\xi_t \text{ and } \sigma(v) < s^\sigma_t.
\]
The percolation parameter is $1-q_t$ where
\[
q_t := \PPP \left( \xi(0) \le - s^\xi_t \text{ or } \sigma(0) \ge s^\sigma_t \right) \gg h^\star_t
\]
by \eqref{e:assumphstar}.
Thus we may apply Lemma~\ref{l:chemdist} with $c_{q_t} = h^\star_t$.

Denote by $d^{\ssst (t)}_\infty$ the associated chemical distance.
We also define a modified version:
\[
\dot{d}^{\ssst (t)}_\infty(u,v) := \inf \left\{ |p| \colon\, p \in \Gamma(u,v), p_i \text{ is open for all } 0 \le i \le |p|-1\right\}.
\]
Note that $\dot{d}^{\ssst (t)}_\infty(u,v) \le d^{\ssst (t)}_\infty(u,v)$.
The advantage of working with $\dot{d}^{\ssst (t)}_\infty$ is that, for fixed $z \in \ZZ^d$, $\dot{d}^{\ssst (t)}_\infty(0,z)$ is independent of $\xi(z), \sigma(z)$.
Moreover, it is non-decreasing in $\sigma(y)$ and non-increasing in $\xi(y)$ for $y \neq z$,
and
\[
\Gamma^\star_t(z) \neq \varnothing \qquad \text{ if and only if } \qquad \dot{d}^{\ssst (t)}_\infty(0,z) \le |z|(1+h^\star_t).
\]
On the other hand,  $\widetilde{\lambda}^*_t(z)$ is non-decreasing in $\xi(y)$ and non-increasing in $\sigma(y)$ for $y \neq z$,
as is verified exactly as for Lemma~\ref{l:EVincrdecr}.
Applying the FKG (or Harris) inequality (see e.g.\ \cite[Theorem~2.4]{Grimmett}) 
to the conditional law given $\xi(z), \sigma(z)$,
we see that, for any $u,v \in \RR$, the events $\{\dot{d}^{\ssst (t)}_\infty(0,z)>u\}$ and $\{\widetilde{\lambda}^*_t(z) > v\}$
are negatively correlated, implying (recall \eqref{e:defcH} and \eqref{e:defhatcP})
\begin{align*}
\PPP \left( \dot{d}^{\ssst (t)}_\infty(0, Z_t) > |Z_t|(1+h^\star_t)\right)
& \le \varepsilon +  \sum_{0 < |z| \le C_\varepsilon r_t } \PPP \left(z \in \cZ^{\ssst (\varepsilon)}_t , \dot{d}^{\ssst (t)}_\infty(0, z) > |z|(1+h^\star_t)\right) \\
& \le \varepsilon + \sup_{z \in \ZZ^d \setminus \{0\}} \PPP \left({d}^{\ssst (t)}_\infty(0,z) > |z|(1+h^\star_t)\right) \EEE \left[ \widehat{\cP}_{r_t}(\cH^1_{-2C_\varepsilon}) \right],
\end{align*}
where the first inequality follows since $Z_t\in\cZ^{\ssst (\varepsilon)}_t$ with probability at least $1-\varepsilon$,
and for the second we use that $\widetilde{\lambda}^*_t(z)$ and $\hat{\lambda}^*_t(z)$ have the same distribution.
To finish the proof, take the $\limsup$ as $t \to \infty$ in the above, invoke Corollary~\ref{c:bddexpecPP} and Lemma~\ref{l:chemdist}, 
and then let $\varepsilon \to 0$.
\end{proof}

\subsubsection{Proof of Propositions~\ref{p:sd} and~\ref{p:decorrelation}}
\label{ss:conseqPP}

We next exploit the coupling in Lemma~\ref{l:coupling} to prove Propositions~\ref{p:sd} and~\ref{p:decorrelation},
starting with the first.

\begin{proof}[Proof of Proposition~\ref{p:sd}]
Let $\hat{\lambda}_t(z) := \hat{\lambda}^{\ssst (L_t)}_{R_{L_t}}(z)$ and define an auxiliary functional
\begin{equation}\label{e:prlsd1}
\widehat{\Psi}_t(z) := \hat{\lambda}_t(z) - \frac{ \ln_3 t}{t} |z|, \qquad z \in \Pi_{L_t, \delta}.
\end{equation}
Let $\widehat{\Psi}^{\ssst (1)}_t$, $\widehat{Z}_t$ be defined analogously to \eqref{e:defPsi1}--\eqref{e:defZt}.
By Lemma~\ref{l:coupling}, $Z_t = \widehat{Z}_t$ with high probability.

Fix $y \neq 0$ and let $t$ be large enough such that $|y| < R_{L_t}$.
Reasoning as in the proof of Lemma~\ref{l:EVincrdecr},
we see that $\hat{\lambda}_t(z)$ is non-decreasing in $\hat{\xi}^z_{L_t}(z+y)$.
Moreover, since $\xi^z$ are i.i.d., the event $\{z = \widehat{Z}_t\}$ 
is also non-decreasing in $\hat{\xi}^z_{L_t}(z+y)$.
As non-decreasing functions of a real random variable are positively correlated,
we deduce that
\begin{equation}\label{e:prlsd2}
\begin{aligned}
& \widetilde{\PPP}_{L_t} \left( z = \widehat{Z}_t, \hat{\xi}^z_{L_t}(z+y) \ge u \,\middle|\, (\hat{\xi}^y)_{y \neq z}, (\xi^z(x))_{x \neq z+y}, (\sigma^z)_{z \in \ZZ^d} \right)  \\
\geq \, & \widetilde{\PPP}_{L_t} \left( z = \widehat{Z}_t \,\middle|\, (\xi^x)_{x \neq z}, (\xi^z(x))_{x \neq z+y}, (\sigma^z)_{z \in \ZZ^d} \right) \widetilde{\PPP}_{L_t} \left( \xi(0) \wedge(a_{L_t} -\delta) \ge u \right).
\end{aligned}
\end{equation}
Integrating over the remaining random variables and summing over $z \in \ZZ^d$, we obtain
\begin{equation}\label{e:prlsd3}
\widetilde{\PPP}_{L_t} \left( \hat{\xi}^{\widehat{Z}_t}_{L_t}(\widehat{Z}_t + y) \ge u \right) \ge \widetilde{\PPP}_{L_t} \left( \xi(0) \wedge(a_{L_t} -\delta) \ge u \right),
\end{equation}
and since, by Lemma~\ref{l:coupling},
\begin{equation}\label{e:prlsd4}
\PPP \left( \xi(Z_t + y) \ge u \right) = \widetilde{\PPP}_{L_t} \left( \xi(\widehat{Z}_t + y) \ge u \right) + o(1) = \widetilde{\PPP}_{L_t} \left( \hat{\xi}^{\widehat{Z}_t}_{L_t}(\widehat{Z}_t + y) \ge u \right) + o(1),
\end{equation}
the result for $\xi(Z_t+y)$ follows.
The proof for $\sigma(Z_t + y)$ is analogous.
\end{proof}

\begin{proof}[Proof of Proposition~\ref{p:decorrelation}]
For each $z$, $\psi_t(z)$ can be seen as a function of  $\xi, \sigma$;
we denote by $\widehat{\psi}_t(z)$ the same function applied to $\xi^z, \sigma^z$.
Let $\widehat{Z}_t$ be defined via \eqref{e:defZpsi} with $\psi_t$ substituted by $\widehat{\psi}_t$.
By Lemma~\ref{l:coupling}, $Z^\psi_t = \widehat{Z}_t$ with high probability.
Fix $y_1, y_2 \in \ZZ^d$ with $|y_1| > k_1$, $|y_2|>k_2$
and two measurable bounded functions $f_1, f_2$.
Note that, for each $z \in \ZZ^d$, the event $\{z = \widehat{Z}_t\}$
depends on $\xi^z, \sigma^z$ only through their values in $B_{k_1}(z)$, $B_{k_2}(z)$ respectively;
in particular, it is independent of $(\xi^z(z+y_1), \sigma^z(z+y_2))$.
Using Lemma~\ref{l:coupling} we may write
\begin{align*}\label{e:prdec1}
\EEE \left[ f_1(\xi(Z_t+y_1)) f_2(\sigma(Z_t+y_2))\right]
& = o(1) + \sum_{z \in \ZZ^d} \EEE \left[f_1(\xi^z(z+y_1)) f_2(\sigma^z(z+y_2)) \mathbbm{1}_{\{ z=\widehat{Z}_t\}} \right]\nonumber\\
& = o(1) + \EEE \left[ f_1(\xi(0)) \right] \EEE \left[ f_2(\sigma(0)) \right]
\end{align*}
as $t \to \infty$, implying the result.
\end{proof}


\smallskip

\section{Path expansions}
\label{s:pathexp}

In this section, we present a method based on \cite{BKS16, MP} to bound the contribution to the Feynman-Kac formula of certain classes of paths. This will be an important ingredient in the proofs of Propositions~\ref{p:ub} and \ref{p:eigloc}.

Recall the scale $R_L$ in \eqref{e:assumpRL} and the path notation from Section~\ref{ss:bam}. For a path $p \in \Gamma_k$, let
\begin{equation*}\label{e:defLambdap}
\Lambda_L(p) := \max \{ \lambda_{R_L}(y) \colon y \in \Set(p) \cap \Pi_{L, \delta} \},
\end{equation*}
with the convention $\max \varnothing = - \infty$. We also set, for $z \in \Pi_{L,\delta}$,
\begin{equation*}\label{e:defLambdapz}
\Lambda_L^{(z)}(p) := \max \{ \lambda_{R_L}(y) \colon y \in \Set(p) \cap \Pi_{L, \delta} \setminus \{z\} \}.
\end{equation*}

The goal of this section is to prove the following result.

\begin{proposition}\label{prop:masspathset}
There exists a constant $c \in (0,\infty)$ such that the following two statements hold eventually almost surely as $L \to \infty$ for all $x \in B_L$:

\begin{enumerate}
\item[(i)]
For all $\cN \subset \Gamma(x)$ satisfying $\Set(p) \subset B_L$ and $\Set(p) \cap B^\cc_{\ln L}(x) \neq \varnothing$ for all $p \in \cN$,
and every choice of $(\gamma_p, z_p) \in \RR \times \ZZ^d$ satisfying
\begin{equation*}\label{e:condmasspathset}
\gamma_p > \left( \Lambda_L(p) + \ee^{-R_L} \right) \vee (a_L -  \tfrac12 \delta ) \qquad {\text{and}} \qquad z_p \in \Set(p) \qquad \text{for all } p \in \cN,
\end{equation*}
we have, for all $t \ge 0$,
\begin{equation*}\label{e:masspathset_dettime}
\ln \EEE_x \left[ \exp \left\{ \int_0^t \xi(X_s) \, \dd s \right\} \id {\{p(X_t) \in \cN\}} \right] < \sup_{p \in \cN} \, \left\{ t \gamma_p - (\ln_3 L - c)|z_p-x| \right\}.
\end{equation*}

\item[(ii)]
For any $z \in \Pi_{L,\delta}$, any $\cN \subset \Gamma(x,z)$ satisfying $p_i \neq z$ for all $i < |p|$, $\Set(p) \subset B_L$ and $\Set(p) \cap B^\cc_{\ln L}(x) \neq \varnothing$ for all $p \in \cN$,  and every choice of $(\gamma, z_p) \in \RR \times \ZZ^d$ satisfying
\begin{equation*}\label{e:condgamma}
\gamma > \left(\sup_{p \in \cN} \Lambda^{(z)}_L(p) + \ee^{-R_L} \right) \vee (a_L -  \tfrac12 \delta ) \qquad {\text{and}} \qquad z_p \in \Set(p) \qquad \text{for all } p \in \cN,
\end{equation*}
we have
\begin{equation*}\label{e:masspathset_stoptime}
\EEE_x \left[ \exp \left\{ \int_0^{\tau_{z}} (\xi(X_s) - \gamma) \, \dd s \right\} \id {\{p(X_{\tau_{z}}) \in \cN\}} \right] < \sigma(x)^{-1} \ee^{- (\ln_3 L - c)  \inf_{p \in \cN}|z_p-x| }.
\end{equation*}
\end{enumerate}
\end{proposition}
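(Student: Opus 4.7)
My plan is to decompose the walk trajectory based on its visits to the sparse set $\Pi_{L,\delta}$ into alternating \emph{cluster visits} inside balls $B_{R_L}(y)$ with $y\in\Pi_{L,\delta}$ and \emph{transits} through its complement, controlling each segment separately via Lemma~\ref{l:massupexit} (cluster expansion) for cluster visits and Lemma~\ref{l:massexcursions} (low-potential excursion bound) for transits. As a preliminary step, choose $\varepsilon>0$ small enough that $(1-\varepsilon)a_L>a_L-\tfrac12\delta$ eventually; then any $y\notin\Pi_{L,\delta}$, which satisfies $\xi(y)\le a_L-\delta$, is moderately low in the sense of~\eqref{e:defMp}, so that Lemma~\ref{l:massexcursions} applied to any transit sub-path $q$ yields $M^{L,\varepsilon}_q\ge|q|-1$.

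\textbf{Key steps.} For each $p\in\cN$, let $y_1,\dots,y_m$ enumerate the distinct sites of $\Set(p)\cap\Pi_{L,\delta}$ (in part (ii), $\Set(p)\cap(\Pi_{L,\delta}\setminus\{z\})$) in their order of first visit by $p$. By Corollary~\ref{c:seppot}, the balls $B_{R_L}(y_j)$ are pairwise disjoint. Apply the strong Markov property at the successive first hitting times of the $y_j$'s and the subsequent exit times from $B_{R_L}(y_j)$ to factorise the Feynman-Kac expectation into: (a) cluster factors, each bounded via Lemma~\ref{l:massupexit} by $1+\delta_\sigma^{-1}|B_{R_L}|/(\gamma-\lambda_{R_L}(y_j))\le 1+\kappa R_L^d\ee^{R_L}$, using the hypothesis $\gamma>\lambda_{R_L}(y_j)+\ee^{-R_L}$ and Assumption~\ref{a:3}; and (b) transit factors, each of the form $\kappa(q_\delta/(2d))^{|q|}\ee^{(\kappa-\ln_3 L)|q|}$ from Lemma~\ref{l:massexcursions}. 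Summing over the shape of each transit, the factor $(q_\delta/(2d))^{|q|}$ absorbs the $(2d)^{|q|}$ nearest-neighbour paths of length $|q|$ into a convergent geometric series $q_\delta^{|q|}$.

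\textbf{Combining and main obstacle.} Since the $y_j$'s are at pairwise $\ell^1$-distance at least $2R_L$, the number of clusters satisfies $m\le\sum_q|q|/(2R_L)+1$, so the aggregate cluster cost $(1+\kappa R_L^d\ee^{R_L})^m\le\ee^{(R_L+O(\ln R_L))m}$ is absorbed by the transit penalty $\ee^{-(\ln_3 L-\kappa)\sum_q|q|}$ as soon as $2R_L\ln_3 L\gg R_L$, i.e.\ for all large $L$. Since $\sum_q|q|\ge|z_p-x|$ by the triangle inequality, this yields the claimed penalty $(\ln_3 L-c)|z_p-x|$ after absorbing residual constants into $c$. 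For part (i), the terminal (incomplete) transit is integrated only up to time $t$, contributing the prefactor $\ee^{t\gamma_p}$ after undoing the shift by $\gamma_p$; for part (ii), the integration stops at $\tau_z$, and the initial holding time of the walk at $x$ (combined via Lemma~\ref{l:patheval}) produces the prefactor $\sigma(x)^{-1}$. The main technical challenge will be carefully managing the trade-off between the super-polynomial per-cluster cost $\ee^{R_L}$ and the per-step transit decay $\ee^{-\ln_3 L}$: the separation property from Corollary~\ref{c:seppot} is essential here, as it guarantees that each cluster visit is ``paid for'' by at least $2R_L$ low-potential transit steps, and the hypothesis $\Set(p)\cap B^\cc_{\ln L}(x)\neq\varnothing$ ensures $\sum_q|q|\ge\ln L-O(R_L)$, which provides uniform slack to absorb the various constants and the summation over all admissible skeletons.
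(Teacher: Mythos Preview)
Your overall strategy---decomposing paths into cluster visits and transits, bounding clusters via Lemma~\ref{l:massupexit} and transits via Lemma~\ref{l:massexcursions}---is exactly the approach taken in the paper (via Lemma~\ref{l:massoneclass}, which follows \cite[Proposition~6.1]{BKS16}). However, there is a genuine gap in your execution.

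The claim ``choose $\varepsilon>0$ small enough that $(1-\varepsilon)a_L>a_L-\tfrac12\delta$ eventually'' is false: this inequality is equivalent to $\varepsilon a_L<\tfrac12\delta$, and since $a_L\sim\varrho\ln_2 L\to\infty$, no fixed $\varepsilon>0$ can satisfy it. Consequently, sites $y\notin\Pi_{L,\delta}$ (which only satisfy $\xi(y)\le a_L-\delta$) are \emph{not} automatically moderately low in the sense of \eqref{e:defMp}, and your conclusion $M^{L,\varepsilon}_q\ge|q|-1$ fails. This matters because the $\ln_3 L$ decay in Lemma~\ref{l:massexcursions} comes from the factor $(1+\delta_\sigma(\gamma-\xi(p_i)))^{-1}\lesssim(\varepsilon a_L)^{-1}$, which requires $\xi(p_i)\le(1-\varepsilon)a_L$; for sites with $\xi(p_i)\in((1-\varepsilon)a_L,a_L-\delta]$ you only get the constant factor $q_\delta$. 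The paper's proof (as in \cite{BKS16}) handles this by a combinatorial summation over equivalence classes that separately tracks the number $k^{L,\varepsilon}_p$ of genuinely moderately-low transit sites and uses the sparsity of the ``intermediate'' sites $\{y:\xi(y)>(1-\varepsilon)a_L\}$ to show that $k^{L,\varepsilon}_p$ is close enough to $n_p$; this is precisely where the condition $\Set(p)\cap B^\cc_{\ln L}(x)\neq\varnothing$ is used.

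There is also a secondary issue: by factorising only at \emph{first} hitting times of the distinct $y_j$, your ``transit'' from $\partial B_{R_L}(y_j)$ to $y_{j+1}$ may revisit $y_j\in\Pi_{L,\delta}$, violating the hypothesis $p_i\notin\Pi_{L,\delta}$ of Lemma~\ref{l:massexcursions}. The paper's decomposition instead counts \emph{all} excursions to $\Pi_{L,\delta}$ (so $m_p$ can exceed the number of distinct clusters), which is why the transit sub-paths $\check p^{(k)}$ genuinely avoid $\Pi_{L,\delta}$.
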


In other words, Proposition~\ref{prop:masspathset} provides upper bounds for the contribution
to both time-dependent and stopped Feynman-Kac formulae from classes of paths that remain in a reference box, are not too short and, in the stopped case, 
end on a high potential peak.

\subsection{Proof of Proposition~\ref{prop:masspathset}}
\label{ss:proofmasspathset}

The proof of is based on Lemma~\ref{l:massoneclass} below. 
In order to state it, we define an equivalence relation over paths depending on the structure of their visits to $\Pi_{L, \delta}$.
Recall the path notation from Section~\ref{ss:bam} and define, for subsets $A,B \subset \ZZ^d$,
\[
\Gamma(A,B) = \bigcup_{x \in A, y \in B} \Gamma(x,y).
\]
Recall that  $D_{L, \delta}$ denotes the $R_L$-neighbourhood of $\Pi_{L,\delta}$. Let the operation $\circ$ denote path concatenation, in other words, for $p, p' \in \Gamma$ such that $p_{|p|} = p'_0$, let
\begin{equation*}\label{e:defconcat}
p \circ p' := (p_0, \ldots, p_{|p|}, p'_1, \ldots, p'_{|p'|}).
\end{equation*}
Now observe that, when $\Set(p) \cap \Pi_{L, \delta} \neq \varnothing$, there is a unique decomposition
\begin{equation*}\label{e:decomp1}
p = \check{p}^{\ssst (1)} \circ \hat{p}^{\ssst (1)} \circ \cdots \circ \check{p}^{\ssst (m_p)} \circ \hat{p}^{\ssst (m_p)} \circ \bar{p},
\end{equation*}
where $m_p \in \NN$,
\begin{equation*}\label{e:decomp2}
\begin{alignedat}{9}
\check{p}^{\ssst (1)} & \in  \Gamma(\ZZ^d, \Pi_{L,\delta}) 
&\qquad\text{and}\qquad& 
\check{p}^{\ssst (1)}_i & \notin  \Pi_{L,\delta}, & \quad\, 0\le i < |\check{p}^{\ssst (1)}|, 
\\
\hat{p}^{\ssst (k)} & \in  \Gamma(\Pi_{L,\delta}, D_{L,\delta}^\cc) 
&\qquad\text{and}\qquad& 
\hat{p}^{\ssst (k)}_i & \in  D_{L, \delta}, & \quad\, 0\le i < |\hat{p}^{\ssst (k)}|, \; 1 \le k \le m_p - 1, 
\\
\check{p}^{\ssst (k)} & \in  \Gamma(D_{L,\delta}^\cc, \Pi_{L,\delta}) 
&\qquad\text{and}\qquad& 
\check{p}^{\ssst (k)}_i & \notin  \Pi_{L,\delta}, & \quad\, 0\le i < |\check{p}^{\ssst (k)}|, \; 2 \le k \le m_p, 
\\
\hat{p}^{\ssst (m_p)} & \in \Gamma(\Pi_{L,\delta}, \ZZ^d) 
&\qquad\text{and}\qquad& 
\hat{p}^{\ssst (m_p)}_i & \in  D_{L,\delta}, & \quad\, 0\le i < |\hat{p}^{\ssst (m_p)}|, 
\end{alignedat}
\end{equation*}
and
\begin{equation*}\label{e:decomp3}
\begin{array}{ll} 
\bar{p} \in \Gamma(D^\cc_{L,\delta}, \ZZ^d), \, \bar{p}_i \notin \Pi_{L,\delta} \; \forall\, i \ge 0 & \text{ if } \hat{p}^{\ssst (m_p)} \in \Gamma(\Pi_{L,\delta}, D^\cc_{L, \delta}), \\
\bar{p}_0 \in D_{L,\delta}, |\bar{p}| = 0  & \text{ otherwise.}
\end{array}
\end{equation*}
Note that $\check{p}^{\ssst (1)}$, $\hat{p}^{\ssst (m_p)}$ and $\bar{p}$ can have zero length.

For $\varepsilon > 0$, recall the definition of $M^{L,\varepsilon}_p$ in \eqref{e:defMp}. 
Whenever $\Set(p) \cap \Pi_{L,\delta} \ne \varnothing$, we define
\begin{align}\label{e:defnpkp}
n_p := \sum_{i=1}^{m_p} |\check{p}^{\ssst (i)}| + |\bar{p}| \qquad \text{ and } \qquad
k^{L,\varepsilon}_p := \sum_{i=1}^{m_p} M^{L,\varepsilon}_{\check{p}^{\ssst (i)}} + M^{L,\varepsilon}_{\bar{p}}. 
\end{align}
When $\Set(p) \cap \Pi_{L,\delta} = \varnothing$,
we set $m_p := 0$, $n_p := |p|$, $k^{L,\varepsilon}_p := M^{L,\varepsilon}_{p}$, and $\Lambda_{L}(p) := -\infty$.

We now introduce an equivalence relation on $\Gamma$:
$p, p' \in \Gamma$ are said to be \emph{equivalent}, written $p' \sim p$, 
if $m_{p} = m_{p'}$, $\check{p}'^{\ssst (i)}=\check{p}^{\ssst (i)}$ for all $i=1,\ldots,m_{p}$ and $\bar{p}' = \bar{p}$ if $\bar{p}_0 \in D^\cc_{L,\delta}$.
Note that $n_p$, $k^{L, \varepsilon}_p$ and $\Lambda_L(p)$ depend only on the equivalence class of $p$.

In order to state our key lemma, we define, for $n,m \in \NN_0$,
\begin{equation*}\label{e:defGammanm}
\Gamma^{(n,m)} := \{p \in \Gamma \colon n_p = n, m_p = m\}.
\end{equation*}

\begin{lemma}\label{l:massoneclass}
For each $\delta, \varepsilon>0$, there exists $c > 1$ such that the following holds a.s.\ eventually as $L \to \infty$.
For all $n,m \in \NN_0$ and $p \in \Gamma^{(n,m)}$ with $\Set(p) \subset B_L$:

\begin{enumerate}
\item[(i)]
If $\gamma > \Lambda_L(p) \vee (a_L -  \tfrac12 \delta )$, then, for all $t \ge 0$,
\begin{equation*}\label{e:massoneclass}
\hspace{-7pt}\EEE_{p_0} \left[ \ee^{\int_0^t (\xi(X_s) - \gamma) \dd s} \id_{\{p(X_t) \sim p\}} \right] < c^{m+1}  (R_L^d)^{\id_{\{m>0\}}}  \left(1 + \frac{c R_L^d}{\gamma - \Lambda_L(p)} \right)^m \left(\frac{ q_\delta }{2d}\right)^n \ee^{(c - \ln_3 L)k^{L,\varepsilon}_p},
\end{equation*}
 where $q_\delta = (1+\delta \delta_\sigma /2)^{-1}$ (with $\delta_\sigma$ as in Assumption~\ref{a:3}). 

\item[(ii)]
If, for $z \in \Pi_{L,\delta}$, $\gamma > \Lambda^{(z)}_L(p) \vee (a_L -  \tfrac12 \delta )$, then
\begin{equation*}\label{e:massoneclass2}
\hspace{-5pt}
 \EEE_{p_0} \left[ \ee^{\int_0^{\tau_z} (\xi(X_s) - \gamma) \dd s} \id_{\{p(X_{\tau_z}) \sim p\}} \right]
< \frac{c^{m+1}}{\sigma(p_0) \vee 1} \left(1 + \frac{c R_L^d}{\gamma - \Lambda^{\ssst (z)}_L(p)} \right)^m \left(\frac{ q_\delta }{2d}\right)^n \ee^{(c - \ln_3 L)k^{L,\varepsilon}_p}.
\end{equation*}
\end{enumerate}
\end{lemma}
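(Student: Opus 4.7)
The plan is to sum the path-wise evaluation of Lemma~\ref{l:patheval} over all paths equivalent to $p$, factorise the resulting expression according to the check/hat decomposition, and bound each factor using Lemma~\ref{l:massexcursions} (for check segments) and Lemma~\ref{l:massupexit} (for hat segments). The crucial geometric input is Corollary~\ref{c:seppot}: for $L$ large, the balls $B_{R_L}(z)$ around distinct $z \in \Pi_{L, \delta}$ are disjoint, so that any hat segment $\hat{p}^{(k)}$, whose interior lies in $D_{L, \delta} = \bigcup_{w \in \Pi_{L, \delta}} B_{R_L}(w)$ and which starts at $z_k \in \Pi_{L, \delta}$, is in fact confined to $B_{R_L}(z_k)$.

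First I would reduce to a path-sum. For case~(i), decompose
\begin{equation*}
\id_{\{p(X_t) \sim p\}} = \sum_{p' \sim p} \id_{\{p_{|p'|}(X) = p',\, T_{|p'|} \le t < T_{|p'|+1}\}},
\end{equation*}
and apply the strong Markov property at $T_{|p'|}$: the trailing sojourn factor equals $\exp((\xi(p'_{|p'|}) - \gamma - \invsigma(p'_{|p'|}))(t - T_{|p'|})) \le 1$, because Lemma~\ref{l:boundsEV} and our hypothesis on $\gamma$ give $\gamma \ge \xi(p'_{|p'|}) - \invsigma(p'_{|p'|})$ (trivially outside $\Pi_{L, \delta}$; on $\Pi_{L, \delta}$ via $\gamma > \Lambda_L(p) \ge \lambda_{R_L}(p'_{|p'|})$). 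Lemma~\ref{l:patheval} then reduces the task to bounding $\sum_{p' \sim p} \prod_{i=0}^{|p'|-1} \tfrac{1}{2d(1 + \sigma(p'_i)(\gamma - \xi(p'_i)))}$; for case~(ii), the stopping time $\tau_z$ gives this identity directly. Because $\sim$ fixes the check segments but leaves the hats free, this product factorises as a product of check-segment contributions times independent sums over hat-segment variants (plus a $\bar{p}$ factor when nontrivial).

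Each check-segment factor is bounded via Lemma~\ref{l:massexcursions} by $\tfrac{c}{\sigma(\mathrm{start}) \vee 1}\bigl(\tfrac{q_\delta}{2d}\bigr)^{|\check{p}^{(k)}|} \ee^{(c - \ln_3 L) M^{L, \varepsilon}_{\check{p}^{(k)}}}$; by Assumption~\ref{a:3} the prefactors $(\sigma(\mathrm{start}) \vee 1)^{-1}$ are uniformly bounded constants and are absorbed into $c^{m+1}$, except at $p_0$, where it produces the $(\sigma(p_0) \vee 1)^{-1}$ appearing in case~(ii). For each hat segment $\hat{p}^{(k)}$ with $k < m_p$, or with $k = m_p$ and $\bar{p}$ nontrivial, dropping the endpoint constraint on $\partial B_{R_L}(z_k)$ gives an upper bound that Lemma~\ref{l:massupexit} controls by
\begin{equation*}
\EEE_{z_k}\!\left[\ee^{\int_0^{\tau_{B^c_{R_L}(z_k)}}(\xi - \gamma)\dd s}\right] \le 1 + \frac{c R_L^d}{\gamma - \lambda_{R_L}(z_k)} \le 1 + \frac{c R_L^d}{\gamma - \Lambda_L(p)},
\end{equation*}
using $\gamma > \Lambda_L(p) + \ee^{-R_L}$ and $\max \invsigma \le \delta_\sigma^{-1}$. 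For case~(ii), stopping at $\tau_z$ combined with disjointness forces $\bar{p}$ to be trivial and the last hat to start at $z$ and hence have length zero, so all nontrivial hats start at some $z_k \neq z$ in $\Pi_{L, \delta}$; hence $\lambda_{R_L}(z_k) \le \Lambda^{(z)}_L(p)$, licensing the replacement of $\Lambda_L(p)$ by $\Lambda^{(z)}_L(p)$.

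The main obstacle, and the source of the extra $(R_L^d)^{\id_{\{m > 0\}}}$ prefactor in case~(i), is the last hat segment when $\bar{p}$ is trivial: here the walker may fail to exit $B_{R_L}(z_{m_p})$ by time $t$, so the sum runs over paths of arbitrary length from $z_{m_p}$ with arbitrary endpoint in $B_{R_L}(z_{m_p})$, rather than only first-exit paths. Interpreting the path-weight products as iterates of a sub-stochastic kernel $K$ on $B_{R_L}(z_{m_p})$, whose spectral radius is less than $1$ since $\gamma > \lambda_{R_L}(z_{m_p})$, this sum equals $\sum_y (I - K)^{-1}(z_{m_p}, y) = \sum_y \int_0^\infty \EEE_{z_{m_p}}[\ee^{\int_0^s(\xi - \gamma) \dd u} \id_{\{X_s = y,\, \tau_{B^c} > s\}}] \dd s$, which by Lemma~\ref{l:totmass} is bounded by $O(R_L^d / (\gamma - \lambda_{R_L}(z_{m_p})))$, yielding the claimed polynomial factor. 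The remaining work is careful bookkeeping to assemble all constants, length exponents and moderate-low-count exponents into the claimed form, using $n_p = \sum_k |\check{p}^{(k)}| + |\bar{p}|$ and $k^{L, \varepsilon}_p = \sum_k M^{L, \varepsilon}_{\check{p}^{(k)}} + M^{L, \varepsilon}_{\bar{p}}$.
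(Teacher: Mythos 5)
Your overall decomposition---factorising the path sum over the equivalence class, bounding the check segments with Lemma~\ref{l:massexcursions} and the hat excursions with Lemma~\ref{l:massupexit}, and using the separation in Corollary~\ref{c:seppot} to confine each hat to a single ball and to force, in item (ii), the last hat to be trivial---is essentially the paper's argument (the paper organises the same factorisation as an induction on $m$ via the strong Markov property), and your treatment of item (ii) is fine. The genuine gap is in the last step of item (i), namely the hat segment that has not exited by time $t$. Having replaced the trailing sojourn factor by $1$, you are committed to bounding the full resolvent sum $\sum_y (I-K)^{-1}(z_{m_p},y)=\sum_y\sum_{q\colon z_{m_p}\to y}\prod_{i<|q|}\tfrac{1}{2d(1+\sigma(q_i)(\gamma-\xi(q_i)))}$, and you identify this with $\sum_y\int_0^\infty \EEE_{z_{m_p}}\big[\ee^{\int_0^s(\xi-\gamma)\dd u}\id\{X_s=y,\ \tau_{B^\cc_{R_L}(z_{m_p})}>s\}\big]\dd s$. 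That identity is false: each terminal visit to $y$ contributes its expected discounted holding time $\sigma(y)/(1+\sigma(y)(\gamma-\xi(y)))$ to the time integral, so the resolvent sum equals the time-integrated Green's function multiplied, site by site, by $\sigma(y)^{-1}+\gamma-\xi(y)$. Since the paper imposes no lower-tail assumption on $\xi(0)$, this factor is not uniformly bounded over $B_{R_L}(z_{m_p})$, and Lemma~\ref{l:totmass} therefore does not deliver the claimed $O\big(R_L^d/(\gamma-\lambda_{R_L}(z_{m_p}))\big)$ for the quantity you actually need.

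The resolvent sum can in fact be controlled, e.g.\ by summing the resolvent identity $(\gamma-\Delta\invsigma-\xi)G_\gamma(z_{m_p},\cdot)=\delta_{z_{m_p}}$ over the ball (the Laplacian contributes a nonpositive boundary flux, and the only site where $\xi$ may exceed $\gamma$ is $z_{m_p}$ itself by the separation estimates), but that is an additional argument you have not supplied. The simpler repair---and what the paper does---is not to convert the final segment into a path sum at all: stop the ``trailing factor $\le 1$'' trick at the fixed segment $\bar p$ (when $\bar p_0\notin D_{L,\delta}$), and bound the remaining deterministic-time expectation $\EEE_{z_{m_p}}\big[\ee^{\int_0^{t-s}(\xi-\gamma)\dd u}\id\{\tau_{B^\cc_{R_L}(z_{m_p})}>t-s\}\big]$ directly via Lemma~\ref{l:totmass} or Corollary~\ref{cor:aprioriboundstotalmass}; this is exactly where the $(R_L^d)^{\id\{m>0\}}$ prefactor of item (i) comes from in the paper. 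Two minor points: the condition $\gamma>\Lambda_L(p)+\ee^{-R_L}$ you invoke belongs to Proposition~\ref{prop:masspathset}, not to this lemma, and is not needed here; and Lemma~\ref{l:patheval} is stated under $\gamma>\max_i\xi(p_i)$, so when your paths pass through sites of $\Pi_{L,\delta}$ you should note explicitly that the identity persists under the weaker condition $\gamma>\xi-\invsigma$, which Lemma~\ref{l:boundsEV} and $\gamma>\Lambda_L(p)$ provide.
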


As anticipated, Lemma~\ref{l:massoneclass} allows us to give the: 

\begin{proof}[Proof of Proposition~\ref{prop:masspathset}]
Using Lemma~\ref{l:massoneclass}, the proof is as in \cite[Proposition~6.1]{BKS16}. Observe that Proposition \ref{prop:masspathset} only applies to paths that exit balls of radius $\ln L$; this ensures that a reasonable number of sites of `moderately low' potential are hit by the path.
\end{proof}

To end the section, we prove Lemma~\ref{l:massoneclass} with an argument similar as for \cite[Lemma~6.5]{BKS16}.

\begin{proof}[Proof of Lemma~\ref{l:massoneclass}]
We will give the proof of item $(ii)$; for this we need the conclusions of  Lemmas~\ref{l:massupexit} and~\ref{l:massexcursions}. Item $(i)$ follows analogously, but also requires Corollary~\ref{cor:aprioriboundstotalmass}; see the proof of \cite[Lemma~6.5]{BKS16}. 
In the following, we abbreviate $I_a^b := \ee^{\int_a^b (\xi(X_s) - \gamma) \dd s}$,
and we fix $c>1$ as in Lemma~\ref{l:massexcursions}; we may and will assume $c > \delta_\sigma^{-1}$ (cf.\ Assumption~\ref{a:3}).

We proceed by induction on $m$. Assume $m=1$. Set $\ell:= |\check{p}^{\ssst (1)}|$, $y:= \check{p}^{\ssst (1)}_\ell \in \Pi_{L,\delta}$. Note that, since we may assume $z = \bar{p}_{|\bar{p}|}$ (otherwise the integral will be zero), the case $\bar{p}_0 \notin D_{L,\delta}$ is not possible; therefore, $\bar{p}_0 \in D_{L,\delta}$, and in particular $|\bar{p}|=0$.  By Corollary \ref{c:seppot}, we may further assume $z=y$ and $T_\ell = \tau_z$. Hence, by Lemma~\ref{l:massexcursions}, 
\begin{align*}
 (\sigma(p_0) \vee 1)  \, \EEE_{p_0} \left[I_0^{\tau_z} \mathbbm{1}_{\{p(X_{\tau_z}) \sim p \}} \right]
 \le  (\sigma(p_0) \vee 1)  \, \EEE_{p_0} \left[I_0^{T_\ell} \mathbbm{1}_{\{p_\ell(X) = \check{p}^{\ssst (1)}\}} \right] < c \left(\frac{q_\delta}{2d}\right)^{\ell} \ee^{(c - \ln_3 L) M^{L,\varepsilon}_{\check{p}^{\ssst (1)}}},
\end{align*}
finishing the case $m=1$.

Assume now by induction that the statement is proven for some $m \ge 1$, and let $p \in \Gamma^{(m+1,n)}$. Define $p' := \check{p}^{\ssst (2)} \circ \hat{p}^{\ssst (2)} \circ \cdots \circ \check{p}^{\ssst (m+1)} \circ \hat{p}^{\ssst (m+1)} \circ \bar{p}$. Then $p' \in \Gamma^{(m,n')}$ where $n = |\check{p}^{\ssst (1)}| + n'$, and $k^{L,\varepsilon}_p = k^{L,\varepsilon}_{p'}+M^{L,\varepsilon}_{\check{p}^{\ssst (1)}}$. Setting $\ell := |\check{p}^{\ssst (1)}|$, $x:=\check{p}^{\ssst (2)}_0$ and $S := \inf\{ s> T_\ell \colon\; X_s \notin D_{L,\delta}\}$, we get
\begin{align}
\label{e:prmassoneclass2}
\EEE_{p_0} \left[ I_0^{\tau_z} \id_{\{p(X_{\tau_z}) \sim p \}}\right]
\le \EEE_{p_0} \left[I_0^S \id_{\{p_\ell(X) = \check{p}^{\ssst (1)}, S < \tau_z\}} 
\right] \EEE_x \left[ I_0^{\tau_z} \id_{\{p(X_{\tau_z}) \sim p'\}}\right].
\end{align}
On the other hand, set $y := \check{p}^{\ssst (1)}_\ell \in \Pi_{L,\delta}$;
we may assume that $y \neq z$ since otherwise \eqref{e:prmassoneclass2} is zero.
Then, by Lemma~\ref{l:massexcursions}, Lemma~\ref{l:massupexit} and Assumption~\ref{a:3},
\begin{align}\label{e:prmassoneclass3}
 (\sigma(p_0) \vee 1)  \, \EEE_{p_0} \left[I_0^S \id_{\{p_\ell(X) = \check{p}^{\ssst (1)}\}} \right]
& =   (\sigma(p_0) \vee 1)  \, \EEE_{p_0} \left[I_0^{T_\ell} \id_{\{p_\ell(X) = \check{p}^{\ssst (1)}\}} \right] \EEE_y \left[I_0^{\tau_{B^\cc_{R_L}(y)}} \right] 
\nonumber\\
& <  c \,  \left(\frac{q_\delta}{2d} \right)^\ell \ee^{(c-\ln_3 L)M^{L,\varepsilon}_{\check{p}^{\ssst (1)}}}
\left(1 + \frac{c R_L^d}{\gamma- \Lambda^{\ssst (z)}_{L}(p)} \right).
\end{align}
Now the induction step follows from \eqref{e:prmassoneclass2}--\eqref{e:prmassoneclass3} and the induction hypothesis. The case $m=0$ follows from Lemma~\ref{l:massexcursions}.
\end{proof}


\smallskip
\section{Negligible paths and eigenfunction localisation}
\label{s:neg}

In this section we complete the proofs of Proposition~\ref{p:lb} (lower bound on the total mass), 
Proposition~\ref{p:ub} (negligible paths), and Proposition~\ref{p:eigloc} (localisation of the principal eigenfunction).  

Before we begin, we note that Corollary \ref{c:locpro} and $a_{L_t} = a_t + o(1)$ imply that, for any $\eta > 0$,
\begin{equation}\label{e:zin}
Z_t \in \Pi_{L_t, \eta}
\end{equation}
holds with high probability as $t \to \infty$. 
This will be crucial to apply Lemma~\ref{l:bdlocEFs} up to an arbitrary level of precision,
which ultimately drives the complete localisation. 
Recall also that 
\begin{equation}
\label{e:lambdaat}
  \lambda_{R_{L_t}}(Z_t) = a_t + o(1) = \varrho \ln_2 t (1 + o(1)) 
\end{equation}
with high probability by Corollary~\ref{c:e}.

\subsection{Lower bound on the total mass}
\label{ss:lb}

As a first step to establish Proposition~\ref{p:lb}, 
we give next a consequence of the percolation estimates of Section~\ref{ss:quickpaths},
which will allow us to streamline the approach compared to \cite{MP}.
Recall the definition of $\Gamma^\star_t(z)$ from~\eqref{e:goodpaths}.

\begin{lemma}\label{l:lbpath}
For any $t > 0$, $z \in \ZZ^d \setminus \{ 0 \}$, $p \in \Gamma^\star_t(z)$, and $0<r< \tfrac12 |p| s^\sigma_t $,
\begin{equation}\label{e:lbpath}
\EE_0 \left[\ee^{\int_0^{\tau_z} \xi(X_s) \dd s} \id_{\{\tau_z \le r \}} \right] \ge \exp \left\{ -r s^\xi_t -|p| \ln \left( \frac{4d |p| s^\sigma_t}{r}\right)   \right\}.
\end{equation}
\end{lemma}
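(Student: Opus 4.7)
The plan is to restrict the expectation to the event that the walk $X$ follows the path $p$ exactly during its first $|p|$ jumps and completes all of them by time $r$; the good-path constraints then handle both the exponential weight and the probability. Let $T_0=0<T_1<T_2<\cdots$ be the jump times of $X$ and $p_k(X)$ the geometric path of its first $k$ jumps, and define
\[
A:=\{p_{|p|}(X)=p,\ T_{|p|}\le r\}.
\]
On $A$ one has $\tau_z\le T_{|p|}\le r$ since $p_{|p|}=z$; moreover for every $s<\tau_z$ the walker sits at some $p_i$ with $i<|p|$, so the good-path lower bound $\xi(p_i)>-s^\xi_t$ gives $\int_0^{\tau_z}\xi(X_s)\,\dd s\ge-s^\xi_t\,\tau_z\ge-s^\xi_t r$, and hence
\[
\EE_0\!\left[\ee^{\int_0^{\tau_z}\xi(X_s)\dd s}\id_{\{\tau_z\le r\}}\right]\ge\ee^{-s^\xi_t r}\,\PP_0(A).
\]

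Next I would estimate $\PP_0(A)$ from below. Conditional on the skeleton trajectory being $p$, the holding times $E_i:=T_{i+1}-T_i$ ($0\le i<|p|$) are independent with $E_i\sim\mathrm{Exp}(1/\sigma(p_i))$, the jump directions are independently uniform over the $2d$ neighbours, and the two are independent, so
\[
\PP_0(A)=(2d)^{-|p|}\,\PP\!\left(\sum_{i=0}^{|p|-1}E_i\le r\right).
\]
The good-path upper bound $\sigma(p_i)<s^\sigma_t$ lets me stochastically dominate each $E_i$ above by an $\mathrm{Exp}(1/s^\sigma_t)$ variable, so $\sum_{i=0}^{|p|-1}E_i$ is stochastically dominated by a $\mathrm{Gamma}(|p|,1/s^\sigma_t)$ random variable. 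Using the standard Gamma--Poisson identity and the crude bound $|p|!\le|p|^{|p|}$, this yields
\[
\PP\!\left(\sum_{i=0}^{|p|-1}E_i\le r\right)\ge\PP\bigl(\mathrm{Poisson}(r/s^\sigma_t)\ge|p|\bigr)\ge\ee^{-r/s^\sigma_t}\!\left(\frac{r}{|p|\,s^\sigma_t}\right)^{|p|}.
\]

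Combining these estimates and taking logarithms,
\[
\ln\EE_0\!\left[\ee^{\int_0^{\tau_z}\xi(X_s)\dd s}\id_{\{\tau_z\le r\}}\right]\ge-s^\xi_t r-\frac{r}{s^\sigma_t}-|p|\ln\!\left(\frac{2d|p|\,s^\sigma_t}{r}\right).
\]
Rewriting $\ln(2d|p|s^\sigma_t/r)=\ln(4d|p|s^\sigma_t/r)-\ln 2$, the target bound \eqref{e:lbpath} follows as soon as $|p|\ln 2\ge r/s^\sigma_t$; this is immediate from the hypothesis $r<\tfrac12|p|s^\sigma_t$ combined with $\ln 2>\tfrac12$. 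There is no genuine obstacle in this argument; the only point requiring care is that the good-path conditions constrain $\xi$ and $\sigma$ only at the $|p|$ interior vertices $p_0,\ldots,p_{|p|-1}$, so one must verify that on $A$ both the integral over $[0,\tau_z]$ and the relevant holding times $E_0,\ldots,E_{|p|-1}$ involve only these vertices --- which holds because along the walk on $A$ the site $z$ is first visited at $\tau_z$ (if $p$ revisits $z$ this only makes $\tau_z$ smaller, and for $s<\tau_z$ the walker is at $p_i$ for some $i<|p|$ in any case).
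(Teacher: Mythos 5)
Your proof is correct and follows the same skeleton as the paper's: restrict to the event that the walk traces $p$ exactly with all $|p|$ jumps completed by time $r$, which factors the expectation into $e^{-r s^\xi_t}$ (from $\xi(p_i) > -s^\xi_t$ on the traversed sites) times $(2d)^{-|p|}$ (jump directions) times $\PP\left(\sum_{i<|p|} E_i \le r\right)$ for independent holding times with means $\sigma(p_i) < s^\sigma_t$. The only divergence is the final probability estimate. The paper bounds each holding time separately via $\PP\left(E_i \le \tfrac{r}{|p|s^\sigma_t}\right) = 1 - \exp\left\{-\tfrac{r}{|p|\sigma(p_i)}\right\} \ge \tfrac{r}{2|p|s^\sigma_t}$ (using $1-e^{-x}\ge x/2$ on $(0,\tfrac12)$, valid precisely because $r < \tfrac12|p|s^\sigma_t$), directly producing the constant $4d$ without any residue. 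You instead stochastically dominate the sum by a Gamma, pass to a Poisson tail, apply $n!\le n^n$, and then absorb the extra $r/s^\sigma_t$ term using $r/s^\sigma_t < |p|/2 < |p|\ln 2$. Both yield the exact inequality \eqref{e:lbpath}; the paper's route is marginally shorter and avoids the Gamma--Poisson identity, while yours shows that the hypothesis $r < \tfrac12|p|s^\sigma_t$ has a little slack (anything up to $|p|\ln 2$ would suffice in your version). Your remark about possible revisits of $z$ and the fact that the good-path constraints only concern $p_0,\dots,p_{|p|-1}$ is a correct and worthwhile sanity check that the paper leaves implicit.
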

\begin{proof}
The proof is similar as for \cite[Lemma~8.1]{BKS16}.
By requiring the random walk $X$ to follow the path $p$ until hitting $z$
and using that $\xi(p_i) \geq -s^\xi_t$, we obtain
\[
\EE_0 \left[\ee^{\int_0^{\tau_z} \xi(X_s) \dd s} \id_{\{\tau_z \le r \}} \right] \ge (2d)^{-|p|} \ee^{- r s^\xi_t} \PP \left( \sum_{i=0}^{|p|-1} \sigma(p_i) E_i \le r \right),
\]
where $(E_i)_{i \in \NN_0}$ are i.i.d.\ Exp($1$) random variables.
The probability above is at least
\[
\PP \left( E_i \leq \frac{r}{|p| s^\sigma_t} \, \forall\, 0 \le i \le |p|-1 \right) \ge \left(\frac{2|p| s^\sigma_t}{r} \right)^{-|p|}
\]
where we used $\sigma(p_i) < s^\sigma_t$ and $1 - \ee^{-x} \ge \tfrac12 x$ for $x \in (0,\tfrac12)$. 
This proves \eqref{e:lbpath}.
\end{proof}

\begin{proof}[Proof of Proposition \ref{p:lb}]

To ease notation, abbreviate $\tau := \tau_{Z_t}$ and $\lambda_t := \lambda_{R_{L_t}}(Z_t)$.
Use the Feynman-Kac formula~\eqref{e:fk} and the strong Markov property to write, for $r \in (0,t)$,
\begin{align}\label{e:prLB1}
U(t) \ge \EE_0 \left[ \ee^{ \int_0^t\xi(X_s) \dd s} \id_{\{\tau\le r \} } \right] 
= \EE_0 \left[ \ee^{\int_0^\tau \xi(X_s) \dd s} \id_{\{\tau \le r\}}  u_{Z_t} \left(t-\tau, Z_t\right) \right].
\end{align}
To choose $r$, we use Proposition~\ref{p:goodpaths} to pick a path $p \in \Gamma^\star_t(Z_t)$ and set
\begin{equation}\label{e:defr}
r:= \frac{4 d |p|}{\lambda_t}.
\end{equation}
Note that $r \ll t$ by \eqref{e:lambdaat}, Proposition~\ref{p:goodpaths} and Corollary~\ref{c:e}, 
so we may indeed take $r$ in \eqref{e:prLB1}.
Moreover, eventually $r < \tfrac12 |p| s^\sigma_t$, so we may apply Lemma~\ref{l:lbpath}.
On the other hand, \eqref{e:zin} and Corollary~\ref{cor:aprioriboundstotalmass} yield that, with high probability,
\begin{equation*}
\ln u_{Z_t}(s, Z_t) > s \lambda_t+ o(1) \quad \text{ for all } s> 0.
\end{equation*}
Collecting these facts we deduce
\[
\ln U(t) \ge (t-r) \lambda_t -r s^\xi_t - |p| \ln \left( \frac{4d |p| s^\sigma_t}{r}\right) + o(1),
\]
which after using $r = 4d |p|/\lambda_t$, the definitions of $s^\xi_t$, $s^\sigma_t$ and $\tfrac12 a_t \le \lambda_t \le 2 \varrho \ln_2 t$ becomes
\begin{equation}\label{e:prLB2}
\ln U(t) \ge t \lambda_t - |p| \ln_3 t - |p| \left\{ \ln (2\varrho) + 4d(1+2 h_t^2) + h_t^2 \ln a_t \right\} + o(1).
\end{equation}
By Proposition~\ref{p:goodpaths} and Corollary~\ref{c:e} (recall $h^\star_t \ll h_t$),
\[
|p| \ln_3 t \le |Z_t| (1+ h^\star_t)\ln_3 t = |Z_t| \ln_3 t + o(t d_t h_t),
\]
and we may check that the third term in \eqref{e:prLB2} is also $o(t d_t h_t)$.
This completes the proof.
\end{proof}

 \subsection{Negligible paths}
 \label{ss:negpaths}
 
We prove Proposition \ref{p:ub} by applying the machinery in Section~\ref{s:pathexp}. In order to do so, we first eliminate paths that fail to exit the ball $B_{\ln L_t}$. The following is an easy consequence of the almost sure bound on the maximum of $\xi$ inside balls, stated in \eqref{e:max}.
 
\begin{lemma}
\label{l:log}
Eventually almost surely as $t \to \infty$,
\[    \ln \EE_0 \left[ \exp\left\{ \int_0^t \xi(X_s) \, \dd s \right\} \id \{  \tau_{B_{\ln L_t}^\cc} > t \} \right] <  2 \varrho t \ln_3 t.  \]
\end{lemma}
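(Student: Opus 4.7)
The plan is to observe that on the event $\{\tau_{B_{\ln L_t}^\cc} > t\}$ the walk $X$ is confined to the small box $B_{\ln L_t}$, so the exponent in the Feynman-Kac integrand is deterministically bounded by $t$ times the maximum of $\xi$ over this box. The indicator contributes at most $1$, and the bound on the maximum comes from \eqref{e:max}, which gives $\max_{z \in B_L} \xi(z) = a_L + o(1)$ almost surely as $L \to \infty$. Thus, eventually almost surely,
\[
\ln \EE_0\!\left[\exp\!\left\{\int_0^t \xi(X_s)\,\dd s\right\} \id\{\tau_{B_{\ln L_t}^\cc} > t\}\right] \le t \max_{z \in B_{\ln L_t}} \xi(z) \le t\bigl(a_{\ln L_t} + 1\bigr).
\]

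It then remains to check that $a_{\ln L_t} = (1+o(1))\,\varrho \ln_3 t$, so that the right-hand side is strictly less than $2\varrho t \ln_3 t$ for all large $t$. This follows from the asymptotics $a_u \sim \varrho \ln_2 u$ recorded just after the definition \eqref{e:a}, applied with $u = \ln L_t$: since $L_t = t \ln_2 t$, we have $\ln L_t \sim \ln t$, hence $\ln_2(\ln L_t) = \ln_3 L_t \sim \ln_3 t$, giving $a_{\ln L_t} \sim \varrho \ln_3 t$ as required.

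There is no real obstacle here: the only thing to double-check is that the event \eqref{e:max} is used at the scale $L = \ln L_t$, which tends to infinity, so that its almost sure conclusion indeed applies eventually almost surely in $t$.
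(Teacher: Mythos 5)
Your proof is correct and follows exactly the route the paper intends: bound the integrand by $t\max_{z\in B_{\ln L_t}}\xi(z)$ on the confinement event, invoke \eqref{e:max} at scale $L=\ln L_t\to\infty$, and use $a_L\sim\varrho\ln_2 L$ together with $\ln L_t\sim\ln t$ to get $a_{\ln L_t}\sim\varrho\ln_3 t$, which is strictly below $2\varrho\ln_3 t$ eventually. The paper states the lemma as "an easy consequence of \eqref{e:max}" without detail, and your write-up supplies precisely those details.
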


\begin{proof}[Proof of Proposition \ref{p:ub}]
We begin by proving the first statement. 
For $p \in \Gamma(0)$, we define a choice of $z_p$ by setting
$z_p=0$ if $\Set(p) \cap \Pi_{L_t, \delta} = \varnothing$,
and otherwise taking $z_p$ to be a maximizer of $z \mapsto \lambda_{R_{L_t}}(z)$ over $z \in \Set(p) \cap \Pi_{L_t, \delta}$
(chosen according to some fixed, deterministic rule).
Applying the first statement of Proposition~\ref{prop:masspathset} with the settings
\[ \cN :=  \{ p \in \Gamma(0) :   \Set(p) \subseteq B_{L_t} \setminus \{Z_t\}, \Set(p) \cap B_{\ln {L_t}}^\cc \neq \varnothing  \} \]
and our choice of $z_p$, we deduce that there exists a $c > 0$ such that
\begin{align}
\nonumber &  \ln \EE_0 \left[  \exp \left\{   \int_0^t \xi(X_s) \, \dd s  \right\}  \id \{  \tau_{B^\cc_{\ln L_t}}<t<\tau_{Z_t}  \wedge \tau_{B^\cc_{L_t}} \} \right]   \\
\nonumber &  \phantom{aaaaaaaa} <  \Big(   \max_{z \in \Pi_{L_t, \delta} \setminus \{Z_t\}} \big( t  \lambda_{R_{L_t}}(z) - (\ln_3 L_t - c) |z|  \big)  + t \ee^{-R_{L_t}}  \Big) \vee t (a_{L_t} - \delta) \\
 \label{e:ub1}  &   \phantom{aaaaaaaa} <  \big( t  \Psi_{t, c}^{\ssst (2)} + t \ee^{-R_{L_t}} \big)   \vee t(a_{L_t} - \delta) 
\end{align}

Observe that we have chosen $L_t$ and $R_L$ in \eqref{e:defLt} and \eqref{e:assumpRL} so that $t \ee^{-R_{L_t}} \ll r_t$. 
Moreover, for any $f_t \to 0$ and $g_t \to \infty$, $|Z_{t,c}^{\ssst (2)}| < r_t \sqrt{g_t}$ and $\Psi_{t,c}^{\ssst (2)} > a_t - f_t$ 
with high probability by Corollaries~\ref{c:e} and \ref{c:ec}. Hence \eqref{e:ub1} is in turn bounded above by
$  t \Psi_{t}^{\ssst (2)} + cr_t \sqrt{g_t} + o( r_t)$, 
which together with Lemma~\ref{l:log} proves the result.

For the second statement, we again apply item (i) of Proposition~\ref{prop:masspathset}, this time with
\[ \cN :=  \{ p \in \Gamma(0) \colon\,  \Set(p) \subseteq B_{L_t},  \Set(p) \cap D_t^\cc \neq \varnothing  \}.  \]
This is justified by Corollary~\ref{c:e}: it implies that, with high probability, $D_t \subseteq B_{L_t}$,
and thus $\cN \neq \varnothing$; 
moreover, $|Z_t| > \ln L_t$, so all $p \in \cN$ satisfy $\Set(p) \cap B_{\ln {L_t}}^\cc \neq \varnothing$.
Choose $z_p$ as before except if this sets $z_p$ to be $Z_t$, 
in which case choose $z_p\in \Set(p)$ arbitrarily satisfying $|z_p|>|Z_t|(1+h_t)$. 
We then similarly obtain
\begin{align*}
& \ln \EE_0 \left[  \exp \left\{   \int_0^t \xi(X_s) \, \dd s  \right\}  \id \{  \tau_{D_t^\cc} \vee \tau_{B^\cc_{\ln L_t}}  \le t  < \tau_{B^\cc_{L_t}}  \} \right]  \\
&   \phantom{aaaaaaaa} <   \left( t \Psi_{t}^{\ssst (2)}  \vee   \left( t \Psi_{t}^{\ssst (1)} -  |Z_{t} |  h_t  \ln_3 L_t \right)  \right) +  o(r_t g_t)
\end{align*}
with high probability, finishing the proof.
\end{proof}

\subsection{Localisation of the principal eigenfunction}
\label{ss:loc}

Similarly as in Section \ref{ss:negpaths}, we apply the machinery of Section~\ref{s:pathexp} to the Feynman-Kac representation of the principal eigenfunction $\phi_{D_t}(Z_t)$ in \eqref{e:fkeig}. This time we aim to use the second statement of Proposition \ref{prop:masspathset}; the following lemma ensures that this is applicable in our setting.

\begin{lemma}
\label{l:valid}
For any $f_t \to 0$, with high probability as $t \to \infty$,
\[  \lambda_{R_{L_t}}(Z_t) >  \max_{z \in D_t \cap \Pi_{L_t, \delta} \setminus \{Z_t\} } \lambda_{R_{L_t}}(z)  + d_t f_t. \]
\end{lemma}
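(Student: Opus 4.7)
The plan is to express $\lambda_{R_{L_t}}(z) - \lambda_{R_{L_t}}(Z_t)$ in terms of the penalisation functional $\Psi_t$ defined in \eqref{e:defPsit} and then invoke the top order statistics given by Corollary~\ref{c:e}. Concretely, for any $z \in D_t \cap \Pi_{L_t,\delta} \setminus \{Z_t\}$, one has by the definition of $\Psi_t$
\[
\lambda_{R_{L_t}}(z) - \lambda_{R_{L_t}}(Z_t) = \bigl(\Psi_t(z) - \Psi_t(Z_t)\bigr) + \frac{\ln_3 t}{t}\bigl(|z| - |Z_t|\bigr).
\]
Since $z \ne Z_t$, the first bracket is at most $\Psi_t^{\ssst(2)} - \Psi_t^{\ssst(1)}$. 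For the second bracket, I would use that $z \in D_t = B_{|Z_t|(1+h_t)}$ to bound $|z| - |Z_t| \le h_t |Z_t|$.

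Next, apply Corollary~\ref{c:e}: for any sequence $g_t \to \infty$ and any sequence $\tilde f_t \to 0$, with high probability,
\[
|Z_t| < r_t\, g_t \qquad \text{and} \qquad \Psi_t^{\ssst(1)} - \Psi_t^{\ssst(2)} > d_t\, \tilde f_t.
\]
Recalling that $r_t = t d_t / \ln_3 t$, the second term above is bounded by
\[
\frac{\ln_3 t}{t}\, h_t |Z_t| < h_t\, g_t\, d_t \qquad \text{w.h.p.}
\]
Combining these bounds, with high probability,
\[
\lambda_{R_{L_t}}(z) - \lambda_{R_{L_t}}(Z_t) < - d_t \tilde f_t + d_t h_t g_t = - d_t\bigl(\tilde f_t - h_t g_t\bigr).
\]

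The conclusion then follows by a suitable choice of $\tilde f_t$ and $g_t$: given the target sequence $f_t \to 0$, I would first choose any $g_t \to \infty$ with $g_t h_t \to 0$ (possible since $h_t \to 0$), and then set $\tilde f_t := f_t + g_t h_t \to 0$, so that $\tilde f_t - g_t h_t = f_t$. Since this choice is admissible in Corollary~\ref{c:e}, we obtain $\lambda_{R_{L_t}}(z) - \lambda_{R_{L_t}}(Z_t) < -d_t f_t$ simultaneously for all $z \in D_t \cap \Pi_{L_t, \delta}\setminus\{Z_t\}$, with high probability. No step here poses a serious obstacle; the only thing to verify carefully is that the uniformity over $z$ is built into the maximality definition of $\Psi_t^{\ssst(2)}$ and into the deterministic bound $|z| \le |Z_t|(1+h_t)$, both of which hold by construction of $D_t$ and the order statistics $\Psi_t^{\ssst(k)}$.
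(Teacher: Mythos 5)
Your proposal is correct and is essentially the paper's intended argument: the paper simply defers to the analogous Lemma~7.1 of \cite{MP}, remarking that the key input is $h_t \to 0$, and your write-up supplies exactly those details. Namely, you decompose $\lambda_{R_{L_t}}(z)-\lambda_{R_{L_t}}(Z_t)$ through the penalisation functional $\Psi_t$, bound the gap and $|Z_t|$ via Corollary~\ref{c:e}, control the penalisation discrepancy by $h_t g_t d_t$ using $|z|\le |Z_t|(1+h_t)$ and $r_t = t d_t/\ln_3 t$, and absorb it thanks to $h_t \to 0$.
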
 
\begin{proof}
The proof is similar to Lemma 7.1 of \cite{MP}, and uses the fact that $h_t \to 0$.
\end{proof}

\begin{proof}[Proof of Proposition~\ref{p:eigloc}]
We wish to apply Proposition~\ref{prop:masspathset}
to paths $p \in \Gamma(y, Z_t)$ for $y \neq Z_t$.
To that end, we must first exclude paths that fail to exit the ball $B_{\ln L_t}(y)$,
which is only possible when $y \in B_{\ln L_t}(Z_t)$ and $\Set(p) \subset B_{2 \ln L_t}(Z_t)$.
Set $m_L := 2 \ln L$.
By Corollary~\ref{c:e}, with high probability $B_{m_{L_t}}(Z_t) \subset D_t$,
and thus $\lambda_{D_t} \ge \lambda_{m_{L_t}}(Z_t) \vee \lambda_{R_{L_t}}(Z_t)$ by eigenvalue monotonicity (see \cite[Lemma 3.1]{MP}).
Thus
\begin{align*}
 \EE_y \left[ \ee^{\int_0^{\tau_{Z_t}} (\xi(X_s) -\lambda_{D_t}) \, \dd s } \id_{\{\tau_{Z_t} < \tau_{B^\cc_{\ln L_t}(y)}\}} \right]
& \le \EE_y \left[ \ee^{ \int_0^{\tau_{Z_t}} (\xi(X_s) -\lambda_{m_{L_t}}(Z_t)) \, \dd s } \id_{\{\tau_{Z_t} < \tau_{B^\cc_{m_{L_t}}(Z_t)}\}} \right]\\
&  = \frac{\sigma(Z_t)}{\sigma(y)} \frac{\phi_{Z_t, m_{L_t}}(y)}{\phi_{Z_t, m_{L_t}}(Z_t)} \id_{B_{m_{L_t}}(Z_t)}(y),
\end{align*}
where the equality follows from the Feynman-Kac formula \eqref{e:fkeig2} for the principal eigenfunction $\phi_{Z_t,m_{L_t}}$ 
corresponding to the eigenvalue $\lambda_{m_{L_t}}(Z_t)$ (cf.\ Section~\ref{ss:bam}).
Splitting \eqref{e:fkeig} according to whether $X$ exits $B_{\ln L_t}(y)$ or not before hitting $Z_t$,
we obtain
\begin{align}
\sigma(Z_t) \frac{\phi_{D_t}(y)}{\phi_{D_t}(Z_t)}  
& \le \sigma(Z_t) \frac{\phi_{Z_t, m_{L_t}}(y)}{\phi_{Z_t, m_{L_t}}(Z_t)} \id_{B_{m_{L_t}}(Z_t)}(y)  \nonumber\\
\label{e:eiglocsplit}  & + \sigma(y) \EE_y \left[  \exp \left\{ \int_0^{\tau_{Z_t}} ( \xi(X_s) -  \lambda_{R_{L_t}}(Z_t) ) \, \dd s \right\}  \id \{ \tau_{B^\cc_{\ln L_t}(y) } \le \tau_{Z_t} < \tau_{ D_t^\cc}  \} \right].
\end{align}
In light of \eqref{e:zin}, Lemma~\ref{l:bdlocEFs} implies
\begin{equation}\label{e:preigloc1}
\lim_{t \to \infty} \sigma(Z_t)  \sum_{y \in B_{m_{L_t}}(Z_t) \setminus \{Z_t\}}  \frac{\phi_{Z_t, m_{L_t}}(y)}{\phi_{Z_t, m_{L_t}}(Z_t)} = 0 \quad \text{ in probability.}
\end{equation}
For the term in \eqref{e:eiglocsplit}, we may apply item (ii) of Proposition~\ref{prop:masspathset} with the settings
\[ \cN :=  \{ p \in \Gamma(y, Z_t) : p_i \neq Z_t \,\forall\, i<|p|, \Set(p) \subseteq D_t , \Set(p) \cap B_{\ln L_t}(y)^\cc \neq \varnothing  \} ,\]
$\gamma :=  \lambda_{R_{L_t}}(Z_t)$ and $z_p := Z_t$, which is valid by Lemma \ref{l:valid} 
and since $\ee^{- R_{L_t}} = o(d_t)$ by \eqref{e:defLt} and \eqref{e:assumpRL}. 
We deduce that there exists a $c > 0$ such that, with high probability as $t \to \infty$,
\[  \sigma(y) \EE_y \left[  \exp \left\{ \int_0^{\tau_{Z_t}} ( \xi(X_s) -  \lambda_{R_{L_t}}(Z_t) ) \, \dd s \right\}  \id \{   \tau_{B^\cc_{\ln L_t}(y) } \le \tau_{Z_t} < \tau_{ D_t^\cc}  \} \right]  <   \ee^{    - (\ln_3 L_t - c) |y-Z_t| }. \]
Summing over $y \in D_t \setminus \{Z_t\}$ and combining with \eqref{e:preigloc1} yields the result.
\end{proof}


\smallskip

\section{The special case of log-Weibull traps}
\label{s:spec}

In this section we study the special case of log-Weibull traps, completing the proof of Proposition \ref{p:zhatz} 
and Theorem \ref{t:spec2} under Assumptions~\ref{lw} and \ref{de}.
Note that, in this case, $\delta_\sigma = \essinf \sigma(0) = 1$, but we prefer to keep $\delta_\sigma$ to show how the formulae depend on it.

Recall the definition of $R_t^\ast$ in Section \ref{ss:PPapproach}. We use the machinery developed in Section \ref{s:os}, which allows us to reduce the problem to studying the upper tail of (a truncated version of) the single random variable $\lambda_{R_t^\ast}(0)$. In particular, we wish to observe the local profile of the random environments conditionally on $\lambda_{R_t^\ast}(0)$ being large. 

To define the appropriate local profile, we begin with some notation. 
Recall the radii of influence $\rho_\xi$ and $\rho_\sigma$ 
and the interface sites $\mathcal{F}_\xi$ and $\mathcal{F}_\sigma$. 
Recall also the constants $\bar{c}(y)$, $y \in \ZZ^d$ from \eqref{e:defbarc}.
In order to separately keep track of the interface cases for $\xi$ and $\sigma$,
we set
\[
\bar{c}_\sigma(y) := \left\{
\begin{array}{ll}
\bar{c}(y) & \text{ if } y \in \cF_{\sigma},\\
0 & \text{ otherwise,}
\end{array}\right.
\quad 
\bar{c}_\xi(y) := \left\{
\begin{array}{ll}
\bar{c}(y) & \text{ if } y \in \cF_{\xi},\\
0 & \text{ otherwise.}
\end{array}\right.
\]
For each $y\in\ZZ^d$, define scales
\[  q_{\xi,t}(y) := \left\{
\begin{array}{ll}
\varrho \ln \bar{c}(y) + \varrho(\mu-1-2|y|) \ln_3 t & \text{if } y \in (B_{\rho_\xi} \setminus \{0\}) \setminus \cF_\xi, \\
0 & \text{otherwise,}
\end{array}\right.
\quad\;
q_{\sigma,t}:=\frac{1}{\mu}\frac{d}{\varrho}\frac{\ln t}{(\ln_2 t)^{\mu-1}} . \]
For an integer $m\ge\rho_\sigma$ and scales $f_t \to 0$, $g_t \to \infty$, define the rectangles
\[ E_\xi :=  \prod_{y \in (B_{\rho_\xi} \setminus (\{0\} ) \setminus \mathcal{F}_\xi }  (-f_t, f_t) \ \times \prod_{y \in (B_{m}  \setminus B_{\rho_\xi}) \cup \mathcal{F}_\xi} (-g_t, g_t)  \, ,  \]
\[  E_\sigma :=   ( 1- f_t,1+ f_t) \ \times \ \prod_{y \in (B_{\rho_\sigma} \setminus (\{0\} ) \setminus \cF_{\sigma} } (\delta_\sigma, \delta_\sigma+ f_t) \ \times \prod_{y \in (B_{m} \setminus B_{\rho_\sigma} ) \cup \cF_\sigma } (\delta_\sigma + f_t,  g_t)  \, , \]
as well as their transformed versions
\[ S_\xi := \prod_{y \in (B_{\rho_\xi} \setminus (\{0\}) \setminus \mathcal{F}_\xi }  (q_{\xi,t}(y) - f_t, q_{\xi,t}(y) + f_t)  \ \times \prod_{y \in (B_{m} \setminus B_{\rho_\xi} ) \cup \mathcal{F}_\xi} (-g_t, g_t)  \, ,   \]
and
\[ S_\sigma := \left( q_{\sigma,t}(1  - f_t), q_{\sigma,t}(1  + f_t) \right) \ \times \prod_{y \in (B_{\rho_\sigma} \setminus \{0\}) \setminus \mathcal{F}_\sigma  }   (\delta_\sigma,\delta_\sigma+ f_t) \ \times \prod_{y \in (B_{m} \setminus B_{\rho_\sigma} ) \cup \mathcal{F}_\sigma } (\delta_\sigma + f_t, g_t) \, . \]

Define the event where $\xi$ and $\sigma$ have the local profile described by $S_\xi$ and $S_\sigma$:
\begin{equation}\label{e:defcSt}
\cS_t^m:=\left\{(\xi(y))_{y\in B_{m} \setminus\{0\}}\in S_\xi,\, (\sigma(y))_{y \in B_{m}}   \in S_\sigma\right\}.
\end{equation}

Recall the truncated principal eigenvalue $\hat{\lambda}^*_t$ and the scale $A_t$ from Proposition \ref{prop:maxordertrunceig}.
Let 
\[  \cA_{t}(s)  :=  \{ \hat{\lambda}^*_t > A_t + s d_t  \}, \quad s \in \RR. \]
The main result we need is the following, which builds on the analysis of Proposition~\ref{prop:maxordertrunceig}.

\begin{proposition}
\label{p:spec}
There exists a constant $\kappa = \kappa(\mu) \in (0,\tfrac12)$
satisfying the following.
Fix scales $f_t, g_t >0$ such that $g_t \to \infty$, $f_t \to 0$ and
$f_t \gg (\ln_2 t)^{-\kappa}$ as $t \to \infty$.
Then, for each $m\ge\rho_\sigma$ and uniformly over $s$ in bounded intervals of $\RR$,
\begin{equation}
\lim_{t \to \infty}  \PPP (   \mathcal{S}_t^m  | \cA_{t}(s)  ) = 1.
\end{equation}
Moreover, denote by $\nu_\xi^y$ and $\nu_\sigma^y$ the probability measures on $\RR$
with densities proportional respectively to $\ee^{\bar{c}_\xi(y) x/\varrho } f_\xi(x)$ 
and $\ee^{\bar{c}_\sigma(y) \delta_\sigma / x} f_\sigma(x)$,
where $f_\xi$, $f_\sigma$ are the density functions of $\xi(0)$, $\sigma(0)$.
Fix a Borel set $\cI \subset \RR$.
Then, for any $y \in (B_m \setminus B_{\rho_\xi}) \cup \cF_\xi$,
\begin{equation}\label{interface1}
\lim_{t \to \infty} \left| \PPP \Big( \xi(y) \in  \cI \,\middle|\, \cA_t(s) \Big) -\nu_\xi^y(\cI) \right| =0
\end{equation}
and, for any $y \in (B_m \setminus B_{\rho_\sigma}) \cup \cF_\sigma$,
\begin{equation}\label{interface2}
\lim_{t \to \infty} \left| \PPP \Big( \sigma(y) \in  \cI \,\middle|\, \cA_t(s) \Big) -\nu^y_\sigma(\cI) \right| =0,
\end{equation}
where in both cases the convergence is uniform over $s$ in bounded intervals of $\RR$.
\end{proposition}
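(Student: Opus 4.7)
The plan is to extend the Laplace-type analysis performed in the proof of Proposition~\ref{prop:maxordertrunceig} to resolve the joint dependence of the integrand on the local configuration of $\xi$ and $\sigma$ around the origin. The starting point is identity~\eqref{e:expeig}, which for $A \ge a_t - \delta$ yields
\[
\{\hat{\lambda}^*_t > A\} = \bigl\{\xi(0) > A + (1 - Q_t(A))/\sigma(0)\bigr\}.
\]
Using the independence of $\xi(0),\sigma(0)$ from each other and from the external fields $(\xi(y),\sigma(y))_{y \ne 0}$ on which $Q_t$ depends, integrating out $\xi(0)$ first yields, for any event $\cE$ measurable with respect to the external fields,
\[
\PPP(\cA_t(s) \cap \cE) = \EEE \!\left[ \id_\cE \int_0^\infty \exp \!\left\{ -G\!\left( A_t + s d_t + \tfrac{1 - Q_t(A_t + s d_t)}{u} \right) \right\} f_\sigma(u)\, du \right],
\]
with $G(x) = \ee^{x/\varrho}$ under~\ref{de}. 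By Bayes' formula and the asymptotics $\PPP(\cA_t(s)) = t^{-d}\ee^{-s}(1+o(1))$ from Proposition~\ref{prop:maxordertrunceig}, all claims will then follow by identifying via Laplace asymptotics the configurations that dominate this integral.

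First I would carry out the inner $u$-integration for each fixed realisation of the external environment. Substituting $v = \ln u$ converts the log-Weibull density into a Weibull-type density $\mu v^{\mu-1}\ee^{-v^\mu}$, and expanding $\ee^{(1-Q_t)/(u\varrho)} - 1 \approx (1-Q_t)/(u\varrho)$ (valid on the relevant range $u \gg \ell_t$ as identified for~\eqref{e:maxordertraplarge}) produces the effective exponent $-v^\mu - (1-Q_t)\,d\ln t/(\varrho\ee^v)$. A standard Laplace computation places a sharp saddle at $v^\star = \ln_2 t - (\mu-1)\ln_3 t + O(1)$, equivalently $u^\star \sim (1-Q_t)\,q_{\sigma,t}$, with Gaussian width of order $(\ln_2 t)^{-(\mu-1)/2}$ in $v$; this establishes the concentration $\sigma(0)/q_{\sigma,t}\to 1$ at the advertised rate $\kappa(\mu) \in (0,1/2)$. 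After this integration, the outer integrand retains an effective weight $\exp\{-(1-Q_t)\mu(\ln_2 t)^{\mu-1}(1+o(1))\}$ as its only remaining dependence on the external environment.

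Next I would treat the outer integral as a second finite-dimensional Laplace problem by expanding $Q_t$ via~\eqref{e:expeig} and isolating, for each $y$, the contribution from the $n(y)^2$ shortest paths of length $2|y|$ through $y$, each the product of $2|y|-1$ interior factors $(2d(1+\sigma(p_i)(A-\hat\xi_t(p_i))))^{-1}$. Since well-separated sites contribute approximately independently, the problem reduces to independent single-site optimisations. At each $y$, the reduction of the effective cost $(1-Q_t)\mu(\ln_2 t)^{\mu-1}$ achievable by biasing $\xi(y)$ and $\sigma(y)$ scales respectively as $\bar{c}(y)(\ln_2 t)^{\mu-1-2|y|}$ and $\bar{c}(y)(\ln_2 t)^{\mu-2|y|}$, with $\bar c(y)$ from~\eqref{e:defbarc} emerging as the explicit prefactor, and these gains are to be weighed against the tail costs $\ee^{\xi(y)/\varrho}$ and $(\ln\sigma(y))^\mu$. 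Solving the balance yields $\xi(y) - \varrho(\mu-1-2|y|)\ln_3 t \to \varrho\ln\bar{c}(y)$ when $|y| < \rho_\xi$, and $\sigma(y) \to \delta_\sigma$ when $|y| < \rho_\sigma$; beyond the respective radius of influence, the integrand is insensitive to the local value and the marginal integrates out to return the unconditional law.

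The main technical obstacle is the interface cases $y \in \cF_\xi$ and $y \in \cF_\sigma$, where the scaling exponents $\mu-1-2|y|$ or $\mu-2|y|$ vanish and the tradeoff becomes exactly marginal: instead of concentrating, the conditional law of $\xi(y)$ or $\sigma(y)$ converges weakly to a tilt of its unconditional law. Specifically, after integrating out all other variables, the residual Laplace weight in $\xi(y)$ for $y \in \cF_\xi$ is proportional to $\ee^{\bar{c}(y)\xi(y)/\varrho} f_\xi(\xi(y))$, which after normalisation identifies $\nu^y_\xi$; analogously, $\ee^{\bar{c}(y)\delta_\sigma/\sigma(y)} f_\sigma(\sigma(y))$ identifies $\nu^y_\sigma$, proving~\eqref{interface1}--\eqref{interface2}. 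Uniformity over $s$ in bounded intervals follows because all saddle-point quantities depend continuously on $s$ and the perturbation $sd_t$ is negligible at the relevant scales; a careful tracking of subleading $\ln_3 t$ and $O(1)$ corrections throughout is needed to produce the exact constants in $q_{\xi,t}(y)$ and $\bar{c}(y)$.
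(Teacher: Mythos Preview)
Your overall strategy matches the paper's: integrate out $\xi(0)$, perform a Laplace analysis first on $\sigma(0)$, then on the external variables, and identify the interface limits as exponential tilts. The $\sigma(0)$ saddle-point computation you sketch is essentially what the paper does (via its generalised Laplace lemma, Proposition~\ref{p:lap}).

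The main gap is your claim that ``well-separated sites contribute approximately independently, [so] the problem reduces to independent single-site optimisations.'' The argument cannot proceed this way. The contribution of the shortest paths through $y$ to $Q_t$ is a product over \emph{all} intermediate sites $p_i$, so the effective Laplace weight for $\sigma(y)$ (respectively $\xi(y)$) only takes the clean form $\bar c(y)(\ln_2 t)^{\mu-2|y|}$ (respectively $\bar c(y)(\ln_2 t)^{\mu-1-2|y|}$) \emph{after} one already knows that $\sigma(z) \approx \delta_\sigma$ and $\xi(z) \approx q_{\xi,t}(z)$ for all $z$ with $0 < |z| < |y|$. The paper resolves this by induction on $|y|$: first concentrate $\sigma(0)$, then use this to concentrate $\sigma(y)$ for $|y|=1$, then use \emph{that} restriction to handle $|y|=2$, and so on; see the decomposition~\eqref{e:decQysigma} and the recursive error scales $\tilde f_t^{\ssst (n)}$, which in turn determine $\kappa$ through~\eqref{e:defkappan}--\eqref{e:defkappa}. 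Without this inductive structure you can neither extract the constant $\bar c(y)$ nor control the errors uniformly enough to justify the next Laplace step.

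A secondary omission is the a priori restriction to a set $\cR$ (cf.~\eqref{e:defcR}) on which $\tilde Q_t$ and the Taylor expansion of $G$ are uniformly controlled; this is required before any Laplace step can be applied rigorously and also underlies the reduction to $s=0$ via~\eqref{e:relationAt(s)At}, which is the actual source of uniformity in $s$.
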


Proposition~\ref{p:spec} will be proved in Appendix~\ref{s:laplace}
by repeated applications of Laplace's method (cf.\ Proposition~\ref{p:lap}).
An explicit bound for the constant $\kappa$ above is given in the proof, see \eqref{e:defkappa} below;
in fact, separate error bounds are possible for $\sigma(y)$ and $\xi(y)$ depending on $|y|$.
Also note that, if $y \notin \cF_\xi$, then $\bar{c}_\xi(y)=0$ and $\nu^y_\xi$ equals the law of $\xi(0)$
(analogously for $\sigma$).

We show next how Proposition~\ref{p:spec} implies Theorem~\ref{t:spec2} and Proposition~\ref{p:zhatz}.
We start with an intermediate result.
For $z \in \ZZ^d$, define 
\begin{equation}\label{e:defcStz}
\cS^m_t(z) := \left\{(\xi(y))_{y\in B_{m}(z) \setminus\{z\}}\in S_\xi,\,(\sigma(y))_{y \in B_{m}(z)}   \in S_\sigma\right\},
\end{equation}
i.e., $\cS^m_t(z)$ is the translation by $z$ of the event $\cS^m_t$ in \eqref{e:defcSt}.
Our next lemma shows that the local profile defined by $S_\xi$, $S_\sigma$
is with high probability seen from the point of view of $Z_t$.

\begin{lemma}\label{l:profilearoundZt}
For each $m\ge\rho_\sigma$,
\begin{equation}\label{e:profilearoundZt}
\lim_{t \to \infty} \PPP \left( \cS^m_{r_t}(Z_t) \right) = 1.
\end{equation}
\end{lemma}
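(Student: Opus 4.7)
The plan is to transfer the statement to the setting of the i.i.d.\ copies via Lemma~\ref{l:coupling}, and then apply Proposition~\ref{p:spec} using stationarity and a plain union bound over candidate localisation sites.

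First I would invoke Lemma~\ref{l:coupling} at scale $L=L_t$, and work throughout under the coupling $\widetilde\PPP_{L_t}$. With $\widetilde\PPP_{L_t}$-probability tending to one we have $Z_t=\widehat Z_t$, where $\widehat Z_t$ is the analogue of $Z_t$ defined using the i.i.d.\ truncated eigenvalues $\Lambda_z:=\hat\lambda^{(L_t)}_{R_{L_t}}(z)$ in place of $\lambda_{R_{L_t}}(z)$. Moreover, since $m$ is fixed and $R_{L_t}\to\infty$, eventually $B_m(Z_t)\subset B_{R_{L_t}}(Z_t)$, so the fields $(\xi,\sigma)$ agree there with the $Z_t$-copy $(\xi^{Z_t},\sigma^{Z_t})$. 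Consequently it suffices to show that
\[
\widetilde\PPP_{L_t}\bigl(\widehat\cS^m_{r_t}(\widehat Z_t)\bigr)\to 1 \qquad \text{as } t\to\infty,
\]
where $\widehat\cS^m_{r_t}(z)$ denotes the analogue of $\cS^m_{r_t}(z)$ formulated in terms of $(\xi^z,\sigma^z)$.

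Next, for any $\varepsilon>0$, I would use Proposition~\ref{p:orderstat2} to fix $M>0$ so that $\widetilde\PPP_{L_t}(E_{M,t})\ge 1-\varepsilon$ for all large $t$, where
\[
E_{M,t} := \bigl\{\widehat Z_t\in B_{Mr_t}\bigr\}\cap\bigl\{\Lambda_{\widehat Z_t}\in I_M\bigr\}, \qquad I_M := [A_{r_t}-Md_{r_t},\,A_{r_t}+Md_{r_t}];
\]
the inclusion of $\Lambda_{\widehat Z_t}$ in $I_M$ uses that the penalisation $(\ln_3 t/t)|Z_t|$ is $O(d_{r_t})$ on $\{|Z_t|\le Mr_t\}$. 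A trivial union bound, followed by the i.i.d.\ structure of $(\xi^z,\sigma^z)_{z\in\ZZ^d}$ and stationarity (using that on $\cS^m_{r_t}$ the values of $\xi$ on $B_m\setminus\{0\}$ are of order $\ln_3 r_t\ll a_{L^*_{r_t}}$, so the truncation is inactive there), yields
\[
\widetilde\PPP_{L_t}\bigl(\widehat\cS^m_{r_t}(\widehat Z_t)^c\cap E_{M,t}\bigr) \le \sum_{z\in B_{Mr_t}} \widetilde\PPP_{L_t}\bigl(\widehat\cS^m_{r_t}(z)^c,\,\Lambda_z\in I_M\bigr) = |B_{Mr_t}|\cdot \PPP\bigl(\cS^m_{r_t}^c,\,\hat\lambda^*_{r_t}\in I_M\bigr),
\]
and the right-hand side is bounded by $|B_{Mr_t}|\cdot \PPP(\cS^m_{r_t}^c\mid\cA_{r_t}(-M))\cdot \PPP(\cA_{r_t}(-M))$. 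Proposition~\ref{p:spec} forces the conditional probability to vanish, while $|B_{Mr_t}|\sim (2M)^d r_t^d$ exactly compensates the tail $\PPP(\cA_{r_t}(-M))\sim r_t^{-d}\ee^{M}$ supplied by Proposition~\ref{prop:maxordertrunceig}, so the overall bound is $o(1)$. Combining with $\widetilde\PPP_{L_t}(E_{M,t}^c)\le\varepsilon$ and letting $\varepsilon\to 0$ finishes the proof.

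The main obstacle is bridging between the \emph{pointwise} conditioning (given $Z_t=z$) implicit in the statement and the \emph{exceedance} conditioning (given $\hat\lambda^*_{r_t}>A_{r_t}+sd_{r_t}$) provided by Proposition~\ref{p:spec}. The coupling of Lemma~\ref{l:coupling} is exactly what makes the truncated eigenvalues at different sites independent, so the pointwise conditioning can be replaced by a union bound whose combinatorial factor $|B_{Mr_t}|\sim r_t^d$ is precisely absorbed by the $r_t^{-d}$ tail decay of a single truncated eigenvalue, without any need for more delicate conditional density estimates.
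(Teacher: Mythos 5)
Your proof is correct and follows essentially the same route as the paper's: restrict $Z_t$ with probability at least $1-\varepsilon$ to a spatial/eigenvalue window of size $O(r_t)$ and $O(d_{r_t})$, take a union bound over the $O(r_t^d)$ candidate sites, and apply Proposition~\ref{p:spec} conditionally on the exceedance event, with the entropy factor absorbed by the $r_t^{-d}$ tail of the truncated eigenvalue. The only cosmetic differences are that the paper works with translation-invariant truncated eigenvalues built from the actual field (as in the first part of the proof of Proposition~\ref{p:goodpaths}) with site-dependent levels $s^{\ssst(\varepsilon)}_{t,z}$, packaging the tail/entropy cancellation via Corollary~\ref{c:bddexpecPP}, whereas you route through the coupling of Lemma~\ref{l:coupling} and use a fixed level $-M$ together with the exact tail asymptotics of Proposition~\ref{prop:maxordertrunceig}.
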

\begin{proof}
Fix $\varepsilon > 0$.
Reasoning as in the first part of the proof of Proposition~\ref{p:goodpaths},
we obtain $C_\varepsilon \in (0,\infty)$ such that,
with probability larger than $1-\varepsilon$, $Z_t$ belongs to the set
\[
\cZ_t^{\ssst (\varepsilon)} := \left\{z \in \ZZ^d \colon\, |z| \le C_\varepsilon r_t, \hat{\lambda}^*_{r_t}(z) > A_{r_t} + s^{\ssst (\varepsilon)}_{t,z} d_{r_t} \right\},\qquad s^{\ssst (\varepsilon)}_{t,z} := (|z|/r_t - 2C_\varepsilon).
\]
On the other hand, since $|s^{\ssst (\varepsilon)}_{t,z}| \le 2 C_\varepsilon$ when $|z| \le C_\varepsilon r_t$, we get 
(recall \eqref{e:defcH} and \eqref{e:defhatcP})
\[
\begin{aligned}
\PPP \left( \cS^m_{r_t}(Z_t)^\cc, Z_t \in \cZ_t^{\ssst (\varepsilon)} \right)
& \le \sum_{|z| \le C_\varepsilon r_t } \PPP \left( \cS^m_{r_t}(z)^\cc, z \in \cZ_t^{\ssst (\varepsilon)} \right) \\
& \le \sup_{|z| \le C_\varepsilon r_t} \PPP \left( (\cS^m_{r_t})^\cc \,\middle|\, \cA_{r_t}(s^{\ssst (\varepsilon)}_{t,z}) \right) \EEE \left[ \widehat{\cP}_{r_t}(\cH^1_{-2C_\varepsilon})\right] \underset{t \to \infty}{\longrightarrow} 0
\end{aligned}
\]
by Proposition~\ref{p:spec} and Corollary~\ref{c:bddexpecPP}.
This implies
$ \limsup_{t \to \infty} \PPP \left( \cS^m_{r_t}(Z_t)^\cc \right) \le \varepsilon$,
and since $\varepsilon$ is arbitrary, \eqref{e:profilearoundZt} follows.
\end{proof}

We finish next the proofs of Theorem~\ref{t:spec2} and Proposition~\ref{p:zhatz}, starting with the first.

\begin{proof}[Proof Theorem~\ref{t:spec2}]
In light of Lemma~\ref{l:profilearoundZt},
it only remains to prove the weak convergence of $\xi(Z_t+y)$
and $\sigma(Z_t+y)$ in the cases $y \in B^\cc_{\rho_\xi} \cup \cF_\xi$ and $y \in B^\cc_{\rho_\sigma} \cup \cF_\sigma$,
respectively. 
Recall from the proof of Proposition~\ref{p:sd} the abbreviation $\hat{\lambda}_t = \hat{\lambda}^{(L_t)}_{R_{L_t}} = \hat{\lambda}^*_{r_t}$,
the functional $\widehat{\Psi}_t$ in \eqref{e:prlsd1}, its maximizer $\widehat{Z}_t$ and
the fact that $Z_t = \widehat{Z}_t$ with high probability.
By Lemma~\ref{l:coupling} and Corollary~\ref{c:locpro},
it is enough to prove the statements for $\xi^{\widehat{Z}_t}(\widehat{Z}_t+y)$ and $\sigma^{\widehat{Z}_t}(\widehat{Z}_t+y)$.
Here we will only prove the first, as the second follows analogously.
Fix $y \in B^\cc_{\rho_\xi} \cup \cF_\xi$.

For $z \in \ZZ^d$, let
\[
T_t(z) := d_{r_t}^{-1} \left(\max_{x \neq z} \widehat{\Psi}_t(x) - A_{r_t} + \frac{|z|}{r_t} d_t \right).
\]
Fix $\varepsilon >0$.
By Proposition~\ref{p:orderstat2},
there exists $C_\varepsilon \in (0,\infty)$ such that,
when $t$ is large,
$\widehat{Z}_t$ belongs with probability larger than $1-\varepsilon$ to the set
\[
\cZ_t^{(\varepsilon)} := \left\{z \in \ZZ^d \colon\, |z| \le C_\varepsilon r_t, |T_t(z)| \le 2 C_\varepsilon \right\}.
\]
Moreover, for any $0<a<b<\infty$ and $z \in \ZZ^d$,
\begin{equation}\label{e:prtsp22}
\begin{aligned}
& \PPP \left( \xi^z(z + y) \in (a,b], \widehat{Z}_t = z, z \in \cZ_t^{(\varepsilon)} \right) \\
= \, & \PPP \left( \xi^{z}(z + y) \in (a,b], \hat{\lambda}^*_{r_t}(z) > A_{r_t} + T_t(z) d_{r_t}, |T_t(z)| \le 2 C_\varepsilon \right)\\
= \, & \int_{-2C_\varepsilon}^{2 C_\varepsilon} \PPP \left( \xi(y) \in (a,b] \,\middle|\, \cA_{r_t}(u) \right) \PPP \left(\cA_{r_t}(u) \right) \PPP \left( T_t(z) \in \dd u \right),
\end{aligned}
\end{equation}
where the last equality holds by the independence between $T_t(z)$ and $(\xi^z, \sigma^z)$
and the translation invariance of the latter.
By Proposition~\ref{p:spec} (with $m \ge |y|$), \eqref{e:prtsp22} equals
\[
\nu^y_\xi(a,b) (1+o(1)) \PPP \left( \widehat{Z}_t = z, z \in \cZ_t^{(\varepsilon)}\right)
\]
where $o(1)$ is uniform in $z$.
Summing over $z$ we obtain
\[
\begin{aligned}
\nu^{y}_\xi(a,b)(1-\varepsilon) & \le \liminf_{t\to \infty} \PPP \left( \xi^{\widehat{Z}_t}(\widehat{Z}_t + y) \in (a,b] \right) \\
& \le \limsup_{t\to \infty} \PPP \left( \xi^{\widehat{Z}_t}(\widehat{Z}_t + y) \in (a,b] \right) \le \nu^{y}_\xi(a,b) + \varepsilon,
\end{aligned}
\]
and since $\varepsilon$ is arbitrary, the conclusion follows.
\end{proof}

\begin{proof}[Proof of Proposition \ref{p:zhatz}]
This proof is similar to the proof of Corollary~5.11 in \cite{MP}. 
Recall the definition of $\lambda_{\rho_\xi,\rho_\sigma}(z)$ from Section \ref{ss:logweib} and note that the monotonicity properties of the principal eigenvalue with respect to the domain and to the potential (see \cite[Lemma 3.1]{MP}) imply that $\lambda_{R_{L_t}}(z)\ge \lambda_{\rho_\xi,\rho_\sigma}(z)$.
We now claim that, for some $\varepsilon_t \to 0 $,
\begin{align}\label{eq:eigclaim0} 
\lambda_{R_{L_t}}(Z_t)-\lambda_{\rho_\xi,\rho_\sigma}(Z_t)<d_{r_t} \varepsilon_t  \quad \text{ with high probability as } t \to \infty.
\end{align}
The first step to showing this claim is to replace the eigenvalues by their truncated equivalents. 
Indeed, Lemma~\ref{l:coupling} 
and Corollary~\ref{c:locpro} imply that, with high probability as $t\to\infty$,
\[ \lambda_{R_{L_t}}(Z_t)=\hat \lambda_{R_{L_t}}^{(L_t)}(Z_t)\quad\mathrm{and}\quad \lambda_{\rho_\xi,\rho_\sigma}(Z_t)=\hat \lambda_{\rho_\xi,\rho_\sigma}^{(L_t)}(Z_t). \]
Therefore it suffices to show that, with high probability as $t\to\infty$,
\begin{align}\label{eq:eigclaim}
\hat \lambda_{R_{L_t}}^{(L_t)}(Z_t)-\hat \lambda_{\rho_\xi,\rho_\sigma}^{(L_t)}(Z_t)<d_{r_t} \varepsilon_t.
\end{align}
We now appeal to the eigenvalue path expansion \eqref{e:pathexpeig}. Specifically, we write
\begin{align}\label{eq:twoeigs}
\hat \lambda_{R_{L_t}}^{(L_t)}(Z_t)-\hat \lambda_{\rho_\xi,\rho_\sigma}^{(L_t)}(Z_t) = T_1 + T_2 +T_3 
\end{align}
where, abbreviating, $\sigma_t = \sigma^{Z_t}$, $\xi_t = \hat \xi^{Z_t}_{L_t}$ (with $\hat \xi^z_L$ as in \eqref{e:defhatxiz}) and $\bar\xi_t = \xi_t \id_{B_{\rho_\xi}(Z_t)}$,
\begin{align}
T_1 & =\frac1{\sigma_t(Z_t)}\sum_{k\ge 2}\sum_{\substack{p\in\Gamma_k(Z_t,Z_t)\\p_i\neq Z_t\,\forall\,0<i<k\\ \Set(p)\subseteq B_{\rho_\sigma}(Z_t)}}\Bigg\{ \prod_{0<i<k}\frac1{2d} \frac1{1+\sigma_t(p_i)(\hat \lambda_{R_{L_t}}^{(L_t)}(Z_t)- \xi_t(p_i))} \label{e:T1}\\
& \phantom{ + \frac1{\sigma_t(Z_t)}\sum_{k\ge 2}\sum_{\substack{p\in\Gamma_k(Z_t,Z_t)\\p_i\neq Z_t\,\forall\,0<i<k\\ \Set(p)\subseteq B_{R_t^\ast}(Z_t)}}}-\prod_{0<i<k}\frac1{2d}\frac1{1+  \sigma_t(p_i)(\hat \lambda_{R_{L_t}}^{(L_t)}(Z_t)-\bar{\xi}_t(p_i))}  \Bigg\}, \nonumber
\end{align}

\begin{align*}
T_2 &= \frac1{\sigma_t(Z_t)}\sum_{k \ge 2}\sum_{\substack{p\in\Gamma_k(Z_t,Z_t)\\p_i\neq Z_t\,\forall\,0<i<k\\ \Set\subseteq B_{\rho_\sigma}(Z_t)}}\Bigg\{\prod_{0<i<k}\frac1{2d}\frac1{1+ \sigma_t(p_i)(\hat \lambda_{R_{L_t}}^{(L_t)}(Z_t)-\bar\xi_t(p_i))}\\
&\phantom{+\frac1{\sigma(Z_t)}\sum_{k \ge 2}\sum_{\substack{p\in\Gamma_k(Z_t,Z_t)\\p_i\neq Z_t\,\forall\,0<i<k\\ \Set(p)\subseteq B_{R_t^\ast}(Z_t)}}}-\prod_{0<i<k}\frac1{2d}\frac1{1+\sigma_t(p_i)(\hat \lambda_{\rho_\xi,\rho_\sigma}^{(L_t)}(Z_t)-\bar \xi_t(p_i))}\Bigg\} ,
\end{align*}
and
\begin{align*}
T_3&=\frac1{\sigma_t(Z_t)}\sum_{k\ge 2\rho_\sigma+2}\sum_{\substack{p\in\Gamma_k(Z_t,Z_t)\\p_i\neq Z_t\,\forall\,0<i<k\\ \Set(p)\subseteq B_{R_{L_t}}(Z_t)\\ \Set(p)\cap(B_{R_{L_t}}(Z_t)\setminus B_{\rho_\sigma}(Z_t))\neq\varnothing}}\prod_{0<i<k}\frac1{2d} \frac1{1+\sigma_t(p_i)(\hat \lambda_{R_{L_t}}^{(L_t)}(Z_t)-\xi_t(p_i))}.
\end{align*}
We deal with $T_1$ initially.
Applying Lemma~\ref{l:profilearoundZt} with $m=\rho_\sigma+1$ together with Lemma~\ref{l:coupling} and Corollary~\ref{c:e},
we conclude that the following hold with high probability as $t \to \infty$:
for all $x \in B_m(Z_t) \setminus \{Z_t\}$, $\delta_\sigma < \sigma_t(x) < 2\delta_\sigma$ and 
$\xi_t(x) < c_1 \ln_3 t$ for some constant $c_1 \in (0,\infty)$;
for all $x \in B_m(Z_t)\setminus B_{\rho_\xi}(Z_t)$, $\xi_t(x)<g_t < c_1 \ln_3 t$;
$\sigma_t(Z_t) \ge \tfrac12 q_{\sigma,t}$;
and $\hat{\lambda}^{(L_t)}_{R_{L_t}}(Z_t) > a_{L_t} - \delta > c_2 \ln_2 t$ for some constant $c_2 \in (0,\infty)$
and $\delta$ as in Corollary~\ref{c:seppot}.
Noting that any path giving a non-zero contribution to $T_1$ must
exit $B_{\rho_\xi}(Z_t)$ and then return to $Z_t$,
we may restrict the sum in \eqref{e:T1} to $k \ge 2 \rho_\xi + 2$,
obtaining
\[ 
T_1 \le \frac{c_3}{q_{\sigma,t}}     g_t (c_2 \ln_2 t - c_1 \ln_3 t) ^{ - 2 \rho_\xi -2}  < \frac{c_4}{q_{\sigma,t}}   g_t (\ln_2 t)^{ - 2 \rho_\xi -2} = c_5 g_t \frac{(\ln_2 t)^{\mu-1  - 2 \rho_\xi - 2} }{\ln t} 
\]
for some constants $c_3, c_4, c_5 \in (0, \infty)$,
where the last equality holds by the definition of $q_{\sigma, t}$.
Finally note that the definition of $\rho_\xi$ implies that
$\mu-1  - 2 \rho_\xi -2 < 0$,
and indeed this is the smallest integer for which this is true. 
We deduce that the above is smaller than $d_{r_t} \varepsilon_t$ eventually for some $\varepsilon_t \to0$.
For $T_2$, it is clear by similar arguments that, with high probability, 
 it is bounded in absolute value by 
$o(\hat \lambda_{R_{L_t}}^{(L_t)}(Z_t)-\hat \lambda_{\rho_\xi,\rho_\sigma}^{(L_t)}(Z_t))$.

For $T_3$, we additionally use the fact that $\xi_t(x) \leq a_{L_t}-c_\ast$ for all $x\in B_{R_{L_t}}(Z_t) \setminus \{Z_t\}$, as defined in \eqref{e:defhatxiz}. Then we can upper-bound $T_3$ by
\[
\frac{c_6}{q_{\sigma,t}}\left((c_2 \ln_2 t - c_1 \ln_3 t)^{-2\rho_\sigma-1}\right)<\frac{c_7}{q_{\sigma,t}}(\ln_2 t)^{-2\rho_\sigma-1} = c_8\frac{(\ln_2 t)^{\mu-1-2\rho_\sigma-1}}{\ln t}
\]
for constants $c_6,c_7, c_8 \in (0, \infty)$. 
The definition of $\rho_\sigma$ implies that 
$\mu-1-2\rho_\sigma-1<0$,
and indeed this is the smallest integer for which this is true. 
We deduce that the above is smaller than $d_{r_t} \varepsilon_t$ eventually for some $\varepsilon_t \to0$,
finishing the proof of \eqref{eq:eigclaim0}.

To complete the proof, note that, by \eqref{eq:eigclaim0} and Corollary~\ref{c:e},
with high probability as $t \to \infty$ and for all $z \neq Z_t$,
\[
\Psi_t^{\rho_\xi,\rho_\sigma}(Z_t) > \Psi_t(Z_t) - d_{r_t} \varepsilon_t > \Psi_t(z) \ge \Psi^{\rho_\xi,\rho_\sigma}_t(z),
\]
where the last inequality follows by monotonicity of the principal eigenvalue.
\end{proof}

{\bf Acknowledgements.}
We thank an anonymous referee for several helpful suggestions.
The first author was supported by the Engineering \& Physical Sciences Research Council (EPSRC) Fellowship
EP/M002896/1 held by Dmitry Belyaev.
The second author was partially supported by the EPSRC Grant EP/M027694/1 held by Codina Cotar.
The third author was supported by the German DFG project KO 2205/13 ``Random mass flows through random potential'' held by Wolfgang K\"onig,
and by the DFG Research Unit FOR2402 ``Rough paths, stochastic partial
differential equations and related topics''. 
The third author thanks UCL and Birkbeck for their hospitality during two research visits.

\appendix

\section{Density computation}
\label{a:density}

Let $\cP_\infty$ be a Poisson point process in $\RR \times \RR^d$ with intensity measure $\ee^{-\lambda} \dd \lambda \otimes \dd z$.
Let $\Psi^{\ssst (i)}(\cP_\infty)(\theta)$, $Z^{\ssst (i)}(\cP_\infty)(\theta)$, $\theta>0$, be defined as in \cite[Section~7.2]{BKS16},
and set $\overline{\Psi}^{\ssst (i)} = \Psi^{\ssst (i)}(\cP_\infty)(1)$, $\overline{Z}^{\ssst (i)} = Z^{\ssst (i)}(\cP_\infty)(1)$.
Our goal is to prove the following.
\begin{proposition}\label{p:density}
For any $k \in \NN$, the random vector
\[
\left( \overline{Z}^{\ssst (i)}, \ldots, \overline{Z}^{\ssst (k)}, \overline{\Psi}^{\ssst (1)}, \ldots, \overline{\Psi}^{\ssst (k)} \right) 
\in (\RR^d)^k \times \RR^k
\]
is distributed according to \eqref{e:ordstat2}.
\end{proposition}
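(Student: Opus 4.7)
The plan is to reduce the computation to a standard order-statistics calculation for a one-dimensional Poisson point process via a change of variables. First I would apply the map $\Phi\colon (z,\lambda)\mapsto(z,\lambda-|z|)$ to $\cP_\infty$. Since $\Phi$ is a bijection of $\RR\times\RR^d$ with unit Jacobian, the image $\Phi(\cP_\infty)$ is a Poisson point process with intensity $\ee^{-(|z|+\psi)}\dd z\otimes\dd\psi$. The point of this transformation is that the functional $\lambda-|z|$ (which, for $\theta=1$, defines $\overline\Psi^{\ssst(i)}$ and its associated maximiser) becomes simply the $\psi$-coordinate, so that $\overline Z^{\ssst(i)}$ is identified with the $z$-coordinate of the point in $\Phi(\cP_\infty)$ having the $i$-th largest $\psi$.

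Next I would project onto the $\psi$-axis: by the mapping theorem, the resulting one-dimensional process is Poisson with intensity $\nu(\dd\psi)=2^d\ee^{-\psi}\dd\psi$, using the product structure $\int_{\RR^d}\ee^{-|z|}\dd z=2^d$. Because $\nu((\psi,\infty))<\infty$ for every $\psi\in\RR$ while $\nu(\RR)=\infty$, the top $k$ order statistics $\overline\Psi^{\ssst(1)}>\cdots>\overline\Psi^{\ssst(k)}$ are almost surely well-defined. By the standard formula (condition on the location of the $k$-th largest point and use that the remaining points form a Poisson process on $(\overline\Psi^{\ssst(k)},\infty)$), the joint Lebesgue density of $(\overline\Psi^{\ssst(1)},\ldots,\overline\Psi^{\ssst(k)})$ is
\[
\id_{\{\psi_1>\cdots>\psi_k\}}\prod_{i=1}^k\bigl(2^d\ee^{-\psi_i}\bigr)\,\exp\bigl(-2^d\ee^{-\psi_k}\bigr).
\]

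Finally, by the marking theorem applied to $\Phi(\cP_\infty)$, conditionally on the $\psi$-coordinates the associated $z$-coordinates are i.i.d.\ with density $2^{-d}\ee^{-|z|}$ on $\RR^d$. Thus, conditionally on $(\overline\Psi^{\ssst(1)},\ldots,\overline\Psi^{\ssst(k)})$, the vector $(\overline Z^{\ssst(1)},\ldots,\overline Z^{\ssst(k)})$ has joint density $\prod_{i=1}^k 2^{-d}\ee^{-|z_i|}$. Multiplying this with the density above, the $2^{dk}$ factors cancel and we obtain exactly \eqref{e:ordstat2}.

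There is no real obstacle here beyond being careful in identifying $\overline\Psi^{\ssst(i)}$ and $\overline Z^{\ssst(i)}$ with the quantities attached to $\Phi(\cP_\infty)$ according to the definitions in \cite[Section~7.2]{BKS16}; once this identification is made, everything is a direct application of the mapping and marking theorems for Poisson point processes together with the elementary density of top order statistics.
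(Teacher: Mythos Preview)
Your proposal is correct and follows essentially the same route as the paper: apply the shear $(\lambda,z)\mapsto(\lambda-|z|,z)$ to obtain a Poisson process with product intensity $\ee^{-\psi}\dd\psi\otimes\ee^{-|z|}\dd z$, then read off the $\overline\Psi^{\ssst(i)}$ as order statistics of the one-dimensional projection (intensity $2^d\ee^{-\psi}\dd\psi$) and the $\overline Z^{\ssst(i)}$ as independent i.i.d.\ marks with density $2^{-d}\ee^{-|z|}$. The paper phrases the marking step as an explicit construction of $\widehat\cP$ from $(\eta_i,X_i)$ rather than invoking the marking theorem, and leaves the order-statistics density to an induction on $k$, but the content is the same.
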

\begin{proof}
Write $\cP_\infty = \sum_{i \in \NN} \delta_{(\lambda_i, z_i)}$ and let 
$\widehat{\cP} := \sum_{i \in \NN} \delta_{(\lambda_i-|z_i|, z_i)}$.
Noting that $\widehat{\cP} = \cP_\infty \circ T^{-1}$
where $T:\RR \times \RR^d \to \RR \times \RR^d$ is defined via
$T(\lambda, z) := \left( \lambda - |z|, z\right)$,
we deduce from \cite[Proposition~3.7]{resnick} that $\widehat{\cP}$ is a Poisson point process on $\RR \times \RR^d$
with intensity measure $(\ee^{-\lambda} \dd \lambda) \otimes (\ee^{-|z|} \dd z)$.
On the other hand, let $\cP_1$ be a Poisson point process on $\RR$ with intensity measure $2^d \ee^{-\lambda} \dd \lambda$,
and let $X=(X_i)_{i \in \NN}$ be a sequence of i.i.d.\ random vectors in $\RR^d$, independent of $\cP_1$, 
each having density $2^{-d} \ee^{-|z|} \dd z$ with respect to the Lebesgue measure in $\RR^d$.
Writing $\cP_1 := \sum_{i \in \NN} \delta_{\eta_i}$,
it is straightforward to check that $\sum_{i \in \NN} \delta_{(\eta_i, X_i)}$ has the same distribution as $\widehat{\cP}$,
and therefore we may assume that $\widehat{\cP}=\sum_{i \in \NN} \delta_{(\eta_i, X_i)}$.
This immediately yields that $(\overline{Z}^{\ssst (i)})_{1 \le i \le k}$ 
is independent of $(\overline{\Psi}^{\ssst (i)})_{1 \le i \le k}$
and distributed as $(X_i)_{1 \le i \le k}$,
proving the part of \eqref{e:ordstat2} concerning $z_1, \ldots, z_k$.
Moreover, $\overline{\Psi}^{\ssst (1)} = \max \left\{\eta \in \RR \colon\, \cP_1(\{\eta\}) > 0 \right\}$
and, recursively for $i \in \NN$, 
$\overline{\Psi}^{\ssst (i+1)} = \max \left\{ \eta \in (-\infty, \overline{\Psi}^{\ssst (i)}) \colon\, \cP_1(\{\eta\})>0\right\}$,
from which the part of \eqref{e:ordstat2} for $\psi_1, \ldots, \psi_k$ may be proved straightforwardly by induction on $k$.
\end{proof}

\section{Laplace's method}
\label{s:laplace}

The standard version of Laplace's method states that, 
if a real-valued $C^2$-function $h$ has a unique maximum at~$x_0$ and satisfies some additional conditions, then the integral
$  \int_{\mathbb{R}} \ee^{t h(x) }  \, \dd x     $
is asymptotically concentrated, as $t \to \infty$, on the region $(x_0 - \delta_t, x_0 + \delta_t)$ for any $\delta_t \gg t^{-1/2}$.
We provide next a generalisation of this result
with the function $h$ substituted by a collection $h_{t, \aleph}$,
indexed by $t>0$ and $\aleph$ in an abstract index set $\mathfrak{S}_t$,
that, as $t \to \infty$ and uniformly over $\aleph$, ``looks like'' $h$
in a sense made precise next.
This will then be used to prove Proposition~\ref{p:spec}.
 
\begin{proposition}\label{p:lap}
For an interval $\cI \subset \RR$ with non-empty interior,
let $h,f:\cI \to \RR$ be measurable functions satisfying:
\begin{enumerate}
\item[(i)] There exists a unique $x_0 \in \cI$ such that $h(x_0) = \max_{x \in \cI} h(x)$;
\item[(ii)] For all $\zeta>0$, there exists $\eta \in (0,\infty)$ such that $\sup_{x \in \cI \colon\, |x-x_0|\ge\zeta} h(x) \le h(x_0) - \eta$;
\item[(iii)] There exist $\zeta, \underline{c}, \overline{c} \in (0,\infty)$ such that, for all $x \in [x_0-\zeta, x_0+\zeta] \cap \cI$,
\[-\underline{c} \frac{(x-x_0)^2}{2} \le h(x) - h(x_0) \le -\overline{c} \frac{(x-x_0)^2}{2};\]
\item[(iv)] $f$ is non-negative and there exists $\zeta > 0$ such that
\[ \sup_{x \in \cI \colon |x-x_0| \le \zeta} f(x) < \infty\]
and
\[ \varepsilon_f(u) := \left|\ln \left\{\inf_{x \in \cI \colon |x-x_0| \in [u, \zeta] } f(x) \right\}\right| \ll u^{-2} \quad \text{ as } u \downarrow 0; \]
\item[(v)] $\int_\cI \ee^{h(x)} f(x) \dd x<\infty$.
\end{enumerate}
Suppose that, for each $t>0$, there exists an index set $\mathfrak{S}_t$ and,
for each $\aleph \in \mathfrak{S}_t$,
a measurable function $h_{t,\aleph}:\cI \to \RR \cup \{-\infty\}$ satisfying, for some $\zeta\in (0, \infty)$, 
\begin{enumerate}
\item[(vi)] $\epsilon_t := \sup_{x\in \cI \colon|x-x_0| \le \zeta} \sup_{\aleph \in \mathfrak{S}_t} \left|h_{t,\aleph}(x) - h(x) \right| \to 0$ as $t \to \infty$;
\item[(vii)] For all $\delta, \eta > 0 \in (0,1)$, there exists $c \in (1-\delta, 1+\delta)$ such that
\[\limsup_{t\to\infty}\sup_{\aleph \in \mathfrak{S}_t}\sup_{x \in \cI \colon |x-x_0|>\zeta}(h_{t,\aleph}(x)-c h(x))\le \eta.\]
\end{enumerate}
Then, for any
scales $v_t \to \infty$, $\delta_t > 0$ satisfying $\delta_t^2 \gg \epsilon_t \vee \left( \varepsilon_f(v_t^{-1/2}) + 1\right) {v_t}^{-1}$,
\begin{equation}\label{e:lap1}
\int_{(x_0-\delta_t,x_0+\delta_t) \cap \cI} \ee^{v_t h_{t, \aleph}(x)} f(x) \, \dd x \sim \int_\cI \ee^{ v_t h_{t,\aleph}(x)} f(x) \, \dd x \quad \text{ uniformly over } \aleph \in \mathfrak{S}_t
\end{equation}
as $t \to \infty$, where by convention $\ee^{-\infty} := 0$.
\end{proposition}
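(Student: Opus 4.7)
The plan is to prove \eqref{e:lap1} by splitting $\cI$ into three regions—the central window $C_t := (x_0 - \delta_t, x_0 + \delta_t) \cap \cI$, the near-tail $M_t := \{x \in \cI \colon |x - x_0| \in [\delta_t, \zeta]\}$, and the far region $F_t := \{x \in \cI \colon |x - x_0| > \zeta\}$ (with $\zeta$ small enough to satisfy (iii), (iv) and (vi) simultaneously)—and then showing that the contributions of $M_t$ and $F_t$ are uniformly $o(1)$ relative to the contribution of $C_t$. The core idea is that the two-sided quadratic bounds in (iii) combined with the uniform approximation (vi) let one mimic the classical Gaussian Laplace estimate inside $C_t$ uniformly in $\aleph$, while (vii), (ii) and (v) handle the far region.

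For a lower bound on $\int_{C_t}$, I would restrict to the thin annulus $A_t := \{x \in \cI \colon |x-x_0| \in [v_t^{-1/2}, 2 v_t^{-1/2}]\}$, which lies inside $C_t$ eventually because $v_t \delta_t^2 \to \infty$. On $A_t$, combining (iii) with (vi) gives $h_{t,\aleph}(x) \ge h(x_0) - \epsilon_t - 2\underline{c}/v_t$, and the definition of $\varepsilon_f$ in (iv) ensures $f(x) \ge \ee^{-\varepsilon_f(v_t^{-1/2})}$; hence
\[
\int_{C_t} \ee^{v_t h_{t,\aleph}(x)} f(x) \, \dd x \ge v_t^{-1/2} \exp\bigl\{v_t h(x_0) - v_t \epsilon_t - \varepsilon_f(v_t^{-1/2}) - O(1)\bigr\},
\]
uniformly in $\aleph \in \mathfrak{S}_t$. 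For the near-tail, the quadratic upper bound from (iii)–(vi) together with the local boundedness of $f$ from (iv) yields, via a Mill-ratio estimate for the Gaussian-type integral,
\[
\int_{M_t} \ee^{v_t h_{t,\aleph}(x)} f(x) \, \dd x \le \frac{C_1}{v_t \delta_t} \exp\Bigl\{v_t h(x_0) + v_t \epsilon_t - \tfrac{1}{2} v_t \overline{c} \delta_t^2 \Bigr\}
\]
for some finite constant $C_1$. The ratio of these two bounds has prefactor $O((v_t \delta_t^2)^{-1/2}) = o(1)$ and exponent essentially $-\tfrac{1}{2} v_t \overline{c} \delta_t^2 + 2 v_t \epsilon_t + \varepsilon_f(v_t^{-1/2}) + O(1)$, which tends to $-\infty$ precisely because the hypotheses $\delta_t^2 \gg \epsilon_t$ and $v_t \delta_t^2 \gg \varepsilon_f(v_t^{-1/2}) + 1$ make $v_t \delta_t^2$ dominate all competing terms.

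The main obstacle is controlling $F_t$ through (vii), where the constant $c$ must be chosen carefully depending on the sign of $h(x_0)$. For $x \in F_t$, (ii) gives $h(x) \le h(x_0) - \eta'$ for some $\eta' > 0$, while (vii) provides, for any small $\delta, \eta \in (0,1)$, a constant $c \in (1-\delta, 1+\delta)$ with $h_{t,\aleph}(x) \le c\,h(x) + \eta$ eventually and uniformly. Writing $\ee^{v_t c h(x)} = \ee^{(v_t c - 1)h(x)} \ee^{h(x)}$ and using $h \le h(x_0) - \eta'$ on $F_t$ together with (v), one obtains
\[
\int_{F_t} \ee^{v_t h_{t,\aleph}(x)} f(x) \, \dd x \le C_2 \exp\bigl\{v_t \eta + (v_t c - 1)(h(x_0) - \eta')\bigr\},
\]
with $C_2 := \int_\cI \ee^{h(x)} f(x) \, \dd x < \infty$. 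Comparing against the central lower bound, the decisive coefficient of $v_t$ in the exponent of the ratio is $(c-1) h(x_0) + \eta - c \eta'$; one then picks $c < 1$ if $h(x_0) > 0$, $c > 1$ if $h(x_0) < 0$ (and either sign if $h(x_0) = 0$), together with $\eta < \eta'/2$ and $|c-1|$ sufficiently small, which forces this coefficient to be strictly negative. The resulting exponential decay in $v_t$ dwarfs the $v_t^{1/2}$ prefactor and the $v_t \epsilon_t + \varepsilon_f(v_t^{-1/2}) = o(v_t)$ correction (the latter by (vi) and the bound $\varepsilon_f(u) \ll u^{-2}$ in (iv)), so the $F_t$-ratio also vanishes uniformly in $\aleph$; combining this with the $M_t$-estimate establishes \eqref{e:lap1}.
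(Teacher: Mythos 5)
Your proposal follows essentially the same route as the paper's proof: the same three-region split, the same lower bound over a one-sided annulus at scale $v_t^{-1/2}$ using (iii), (iv), (vi), the same Gaussian-tail upper bound on $\{\delta_t \le |x-x_0| \le \zeta\}$, and the same treatment of $\{|x-x_0|>\zeta\}$ via (vii), (ii) and (v), with the comparison driven by $\delta_t^2 \gg \epsilon_t \vee (\varepsilon_f(v_t^{-1/2})+1)v_t^{-1}$. One quibble: assumption (vii) does not let you choose the sign of $c-1$ (it only asserts existence of \emph{some} $c \in (1-\delta,1+\delta)$), but this is harmless and your own side condition ``$|c-1|$ sufficiently small, $\eta<\eta'/2$'' already repairs it, since taking $\delta$ so small that $\delta\,|h(x_0)|<\eta'/8$ makes $(c-1)h(x_0)+\eta-c\eta'$ strictly negative for \emph{every} $c\in(1-\delta,1+\delta)$ — which is exactly how the paper argues.
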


\begin{remark}\label{r:2xdiff}
If $h$ satisfies items $(i)$--$(ii)$ and can be extended to $\cI \cup (x_0-\delta, x_0 +\delta)$
for some $\delta>0$ in a such a way that it is twice differentiable 
in $(x_0-\delta, x_0+\delta)$ and $h''(x_0) < 0$,
then it also satisfies item $(iii)$.
\end{remark}
\begin{proof}[Proof of Proposition~\ref{p:lap}]
Fix $v_t, \delta_t$ as in the statement and $\zeta$ as in assumptions $(iii)$--$(vii)$.
By assumptions $(iii)$ and $(vi)$, for all $\aleph \in \mathfrak{S}_t$ and all $x \in \cI$ with $|x-x_0| \le \zeta$,
\[
h_{t,\aleph} (x) \ge h(x) - \epsilon_t \ge h(x_0)- \epsilon_t - \tfrac12 \underline{c} (x-x_0)^2.
\]
Letting $\bar{\epsilon}_t := \varepsilon_f(v_t^{-1/2})/v_t$,
we see that, for all large enough $t$ and some constant $K_0 \in (0,\infty)$,
\begin{equation}\label{e:prlap0}
\int_{(x_0-\delta_t, x_0+\delta_t) \cap \cI} \ee^{ v_t h_{t,\aleph}(x)} f(x) \dd x 
 \ge \frac{\ee^{v_t(h(x_0)- \epsilon_t - \bar{\epsilon}_t)}}{\sqrt{v_t \underline{c}}} \int_{\sqrt{\underline{c}}}^{\delta_t \sqrt{v_t \underline{c}}} \ee^{-\tfrac{x^2}{2}} \dd x
 \ge K_0 \frac{\ee^{v_t(h(x_0) - \epsilon_t)}}{\sqrt{v_t}},
\end{equation}
where we used assumption $(iv)$, $\delta_t \gg v_t^{-1/2}$ and that $\cI$ contains either $(x_0 + v_t^{-1/2},x_0+\delta_t)$ or $(x_0-\delta_t,x_0 - v_t^{-1/2})$.
On the other hand, by assumptions $(iii)$ and $(vi)$ again,
\[
h_{t,\aleph}(x) \le h(x_0) + \epsilon_t - \tfrac12 \overline{c} (x-x_0)^2
\]
for all $x \in \cI \cap [x_0- \zeta, x_0+\zeta]$ and all $\aleph \in \mathfrak{S}_t$, and thus by assumption $(iv)$
\begin{equation}\label{e:prlap1}
\begin{aligned}
\int_{\cI \cap\{|x-x_0| \in [\delta_t, \zeta] \}} \ee^{ v_t h_{t,\aleph}(x)} f(x) \dd x 
& \le \left(\sup_{x \in \cI \colon |x-x_0| \le \zeta} f(x) \right) \frac{\ee^{ v_t ( h(x_0) + \epsilon_t)}}{\sqrt{ v_t \overline{c}}} 2 \int_{\delta_t \sqrt{v_t \overline{c}}}^\infty \ee^{-x^2/2} \dd x \\
& \le K_1 \frac{\ee^{v_t [h(x_0)+\epsilon_t - \overline{c} \delta_t^2/2]}}{ v_t \delta_t}
\end{aligned}
\end{equation}
for a constant $K_1 \in (0,\infty)$.
Take now $\eta>0$ as in assumption $(ii)$ and fix $\delta \in (0,1/2)$ such that $\delta |h(x_0)| < \eta/8$. 
By assumption $(vii)$, there exists a $c \in (1-\delta, 1+\delta)$ such that
\[
\sup_{\aleph \in \mathfrak{S}_t} \; \sup_{x \in \cI \colon |x-x_0|>\zeta} \left( h_{t,\aleph}(x) - ch(x) \right) \le \eta/4 < \tfrac12 c \eta
\]
for all large enough $t$, and thus, by assumptions $(ii)$ and $(v)$,
\begin{equation}\label{e:prlap2}
\begin{aligned}
\int_{\cI \cap \{|x-x_0|> \zeta \}} \ee^{v_t h_{t,\aleph}(x)} f(x) \dd x
& \le \ee^{\tfrac{ v_t c \eta}{2}}\ee^{ (c v_t -1)(h(x_0)-\eta)} \int_{\cI} \ee^{h(x)} f(x) \dd x \\
& \le K_2 \ee^{ v_t (h(x_0) - \eta/8)}
\end{aligned}
\end{equation}
for some constant $K_2 \in (0,\infty)$ . 
Collecting \eqref{e:prlap0}--\eqref{e:prlap2}, we obtain
\[
\sup_{\aleph \in \mathfrak{S}_t} \, \frac{\int_{\cI \cap \{|x-x_0|> \delta_t \}} \ee^{ v_t h_{t,\aleph}(x)} f(x) \dd x}{\int_{\cI \cap \{|x-x_0| \le  \delta_t \}} \ee^{ v_t h_{t,\aleph}(x)} f(x) \dd x } \le 
\frac{K_1\ee^{ - \tfrac{v_t}{2} \left( \overline{c} \delta_t^2 - 4 \epsilon_t -4 \bar{\epsilon}_t \right)}}{ K_0 \sqrt{ v_t} \delta_t} + \frac{K_2 \sqrt{v_t}}{K_0} \ee^{-\tfrac{v_t}{8}(\eta - 8 \epsilon_t  -8 \bar{\epsilon}_t)},
\]
which by our assumptions converges to $0$ as $t \to \infty$.
\end{proof}

Now we can finally complete the proof of Proposition~\ref{p:spec}, relying on Proposition~\ref{p:lap} 
and on the analysis already developed for Proposition~\ref{prop:maxordertrunceig}.

\begin{proof}[Proof of Proposition~\ref{p:spec}]
Define $\kappa_0 = \tilde{\kappa}_0 := \tfrac12 \{(\mu-1) \wedge 1\}$ and, recursively for $n \ge 1$,
\begin{equation}\label{e:defkappan}
\kappa_{n} := \frac{\kappa_{n-1}}{2} \wedge \frac{(\mu-2n)^+}{2}, \qquad \tilde{\kappa}_n := \frac{\kappa_n}{2} \wedge \frac{(\mu-2n-1)^+}{2}.
\end{equation}
We then pick $\kappa \in (0,1/2)$ satisfying
\begin{equation}\label{e:defkappa}
\kappa < \min \left\{\kappa_n \colon\, \kappa_n > 0 \right\} \wedge \min \left\{ \tilde{\kappa}_n \colon\, \tilde{\kappa}_n >0 \right\}.
\end{equation}
Fix $m \ge \rho_\sigma$ and scales $f_t, g_t$ as in the statement.
Since $S^m_t$ is decreasing in $m$ and increasing in $g_t$,
we may assume that $m \ge \rho_\sigma+1$ and $g_t \ll \ln_3 t$.

Define a field
\[ \varphi \in \cE := \RR^{B_{R_t^\ast} \setminus \{0\}} \times (0,\infty) \times (\delta_\sigma, \infty)^{B_{R_t^\ast} \setminus \{0\}} \]
with projections $\varphi_\xi$ and $\varphi_\sigma$ onto the first $|B_{R_t^\ast} |-1$ and last $|B_{R_t^\ast}|$ components respectively,
the middle set corresponding to $\varphi_\sigma(0)$.
We write $\dot{\varphi}_y := (\varphi_\xi(x), \varphi_\sigma(x))_{x \neq y}$.
Define also a truncated version $\hat{\varphi}_\xi$ of $\varphi_\xi$ by the equation
\[
\hat{\varphi}_\xi(y) + q_{\xi, t}(y) = \Big(\varphi_\xi(y) + q_{\xi, t}(y) \Big) \wedge \Big( a_{L_t} - c_* \Big), \quad y \in B_{R^*_t} \setminus \{0\},
\]
where $c_* = 4 \delta_\sigma^{-1}$ as in \eqref{e:trunclev},
i.e., $\hat{\varphi}_\xi$ is a shifted version of a field truncated as in \eqref{e:defhatxiz}.
Note that $\hat{\varphi}_\xi$ depends on $t$, but we suppress this from the notation.

Recall the scale $A_t$ from Proposition~\ref{prop:maxordertrunceig} and the function $Q_t(A)$ defined in its proof. 
We define a function $\tilde Q_t(A,\dot{\varphi}_0)$ similar to $Q_t(A)$ but with the shifted field:
\begin{align*}
\tilde Q_t(A,\dot{\varphi}_0):=\sum_{k\ge 2}\sum_{\substack{p\in\Gamma_k(0,0)\\p_i\neq0\,\forall\,0<i<k\\ \Set(p)\subset B_{R_t^\ast}}}\prod_{0<i<k}\frac1{2d}\frac{1}{1 + \varphi_\sigma(p_i) \big(A- \hat{\varphi}_\xi(p_i) - q_{\xi,t}(p_i) \big)},
\end{align*}
where $\varphi$ is the field as above and $A \ge a_t - \delta$ (with $\delta$ as in Corollary~\ref{c:seppot}).
As mentioned in the proof of Proposition~\ref{prop:maxordertrunceig},
$\tilde{Q}_t(A,\dot{\varphi}_0) \le 1/2$ uniformly on $\dot{\varphi}_0$ and $A \ge a_t - \delta$.

In the following, we abbreviate $\xi := \xi^0$, $\sigma := \sigma^0$
as in the proof of Proposition~\ref{prop:maxordertrunceig}.

We introduce transformed versions of the potential and trapping landscape:
\[
\tilde \xi_t(y)= \xi(y) - q_{\xi,t}(y), \;\;\; y \in B_{R_t^\ast} \setminus \{0\}
\qquad \text{ and } \qquad
\tilde\sigma_t(y)=\begin{cases}
\frac{\sigma(y)}{q_{\sigma,t}} &\mbox{if }y=0,\\
\sigma(y) &\mbox{otherwise}.
\end{cases} 
\]
Proceeding as in the proof of Proposition~\ref{prop:maxordertrunceig},
we may write, for any measurable $B \subset \cE$,
\begin{equation}\label{e:integral}
\begin{aligned}
& \PPP \left((\tilde{\xi}_t, \tilde{\sigma}_t) \in B, \cA_t(s) \right) \\
& = \int_B \exp \left\{ - G \left(A_t + s d_t + \frac{1-\tilde{Q}_t(A_t+ s d_t, \dot{\varphi}_0)}{q_{\sigma,t} \varphi_{\sigma}(0)} \right) \right\} \PPP \left( (\tilde{\xi}_t, \tilde{\sigma}_t) \in \dd \varphi \right),
\end{aligned}
\end{equation}
where $G(r) = -\ln \PPP (\xi(0) > r ) = \ee^{r/\varrho} $.

Before we proceed,
we will first need to restrict our integrals to an a priori subset $\cR \subset \cE$
where we can better control the exponent in \eqref{e:integral}.
To that end,  recall from the proof of Proposition~\ref{prop:maxordertrunceig} that, 
fixing $\ell_t\to\infty$ such that $\ell_t^{-1}\gg d_t\vee|a_t-A_t|$,
we have, for any $M>0$,
\[
\lim_{t \to \infty} t^d \sup_{s \ge -M}\PPP (\cA_t(s), \sigma(0) \le \ell_t) =0.
\]
Reasoning as in \eqref{e:prMOTEIG4},
we can bound
\begin{equation}\label{e:compAtat}
0 \le a_t - A_t \le (1+o(1)) d_t (\ln_2 t)^\mu \quad \text{ as } t \to \infty,
\end{equation}
so we may take $\ell_t := \ln t / (\ln_2 t)^{\mu+1}$.
Moreover, since $\xi(0)\ge \hat{\lambda}^*_t$, \eqref{e:compAtat} implies, for any $\varepsilon>0$,
\[
\lim_{t \to \infty} t^d \sup_{s \ge -M} \PPP \left( \cA_t(s), \xi(y) > (1+\varepsilon) \varrho \mu \ln_3 t \text{ for some } y \in B_m \setminus \{0\} \right) = 0.
\]
By Lemma~\ref{l:EVincrdecr}, 
the events $\cA_t(s)$ and $\{\xi(y) < -g_t\}$
are negatively correlated, thus
\[
\lim_{t \to \infty} t^d \sup_{s \ge - M} \PPP \left( \cA_t(s), \xi(y) < -g_t \text{ for some } y \in B_m \setminus \{0\} \right) = 0
\]
as well. Hence, we may restrict to the subset
\begin{equation}\label{e:defcR}
\cR := \prod_{y \in B_m \setminus\{0\}} \Big(-g_t, 2 \varrho \mu \ln_3 t -  q_{\xi, t}(y) \Big) \times \RR^{B_{R^*_t} \setminus B_m} \times (\ell_tq_{\sigma,t}^{-1},\infty)\times (\delta_\sigma, \infty)^{B_{R_t^\ast} \setminus \{0\}},
\end{equation}
where the first $|B_m|-1$ coordinates correspond to $(\varphi_\xi(y))_{y \in B_m \setminus\{0\}}$
and the interval $(\ell_tq_{\sigma,t}^{-1},\infty)$ corresponds to $\varphi_\sigma(0)$.
Note that, on $\cR$, $\hat{\varphi}_\xi(y) = \varphi_\xi(y)$ for all $y \in B_m \setminus \{0\}$.

An important consequence is as follows:
since $\sigma(0) > \ell_t$ when $(\tilde{\xi}_t, \tilde{\sigma}_t) \in \cR$,
we may reason as in the proof of Proposition~\ref{prop:maxordertrunceig}
to see that, for any measurable $B \subset \cR$,
\begin{equation}\label{e:relationAt(s)At}
\PPP \left((\tilde{\xi}_t, \tilde{\sigma}_t) \in B, \cA_t(s) \right) = \ee^{-s(1+o(1))} \PPP \left((\tilde{\xi}_t, \tilde{\sigma}_t) \in B, \cA_t(0) \right) \;\; \text{ as } t \to \infty
\end{equation}
uniformly over $s$ in bounded sets,
and thus it is enough to consider $s=0$.
Another important fact to note is that, since we assume $m\ge 1$, $\tilde{Q}_t(A_t, \dot{\varphi}_0) = O((\ln_2 t)^{-1})$ uniformly over $\varphi \in \cR$.

We show next that the integral \eqref{e:integral} with $s=0$ and $B=\cR$ is asymptotically concentrated 
in the set $E_\xi \times E_\sigma$; this will be done by applications of the Laplace method as stated in Proposition~\ref{p:lap} (see also Remark~\ref{r:2xdiff}).

We begin by analysing the coordinate $\varphi_\sigma(0)$. We first observe that, uniformly over $\varphi \in \mathcal{R}$, 
\begin{align}\label{e:asympG}
G\left(A_t + \frac{1-\tilde Q_t(A_t,\dot{\varphi}_0)}{q_{\sigma,t} \varphi_\sigma(0) }\right)
& = \ee^{A_t/\varrho} 
+ \frac{\ee^{A_t/\varrho}}{\varrho q_{\sigma, t}} \left( \frac{1-\tilde{Q}_t(A_t, \dot{\varphi}_0)}{\varphi_\sigma(0)} + O\left( \frac{q_{\sigma, t}}{\ell_t^2}\right) \right),\nonumber\\
& = \ee^{A_t/\varrho} 
+ \mu (\ln_2 t)^{\mu-1} \left( \frac{1-\tilde{Q}_t(A_t, \dot{\varphi}_0)}{\varphi_\sigma(0)} + O\left(\frac{(\ln_2 t)^{\mu+3}}{\ln t} \right) \right),\\
&=\ee^{A_t/\varrho} + \mu (\ln_2 t)^{\mu-1} \frac{1}{\varphi_\sigma(0)} \left( 1+ O \left( \frac{1}{\ln_2 t}\right)\right), \nonumber
\end{align}
where we used $\ee^x = 1 + x + O(x^2)$ as $x \to 0$, the definitions of $a_t$, $q_{\sigma, t}$, $\ell_t$ and \eqref{e:compAtat}.
Write
$
\cR = \mathfrak{S}_t \times \left( \ell_t q_{\sigma, t}^{-1}, \infty \right)
$
where $\mathfrak{S}_t$ corresponds to the projection of $\cR$ onto the coordinates $\dot{\varphi}_0$.
Note that $\tilde{\sigma}_t(0)$ has a density with respect to Lebesgue measure given by
\begin{equation}\label{e:densitysigma(0)}
f_{\tilde{\sigma}_t(0)}(x) = \frac{\mu}{x} \exp \Big\{ - \ln (q_{\sigma, t}x)^\mu + (\mu-1)\ln_2 (q_{\sigma,t}x) \Big\}.
\end{equation}
Setting $\chi_t := \ee^{A_t/\varrho} + (\ln q_{\sigma,t})^\mu - (\mu-1) \ln_2 q_{\sigma, t}$,
define $h_{t,\dot{\varphi}_0}(x)$ by the equation
\begin{equation}\label{e:prpspec1}
-G\left(A_t + \frac{1-\tilde{Q}_t(A_t, \dot{\varphi}_0)}{q_{\sigma,t}(1 + x)} \right) + \ln\left( \frac{x}{\mu} f_{\tilde{\sigma}_t(0)}(x) \right)
+ \chi_t = \mu (\ln_2 t)^{\mu-1} h_{t,\dot{\varphi}_0}(x)
\end{equation}
for $x > \ell_t q_{\sigma,t}^{-1}$, and $h_{t,\dot{\varphi}_0}(x) := -\infty$ otherwise.
Set also $h(x) := - ( 1/x + \ln x)$.
Using \eqref{e:asympG}, \eqref{e:densitysigma(0)} and the definition of $q_{\sigma, t}$, 
we may verify that, for any $\zeta \in (0, 1)$,
\[
\sup_{x:|x-1| \le \zeta} \sup_{\dot{\varphi}_0 \in \mathfrak{S}_t} \Big| h_{t,\dot{\varphi}_0}(x) - h(x) \Big| = O \left( \frac{1}{\ln_2 t} \right).
\]
Moreover, for all $\varepsilon \in (0,1)$,
\[
\limsup_{t \to \infty} \sup_{\dot{\varphi}_0 \in \mathfrak{S}_t} \sup_{x > 0} \left( h_{t,\dot{\varphi}_0}(x) - (1-\varepsilon) h(x) \right) \le 0,
\]
as can be verified separately for $x\ge1$ and $x \in (\ell_t q_{\sigma,t}^{-1}, 1)$:
in the first case, use $(1+u)^\mu \ge 1 + \mu u$ (recall $\mu>1$) and $\ln (1+u) \le u$ for all $u \ge 0$;
in the second case, use
\[
\frac{\ln q_{\sigma, t}}{\mu} \left[1 - \left(\frac{\ln (q_{\sigma, t} x)}{\ln q_{\sigma, t}} \right)^\mu \right] \le \frac{1}{x} - 1.
\]
We thus verify the conditions of Proposition~\ref{p:lap} with $\cI = (0, \infty)$, $x_0 =1$, $f(x) = \mu/x$, $v_t = \mu (\ln_2 t)^{\mu-1}$ and $\aleph = \dot{\varphi}_0$, obtaining with the help of \eqref{e:integral}
\[
\begin{aligned}
\PPP \left( (\tilde{\xi}_t, \tilde{\sigma}_t) \in \cR, \cA_t(0) \right)
& = \int_{\mathfrak{S}_t} \ee^{-\chi_t}\left\{\int_\cI \ee^{\mu (\ln_2 t)^{\mu-1} h_{t,\dot{\varphi}_0}(x)} \dd x\right\} \PPP \left( (\tilde{\xi}_t(z), \tilde{\sigma}_t(z))_{z \neq 0} \in \dd \dot{\varphi}_0\right)\\
& \sim \int_{\mathfrak{S}_t} \ee^{-\chi_t}\left\{\int_{-\delta_t}^{\delta_t} \ee^{ \mu (\ln_2 t)^{\mu-1} h_{t,\dot{\varphi}_0}(x)} \dd x\right\} \PPP \left( (\tilde{\xi}_t(z), \tilde{\sigma}_t(z))_{z \neq 0} \in \dd \dot{\varphi}_0\right) \\
& = \PPP \left( (\tilde{\xi}_t(z), \tilde{\sigma}_t(z))_{z \neq 0} \in \mathfrak{S}_t, \tilde{\sigma}_t(0) \in (-\delta_t, \delta_t), \cA_t(0)\right)
\end{aligned}
\]
for any $\delta_t \to 0$ satisfying $\delta^2_t \gg (\ln_2 t)^{-(\mu-1)\wedge 1}$.
At this point, we fix $\tilde{f}_t^{\ssst (0)} := g_t (\ln_2 t)^{-\frac{(\mu-1) \wedge 1}{2}}$
and recursively define
\[
\tilde{f}^{\ssst (n)}_t := 
\left\{
\begin{array}{ll}
g_t \sqrt{ \tilde{f}^{\ssst (n-1)}_t \vee \{(\ln_3 t)(\ln_2 t)^{-(\mu-2n)} \}} & \text{ if } \mu  > 2n,\\
1 & \text{ otherwise.}
\end{array}\right.
\]
Note that, by the definition of $\kappa$, $f_t \ge \tilde{f}^{\ssst (n)}_t$ whenever $\mu > 2n$.
We work henceforth in the subspace $\cR'$ obtained by intersecting $\cR$ 
with the set where $\varphi_\sigma(0) \in (1-\tilde{f}^{\ssst (0)}_t, 1+\tilde{f}^{\ssst (0)}_t)$.

Consider now $y \in B_m \setminus \{0\}$.
Split $\tilde{Q}_t(A_t, \dot{\varphi_0})$ into paths that do or do not touch $y$, i.e.,
\[
\tilde{Q}_t(A_t, \dot{\varphi}_0) = T_t(\dot{\varphi}_y) + \tilde{Q}^y_t
\]
where
\[
\tilde{Q}^y_t :=\sum_{k\ge 2|y|}\sum_{\substack{p\in\Gamma_k(0,0)\\p_i\neq0\,\forall\,0<i<k\\ y \in \Set(p)\subset B_{R_t^\ast}}}\prod_{0<i<k}\frac1{2d} \left\{1 + \varphi_\sigma(p_i) \Big(A_t - \varphi_\xi(p_i) - q_{\xi,t}(p_i)  \Big) \right\}^{-1}.
\]

Let us analyse the variables $\varphi_\sigma(y)$, $y \in B_m \setminus (\{0\}$.
We will show inductively on $|y|$ that, if $y \in B_{\rho_\sigma} \setminus (\{0\} \cup \cF_\sigma)$,
then we may restrict to $\varphi_\sigma(y) \in (\delta_\sigma, \delta_\sigma + \tilde{f}^{\ssst (|y|)}_t)$.
Assuming first $|y|=1$, we may write, uniformly on $\cR'$,
\[
\frac{\tilde{Q}^y_t}{ \varphi_\sigma(0)} = (2d \varrho \ln_2 t)^{-1}\frac{1}{ \varphi_\sigma(0) \varphi_\sigma(y)}\left( 1+O \left( \frac{\ln_3 t}{\ln_2 t}\right)\right) = (2d \varrho \ln_2 t)^{-1}\frac{1}{\varphi_\sigma(y)}\left( 1+O \left(\tilde{f}^{\ssst (0)}_t \right)\right),
\]
where the leading term comes from the single path of length $2$;
to obtain the order of the error term in the first equality,
use \eqref{e:compAtat} and $\varphi_\xi(y)+q_{\xi, t}(y) = O(\ln_3 t)$.
Setting
\[
\tilde{T}_t(\dot{\varphi}_y) := \ee^{A_t/\varrho} + \mu (\ln_2 t)^{\mu-1} \left\{ \frac{1- T_t(\dot{\varphi}_y)}{\varphi_{\sigma}(0)}  \right\},
\]
denoting  $\aleph := (\dot{\varphi}_y, \varphi_\xi(y))$ and defining $h_{t,\aleph}$ by the equation
\begin{equation}\label{e:eqwithexponent}
-G \left(A_t + \frac{1-\tilde{Q}_t(A_t, \dot{\varphi}_0)}{q_{\sigma, t} \varphi_\sigma(0)} \right) 
+ \tilde{T}_t(\dot{\varphi}_y) = (\ln_2 t)^{\mu-2} h_{t,\aleph}(\varphi_\sigma(y)),
\end{equation}
we can use \eqref{e:integral}, \eqref{e:asympG} and $\mu > 2$ to verify 
the conditions of Proposition~\ref{p:lap} with $h(x) = \mu (2d \varrho x )^{-1}$, 
$x_0=\delta_\sigma$, $f = f_{\sigma}$ and $v_t = (\ln_2 t)^{\mu-2}$,
concluding that we may restrict to $\varphi_\sigma(y) \in (\delta_\sigma, \delta_\sigma+ \tilde{f}^{\ssst (1)}_t)$;
indeed, $(\tilde{f}^{\ssst (1)}_t)^2 \gg \tilde{f}^{\ssst (0)}_t \vee \{(\ln_3 t) (\ln_2 t)^{-(\mu-2)}\}$ 
(note that $f(x) \sim (x-\delta_\sigma)^{\mu-1}$ as $x \downarrow \delta_\sigma$).
Assume by induction that the latter has been proved for some $n \ge 1$
and all $y \in B_{\rho_\sigma} \setminus (\{0\} \cup \cF_\sigma)$ with $|y|=n$,
and let $y \in B_{\rho_\sigma} \setminus (\{0\} \cup \cF_\sigma)$ with $|y|=n+1$.
Reasoning as before, we may write, using $\varphi_\sigma(z) \in (\delta_\sigma, \delta_\sigma + \tilde{f}^{\ssst (n)}_t)$ for all $0<|z| < |y|$,
\begin{equation}\label{e:decQysigma}
\frac{\tilde{Q}^y_t}{\varphi_\sigma(0)} = \frac{n(y)^2}{(2d \varrho \ln_2 t)^{2|y|-1} \delta_\sigma^{2|y|-2}} \frac{1}{\varphi_\sigma(y)} \left(1+ O\left( \tilde{f}^{\ssst (n)}_t\right) \right),
\end{equation}
where the leading term comes from paths with length $2|y|$.
Defining now $h_{t,\aleph}$ as in \eqref{e:eqwithexponent} but with $2$ substituted by $2|y|$,
we can as before use $\mu > 2|y|$ to apply Proposition~\ref{p:lap} 
and conclude that we may restrict to $\varphi_\sigma(y) \in (\delta_\sigma, \delta_\sigma + \tilde{f}^{\ssst (n+1)}_t)$,
finishing the induction step. We may thus further restrict to the subset $\cR'' \subset \cR'$
obtained by intersecting $\cR'$ with the set where $\varphi_\sigma(y) \in (\delta_\sigma, \delta_\sigma + \tilde{f}^{\ssst (|y|)}_t)$ 
for all $y \in (B_{\rho_\sigma} \setminus \{0\}) \setminus \cF_\sigma$.

For $\varphi_\sigma(y)$, $y \in \cF_\sigma$,
we obtain a similar decomposition as in \eqref{e:eqwithexponent}
but with an exponent equal to zero on $\ln_2 t$;
moreover, the function $h_{t,\aleph}(\varphi_\sigma(y))$ converges to $\bar{c}(y) \delta_\sigma /\varphi_\sigma(y)$
uniformly over $\varphi \in \cR''$, implying that, for all measurable $\cI \subset \RR$,
\begin{equation}\label{e:convinterfsigma}
\begin{aligned}
& \PPP \left ( (\tilde{\xi}_t, \tilde{\sigma}_t) \in \cR'', \sigma(y) \in \cI, \cA_t(0) \right) \\
& \sim \int_\cI \ee^{\bar{c}_\sigma(y) \delta_\sigma / x} f_\sigma(x) \dd x \int_{\mathfrak{S}''_\sigma(y)} \ee^{-\tilde{T}_t(\dot{\varphi}_y)} \PPP \left( \tilde{\xi}_t(y) \in \dd \varphi_\xi(y), (\tilde{\xi}_t(z), \tilde{\sigma}_t(z))_{z \neq y} \in \dd \dot{\varphi}_y \right),
\end{aligned}
\end{equation}
where $\mathfrak{S}''_\sigma(y)$ is the projection of $\cR''$ on the coordinates other than $\varphi_\sigma(y)$.
For $\varphi_\sigma(y)$, $|y| > \rho_\sigma$,
\eqref{e:decQysigma} holds with ``$\le$'' in place of ``$=$'',
yielding a decomposition as in \eqref{e:eqwithexponent} but with $(\ln_2 t)^{\mu-2} h_{t, \aleph}$
substituted by a function converging to zero uniformly over $\varphi \in \cR''$.
Hence \eqref{e:convinterfsigma} still holds (note that $\bar{c}_\sigma(y) = 0$ in this case).
This finishes the proof of \eqref{interface2};
in particular, we may restrict to $\varphi_\sigma(y) \in (\delta_\sigma + f_t, g_t)$, $y \in (B_m \setminus B_{\rho_\sigma}) \cup \cF_\sigma$.

Finally, consider $\varphi_\xi(y)$, $y \in B_m \setminus \{0\}$,
starting with $y \in (B_{\rho_\xi} \setminus \{0\}) \setminus \cF_\xi$.
First we note that
\[
\frac{A_t - q_{\xi,t}(y)}{1+ \varphi_\sigma(y) \big( A_t - q_{\xi, t}(y) - \varphi_\xi(y)\big)}
= \frac{1}{\varphi_\sigma(y)} + \frac{\varphi_\xi(y) - \delta_\sigma^{-1}}{\delta_\sigma \varrho \ln_2 t} \left( 1 + O \left( \tilde{f}^{\ssst (|y|)}_t \right) \right),
\]
where we used the concentration of $\varphi_\sigma(y)$, \eqref{e:compAtat} and $\varphi_\xi(y) + q_{\xi, t}(y) = O(\ln_3 t)$.
Defining
\[
\widehat{Q}^y_t(\dot{\varphi}_y) := \sum_{\substack{p\in\Gamma_{2|y|}(0,0)\\p_i\neq0\,\forall\,0<i<2|y|\\ y \in \Set(p)\subset B_{R_t^\ast}}} \prod_{\substack{0<i<2|y|\\ p_i \neq y}}\frac1{2d} \left\{1 + \varphi_\sigma(p_i) \Big(A_t - \varphi_\xi(p_i) - q_{\xi,t}(p_i)  \Big) \right\}^{-1}
\]
and noting that
\[
\begin{aligned}
\tilde{Q}^y_t 
& = \frac{\widehat{Q}^y_t(\dot{\varphi}_y)}{1+ \varphi_\sigma(y) \big( A_t - q_{\xi, t}(y) - \varphi_\xi(y) 
\big)} + O \left( (\ln_2 t)^{-2|y| -1}\right)
\end{aligned}
\]
where the ``$O(\cdot)$'' comes from paths with length greater than $2|y|$, we obtain
\[
\begin{aligned}
\frac{\tilde{Q}^y_t}{\varphi_t(0)} - \frac{\widehat{Q}^y_t(\dot{\varphi}_y)}{\varphi_\sigma(0)\varphi_\sigma(y)(A_t - q_{\xi, t}(y))}
& = \frac{n(y)^2 (\varphi_\xi(y) - \delta_\sigma^{-1})}{(2d \delta_\sigma)^{2|y|-1} (\varrho \ln_2 t)^{2|y|}} \left(1+ O\left( \tilde{f}^{\ssst (|y|)}_t\right) \right) \\
& = \frac{\bar{c}(y) (\varphi_\xi(y) - \delta_\sigma^{-1})}{\mu \varrho (\ln_2 t)^{2|y|}} \left(1+ O\left( \tilde{f}^{\ssst (|y|)}_t\right) \right).
\end{aligned}
\]
Consider the density of $\tilde{\xi}_t(y)$
\[
\begin{aligned}
f_{\tilde{\xi}_t(y)}(x) = \frac{\ee^{x/\varrho}}{\varrho} \exp \left\{ q_{\xi, t}(y) - (\ln_2 t)^{\mu-1 - 2|y|} \bar{c}(y) \ee^{\varphi_\xi(y)/\varrho} \right\}.
\end{aligned}
\]
Setting $\aleph := (\dot{\varphi}_y, \varphi_\sigma(y))$,
\[
\widehat{T}_t(\aleph) := \tilde{T}_t(\dot{\varphi}_y) - \frac{\widehat{Q}^y_t(\dot{\varphi}_y)}{\varphi_\sigma(0) \varphi_\sigma(y)(A_t - q_{\xi, t}(y))}
\]
and solving for $h_{t, \aleph}$ in
\begin{equation}\label{e:equation}
\begin{aligned}
& -G \left(A_t + \frac{1-\tilde{Q}_t(A_t, \dot{\varphi}_0)}{q_{\sigma, t} \varphi_\sigma(0)} \right) 
 + \ln f_{\tilde{\xi}_t(y)}(\varphi_\xi(y)) + \widehat{T}_t(\aleph)\\
& \phantom{aaaaaaa} \; = \frac{\varphi_\xi(y)}{\varrho} + \ln(1/\varrho) + q_{\xi, t}(y) + (\ln_2 t)^{\mu-1-2|y|} h_{t,\aleph}(\varphi_\xi(y)),
\end{aligned}
\end{equation}
we apply once more Proposition~\ref{p:lap} with $h(x) = \bar{c}(y) [(x-\delta_\sigma^{-1})/\varrho - \ee^{x/\varrho}]$,
$x_0 = 0$ and $f(x) = \ee^{x/\varrho}/\varrho$, concluding $\varphi_{\xi}(y) \in (-f_t, f_t)$
since $f^2_t \gg \tilde{f}^{\ssst (|y|)}_t \vee (\ln_2 t)^{-\mu+1+2|y|}$.

For $y \in (B_m \setminus B_{\rho_\xi}) \cup \cF_\xi$,
we solve \eqref{e:equation} without $\ln f_{\tilde{\xi}_t(y)}$ or the first three terms after the equality; 
note that $\mu -1- 2|y|=0$ if $y \in \cF_\xi$ and is negative otherwise.
In the first case, $ h_{t, \aleph}(\varphi_\xi(y))$ converges to $ \bar{c}(y) (\varphi_\xi(y) -\delta_\sigma^{-1})/\varrho$ 
uniformly over $\varphi \in \cR''$, as follows from the error bounds above, 
our choice of $\tilde{f}^{\ssst (n)}_t$ and $\varphi_\xi(y)= O(\ln_3 t)$;
in the second case, $(\ln_2 t)^{\mu - 1 - 2|y|} h_{t, \aleph}(\varphi_\xi(y))$ converges uniformly to zero.
Thus, for any measurable $\cI \subset \RR$,
\[
\begin{aligned}
& \PPP \left ( (\tilde{\xi}_t, \tilde{\sigma}_t) \in \cR'', \xi(y) \in \cI, \cA_t(0) \right) \\
& \sim \int_\cI \ee^{\bar{c}_\xi(y) x / \varrho} f_\xi(x) \dd x \int_{\mathfrak{S}''_\xi(y)} \ee^{-\tilde{T}_t(\dot{\varphi}_y)} \PPP \left( \tilde{\sigma}_t(y) \in \dd \varphi_\sigma(y), (\tilde{\xi}_t(z), \tilde{\sigma}_t(z))_{z \neq y} \in \dd \dot{\varphi}_y \right),
\end{aligned}
\]
where $\mathfrak{S}''_\xi(y)$ is the projection of $\cR''$ onto the coordinates other than $\varphi_\xi(y)$.
Now \eqref{interface1} follows,
and so we may restrict to $\varphi_\xi(y) \in (-g_t, g_t)$, $y \in (B_m \setminus B_{\rho_\xi}) \cup \cF_\xi$.
This concludes the proof.
\end{proof}


\end{document}